\renewcommand{\ref}{\hyperref}
\newcommand{\twoheaduparrow}{\rotatebox[origin=c]{90}{$\twoheadrightarrow$
}}
\newcommand{\YB}{{\,$\backslash$\hspace{-7pt}$\slash$\hspace{-7.2pt}\smash{\raisebox{-2pt}{$-$}}\hspace{-0pt}}}
\def\ZZ{{\mathbb Z}}
\def\RR{{\mathbb R}}
\def\CC{{\mathbb C}}
\def\KK{{\mathbf K}}
\def\PP{{\mathbf P}}
\def\ww{{\mathtt{w}}}
\def\bww{{\overline\ww}}
\newcommand{\imbr}{{\operatorname{ImBr}}}
\newcommand{\Ball}{{\mathbf{B}}}
\newcommand{\Disk}{{\mathbf{D}}}
\newcommand{\AG}{{\rm A$\Gamma$}}
\newcommand{\circledplus}{{\oplus}}
\newcommand{\simplus}{\stackrel{+}{\sim}}
\DeclareMathOperator{\selflink}{sl}
\DeclareMathOperator{\link}{lk}
\newtheorem{theorem}{Theorem}[section]
\newtheorem{proposition}[theorem]{Proposition}
\newtheorem{conjecture}[theorem]{Conjecture}
\newtheorem{corollary}[theorem]{Corollary}
\newtheorem{lemma}[theorem]{Lemma}
\theoremstyle{definition}
\newtheorem{remark}[theorem]{Remark}
\newtheorem{example}[theorem]{Example}
\newtheorem{definition}[theorem]{Definition}
\newtheorem{problem}[theorem]{Problem}
\newtheorem{restr}[theorem]{Restriction}
\numberwithin{equation}{section}
\newcommand{\red}[1]{{\color{red} #1}}
\newcommand{\eps}{{\varepsilon}}
\begin{document}

\title[Morsifications and mutations]
{Morsifications and mutations}

\author{Sergey Fomin}
\address{Department of Mathematics, University of Michigan, 
Ann Arbor, MI 48109, USA}
\email{fomin@umich.edu}

\author{Pavlo Pylyavskyy}
\address{
School of Mathematics, University of Minnesota,
Minneapolis, MN 55414, USA}
\email{ppylyavs@umn.edu}

\author{Eugenii Shustin}
\address{School of Mathematical Sciences, Tel Aviv
University, 
Tel Aviv 69978, 
Israel}
\email{shustin@tauex.tau.ac.il}

\author{Dylan Thurston}
\address{Department of Mathematics, Indiana University, Bloomington, IN 47405, USA}
\email{dpthurst@indiana.edu}

\thanks
{\emph{2020 Mathematics Subject Classification}
Primary 13F60, 
Secondary  
20F36, 
57K10, 
58K65. 
}

\thanks{Partially supported by NSF grants 
DMS-1664722 and DMS-2054231 (S.~F.), 
DMS-1148634, DMS-1351590, DMS-1745638 and DMS-1949896 and  (P.~P.), 
DMS-1507244 and DMS-2110143 (D.~T.), 
a Simons Fellowship (S.~F.), 
a Sloan Fellowship (P.~P.), 
the ISF grants 176/15 and  501/18 (E.~S.), 
and the Bauer-Neuman Chair in Real and Complex Geometry (E.~S.).}

\date{February 18, 2020. Revised November 1, 2021}

\begin{abstract}
We describe and investigate a connection between 
the topology of iso\-lated singularities 
of plane curves and 
the mutation equivalence, in the sense of cluster algebra theory, 
of the quivers associated with 
their morsifications. 
\end{abstract}

\keywords{Plane curve singularity, morsification, A'Campo--Guse\u{\i}n-Zade diagram, 
quiver mutation, positive braid, plabic graph, oriented divide, link equivalence.} 


\maketitle

\tableofcontents

\thispagestyle{empty}



\section*{Introduction}

We present and explore a remarkable connection between two
seemingly unrelated subjects: the combinatorics of quiver
mutations (which originated in the theory of cluster algebras) 
and the topology of plane curve singularities. \linebreak[3]
Our constructions build on the elegant approach 
to the latter subfield of singularity theory that was 
pioneered in the 1970s by
N.~A'Campo~\cite{acampo} and 
S.~Guse\u{\i}n-Zade~\cite{gusein-zade-1,gusein-zade-2}.
Given a real form of an isolated plane curve singularity, 
one begins by finding its real \emph{morsification}, 
a real nodal local deformation that has 
the maximal possible number
of real hyperbolic nodes. 
From the combinatorial topology of 
the morsification (more precisely, of its \emph{divide},
the set of real points of the deformed curve in the vicinity of the
singularity, viewed up to isotopy), one constructs the associated 
\emph{A'Campo--Guse\u{\i}n-Zade} 
\emph{diagram}  (or \AG-diagram), 
a certain tricolored planar graph.    
The \AG-diagram 
can be used to explicitly compute the monodromy and the
intersection form in the vanishing homology of the singularity. 
In fact, more is true: the \AG-diagram 
uniquely determines the complex topological type of the underlying singularity; 
see 
\cite{balke-kaenders}
(for totally real singularities) and~\cite{leviant-shustin} (in full generality).  

A given complex singularity may have many distinct \emph{real forms}.
These are real plane singular curves which, 
when viewed over the complex numbers, 
are locally homeomorphic to each other---but over the reals, 
they are not.
Their real morsifications are also different from each other, 
and so are the associated \AG-diagrams. 
How, then, can we tell, looking at two morsifications, 
whether we are dealing with the same complex singularity or not? 

One answer to this question was given by N.~A'Campo~\cite{AC1} in the late 1990s,
in~terms of a certain link that can be constructed from the divide of a given morsification. 
In~this paper, we propose an alternative answer, which comes from the theory of \emph{cluster
  algebras}~\cite{ca1, fwz},
specifically from the combinatorics of \emph{quiver mutations}.  

Our approach starts with a small but important change of perspective:
we replace \AG-diagrams by closely related \emph{quivers},
removing the marking of the vertices but introducing orientations of the edges. 
We show (see Theorem~\ref{th:quiver-determines-topology}) that the quiver constructed from a morsification determines the topological type of the singularity. 
The question then becomes: how can we tell, by looking at two quivers
coming from morsifications of two plane curve singularities, whether these singularities are topologically equivalent or not? 

The answer comes from the theory of \emph{quiver mutations}. 
(A~quiver can be mutated in different ways, depending on the choice of a vertex.
One then applies a mutation to the resulting quiver, and so on.
The set of quivers obtained in this way defines a cluster algebra.) 
We~conjecture that two singularities are topologically equivalent
if and only if the quivers associated with their respective morsifications 
are mutation equivalent to each other, 
i.e., if and only if one quiver can be transformed into another by iterated mutations.  
Thus, different real forms of the same complex
singularity---and different morsifications of these real forms---should 
give rise to mutation equivalent quivers. 
Conversely, 
topologically distinct singularities are expected to produce quivers
of different mutation~type.  
Succinctly put, plane curve singularities are classified
by the cluster algebras defined by their morsifications. 

Our main results 
establish this relationship between the topology of 
plane curve singularities and the mutation equivalence of associated quivers
modulo some technical assumptions, 
which we optimistically expect to be redundant. 
More concretely, we obtain the following results. 

First, we establish the direction ``combinatorial equivalence implies topological equivalence''  
in the version where on the combinatorial side,  mutation equivalence of quivers associated with morsifications 
is replaced by the move equivalence of the corresponding \emph{plabic} (i.e., planar bicolored) graphs, 
in the sense of A.~Postnikov~\cite{postnikov}. 
The connection between mutation equivalence and move equivalence is well known,
cf.\ Proposition~\ref{pr:plabic-vs-quivers}; in the context of morsifications,
we expect these notions to be interchangeable, see  Conjecture~\ref{conj:shapiro-algebraic}.  
Our first main result, Corollary~\ref{cor:p=>d}, 
demonstrates that move equivalence of plabic graphs constructed from different morsifications 
implies the topological equivalence of the corresponding singularities. 

The key ingredient of the proof of this result is the construction of a link asso\-ciated 
with an arbitrary plabic graph. This link is invariant under Postnikov's local moves,
see Corollary~\ref{cor:plabic-isotopic}. 
Furthermore, the links associated with plabic graphs are naturally transverse,
and local moves translate into transverse isotopies, 
see Corollaries~\ref{cor:plabic-transverse} and~\ref{cor:divide-link-transverse}. 
(For a plabic graph of a divide, one recovers \linebreak[3]
its A'Campo link.)  

Our second main result concerns the opposite direction:  
``topological equivalence implies combinatorial equivalence.''  
In~Corollary~\ref{th:malleable+positive-isotopic}, 
we show that different morsifications of the same singularity produce 
mutation-equivalent quivers. 
This result is predicated on the following two assumptions: 
(i)~the existence of sequences of Yang-Baxter moves transforming each of 
the given divides into a \emph{scannable} form; 
and \linebreak[3]
(ii)~the existence of a \emph{positive isotopy} relating the positive braids
associated with the respective scannable divides. 
We expect the assumptions (i)--(ii) to be redundant, see Conjectures~\ref{conj:positive-isotopy}
and~\ref{conj:alg-is-malleable}. 
These assumptions are satisfied in all examples of real morsifications known to us,
cf.\ Remarks~\ref{rem:isotopy-in-known-constructions}. 
and~\ref{rem:evidence-via-overlays}. 


The link between morsifications and quiver mutations revealed in this paper 
is suggestive of a deep intrinsic relationship between 
singularities and cluster algebras. \linebreak[3]
To give one example, a quasihomogeneous singularity $x^a+y^b=0$ is
described by the same mutation class of quivers as the standard
cluster structure on the homogeneous coordinate ring of the Grassmannian
$\operatorname{Gr}_{a,a+b}(\CC)$, see Remark~\ref{rem:lissajous-grassmannian}. 
The~underlying reasons for these combinatorial coincidences 
are yet to be uncovered. 


Our investigations naturally lead us to a number of questions 
concerning the topology of plane curve singularities, 
their real forms, morsifications, divides, braids, quivers, and plabic graphs; 
see in particular Conjectures \ref{conj:plabic-vs-quivers}, 
\ref{conj:positive-isotopy}, and~\ref{conj:alg-is-malleable}
and Problems~\ref{problem:divide-links}, \ref{prob:plabic-transverse}, 
\ref{problem:triangle-moves-and-algebraicity}, and~\ref{problem:yb-equivalence-of morsif}. 
The recent paper~\cite{leviant-shustin} 
by P.~Leviant and the third \linebreak[3]
author was motivated by this work; cf.\ in particular Conjecture~\ref{conj:leviant-shustin}
and Theorem~\ref{th:ag-diagram-determines-topology}. 


The initial impetus for this project came from the desire to
understand the reasons behind the common appearance of the \textit{ADE}
classification, in its version involving quivers, 
in two ostensibly unrelated contexts:  
V.~Arnold's celebrated classification~\cite{arnold-classification} 
of simple singularities
and the much more recent classification of cluster algebras of finite type~\cite{ca2}. 
Indeed, we prove---unconditionally---that our main conjecture holds true for simple singularities 
(resp., quivers of finite type),  see Theorem~\ref{th:simple-singularities}. 

\pagebreak

\begin{center}
\textsc{Organization of the paper}
\end{center}


This paper has at least two intended audiences:
the readers whose primary interests lie either in singularity theory or in 
the theory of cluster algebras. 
Bearing~this in mind, we aimed to make our presentation as accessible as possible
to the mathematicians who might be unfamiliar with one of the two subjects. 
Additional details from singularity theory can be found in the textbooks 
\cite[Sections 2.1--2.2]{aglv1}
\cite[Section~4.1]{agzv2} and in the papers \cite{acampo,
  balke-kaenders,gusein-zade-1,gusein-zade-2, leviant-shustin}. 
For a thorough exposition of the fundamentals~of quiver mutations,
and their relations with cluster algebras, the reader is referred to~\cite{fwz}. 

In Sections~\ref{sec:singularities-and-morsifications}--\ref{sec:ag-diagrams}, 
we review the requisite singularity theory background: 
isolated singularities of complex and real plane curves,  
and their morsifications (Section~\ref{sec:singularities-and-morsifications}); 
divides and their role in the study of plane curve singularities (Section~\ref{sec:divides}); 
and the A'Campo--Guse\u{\i}n-Zade diagrams (Section~\ref{sec:ag-diagrams}). 

In Section~\ref{sec:quivers}, we introduce the quivers of divides, and
show that the topology of a~complex singularity can be recovered from the quiver of its morsification. 
In Section~\ref{sec:main-conjecture}, we define quiver mutations, 
and formulate the first version of our main conjecture (Conjecture~\ref{conj:morsif=mut}), describing 
the putative correspondence between topological equivalence of singularities and mutation equivalence of associated quivers.
 
In Sections~\ref{sec:plabic-graphs} and~\ref{sec:links-of-divides}, 
we reformulate the problem
on both sides of the conjectural correspondence. 
Section~\ref{sec:plabic-graphs} is devoted to the combinatorics of plabic graphs
and local moves on them, in a version slightly different from Postnikov's
original treatment~\cite{postnikov}. We explain that move equivalent plabic graphs
produce mutation equivalent quivers.
(The converse implication is false, but can conjecturally be fixed by
allowing an additional class of transformations called ``switches.'') 
We also explain how divides give rise to plabic graphs, 
sharing the same quivers, up to mutation equivalence.

In Section~\ref{sec:links-of-divides}, we turn to topology.
Following A'Campo, we define the links of divides, and review their main properties. 
We then recast the topological equivalence of singularities 
in terms of divide links: 
as shown by A'Campo, two singularities are equivalent if and only if the links of divides coming from their morsifications are isotopic to each other. 
This enables us to reformulate the main conjecture 
in the language of divide links and plabic graphs,
see Conjectures~\ref{conj:link-equivalence=move-equivalence} and~\ref{conj:morsif=mut-via-divides}.

Sections~\ref{sec:oriented-divides-and-their-links} and~\ref{sec:links-from-plabic} 
are dedicated to the proof of our first main result, Corollary~\ref{cor:p=>d}
(``move equivalence implies link equivalence'').  
In Section~\ref{sec:oriented-divides-and-their-links}, 
we introduce and study oriented divides and associated links,
in particular showing that local moves on oriented divides 
translate into smooth isotopies of their links. 
In Section~\ref{sec:links-from-plabic}, we define the links of plabic graphs, 
and show that Postnikov moves result in link isotopy. 

Section~\ref{sec:transverse}, though optional as far as the main results are concerned, 
clarifies the relationships between the links considered in this paper 
and several important classes of links studied in the literature. 
In particular, we show that all links associated with plabic graphs
(or divides) are both quasipositive and transverse;
that local moves result in transverse isotopies; 
and that for divide links, ordinary isotopy and transverse isotopy are equivalent. 
We note that a related (but different) construction due to 
V.~Shende, D.~Treumann, H.~Williams, and E.~Zaslow \cite{stwz} 
provided a link between cluster algebras and Legendrian 
(rather than transverse) links; cf.\ Remark~\ref{rem:transverse-vs-legendrian}. 

Sections~\ref{sec:scannable-divides}--\ref{sec:yang-baxter-transformations}
are devoted to the reverse direction of the main correspondence
(``link equivalence implies move equivalence'').  
In Section~\ref{sec:scannable-divides}, we discuss   
scannable divides and associated positive braids.
In particular, we recall a beautiful palindromic rule, 
due to O.~Couture and B.~Perron~\cite{couture-perron}, 
for constructing the A'Campo link of a scannable divide 
as the closure of a certain positive braid. 

A distinguished choice of a 
plabic graph attached to a scannable divide, which we call a \emph{plabic fence},
is introduced in Section~\ref{sec:plabic-fences}. 
These fences are closely related to the corresponding braids. 
In~Section~\ref{sec:positive-braid-isotopy}, we introduce the notion of
\emph{positive braid isotopy}, and relate it to move equivalence of plabic fences,
see Theorem~\ref{th:main-conjecture-scannable-1}.
This establishes the desired implication in the case of scannable divides,
modulo the assumption of positive isotopy, cf.\ condition~(ii) above. 

In Section~\ref{sec:yang-baxter-transformations}, we review the relevant properties
of triangle (or ``Yang-Baxter'') moves on divides.  
These moves preserve the associated links, 
and they can be emulated by local moves on plabic graphs. 
This enables us to extend the aforementioned result from the scannable case 
to the generality of arbitrary divides
which can be converted into a scannable form via a sequence of 
triangle moves, cf.\ assumption (i) above. 
This class conjecturally includes all divides coming from morsifications. 

Sections~\ref{sec:overlays} and~\ref{sec:lissajous} are devoted to 
concrete constructions producing scannable real morsifications of plane curve singularities. 
Here we describe a large class of examples coming
from transversal overlays of quasihomogeneous singularities,  
and their morsifications built from \emph{Lissajous divides} and \emph{wiring diagrams}. 

Section~\ref{sec:plabic-graphs-and-links} presents an alternative approach to 
the construction of divide links which employs a particular kind of orientations of plabic graphs. 
These orientations, which are closely related to Postnikov's notion of \emph{perfect} orientations,
always exist in the scannable case. 
Such an orientation can be used to describe the A'Campo link of a divide (or plabic graph)
by a simple local combinatorial rule.
Moreover these links are preserved under Postnikov moves, 
allowing for an elegant and elementary development of the theory. 
Unfortunately, this approach has limited applicability, since there exist divides
(including some which come from morsifications) 
whose associated plabic graphs do not possess an orientation with requisite properties. 

The final Section~\ref{sec:simple-singularities} is devoted to the special case of
simple singularities. In this case, everything works precisely as intended: 
we show that any morsification of a simple singularity gives rise to a quiver of finite type,
and conversely, if a morsification produces a quiver of finite type, then the singularity is simple. 
Moreover the $ADE$ type of a simple singularity matches the cluster type of the
corresponding quiver. 


\vspace{.2in}

\begin{center}
\textsc{Acknowledgments}
\end{center}

\smallskip\nopagebreak

We benefited from stimulating interactions with 
Ian Agol, Sergei Chmutov, John Etnyre, Matthew Hedden, Diana Hubbard, 
Ilia Itenberg, Peter Leviant, Stepan Orevkov, Michael Shapiro, and
Harold Williams. 

\newpage

\section{Singularities and morsifications}
\label{sec:singularities-and-morsifications}


Throughout this paper, the term \emph{singularity}
means a germ $(C,z)\subset\CC^2$ of a reduced  
analytic curve~$C$ in the complex plane~$\CC^2$
at a singular point~$z\in\CC^2$.
We can postulate, without loss of generality, that $z=(0,0)$. 

We shall always assume that our singularity is \emph{isolated}: 
there exists~a closed ball $\Ball=\Ball_{C,z}\subset\CC^2$ centered at~$z$ 
such that $z$ is the only singular point of $C$~in~$\Ball$. \linebreak[3]
Moreover we can assume that any sphere centered at~$z$ and contained
in~$\Ball$ intersects our curve~$C$ transversally;
we then call $\Ball$ the \emph{Milnor ball} at~$z$. 

The simplest example of a singularity is a \emph{node}, i.e., 
a transversal intersection of two locally smooth branches. 

Isolated singularities of plane complex curves can be
studied up to different types of equivalence.  
Here we focus on the \emph{topological} theory, 
which considers singularities up to homeo\-mor\-phisms of 
a neighborhood of an isolated singular point. 
We note that this point of view is substantially different from treating singularities
up to diffeomorphisms, cf.\ Example~\ref{example:4-transversal-branches}. 

\begin{example}
\label{example:4-transversal-branches}
All singularities consisting of four smooth branches transversally crossing at
a point $z\in\CC^2$ are 
topologically equivalent to each other.
On the other hand, any diffeomorphism of a neighborhood of~$z$ 
preserves the cross-ratio of the tangent lines (at~$z$) 
to the four branches, so configurations with different cross-ratios
are not equivalent to each other in the smooth category. 
\end{example}

By a theorem of Weierstrass (see, e.g., \cite[Theorems I.1.6 and ~I.1.8]{GLS} 
or \cite[Section~III.8.2]{brieskorn-knorrer}),
any locally convergent power series $f(x,y)$ splits into a product of
irreducible factors that are also locally convergent. 
In the case under consideration (an isolated singularity $f(x,y)=0$) 
we can choose~$\Ball$ so that everything converges there.
The factors are determined uniquely up to permutation, 
and up to multiplication by a unit 
(i.e., by a nonvanishing function $\Ball\to\CC$).
Since we assume that the curve is reduced, 
the factors are pairwise distinct: no two of them differ by a unit. 
These factors correspond to
the \emph{local branches} of the singularity. 

\begin{definition}
\label{def:quasi-homogeneous}
A singularity $(C,z)$ is called \emph{quasihomogeneous}
of type~$(a,b)$ 
(for $a\ge b\ge 2$) if, in suitable local coordinates, $(C,z)$ can be given by an equation of the form
\begin{equation}
\label{eq:quasihom-ab}
f(x,y)=\sum_{\substack{bi+aj=ab\\ i,j\ge0}} c_{ij}x^iy^j=0, 
\end{equation}
with $z\!=\!(0,0)$; here $f(x,y)$ must not contain multiple irreducible~factors. 
Any quasihomogeneous singularity of type~$(a,b)$ 
is topologically equivalent to the singularity 
\begin{equation}
\label{eq:x^a+y^b=0}
x^a+y^b=0. 
\end{equation}
The number of (complex) branches of the singularity \eqref{eq:x^a+y^b=0} 
is $\operatorname{gcd}(a,b)$.  
\end{definition}

\begin{remark}
Suppose a singularity $(C,z)$ is given (in some local coordinates) by an equation of the form 
\[
\sum_{\substack{bi+aj\ge ab\\ i,j\ge0}} c_{ij}x^iy^j=0, 
\]
with $z=(0,0)$. Assume that the corresponding equation \eqref{eq:quasihom-ab} 
defines a quasiho\-mo\-geneous singularity of type $(a,b)$.
Then $(C,z)$ is topologically equivalent to the quasihomogeneous singularity~\eqref{eq:x^a+y^b=0}. This follows by combining \cite[Theorem in Section~12.2, page 194]{agzv1} and \cite[Theorem~2.1]{le-ramanujam}.
\end{remark}

\begin{definition}
One important topological invariant of a singularity is its 
\emph{Milnor number}~\cite[\S~7]{Milnor}. 
Let $(C,z)$ be a singularity given by an equation 
$f(x,y)=0$. 
Algebraically, the Milnor number $\mu(C,z)$ is given by 
\[
\mu(C,z)=\dim_\CC\left(\CC[[x,y]]/
\langle \textstyle\frac{\partial f}{\partial x}, \textstyle\frac{\partial f}{\partial y}\rangle\right), 
\]
the dimension of the quotient of the algebra of power series in $x$ and~$y$
by its Jacobian ideal. 
%
To illustrate, the Milnor number of the quasihomogeneous 
singularity \eqref{eq:x^a+y^b=0} is equal to $(a-1)(b-1)$. 
\end{definition}

The Milnor number can also be defined as the maximal number of critical points 
(in the vicinity of~$z$) that a small deformation of~$f$ may have. 
Informally, the Milnor number measures the complexity of the singular point 
considered as the critical point of the defining function.
See, e.g., \cite[Section~2.1]{GLS} for further discussion. 

Any deformation of 
an isolated plane curve singularity that keeps the Milnor number constant yields topological equivalence, see~\cite{le-ramanujam}. 


\begin{definition}
A plane curve singularity $(C,z)$ is called \emph{real} if $C\subset\CC^2$ is an analytic curve 
invariant under complex conjugation,  
and $z\!\in\! C$ its real singular~point. 
Equivalently, $C$~is given by an equation $f(x,y)=0$ 
where all coefficients in the power series expansion of~$f$ 
(at~$z$) are real. 
\end{definition}

The simplest example of a real singularity is a real node of a real
plane curve. 
Such a node can be either \emph{hyperbolic} or
\emph{elliptic}, 
i.e., analytically equivalent over~$\RR$ to $x^2-y^2=0$
or to $x^2+y^2=0$, respectively.

\begin{definition}
A~(real) singularity $(C,z)$ is called \emph{totally real} 
if all its local branches are real. 
For example, a hyperbolic node is totally real, but an elliptic one is not. 
\end{definition}

\begin{theorem}[{\cite[Theorem~3]{gusein-zade-2}
\cite[Theorem~1.1]{balke-kaenders}}]
\label{th:totally-real-form-exists}
Every complex plane curve singularity is topologically equivalent to a
totally real one.
\end{theorem}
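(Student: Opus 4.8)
The plan is to reduce the statement to a combinatorial realization problem and then solve it by writing down an explicit real curve. Recall the classical description (due to Brauner, Burau, and Zariski) of the topological type of a plane curve singularity: it is completely determined by, and determines, the discrete data consisting of the characteristic Newton--Puiseux exponents of each local branch together with the pairwise intersection multiplicities $(C_i\cdot C_j)_0$ of the branches. My strategy is therefore to extract this data from the given complex singularity $(C,z)$, and then to construct a real analytic curve, with all branches real, realizing exactly the same data. Having matched the complete topological invariant, the constructed curve is automatically topologically equivalent to $(C,z)$, and by construction it is totally real.

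First I would realize a single branch. An irreducible germ with prescribed characteristic exponents $m<\beta_1<\dots<\beta_g$ admits a parametrization of the form $x=t^m$, $y=\sum_k a_k t^k$; choosing the coefficients $a_k$ to be real (for instance $a_k=1$ at the characteristic exponents and $0$ otherwise) yields a germ whose defining factor is invariant under complex conjugation---hence a real branch---and whose iterated torus knot, being determined by the exponents alone, is the prescribed one. Thus every single-branch topological type is realized by a totally real germ.

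The substance of the argument lies in assembling several branches so that all the mutual intersection multiplicities come out correctly while every branch stays real. The intersection multiplicity of two branches is governed by the order to which their Puiseux expansions coincide before diverging, so the data to be matched is an ultrametric-type pattern of contact orders on the set of branches, organized by a rooted tree. I would build the branches by prescribing, along this tree, the common initial segments of their (real) Puiseux expansions, and then forcing them to separate by inserting real terms with distinct leading coefficients at the prescribed levels. The baby case is the elliptic node $x^2+y^2=0$: over $\CC$ its two smooth branches $x=\pm i y$ form a complex-conjugate pair, but the same topological type (a single node, two transverse branches of linking number one) is realized by the two genuinely real branches of $x^2-y^2=0$. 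In general the same ``realification'' applies: the self-type of a branch equals that of its complex conjugate, and any contact pattern achievable by a complex configuration---in particular by conjugate pairs---can be reproduced by real branches whose expansions diverge over~$\RR$.

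The main obstacle I anticipate is exactly this last point: proving that the tree of contact orders, together with the individual branch types, is \emph{simultaneously} realizable over $\RR$. One must verify that the combinatorial constraints binding the characteristic exponents to the admissible contact orders (the compatibility conditions that make a given symmetric matrix of intersection multiplicities geometrically realizable) involve no obstruction specific to the complex field, so that realizability over $\CC$ forces realizability over $\RR$; and one must check that forcing real branches to separate at the required level never accidentally raises their contact. Once this simultaneous real realization is established, reducedness and the matching of all invariants are routine, and the theorem follows.
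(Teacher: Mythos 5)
You have correctly identified the reduction that the cited sources use, and indeed the paper itself offers no proof of this statement: it is attributed to Gusein-Zade \cite[Theorem~3]{gusein-zade-2} and Balke--Kaenders, whose strategy is in outline the one you describe --- invoke the Zariski--Burau classification (characteristic Puiseux exponents of each branch together with the pairwise intersection multiplicities form a complete topological invariant) and then realize that discrete data by a reduced germ all of whose branches are real. Your first two steps are sound: the classification you invoke is the correct complete invariant, and a single irreducible topological type is realized over $\RR$ by placing real coefficients at the prescribed characteristic exponents.

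The genuine gap is the step you flag in your final paragraph: the \emph{simultaneous} real realization of the entire contact tree. This is not a compatibility check to be deferred --- it is the whole content of the theorem, and your proposal offers no argument for it beyond the assertion that any contact pattern achievable over $\CC$ can be reproduced by real branches. The difficulty is not choosing distinct real coefficients at a splitting level ($\RR$ is infinite, so that part is harmless); it is that the contact order of two branches is a maximum of orders of coincidence taken over all Galois conjugates of their Puiseux series, and for a real branch the conjugate series at a given exponent $k/m$ carry coefficients that are forced to be roots-of-unity multiples $a_k\zeta^k$ of one another and to be closed under complex conjugation. One must verify that prescribing real data at every node of the contact (Eggers) tree neither inflates a contact order through an unintended conjugate nor conflicts with the denominators already imposed by the characteristic exponents, and that a complex-conjugate \emph{pair} of non-real branches can be traded for two real branches with the same self-types, mutual contact, and contacts with all remaining branches. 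This is precisely what Gusein-Zade's inductive construction establishes; without carrying out that induction (or citing it, as the paper does), your argument restates the theorem in combinatorial form rather than proving it.
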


A real singularity topologically equivalent to a given complex singularity 
is called a \emph{real form} of the latter. 
By Theorem~\ref{th:totally-real-form-exists},
any complex plane curve singularity has a real form.  
There are typically several distinct real forms, 
up to conjugation-equivariant 
topological equivalence. 
For example, a complex node has two essentially different real forms: 
hyperbolic and elliptic. 
An irreducible complex singularity (i.e., one that has a single branch) 
has only one real form. 

\pagebreak[3]


One of our implicit goals is to better understand the relations between different real 
forms of the same complex singularity. 

\begin{definition}
A \emph{nodal deformation} of a singularity $(C,z)$ 
inside the Milnor ball~$\Ball$ is an analytic
family of curves $C_t\cap \Ball$ such that 
\begin{itemize}[leftmargin=.3in]
\item 
the complex parameter~$t$ varies 
in a (small) disk centered at $0\in\CC$; 
\item
for $t=0$, we recover the original curve: $C_0=C$; 
\item
each curve 
$C_t$ is smooth along $\partial \Ball$, and intersects 
$\partial \Ball$ transversally; 
\item
for any $t\ne0$, the curve $C_t$ has only ordinary nodes inside
$\Ball$; 
\item 
the number of these nodes does not depend on~$t$.
\end{itemize}
The maximal number of nodes in a nodal deformation 
is~$\delta(C,z)$,   the \emph{$\delta$-invariant} of the singularity; 
see, e.g., \cite[\S10]{Milnor}.
\end{definition}

\begin{definition}
A \emph{real nodal deformation} of a real singularity $(C,z)$ 
is obtained by taking a nodal deformation $(C_t\cap \Ball)$ 
which is equivariant with respect to complex conjugation, 
and restricting the parameter~$t$ to a (small) interval
$[0,\tau)\subset\RR$. 
\end{definition}

\begin{definition}
A \emph{real morsification} of a real singularity $(C,z)$ is a real
nodal deformation $C_t=\{f_t(x,y)=0\}$ as above
such that 
\begin{itemize}[leftmargin=.3in]
\item 
all critical points of $f_t$ are real and Morse (i.e., with nondegenerate Hessian);  
\item 
all saddle points of $f_t$ are at the zero level (i.e., lie
on~$C_t$). 
\end{itemize}
\end{definition}

Real morsifications of totally real singularities
have been successfully used to compute the monodromies and the
intersection forms of plane curve singularities
\cite{acampo,AC1,gusein-zade-1,GZ1}. 

\begin{proposition}[{\cite[Lemma~2]{leviant-shustin}}]
\label{pr:number-of-real-nodes}
The number of real hyperbolic nodes in 
any real nodal deformation of a real singularity $(C,z)$ is at most 
\begin{equation}
\label{eq:delta-R}
\delta_\RR(C,z)\stackrel{\rm def}{=}
\delta(C,z)-\imbr(C,z), 
\end{equation}
where $\delta(C,z)$ is the $\delta$-invariant of the singularity,
and $\,\imbr(C,z)$ denotes the number of pairs of distinct complex
  conjugate local branches of~$C$ centered at $z$.
Moreover it is equal to $\delta_\RR(C,z)$ if and only if this real
nodal deformation is a real morsification. 
\end{proposition}

Thus a real morsification is a real
nodal deformation that has 
$
\delta_\RR(C,z)$ real hyperbolic nodes, 
the maximal possible number. 
Cf.~Figure~\ref{fig:morsification-or-not}. 

\begin{figure}[ht]
\begin{center}
\begin{tabular}{ccccc}
\setlength{\unitlength}{0.8pt}
\begin{picture}(120,60)(0,-30)
\thinlines
\put(7.5,10){\red{\qbezier(0,15)(20,-10)(60,-10)}}
\put(7.5,-10){\red{\qbezier(0,-15)(20,10)(60,10)}}
\put(-7.5,10){\qbezier(135,15)(115,-10)(75,-10)}
\put(-7.5,-10){\qbezier(135,-15)(115,10)(75,10)}
\thicklines
\put(67.5,-42.5){\makebox(0,0){singularity}} 
\end{picture}
&\qquad  &
\setlength{\unitlength}{0.8pt}
\begin{picture}(120,60)(-10,-30)
\thinlines
\put(0,0){\red{\qbezier(0,20)(40,-10)(60,-10)}}
\put(0,0){\red{\qbezier(60,-10)(80,-10)(80,0)}}
\put(0,0){\red{\qbezier(60,10)(80,10)(80,0)}}
\put(0,0){\red{\qbezier(0,-20)(40,10)(60,10)}}
\put(0,0){\qbezier(135,20)(95,-10)(75,-10)}
\put(0,0){\qbezier(75,-10)(55,-10)(55,0)}
\put(0,0){\qbezier(75,10)(55,10)(55,0)}
\put(0,0){\qbezier(135,-20)(95,10)(75,10)}
\thicklines
\put(67.5,-40){\makebox(0,0){not a morsification}} 
\end{picture}
&\qquad\qquad &
\setlength{\unitlength}{0.8pt}
\begin{picture}(80,60)(0,-30)
\thinlines
\put(0,0){\red{\qbezier(0,20)(40,-10)(60,-10)}}
\put(0,0){\red{\qbezier(60,-10)(80,-10)(80,0)}}
\put(0,0){\red{\qbezier(60,10)(80,10)(80,0)}}
\put(0,0){\red{\qbezier(0,-20)(40,10)(60,10)}}
\put(-40,0){\qbezier(135,20)(95,-10)(75,-10)}
\put(-40,0){\qbezier(75,-10)(55,-10)(55,0)}
\put(-40,0){\qbezier(75,10)(55,10)(55,0)}
\put(-40,0){\qbezier(135,-20)(95,10)(75,10)}
\thicklines
\put(47.5,-40){\makebox(0,0){morsification}} 
\end{picture}
\end{tabular}
\quad{\ }
\vspace{.1in}
\end{center}
\caption{A quasihomogeneous plane curve singularity of type $(6,4)$ (two cusps sharing a tangent) and its two real nodal deformations. The first deformation is not a morsification, as it only has 4~real nodes. The second one has 8 real nodes, and is a morsification. 
In this example, $\delta(C,z)=\delta_\RR(C,z)=8$. 
}
\label{fig:morsification-or-not}
\end{figure}

\begin{conjecture} 
\label{conj:leviant-shustin}
Any real plane curve singularity possesses a real morsification.
\end{conjecture}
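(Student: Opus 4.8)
The plan is to build on the totally real case---where the existence of a real morsification is classical, going back to A'Campo and Guse\u{\i}n-Zade---and to extend it to an arbitrary real form by treating the complex conjugate branches separately and in a conjugation-equivariant manner. First I would factor the defining series as $f = \prod_i p_i \cdot \prod_j (g_j \overline{g_j})$, where the $p_i$ are the real irreducible factors (the real branches) and each $g_j \overline{g_j}$ is the real factor cut out by a complex conjugate pair of branches; there are exactly $\imbr(C,z)$ such pairs. The target is fixed by Proposition~\ref{pr:number-of-real-nodes}: a real nodal deformation is a morsification precisely when it realizes $\delta_\RR(C,z) = \delta(C,z) - \imbr(C,z)$ real hyperbolic nodes, so it suffices to exhibit a real nodal deformation attaining this maximum.

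The construction I would attempt is a two-scale deformation in the spirit of A'Campo's complete splitting. At the coarse scale, perturb $f$ to a real curve whose branches are smooth and pairwise transverse, creating an ordinary node at each real pairwise intersection point. At the fine scale, morsify each real branch $p_i$ individually using the morsification of an irreducible real branch (whose real form is unique), and fit these local pictures together so that the combined function $f_t \colon \Ball \cap \RR^2 \to \RR$ remains Morse. For a complex conjugate pair $g_j \overline{g_j}$, I would exploit the sign constraint $g_j \overline{g_j} \ge 0$ on the real locus to arrange that the pair contributes only \emph{elliptic} nodes, that is, nondegenerate extrema of $f_t$ lying off the zero level; these account for exactly the $\imbr(C,z)$ nodes that do not count toward $\delta_\RR(C,z)$.

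The defining, and delicate, feature of a morsification is not the genericity of $f_t$ but the non-generic requirement that every saddle critical value equal zero. I would impose this through the A'Campo mechanism: arrange the zero level $\{f_t = 0\}$ to be an immersed divide whose complementary regions carry a consistent sign pattern, forcing each index-one critical point onto the divide. Once $f_t$ is verified to be Morse with all saddles at level zero and with exactly $\delta_\RR(C,z)$ of them, the ``if and only if'' clause of Proposition~\ref{pr:number-of-real-nodes} certifies that $f_t$ is a morsification, completing the argument.

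The main obstacle, I expect, is precisely the simultaneous control of the saddle-at-zero condition in the presence of complex conjugate branches. In the totally real case the resolution-based constructions of A'Campo and Guse\u{\i}n-Zade place all saddles on the zero level by hand, but for a general real form one must interleave the real divide with the isolated elliptic nodes produced by the factors $g_j \overline{g_j}$ without letting any saddle value drift off zero when the local models are glued near $z$. Guaranteeing that the assembled function is globally Morse, that no spurious degeneracies appear along the seams, and that the hyperbolic node count lands exactly at $\delta_\RR(C,z)$ is where the real work lies. A clean way to organize this may be induction on $\imbr(C,z)$, with the totally real case $\imbr(C,z)=0$ as the base---for which a totally real model is furnished by Theorem~\ref{th:totally-real-form-exists} and a morsification by the classical theory---adding one complex conjugate pair at a time while preserving the divide's sign structure.
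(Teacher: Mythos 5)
This statement is Conjecture~\ref{conj:leviant-shustin} of the paper: it is posed as an \emph{open problem}, and the paper offers no proof. It only records that the totally real case was settled by A'Campo and Guse\u{\i}n-Zade, and that \cite[Theorem~1]{leviant-shustin} establishes the conjecture for singularities expressible as a union of a totally real singularity with semi-quasihomogeneous pieces having \emph{distinct non-real tangents}. Your proposal is therefore not being measured against a proof in the paper, and as written it does not close the gap: it reproduces the known strategy (separate the branches, morsify the real ones by the classical theory, handle each conjugate pair $g_j\overline{g_j}$ separately, glue) while deferring exactly the step that is open. The crux is your phrase ``fit these local pictures together so that \dots every saddle critical value equal[s] zero'' and the inductive step ``adding one complex conjugate pair at a time while preserving the divide's sign structure.'' When a conjugate pair shares its tangent direction with another conjugate pair, or is tangent to a real branch to high order, no construction is known that inserts its immersed-circle model into the already-built divide while keeping every saddle on the zero level and realizing all the mutual intersection points as real hyperbolic nodes; the distinct-tangents hypothesis of Leviant--Shustin exists precisely to make this gluing possible. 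An appeal to ``the A'Campo mechanism'' does not supply the missing argument, since that mechanism is itself available only in the cases already covered.

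Two smaller inaccuracies are worth flagging. First, a ``nondegenerate extremum of $f_t$ lying off the zero level'' is not an elliptic node: an elliptic node is a critical point \emph{on} the zero level with definite Hessian. In the known examples, e.g.\ $(x^2-y^2)(x^2+y^2-t^2)=0$, the $\imbr(C,z)$ nodes that fail to be real hyperbolic are simply absent from the deformed curve (it has $\delta-\imbr$ nodes in all) rather than present as elliptic nodes, so your accounting of the deficit $\delta(C,z)-\delta_\RR(C,z)$ needs to be reformulated. Second, each conjugate pair contributes many real hyperbolic nodes---the self-crossings of the resulting immersed circle and its crossings with the other branches, cf.\ Proposition~\ref{pr:divide-properties-1}---not ``only elliptic nodes''; it is exactly one node per pair that is lost. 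Neither point is fatal to the overall plan, but both must be corrected before the genuinely hard gluing step can even be stated precisely.
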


The totally real case of Conjecture~\ref{conj:leviant-shustin} 
was settled long time ago in \cite[Theorem~1]{acampo} and \cite[Theorem~4]{gusein-zade-2}. 
Much more recently, Conjecture~\ref{conj:leviant-shustin}  was 
established in \cite[Theorem~1]{leviant-shustin}
for a wide class of singularities that in particular includes 
all singularities that can be represented as a union of a totally real singularity 
with semi-quasihomogeneous singularities having distinct non-real tangents.

\pagebreak[3]

\begin{example}
\label{example:4-lines}
Consider the complex singularity with four
smooth branches intersecting transversally at the point $z=(0,0)$,
cf.\ Example~\ref{example:4-transversal-branches}. 
Its three essentially distinct real forms,
and their respective morsifications, are shown
in Figure~\ref{fig:4-lines}. 
\end{example}


\begin{figure}[ht]
\begin{center}
\begin{tabular}{c|cc}
\hline
\text{real singularity}
& \hfill morsifications \qquad\  &  
\\
\hline
\hline
\begin{tabular}{c} $x^3y-xy^3=0$ \\ (four real
  branches) \end{tabular}
& $xy(x-y+t)(x+y-2t)=0$ & \setlength{\unitlength}{.5pt}
\begin{picture}(80,75)(5,65)
\thinlines

\put(0,60){\line(1,0){100}}
\put(40,20){\line(0,1){100}}
\put(0,40){\line(1,1){80}}
\put(20,120){\line(1,-1){80}}

\end{picture}
\\[.35in]
\hline
\\[-.45in]
\begin{tabular}{c}\ \\[.3in]
$x^4-y^4=0$ \\ (two real branches,\\ 
two complex\\
  conjugate branches)
\end{tabular}
& 
\begin{tabular}{c}
\ \\[.45in]
$(x^2-y^2)(x^2+y^2-t^2)=0$ \\[.45in]
$(x^2-(y-1.2t)^2)(x^2+y^2-t^2)=0$
\end{tabular}
&
\begin{tabular}{c}
\setlength{\unitlength}{0.5pt}
\begin{picture}(80,100)(25,50)
\thinlines
\put(20,20){\line(1,1){80}}
\put(20,100){\line(1,-1){80}}
\put(60,60){\circle{56.5}}
\end{picture}
\\[.05in]
\setlength{\unitlength}{0.5pt}
\begin{picture}(80,80)(25,70)
\thinlines
\put(20,53){\line(1,1){60}}
\put(100,53){\line(-1,1){60}}
\put(60,60){\circle{56.5}}
\end{picture}
\end{tabular}
\\[.9in]
\hline
\\[-.45in]
 \begin{tabular}{c} $(x^2+4y^2)(4x^2+y^2)=0$ \\ (two pairs of complex\\
  conjugate branches) \end{tabular}
& $(x^2+4y^2-t^2)(4x^2+y^2-t^2)=0 $ &
\setlength{\unitlength}{0.5pt}
\begin{picture}(80,100)(25,50)
\thinlines
\qbezier(20,60)(20,82)(60,82)
\qbezier(20,60)(20,38)(60,38)
\qbezier(100,60)(100,82)(60,82)
\qbezier(100,60)(100,38)(60,38)
\qbezier(60,20)(38,20)(38,60)
\qbezier(60,100)(38,100)(38,60)
\qbezier(60,20)(82,20)(82,60)
\qbezier(60,100)(82,100)(82,60)
\end{picture}
\\[.23in]
\hline
\end{tabular}
\end{center}
\caption{Three real forms of the singularity from
  Example~\ref{example:4-lines}, and their morsifications.
}
\label{fig:4-lines}
\end{figure}

\vspace{-.15in} 

\begin{example}
Two morsifications of (different real forms of)
the quasihomogeneous singularity of type $(4,2)$
(cf.\ Definition~\ref{def:quasi-homogeneous}) 
are given by $y^2+x^4=t x^2$ 
(a~lemniscate)  and $(x^2-t)^2=y^2$ (two parabolas).
\end{example}

\begin{remark}
The topology of complex singularities of the kind considered in this paper is completely characterized 
by certain combinatorial invariants defined either in terms
of Puiseux expansions (or ``resolution trees,'' with multiplicities) or
equivalently in terms of the topology of a certain link (the Burau-Zariski 
Theorem~\cite{burau, zariski}, see \cite[Chapter~8]{brieskorn-knorrer}); cf.\ also Proposition~\ref{pr:link-determines-singularity} below.
\end{remark}

\clearpage

\newpage

\section{Divides}
\label{sec:divides}

In this section, we recall and discuss the concept of a divide, 
introduced and extensively studied by
N.~A'Campo, see \cite{acampo-2000, acampo-2003, ishikawa} and
references therein. 
There are several versions of this notion in the existing literature;
we will use the following~one. 

\begin{definition}
\label{def:divide}
Loosely speaking, a \emph{divide}~$D$ in a closed disk $\Disk\subset \RR^2$ is the image of a generic relative immersion of a finite set of intervals and circles into~$\Disk$. 
More precisely, the images of immersed intervals and circles,  
collectively called the \emph{branches} of~$D$, 
must satisfy the conditions (D1)--(D6) below. In particular: 
\begin{itemize}[leftmargin=.4in]
\item[(D1)]
the immersed circles do not intersect the boundary~$\partial\Disk$; 
\item[(D2)]
the immersed intervals have pairwise distinct endpoints which lie on~$\partial\Disk$;
moreover these 
immersed intervals intersect $\partial\Disk$ transversally; 
\item[(D3)]
all intersections and self-intersections of the branches 
are transversal; 
\item[(D4)]
no triple (self-)intersections are allowed. 
\end{itemize}
We are only interested in the topology of a divide. 
That is, we do not distinguish between divides related by a
diffeomorphism between their respective ambient disks. 

The connected components of the complement $\Disk\setminus D$ 
which are disjoint from $\partial\Disk$ are the
\emph{regions} of~$D$.
The closure of the union of all regions and all singular points of~$D$
(its \emph{nodes}) 
is called the \emph{body} of the divide, denoted~$I(D)$. 
We require that
\begin{itemize}[leftmargin=.4in]
\item[(D5)]
the body of the divide is connected, as is the union of its branches; 
\item[(D6)]
each region is homeomorphic to an open disk. 
\end{itemize}
In what follows, we don't always draw the boundary of the ambient disk~$\Disk$. 
\end{definition}

\begin{definition}
\label{def:divide-of-a-morsification}
Any real morsification $(C_t)_{t\in[0,\tau)}$ of a real plane curve singularity $(C,z)$ defines a
divide in the following natural way. 
The sets $\RR C_t$ of real points of the deformed curves~$C_t$, 
for $0<t<\tau$, are all isotopic to each other in the ``Milnor disk'' 
$\Disk=\RR\Ball\subset\RR^2$
consisting of the real points of the Milnor ball~$\Ball$. \linebreak[3]
Each real curve $\RR C_t\cap \Disk$, viewed up to
isotopy,  
defines the divide associated with the morsification. 
Conditions (D1)--(D4) and~(D6) of Definition~\ref{def:divide} are readily
checked. 
Condition~(D5) follows from the connectedness of the Dynkin diagram 
of a singularity~\cite{gabrielov} 
and from Guse\u{\i}n-Zade's algorithm~\cite{gusein-zade-2} 
that constructs this diagram from a divide, cf.\ Section~\ref{sec:ag-diagrams}.

A simple example is given in Figure~\ref{fig:x^4-y^4=0}. 
\end{definition}


\vspace{-.1in}

\begin{figure}[ht]
\begin{center}
\begin{tabular}{ccc}
\setlength{\unitlength}{0.48pt}
\begin{picture}(120,100)(0,20)
\thicklines
\put(20,20){\line(1,1){80}}
\put(20,100){\line(1,-1){80}}
\put(60,60){\circle{56.5}}
\thinlines
\put(60,60){\circle{113}}
\put(60,60){\circle{107}}
\end{picture}
&\qquad\qquad &
\setlength{\unitlength}{0.48pt}
\begin{picture}(120,100)(0,20)
\thicklines
\put(80,113){\line(-1,-1){73}}
\put(40,113){\line(1,-1){73}}
\put(60,60){\circle{56.5}}
\thinlines
\put(60,60){\circle{113}}
\put(60,60){\circle{107}}
\end{picture}
\end{tabular}
\end{center}
\caption{Divides associated with two real morsifications 
of the real singularity $x^4-y^4=0$ shown in Figure~\ref{fig:4-lines}
(second row). The circle~$\partial\Disk$ is represented by double lines. 
In each case, the two real branches $x\pm y=0$ get deformed
into~two immersed segments, 
and the two complex conjugate branches $x\pm iy=0$ get deformed into
an immersed circle.
Each divide 
has $4$~regions and $5$~nodes. 
}
\label{fig:x^4-y^4=0}
\end{figure}

Several examples of divides associated with morsifications of 
(various real forms of) quasihomogeneous singularities
$x^a+y^b=0$
are shown in Figure~\ref{fig:divides-quasihom}. 

\newsavebox{\crossing}
\setlength{\unitlength}{1.5pt} 
\savebox{\crossing}(10,10)[bl]{
\thicklines 
\qbezier(5,5)(7,10)(10,10)
\qbezier(5,5)(3,0)(0,0)
\qbezier(5,5)(3,10)(0,10)
\qbezier(5,5)(7,0)(10,0)
}

\newsavebox{\closing}
\setlength{\unitlength}{1.5pt} 
\savebox{\closing}(10,10)[bl]{
\thicklines 
\qbezier(0,0)(5,0)(5,5)
\qbezier(0,10)(5,10)(5,5)
}

\newsavebox{\cclosing}
\setlength{\unitlength}{1.5pt} 
\savebox{\cclosing}(10,30)[bl]{
\thicklines 
\qbezier(0,0)(10,0)(10,15)
\qbezier(0,30)(10,30)(10,15)
}

\newsavebox{\opening}
\setlength{\unitlength}{1.5pt} 
\savebox{\opening}(10,10)[bl]{
\thicklines 
\qbezier(0,0)(-5,0)(-5,5)
\qbezier(0,10)(-5,10)(-5,5)

}

\newsavebox{\ssone}
\setlength{\unitlength}{1.5pt} 
\savebox{\ssone}(10,20)[bl]{
\thicklines 
\qbezier(5,5)(7,10)(10,10)
\qbezier(5,5)(3,0)(0,0)
\qbezier(5,5)(3,10)(0,10)
\qbezier(5,5)(7,0)(10,0)
\put(0,20){\line(1,0){10}} 
}

\newsavebox{\sstwo}
\setlength{\unitlength}{1.5pt} 
\savebox{\sstwo}(10,20)[bl]{
\thicklines 
\qbezier(5,15)(7,20)(10,20)
\qbezier(5,15)(3,10)(0,10)
\qbezier(5,15)(3,20)(0,20)
\qbezier(5,15)(7,10)(10,10)
\put(0,0){\line(1,0){10}} 
}

\newsavebox{\sssone}
\setlength{\unitlength}{1.5pt} 
\savebox{\sssone}(10,30)[bl]{
\thicklines 
\qbezier(5,5)(7,10)(10,10)
\qbezier(5,5)(3,0)(0,0)
\qbezier(5,5)(3,10)(0,10)
\qbezier(5,5)(7,0)(10,0)
\put(0,20){\line(1,0){10}} 
\put(0,30){\line(1,0){10}} 
}

\newsavebox{\ssstwo}
\setlength{\unitlength}{1.5pt} 
\savebox{\ssstwo}(10,30)[bl]{
\thicklines 
\qbezier(5,15)(7,20)(10,20)
\qbezier(5,15)(3,10)(0,10)
\qbezier(5,15)(3,20)(0,20)
\qbezier(5,15)(7,10)(10,10)
\put(0,0){\line(1,0){10}} 
\put(0,30){\line(1,0){10}} 
}

\newsavebox{\sssthree}
\setlength{\unitlength}{1.5pt} 
\savebox{\sssthree}(10,30)[bl]{
\thicklines 
\qbezier(5,25)(7,30)(10,30)
\qbezier(5,25)(3,20)(0,20)
\qbezier(5,25)(3,30)(0,30)
\qbezier(5,25)(7,20)(10,20)
\put(0,0){\line(1,0){10}} 
\put(0,10){\line(1,0){10}} 
}

\newsavebox{\sssonethree}
\setlength{\unitlength}{1.5pt} 
\savebox{\sssonethree}(10,30)[bl]{
\thicklines 
\qbezier(5,25)(7,30)(10,30)
\qbezier(5,25)(3,20)(0,20)
\qbezier(5,25)(3,30)(0,30)
\qbezier(5,25)(7,20)(10,20)
\qbezier(5,5)(7,10)(10,10)
\qbezier(5,5)(3,0)(0,0)
\qbezier(5,5)(3,10)(0,10)
\qbezier(5,5)(7,0)(10,0)
}

\begin{figure}[htbp] 
\begin{center} 
\vspace{-.1in}
\begin{tabular}{c|c|c|c|c|c}
& $a\!=\!2$ & $a\!=\!3$ & $a\!=\!4$ & $a\!=\!5$ & $a\!=\!6$
\\[.0in]
\hline
&&&&&\\[-.18in]
&\setlength{\unitlength}{1.4pt} 
\begin{picture}(10,10)(-4,0) 
\put(0,0){\makebox(0,0){\usebox{\crossing}}} 
\end{picture} 
&
\begin{picture}(15,10)(-5,0)
\put(0,0){\makebox(0,0){\usebox{\crossing}}} 
\put(10,0){\makebox(0,0){\usebox{\closing}}} 
\end{picture} 
&
\begin{picture}(20,10)(-5,0)
\put(0,0){\makebox(0,0){\usebox{\crossing}}} 
\put(10,0){\makebox(0,0){\usebox{\crossing}}} 
\end{picture} 
&
\begin{picture}(25,10)(-5,0)
\put(0,0){\makebox(0,0){\usebox{\crossing}}} 
\put(10,0){\makebox(0,0){\usebox{\crossing}}} 
\put(20,0){\makebox(0,0){\usebox{\closing}}} 
\end{picture} 
&
\begin{picture}(30,10)(-5,0)
\put(0,0){\makebox(0,0){\usebox{\crossing}}} 
\put(10,0){\makebox(0,0){\usebox{\crossing}}} 
\put(20,0){\makebox(0,0){\usebox{\crossing}}} 
\end{picture} 
\\[.05in]
\rotatebox[origin=r]{90}{$b\!=\!2$}&\begin{picture}(10,10)(-10,0) 
\put(0,0){\makebox(0,0){\usebox{\opening}}} 
\put(0,0){\makebox(0,0){\usebox{\closing}}} 
\end{picture} 
& &
\begin{picture}(20,10)(-10,0)
\put(0,0){\makebox(0,0){\usebox{\opening}}} 
\put(0,0){\makebox(0,0){\usebox{\crossing}}} 
\put(10,0){\makebox(0,0){\usebox{\closing}}} 
\end{picture} 
& & 
\begin{picture}(30,10)(-10,0)
\put(0,0){\makebox(0,0){\usebox{\opening}}} 
\put(0,0){\makebox(0,0){\usebox{\crossing}}} 
\put(10,0){\makebox(0,0){\usebox{\crossing}}} 
\put(20,0){\makebox(0,0){\usebox{\closing}}} 
\end{picture} 
\\[-0.05in]
&$A_1$ & $A_2$ & $A_3$ & $A_4$ & $A_5$
\\[.0in]
& node & cusp & tacnode & & 
order~$3$ tangency \\[.05in]
\hline
&&&&&\\[-.25in]
&& \begin{picture}(30,20)(-5,0)
\put(0,0){\makebox(0,0){\usebox{\ssone}}} 
\put(10,0){\makebox(0,0){\usebox{\sstwo}}} 
\put(20,0){\makebox(0,0){\usebox{\ssone}}} 
\end{picture} 
& \begin{picture}(40,20)(-10,0)
\put(0,5){\makebox(0,0){\usebox{\opening}}} 
\put(30,5){\makebox(0,0){\usebox{\closing}}} 
\put(0,0){\makebox(0,0){\usebox{\ssone}}} 
\put(10,0){\makebox(0,0){\usebox{\sstwo}}} 
\put(20,0){\makebox(0,0){\usebox{\ssone}}} 
\end{picture} 
& \begin{picture}(50,20)(-10,0)
\put(0,5){\makebox(0,0){\usebox{\opening}}} 
\put(0,0){\makebox(0,0){\usebox{\ssone}}} 
\put(10,0){\makebox(0,0){\usebox{\sstwo}}} 
\put(20,0){\makebox(0,0){\usebox{\ssone}}} 
\put(30,0){\makebox(0,0){\usebox{\sstwo}}} 
\put(40,-5){\makebox(0,0){\usebox{\closing}}} 
\end{picture} 
& \begin{picture}(60,20)(-10,0)
\put(0,5){\makebox(0,0){\usebox{\opening}}} 
\put(0,0){\makebox(0,0){\usebox{\ssone}}} 
\put(10,0){\makebox(0,0){\usebox{\sstwo}}} 
\put(20,0){\makebox(0,0){\usebox{\ssone}}} 
\put(30,0){\makebox(0,0){\usebox{\sstwo}}} 
\put(40,0){\makebox(0,0){\usebox{\ssone}}} 
\put(50,5){\makebox(0,0){\usebox{\closing}}} 
\end{picture} 
\\[.1in]
\rotatebox[origin=r]{90}{$b\!=\!3$\qquad}
&&\begin{picture}(30,20)(-10,0)
\put(0,0){\makebox(0,0){\usebox{\ssone}}} 
\put(10,0){\makebox(0,0){\usebox{\sstwo}}} 
\put(20,-5){\makebox(0,0){\usebox{\closing}}} 
\put(0,5){\makebox(0,0){\usebox{\opening}}} 
\end{picture} 
&\begin{picture}(40,20)(-10,0)
\put(0,5){\makebox(0,0){\usebox{\opening}}} 
\put(30,5){\makebox(0,0){\usebox{\closing}}} 
\put(0,0){\makebox(0,0){\usebox{\sstwo}}} 
\put(10,0){\makebox(0,0){\usebox{\ssone}}} 
\put(20,0){\makebox(0,0){\usebox{\sstwo}}} 
\end{picture} 
&\begin{picture}(50,20)(-10,0)
\put(0,5){\makebox(0,0){\usebox{\opening}}} 
\put(0,0){\makebox(0,0){\usebox{\sstwo}}} 
\put(10,0){\makebox(0,0){\usebox{\ssone}}} 
\put(20,0){\makebox(0,0){\usebox{\sstwo}}} 
\put(30,0){\makebox(0,0){\usebox{\sstwo}}} 
\put(40,-5){\makebox(0,0){\usebox{\closing}}} 
\end{picture} 
&\begin{picture}(60,20)(-10,0)
\put(0,5){\makebox(0,0){\usebox{\opening}}} 
\put(0,0){\makebox(0,0){\usebox{\ssone}}} 
\put(10,0){\makebox(0,0){\usebox{\ssone}}} 
\put(20,0){\makebox(0,0){\usebox{\sstwo}}} 
\put(30,0){\makebox(0,0){\usebox{\ssone}}} 
\put(40,0){\makebox(0,0){\usebox{\ssone}}} 
\put(50,5){\makebox(0,0){\usebox{\closing}}} 
\end{picture} 
\\[-.45in]
&&&&
\begin{picture}(50,20)(-10,0)
\put(0,5){\makebox(0,0){\usebox{\opening}}} 
\put(0,0){\makebox(0,0){\usebox{\sstwo}}} 
\put(10,0){\makebox(0,0){\usebox{\sstwo}}} 
\put(20,0){\makebox(0,0){\usebox{\ssone}}} 
\put(30,0){\makebox(0,0){\usebox{\sstwo}}} 
\put(40,5){\makebox(0,0){\usebox{\closing}}} 
\end{picture} 
&
\begin{picture}(60,20)(-5,0)
\put(0,0){\makebox(0,0){\usebox{\ssone}}} 
\put(10,0){\makebox(0,0){\usebox{\sstwo}}} 
\put(20,0){\makebox(0,0){\usebox{\ssone}}} 
\put(30,0){\makebox(0,0){\usebox{\sstwo}}} 
\put(40,0){\makebox(0,0){\usebox{\ssone}}} 
\put(50,0){\makebox(0,0){\usebox{\sstwo}}} 
\end{picture} 
\\[.25in]
&& $D_4$ 
& $E_6$ 
& $E_8$ 
& 
$E_8^{(1,1)}$
\\[.0in]
&& $3$ lines& & & $3$ tangent branches 
\\[.05in]
\hline
&&&&& \\[-.35in]
&&&
\begin{picture}(60,30)(-10,0)
\put(0,0){\makebox(0,0){\usebox{\sssone}}} 
\put(10,0){\makebox(0,0){\usebox{\ssstwo}}} 
\put(20,0){\makebox(0,0){\usebox{\sssonethree}}} 
\put(30,0){\makebox(0,0){\usebox{\ssstwo}}} 
\put(40,0){\makebox(0,0){\usebox{\sssone}}} 
\end{picture}
 &
\begin{picture}(50,30)(-10,0)
\put(0,0){\makebox(0,0){\usebox{\opening}}} 
\put(0,0){\makebox(0,0){\usebox{\sssonethree}}} 
\put(10,0){\makebox(0,0){\usebox{\ssstwo}}} 
\put(20,0){\makebox(0,0){\usebox{\sssonethree}}} 
\put(30,0){\makebox(0,0){\usebox{\ssstwo}}} 
\put(40,-10){\makebox(0,0){\usebox{\closing}}} 
\put(40,10){\makebox(0,0){\usebox{\closing}}} 
\end{picture}
&
\begin{picture}(60,30)(-10,0)
\put(0,-10){\makebox(0,0){\usebox{\opening}}} 
\put(0,10){\makebox(0,0){\usebox{\opening}}} 
\put(0,0){\makebox(0,0){\usebox{\ssstwo}}} 
\put(10,0){\makebox(0,0){\usebox{\sssonethree}}} 
\put(20,0){\makebox(0,0){\usebox{\ssstwo}}} 
\put(30,0){\makebox(0,0){\usebox{\sssonethree}}} 
\put(40,0){\makebox(0,0){\usebox{\ssstwo}}} 
\put(50,-10){\makebox(0,0){\usebox{\closing}}} 
\put(50,10){\makebox(0,0){\usebox{\closing}}} 
\end{picture}
\\[.1in]
&&&
\begin{picture}(40,30)(-10,0)
\put(0,0){\makebox(0,0){\usebox{\sssonethree}}} 
\put(10,0){\makebox(0,0){\usebox{\ssstwo}}} 
\put(20,0){\makebox(0,0){\usebox{\sssonethree}}} 
\put(30,0){\makebox(0,0){\usebox{\closing}}} 
\put(0,0){\makebox(0,0){\usebox{\opening}}} 
\end{picture} &&
\begin{picture}(60,30)(-10,0)
\put(0,0){\makebox(0,0){\usebox{\opening}}} 
\put(0,0){\makebox(0,0){\usebox{\sssonethree}}} 
\put(10,0){\makebox(0,0){\usebox{\ssstwo}}} 
\put(20,0){\makebox(0,0){\usebox{\sssonethree}}} 
\put(30,0){\makebox(0,0){\usebox{\ssstwo}}} 
\put(40,0){\makebox(0,0){\usebox{\sssonethree}}} 
\put(50,0){\makebox(0,0){\usebox{\closing}}} 
\end{picture}
\\[.1in]
\rotatebox[origin=c]{90}{$b\!=\!4$}
&&&\begin{picture}(60,30)(-10,0)
\put(0,0){\makebox(0,0){\usebox{\sssone}}} 
\put(10,0){\makebox(0,0){\usebox{\ssstwo}}} 
\put(20,0){\makebox(0,0){\usebox{\sssthree}}} 
\put(30,0){\makebox(0,0){\usebox{\ssstwo}}} 
\put(40,0){\makebox(0,0){\usebox{\sssone}}} 
\put(0,0){\makebox(0,0){\usebox{\opening}}} 
\put(50,0){\makebox(0,0){\usebox{\closing}}} 
\end{picture} 
&&\begin{picture}(60,30)(-5,0)
\put(0,0){\makebox(0,0){\usebox{\sssonethree}}} 
\put(10,0){\makebox(0,0){\usebox{\sssonethree}}} 
\put(20,0){\makebox(0,0){\usebox{\ssstwo}}} 
\put(30,0){\makebox(0,0){\usebox{\sssonethree}}} 
\put(40,0){\makebox(0,0){\usebox{\ssstwo}}} 
\put(50,0){\makebox(0,0){\usebox{\closing}}} 
\put(50,0){\makebox(0,0){\usebox{\cclosing}}} 
\end{picture} 
\\[.1in]
&&&
\begin{picture}(40,30)(0,0)
\put(10,0){\makebox(0,0){\usebox{\ssstwo}}} 
\put(20,0){\makebox(0,0){\usebox{\sssonethree}}} 
\put(30,0){\makebox(0,0){\usebox{\ssstwo}}} 
\put(10,-10){\makebox(0,0){\usebox{\opening}}} 
\put(10,10){\makebox(0,0){\usebox{\opening}}} 
\put(40,-10){\makebox(0,0){\usebox{\closing}}} 
\put(40,10){\makebox(0,0){\usebox{\closing}}} 
\end{picture}&&
\\[.35in]
&&&$
E_7^{(1,1)}$ 
& 
& $2$ cusps with 
\\[.0in]
&&& $4$ lines (cf.~Fig.~\ref{fig:4-lines})&& the same tangent
\\[-.1in]
\end{tabular}
\end{center} 
\caption{Divides associated with real morsifications of (different real forms of) 
  quasihomogeneous singularities $x^a+y^b=0$, for $2\le b\le a\le 6$ and $b\le 4$. 
} 
\label{fig:divides-quasihom} 
\end{figure} 


A few additional examples are given in Figures~\ref{fig:divides-D5-D6-E7}--\ref{fig:divides-two-transversal cusps}. 

\vspace{.1in}

\begin{figure}[ht] 
\begin{center} 
\begin{tabular}{c|c|c}
\begin{picture}(30,20)(0,-10)
\put(0,-5){\makebox(0,0){\usebox{\opening}}} 
\put(0,0){\makebox(0,0){\usebox{\sstwo}}} 
\put(10,0){\makebox(0,0){\usebox{\ssone}}} 
\put(20,0){\makebox(0,0){\usebox{\sstwo}}} 
\end{picture} 
&
\begin{picture}(40,20)(-10,-10)
\put(0,-5){\makebox(0,0){\usebox{\opening}}} 
\put(0,0){\makebox(0,0){\usebox{\sstwo}}} 
\put(10,0){\makebox(0,0){\usebox{\ssone}}} 
\put(20,0){\makebox(0,0){\usebox{\sstwo}}} 
\put(30,5){\makebox(0,0){\usebox{\closing}}} 
\end{picture} 
&
\begin{picture}(50,20)(-10,-10)
\put(0,-5){\makebox(0,0){\usebox{\opening}}} 
\put(0,0){\makebox(0,0){\usebox{\sstwo}}} 
\put(10,0){\makebox(0,0){\usebox{\sstwo}}} 
\put(20,0){\makebox(0,0){\usebox{\ssone}}} 
\put(30,0){\makebox(0,0){\usebox{\sstwo}}} 
\end{picture} 
\\[.1in]
\begin{picture}(30,20)(0,-10)
\put(0,-5){\makebox(0,0){\usebox{\opening}}} 
\put(0,0){\makebox(0,0){\usebox{\ssone}}} 
\put(10,0){\makebox(0,0){\usebox{\sstwo}}} 
\put(20,0){\makebox(0,0){\usebox{\ssone}}} 
\end{picture} 
&
\begin{picture}(40,20)(-5,-10)
\put(0,0){\makebox(0,0){\usebox{\ssone}}} 
\put(10,0){\makebox(0,0){\usebox{\sstwo}}} 
\put(20,0){\makebox(0,0){\usebox{\ssone}}} 
\put(30,0){\makebox(0,0){\usebox{\sstwo}}} 
\end{picture} 
&
\begin{picture}(40,20)(-5,-10)
\put(0,-5){\makebox(0,0){\usebox{\opening}}} 
\put(0,0){\makebox(0,0){\usebox{\ssone}}} 
\put(10,0){\makebox(0,0){\usebox{\sstwo}}} 
\put(20,0){\makebox(0,0){\usebox{\ssone}}} 
\put(30,0){\makebox(0,0){\usebox{\ssone}}} 
\end{picture} 
\\[.1in]
&
\begin{picture}(40,20)(-5,-10)
\put(0,0){\makebox(0,0){\usebox{\ssone}}} 
\put(10,0){\makebox(0,0){\usebox{\ssone}}} 
\put(20,0){\makebox(0,0){\usebox{\sstwo}}} 
\put(30,0){\makebox(0,0){\usebox{\ssone}}} 
\end{picture} 
\\[.05in]
$D_5$ & $D_6$ & $E_7$
\end{tabular}
\end{center} 
\caption{Divides associated with 
singularities of types~$D_5$ (a cusp and a transversal line), 
$D_6$~(a tacnode and a transversal line), and $E_7$~(a cusp and its cuspidal tangent). 
} 
\label{fig:divides-D5-D6-E7} 
\end{figure} 

\begin{figure}[ht] 
\vspace{-.2in}
\begin{center} 
\begin{picture}(80,30)(0,-12)
\put(0,-10){\makebox(0,0){\usebox{\opening}}} 
\put(0,10){\makebox(0,0){\usebox{\opening}}} 
\put(0,0){\makebox(0,0){\usebox{\ssstwo}}} 
\put(10,0){\makebox(0,0){\usebox{\sssonethree}}} 
\put(20,0){\makebox(0,0){\usebox{\ssstwo}}} 
\put(30,0){\makebox(0,0){\usebox{\sssonethree}}} 
\put(40,0){\makebox(0,0){\usebox{\ssstwo}}} 
\put(50,-10){\makebox(0,0){\usebox{\crossing}}} 
\put(50,10){\makebox(0,0){\usebox{\closing}}} 
\end{picture} 
\end{center} 
\caption{A divide associated with the non-quasihomogeneous 
singularity defined by the Puiseux parametrization $y=x^{3/2}+x^{7/4}$, 
see \cite[Figure~31]{couture-perron}.
} 
\label{fig:divides-non-quasihom} 
\end{figure}

\begin{figure}[ht] 
\vspace{-.1in}
\begin{center} 
\begin{picture}(40,30)(-5,-15)
\put(0,-10){\makebox(0,0){\usebox{\opening}}} 
\put(0,10){\makebox(0,0){\usebox{\opening}}} 
\put(0,0){\makebox(0,0){\usebox{\ssstwo}}} 
\put(10,0){\makebox(0,0){\usebox{\sssonethree}}} 
\put(20,0){\makebox(0,0){\usebox{\ssstwo}}} 
\put(30,0){\makebox(0,0){\usebox{\sssonethree}}} 
\end{picture} 
\qquad\qquad
\begin{picture}(60,30)(-5,-15)
\put(0,-10){\makebox(0,0){\usebox{\opening}}} 
\put(0,10){\makebox(0,0){\usebox{\opening}}} 
\put(0,0){\makebox(0,0){\usebox{\ssstwo}}} 
\put(10,0){\makebox(0,0){\usebox{\sssone}}} 
\put(20,0){\makebox(0,0){\usebox{\sssthree}}} 
\put(30,0){\makebox(0,0){\usebox{\sssone}}} 
\put(40,0){\makebox(0,0){\usebox{\ssstwo}}} 
\put(50,-10){\makebox(0,0){\usebox{\closing}}} 
\put(50,10){\makebox(0,0){\usebox{\closing}}} 
\end{picture} 
\end{center} 
\caption{Two divides associated with two different real forms of the
  singularity $(y^2+x^3)(x^2+y^3)=0$ (two transversal cusps). 
} 
\label{fig:divides-two-transversal cusps} 
\end{figure} 

\vspace{-.1in}

\begin{definition}
The divides arising via the construction of Definition~\ref{def:divide-of-a-morsification} are called \emph{algebraic}. 
Thus, an algebraic divide is a divide that comes from a 
real morsification of (a~real form~of) some~complex isolated plane curve 
singularity.
\end{definition}

Any divide~$D$ in which some proper subset of branches does not form a divide is not algebraic. 
In particular, if $D$ contains two branches which are disjoint, then $D$ is non-algebraic. 

\begin{remark}
\label{rem:which-divides-come-from-morsifications}
We are not aware of any (efficiently testable) 
necessary and sufficient conditions---even conjectural ones---ensuring
that a given divide~$D$ represents 
\begin{itemize}[leftmargin=.2in]
\item
a real morsification of a given real singularity; 
or \item
a real morsification of some real form of a given complex
singularity; 
or \item
a real morsification of a real form of some complex
singularity (i.e., $D$ is algebraic).
\end{itemize}
\end{remark}

\pagebreak[3]

\pagebreak[3]

While a given real singularity typically 
has several inequivalent real morsifications,
giving rise to distinct divides
(cf., e.g., Figure~\ref{fig:x^4-y^4=0}), 
some of the basic features of the resulting divide are uniquely determined by 
the real singularity at hand, see 
Propositions~\ref{pr:divide-properties-1}--\ref{pr:divide-properties-2-3}
below. 
Proofs, further details, and references can be found~in~\cite{leviant-shustin}. 

\begin{proposition}
\label{pr:divide-properties-1}
The branches of a divide associated with a real morsification
are 
obtained by deforming the local
branches of the original real singularity. 
Each real local branch of the singularity deforms 
into 
an immersed interval with endpoints on the boundary of the Milnor disk. 
Each pair of distinct complex conjugate local branches deforms into an 
immersed circle in the interior of the Milnor disk. 
\end{proposition}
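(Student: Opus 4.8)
The plan is to separate the contributions of the two kinds of local branches---real branches and complex conjugate pairs---and to exploit the fact that a morsification is a \emph{small} deformation equivariant under complex conjugation. I would start from the Weierstrass factorization of $f$ recalled above, writing the germ $(C,z)$ as the union of its local branches and sorting these into the real branches (each fixed by conjugation) and the pairs $\{B,\overline{B}\}$ of distinct complex conjugate branches. The input from the undeformed picture is a description of real loci: the real points of a real branch $B$ form an arc through $z$ meeting $\partial\Disk$ transversally in two points (this is precisely the Milnor-ball transversality built into the definition of $\Ball$), whereas a conjugate pair has $\RR(B\cup\overline{B})=\{z\}$, an isolated real point, since the distinct irreducible germs $B$ and $\overline{B}$ meet only at $z$ inside $\Ball$.

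Next I would carry out an annular analysis near the boundary. Fix a small concentric disk $\Disk_\rho\subset\Disk$ around $z$. On the closed annulus $\Disk\setminus\operatorname{int}\Disk_\rho$ the curve $C$ is smooth, its real branches are disjoint arcs transversal to $\partial\Disk$ and to $\partial\Disk_\rho$, and the complex branches are uniformly bounded away from $\RR^2$ (they meet $\RR^2$ only at the interior point $z$, and the annulus is compact). For $t$ small, $C_t$ is $C^1$-close to $C$ on this annulus, so $\RR C_t$ there is a disjoint union of arcs, one isotopic to each real $\RR B$, and $\RR C_t$ has no points near the conjugate pairs. This pins down the boundary behaviour of the divide: its endpoints on $\partial\Disk$ are in bijection with the endpoints of the real branches, two per real branch, and the entire ``circle part'' of the divide is confined to the interior.

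The heart of the matter is the local analysis inside $\Disk_\rho$, where the morsification actually smooths the singularity, and this is the step I expect to be the main obstacle. Working branch by branch in the complex picture, for small $t$ the part of $C_t$ that is $C^0$-close to a given branch is a smoothing of that branch, and conjugation permutes these pieces exactly as it permutes the branches. A real branch, being irreducible with interval normalization, smooths into a single immersed interval joining its two boundary entry points. For a conjugate pair the union $B\cup\overline{B}$ is reducible over $\CC$ but irreducible over $\RR$; its smoothing is a \emph{connected} smooth complex curve $\Sigma$ (the Milnor fibre is connected) carrying the antiholomorphic involution induced by conjugation, and the corresponding component of the divide is the fixed-point locus $\RR\Sigma$, immersed in $\Disk$ via $\Sigma\subset C_t$. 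This fixed locus is a closed $1$-manifold lying in the interior, hence a disjoint union of immersed circles, and the crux is to show it is a \emph{single} circle. For a transverse meeting this is modelled by $x^2+y^2=t^2$, whose real locus is one oval; but in general $B$ and $\overline{B}$ meet with multiplicity $m>1$ and $\Sigma$ then carries $2m-1$ independent cycles, so one must rule out $\RR\Sigma$ splitting into several closed curves (and must establish that it is nonempty). I would resolve this either by an equivariant parametrization argument---tracking the fixed-point set of the involution on the deformed normalization of the pair, which is an abstract circle---or by invoking the explicit A'Campo--Guse\u{\i}n-Zade construction of morsifications, in which each branch is morsified separately and the resulting pieces are superimposed transversally, making the branch-to-component correspondence (and Proposition~\ref{pr:number-of-real-nodes}) manifest.
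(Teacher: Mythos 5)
The paper does not actually prove this proposition: it is stated as a quoted fact, and the paragraph immediately preceding it explicitly defers ``proofs, further details, and references'' to \cite{leviant-shustin}. So there is no in-paper argument to compare yours against; what follows is an assessment of your proposal on its own terms.

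Your boundary analysis is sound and establishes the easy half of the statement: the ends of the divide on $\partial\Disk$ are in bijection with the real local branches (two per branch), and the conjugate pairs contribute nothing near $\partial\Disk$. The genuine gap is exactly where you flag it, but your framing of that step is also off in a way that matters. A real morsification is a \emph{nodal} deformation, not a smoothing: $C_t$ is a curve with nodes, not the Milnor fibre, so the surface $\Sigma$ with its $2m-1$ independent cycles is the wrong object on which to analyze the antiholomorphic involution. The relevant surface is the normalization of the part of $C_t$ deforming $B\cup\overline B$. One must first show --- using the node count of Proposition~\ref{pr:number-of-real-nodes} (a morsification realizes exactly $\delta-\imbr$ nodes, all real hyperbolic, so each conjugate pair ``loses'' exactly one of its expected nodes), together with an Euler-characteristic and conjugation-equivariance analysis of the components of the normalization of~$C_t$ --- that this normalization is a single \emph{annulus} whose two boundary circles are swapped by the lifted conjugation. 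Only then is its fixed locus automatically either empty or a single circle, and nonemptiness still requires a separate argument: an antiholomorphic involution of an annulus swapping its two ends can be fixed-point free, as $z\mapsto -1/\bar z$ on $\{1/2\le|z|\le 2\}$ shows (this is exactly why $x^2+y^2+t^2=0$ must be excluded as a deformation of an elliptic node). None of this is supplied; your sketch asserts the conclusion (``the fixed-point set \dots is an abstract circle'') rather than deriving it. Your second fallback --- invoking the explicit A'Campo--Guse\u{\i}n-Zade construction --- does not close the gap either, since the proposition concerns an arbitrary real morsification, not only those produced by that construction. In short: the skeleton is right, but the crux you identify is the entire content of the proposition, and it is resolved in \cite{leviant-shustin}, not by the steps you propose.
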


In particular, among 
algebraic divides, 
the ones corresponding to \emph{totally real} singularities are precisely those
which contain no closed curves. 

\begin{proposition}
\label{pr:divide-properties-2-3}
Given a real plane curve singularity, 
the following collections~of
numbers do not depend on the choice of its morsification
(or the associated divide): 
\begin{itemize}[leftmargin=.3in]
\item
the numbers of self-intersections of the individual branches of the divide; 
\item
the numbers of intersections of the pairs of branches of the divide;
\item
the total number of regions in a divide. 
\end{itemize}
Specifically, the number of regions is equal to $\mu(C,z)-\delta_\RR(C,z)$, 
where $\mu(C,z)$ is the Milnor number of the singularity;
and the aforementioned intersection numbers are determined by 
the $\delta$-invariants and the intersection and self-intersection numbers of the local
branches.  
\end{proposition}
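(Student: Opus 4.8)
The plan is to express every quantity in the statement as a function of intrinsic invariants of the complex singularity---the $\delta$-invariants and mutual intersection multiplicities of the local branches, together with the Milnor number $\mu(C,z)$ and $\delta(C,z)$---none of which depends on the chosen morsification. I would dispose of the region count first. A morsification $f_t$ is a Morse function all of whose critical points are real, and the singular point splits into exactly $\mu(C,z)$ of them; these partition into the saddles lying on $\{f_t=0\}$, which are precisely the hyperbolic nodes of the divide, and the extrema, each of which occupies exactly one region of the divide. By Proposition~\ref{pr:number-of-real-nodes} the number of nodes is $\delta_\RR(C,z)$, so the number of regions equals $\mu(C,z)-\delta_\RR(C,z)$; both terms are intrinsic, which settles the last assertion.

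For the intersection numbers I would invoke the branch dictionary of Proposition~\ref{pr:divide-properties-1}: the real local branches $R_1,\dots,R_r$ deform into arcs $\gamma_1,\dots,\gamma_r$, and each conjugate pair $\{B_k,\overline{B_k}\}$ (for $k=1,\dots,s$, so that $s=\imbr(C,z)$) deforms into an oval $\sigma_k$ whose point set is the real locus of the deformation of $B_k\cup\overline{B_k}$. Every node of the divide then has exactly one of five types: a self-crossing of some $\gamma_i$ or some $\sigma_k$, or a crossing in $\gamma_i\cap\gamma_j$, in $\gamma_i\cap\sigma_k$, or in $\sigma_k\cap\sigma_l$. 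I would bound each type separately. For the self-crossings, restricting the morsification to the factor(s) cutting out a single branch yields a real nodal deformation of that sub-singularity, so Proposition~\ref{pr:number-of-real-nodes} gives $\#\{\text{self-crossings of }\gamma_i\}\le\delta(R_i)$ and $\#\{\text{self-crossings of }\sigma_k\}\le 2\delta(B_k)+(B_k\cdot\overline{B_k})-1$, the $-1$ being $\imbr$ of the conjugate pair. For crossings between distinct branches, conservation of intersection multiplicity under deformation bounds the real crossings by the complex intersection number: $\#(\gamma_i\cap\gamma_j)\le(R_i\cdot R_j)$, $\#(\gamma_i\cap\sigma_k)\le 2(R_i\cdot B_k)$, and $\#(\sigma_k\cap\sigma_l)\le 2(B_k\cdot B_l)+2(B_k\cdot\overline{B_l})$.

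The punchline is a saturation argument. Summing all five families of upper bounds and using the additivity $\delta(C,z)=\sum_{\text{branches}}\delta+\sum_{\text{pairs}}(\,\cdot\,)$ together with $\imbr(C,z)=s$, the total collapses exactly to $\delta(C,z)-\imbr(C,z)=\delta_\RR(C,z)$. On the other hand, Proposition~\ref{pr:number-of-real-nodes} applied to the full morsification says the divide has precisely $\delta_\RR(C,z)$ nodes. Thus the sum of the upper bounds equals the quantity they bound, which forces every individual inequality to be an equality. This yields closed formulas for all self-intersection and pairwise intersection numbers purely in terms of the branch $\delta$-invariants and the mutual intersection multiplicities, and in particular shows they are independent of the morsification.

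The hard part, I expect, is justifying the sub-configuration bounds: I must verify that deleting some factors of $f_t$ produces a genuine real nodal deformation of the corresponding sub-singularity---smooth and transversal along $\partial\Ball$, with only ordinary nodes whose number is $t$-independent---so that Proposition~\ref{pr:number-of-real-nodes} actually applies and delivers the stated inequalities (with the correct $-\imbr$ correction for conjugate pairs). The remaining ingredients---identifying $\sigma_k$ with the real locus of the deformed conjugate pair, and the bookkeeping that converts complex intersection numbers into the factors $2(R_i\cdot B_k)$ and $2(B_k\cdot B_l)+2(B_k\cdot\overline{B_l})$---are then routine. Everything hinges on the bounds being sharp on the nose, so that their sum matches $\delta_\RR(C,z)$ exactly; it is this exact saturation that does all the work.
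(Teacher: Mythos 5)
The paper does not actually prove this proposition: immediately before stating Propositions~\ref{pr:divide-properties-1}--\ref{pr:divide-properties-2-3} it says that ``proofs, further details, and references can be found in \cite{leviant-shustin},'' so there is no internal argument to compare yours against. That said, your reconstruction is essentially the standard one, and its core mechanism is right. The saturation device is exactly what makes the statement work: the five node types (self-crossings of the $\gamma_i$, self-crossings of the $\sigma_k$, and the three kinds of pairwise crossings) are exhaustive and mutually exclusive by condition (D4); your individual upper bounds are correct (including the $-1$ for $\imbr(B_k\cup\overline{B_k})=1$ and the doubling coming from conjugate branches); and the identity $\delta(C,z)=\sum\delta(\text{branches})+\sum(\text{pairwise multiplicities})$ collapses the sum of bounds to $\delta(C,z)-\imbr(C,z)=\delta_\RR(C,z)$, which Proposition~\ref{pr:number-of-real-nodes} says is the exact total. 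Forcing equality in each bound then yields the closed formulas, which is precisely the ``Specifically'' clause. You also correctly identify the main technical debt on this side: one must check that the restriction of $f_t$ to a sub-collection of factors is a deformation to which the upper bound of Proposition~\ref{pr:number-of-real-nodes} applies (for the upper bound alone, constancy of the node count in $t$ is not really needed, only nodality and conjugation-equivariance of the sub-curve, so this is manageable).

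The one step you assert without any justification is in the region count: that the local extrema of $f_t$ are in bijection with the regions of the divide. Half of this is easy --- each region contains at least one extremum by compactness (its closure is compact and $f_t$ vanishes on its boundary), and at most one because two extrema of the same sign in one component of $\{f_t>0\}$ or $\{f_t<0\}$ would produce, by a mountain-pass argument, a saddle at a nonzero level, contradicting the definition of a morsification. But the other half --- that no extremum lies in a component of $\Disk\setminus D$ meeting $\partial\Disk$ --- is not automatic and is the genuinely nontrivial point of that part of the argument; it requires either a localization of the critical points and level sets of $f_t$ near $z$, or the identification of extrema with vanishing cycles supported on compact components of the complement. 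As written, your proof simply assumes it, and without it the conclusion would only be the inequality $\#\{\text{regions}\}\le\mu(C,z)-\delta_\RR(C,z)$. This is the one gap I would insist you close (or explicitly delegate to \cite{leviant-shustin}, as the paper itself does).
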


We note that while the numbers appearing in Proposition~\ref{pr:divide-properties-2-3} do not depend on the choice of a morsification (or divide), they do depend on the choice of a real form of a particular complex singularity. 

The importance of divides in the context of singularity theory 
stems from the fact that 
an algebraic divide 
completely determines the topological type of the
underlying complex singularity. 
(It also contains some information 
concerning the real form at hand.)  
See~\cite{leviant-shustin} and references therein, 
as well as 
Remark~\ref{rem:link-determines-singularity} below. 


\pagebreak[3]

Let $D$ be a divide in a disk~$\Disk$, and $I(D)$ its body, 
cf.\ Definition~\ref{def:divide}. 
Since we assumed the regions to be homeomorphic to open disks, 
the body~$I(D)$ has a natural structure of a cell complex: 
\pagebreak[3]
\begin{itemize}[leftmargin=.3in]
\item
the nodes of~$D$ are the $0$-cells;  
\item
the components of the set of nonsingular points of~$D$ which are 
disjoint from~$\partial\Disk$ are the $1$-cells;
\item
the regions are the $2$-cells. 
\end{itemize} 
If $D$ is a hyperbolic node (i.e., two embedded segments with a single
transverse intersection), 
then $I(D)$ is a single point.
Otherwise $I(D)$ is a connected (cf.~(D5)) and simply-connected
$2$-dimensional cell complex. 

Propositions~\ref{pr:number-of-real-nodes} and~\ref{pr:divide-properties-2-3} imply the following statement. 

\begin{proposition}
\label{pr:milnor-number}
Let $D$ be an algebraic divide. 
The sum of the number of $0$-cells and the number of $2$-cells of the cell complex~$I(D)$
is equal to the Milnor number of the associated singularity.
In particular, this number does not depend on the choice of morsification, 
nor on the choice of the real form of the given complex singularity.
\end{proposition}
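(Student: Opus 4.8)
The plan is to count the $0$-cells and the $2$-cells of $I(D)$ separately, invoking the two cited propositions, and then add the two counts. Since both ingredients are already available, the argument is essentially a bookkeeping exercise; the only point requiring genuine care is matching the purely combinatorial data of the divide to the geometric data of the morsification from which it arises.

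First I would identify the $0$-cells of $I(D)$. By construction these are precisely the nodes of~$D$, that is, the transverse self-intersections of individual branches together with the transverse intersections between distinct branches. Because $D$ is algebraic, it arises as the real picture $\RR C_t\cap\Disk$ of a real morsification $(C_t)$ of $(C,z)$, and under the recipe of Definition~\ref{def:divide-of-a-morsification} each crossing of the divide corresponds to a real hyperbolic node of $C_t$ (a transverse crossing of two real branches, locally $x^2-y^2=0$), whereas the remaining nodes of the deformation --- elliptic nodes and complex conjugate pairs --- leave no crossing in the real picture. Thus the nodes of~$D$ are exactly the real hyperbolic nodes of the morsification, and Proposition~\ref{pr:number-of-real-nodes} guarantees that there are $\delta_\RR(C,z)$ of them. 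Hence the number of $0$-cells equals $\delta_\RR(C,z)$. This identification of divide nodes with hyperbolic nodes is the one step I expect to need the most care, since it is where the real-geometric content of the morsification enters.

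Next I would count the $2$-cells, which by definition are the regions of~$D$. Proposition~\ref{pr:divide-properties-2-3} asserts that the total number of regions is $\mu(C,z)-\delta_\RR(C,z)$. Adding the two counts then yields
\[
\delta_\RR(C,z)+\bigl(\mu(C,z)-\delta_\RR(C,z)\bigr)=\mu(C,z),
\]
which is the claimed equality. The final ``in particular'' assertion follows immediately: the Milnor number $\mu(C,z)$ is a topological invariant of the underlying complex singularity, so it is insensitive to the chosen real form and to the chosen morsification, even though the intermediate quantity $\delta_\RR(C,z)$ and the separate cell counts genuinely depend on the real geometry.
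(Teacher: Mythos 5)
Your proof is correct and follows exactly the route the paper intends: the paper derives this proposition directly from Propositions~\ref{pr:number-of-real-nodes} and~\ref{pr:divide-properties-2-3}, identifying the $0$-cells with the $\delta_\RR(C,z)$ real hyperbolic nodes and the $2$-cells with the $\mu(C,z)-\delta_\RR(C,z)$ regions, just as you do. Your explicit attention to why the nodes of the divide are precisely the hyperbolic nodes of the morsification is a reasonable elaboration of a step the paper leaves implicit.
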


\begin{example}
The three divides in the lower-right corner of Figure~\ref{fig:divides-quasihom}
correspond to morsifications of the following real forms of the same complex singularity:
\begin{itemize}[leftmargin=.3in]
\item
two complex conjugate cusps with the common tangent; 
\item
two real cusps with the common tangent and opposite orientation; 
\item
two co-oriented real cusps with the common tangent, cf.\ Figure~\ref{fig:non-partitions}(a). 
\end{itemize}
In each of the three cases, the combined number of nodes and regions is equal to~$15$,
matching the Milnor number of the singularity. 
\end{example}

\begin{remark}
\label{rem:non-regular-divides}
For $D$ an algebraic divide, 
the cell complex $I(D)$ is not necessarily \emph{regular}:
the closure of a $d$-cell does not have to be a closed $d$-ball. 
Even if $I(D)$ is regular, 
the intersection of the closures of two $d$-cells 
may be disconnected.
Figure~\ref{fig:non-partitions}
(borrowed from~\cite{leviant-shustin}) illustrates 
each of these possibilities,  for both $d=1$ and $d=2$. 
\end{remark}
%
%
\begin{figure}[ht]
\begin{center}
\includegraphics[width=95mm]{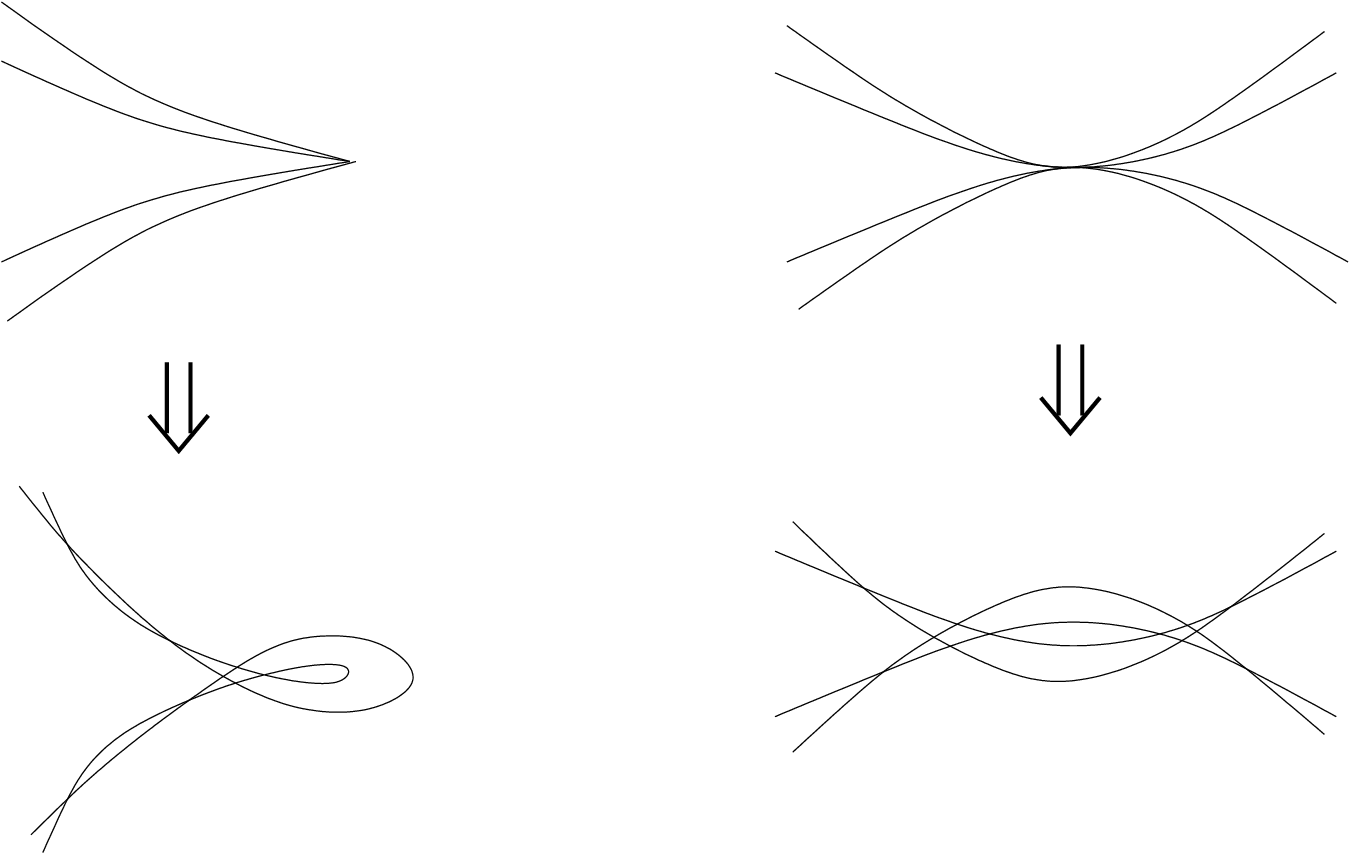}
\\
\hspace{-0.0in}
\begin{picture}(60,30)(-5,0)
\put(0,0){\makebox(0,0){\usebox{\sssonethree}}} 
\put(10,0){\makebox(0,0){\usebox{\sssonethree}}} 
\put(20,0){\makebox(0,0){\usebox{\ssstwo}}} 
\put(30,0){\makebox(0,0){\usebox{\sssonethree}}} 
\put(40,0){\makebox(0,0){\usebox{\ssstwo}}} 
\put(50,0){\makebox(0,0){\usebox{\closing}}} 
\put(50,0){\makebox(0,0){\usebox{\cclosing}}} 
\end{picture} 
\hspace{0.99in}
\begin{picture}(80,20)(0,0)
\put(0,0){\makebox(0,0){\usebox{\sssonethree}}} 
\put(10,0){\makebox(0,0){\usebox{\ssstwo}}} 
\put(20,0){\makebox(0,0){\usebox{\sssonethree}}} 
\put(30,0){\makebox(0,0){\usebox{\ssstwo}}} 
\put(40,0){\makebox(0,0){\usebox{\ssstwo}}} 
\put(50,0){\makebox(0,0){\usebox{\sssonethree}}} 
\put(60,0){\makebox(0,0){\usebox{\ssstwo}}} 
\put(70,0){\makebox(0,0){\usebox{\sssonethree}}} 
\end{picture} 
\\[.45in]
\hspace{-.35in} (a) \hspace{2.25in} (b)
\end{center}
\vspace{-.1in}
\caption{(a) A real morsification of the
singularity \hbox{$(y^2+x^3)(y^2+2x^3)=0$} 
(two cooriented real cuspidal branches with the
common cuspidal tangent) defined by 
\hbox{$(y^2+x^2(x-\eps_1))(y^2+2(x-\eps_2)^2(x-\eps_3))=0$},
with \hbox{$0<\eps_2<\eps_3\ll\eps_1\ll1$}\,, and the corresponding divide 
(cf.\ Figure~\ref{fig:divides-quasihom}, $a=6$, $b=4$, at the bottom). 
Here we~see that the closure of a cell in $I(D)$ \hbox{does not have to be
simply connected.} \\
%
(b)~A~real morsification of the real quasihomogeneous 
singularity of type~$(8,4)$ given by $(y^2-x^4)(y^2-2x^4)=0$ 
(four~real smooth branches quadratically tangent to each other), and its divide.
Here we see that the intersection of two $d$-cells may be
disconnected, for $d=1, 2$.} 
\label{fig:non-partitions}
\end{figure}

\clearpage

\newpage

\section{A'Campo-Gusein-Zade diagrams}
\label{sec:ag-diagrams}

In this section, we review the basics of \AG-diagrams, originally introduced by 
N.~A'Campo~\cite{acampo} and S.~Guse\u\i n-Zade~\cite{gusein-zade-1}. 
These diagrams also appeared in the literature 
under other names: Coxeter-Dynkin diagrams of singularities, $R$-diagrams,~etc. 
See~\cite{balke-kaenders} for another overview of this construction,
and for additional references. 

Two regions of a divide are called \emph{adjacent} if the intersection of 
their closures contains a $1$-cell (which is said to \emph{separate} these two regions). 

\begin{definition}
\label{def:ag-diagram}
Given a divide~$D$ as in Definition~\ref{def:divide},
its A'Campo-Guse\u{\i}n-Zade diagram \AG$(D)$ 
(\emph{\AG-diagram} for short)  
is a vertex-colored graph 
constructed as follows: 
\begin{itemize}[leftmargin=.3in]
\item
place a vertex at each node of~$D$, and color it black;
\item
place one vertex into each region of~$D$;
color these vertices $\circledplus$ or $\circleddash$ 
so that adjacent regions receive different colors (signs),
and non-adjacent regions sharing a node receive the same color;
\item
for each $1$-cell separating two regions, draw an edge connecting the vertices located
inside these regions;
\item
for each region~$R$, say bounded by $k$ one-dimensional cells, 
draw $k$~edges connecting the nodes on the boundary of~$R$ 
to the vertex located inside~$R$; 
these edges correspond to the $k$ distinct (up to isotopy) ways to draw a simple curve 
contained in~$R$ (except for one of the endpoints) 
connecting the interior vertex to a boundary node.  
\end{itemize}
\end{definition}

Figures~\ref{fig:A3-ACGZ} and~\ref{fig:E6-ACGZ} show 
\AG-diagrams of divides associated with 
different real morsifications of 
real singularities of types~$A_3$ and~$E_6$, respectively. 

\vspace{-.1in}

\begin{figure}[ht]
\begin{center}
\setlength{\unitlength}{1.5pt}
\begin{picture}(50,20)(0,0)
\thinlines
\qbezier(0,0)(30,60)(60,0)
\qbezier(0,30)(30,-30)(60,30)

\linethickness{1.2pt}

\put(8.8,15){\circle*{3}}
\put(51.2,15){\circle*{3}}
\put(30,15){\makebox(0,0){$\circledplus$}}
\put(11,15){\red
{\line(1,0){15}}}
\put(49,15){\red
{\line(-1,0){15}}}
\end{picture}
\hspace{1in}
\begin{picture}(50,40)(0,0)
\thinlines
\qbezier(0,15)(0,40)(30,15)
\qbezier(0,15)(0,-10)(30,15)
\qbezier(60,15)(60,40)(30,15)
\qbezier(60,15)(60,-10)(30,15)

\linethickness{1.2pt}

\put(30,15){\circle*{3}}
\put(51.2,15){\makebox(0,0){$\circleddash$}}
\put(8.8,15){\makebox(0,0){$\circleddash$}}
\put(12.7,15){\red
{\line(1,0){15}}}
\put(47.3,15){\red
{\line(-1,0){15}}}
\end{picture}
\end{center}
\caption{Two divides of type~$A_3$,
and their associated \AG-diagrams.}
\label{fig:A3-ACGZ}
\end{figure}

\vspace{-.3in}

\begin{figure}[ht]
\begin{center}
\setlength{\unitlength}{2pt}
\begin{picture}(50,45)(0,0)
\thinlines
\qbezier(0,0)(60,60)(60,20)
\qbezier(60,0)(0,60)(0,20)
\qbezier(60,20)(60,0)(30,0)
\qbezier(0,20)(0,0)(30,0)

\linethickness{1.2pt}

\put(6.2,6){\circle*{2.5}}
\put(53.8,6){\circle*{2.5}}
\put(30,26.5){\circle*{2.5}}
\put(30,6){\makebox(0,0){{$\circledplus$}}}
\put(6.2,26.5){\makebox(0,0){$\circleddash$}}
\put(53.8,26.5){\makebox(0,0){$\circleddash$}}
\put(9,6){\red{\line(1,0){18}}}
\put(51,6){\red{\line(-1,0){18}}}
\put(9,26.5){\red{\line(1,0){18}}}
\put(51,26.5){\red{\line(-1,0){18}}}
\put(6.2,24){\red{\line(0,-1){15.5}}}
\put(30,24){\red{\line(0,-1){15.5}}}
\put(53.8,24){\red{\line(0,-1){15.5}}}
\put(28,8){\red{\line(-8,7){19.5}}}
\put(32,8){\red{\line(8,7){19.5}}}
\end{picture}
\hspace{1in}
\begin{picture}(50,40)(0,5)
\thinlines
\qbezier(25,0)(60,70)(60,40)
\qbezier(35,0)(0,70)(0,40)
\qbezier(60,40)(60,30)(30,30)
\qbezier(0,40)(0,30)(30,30)
\linethickness{1.2pt}
\put(30,10){\circle*{2.5}}
\put(18.5,30.5){\circle*{2.5}}
\put(41.5,30.5){\circle*{2.5}}

\put(30,22){\makebox(0,0){$\circleddash$}}
\put(7.5,39.5){\makebox(0,0){$\circleddash$}}
\put(52.5,39.5){\makebox(0,0){$\circleddash$}}

\put(9.2,38){\red{\line(5,-4){7.4}}}
\put(50.8,38){\red{\line(-5,-4){7.4}}}

\put(32,23.5){\red{\line(5,4){7.2}}}
\put(28,23.5){\red{\line(-5,4){7.2}}}
\put(30,19.5){\red{\line(0,-1){7.2}}}

\end{picture}
\end{center}
\caption{Two divides of 
type~$E_6$,
and their associated \AG-diagrams.}
\label{fig:E6-ACGZ}
\end{figure}


\begin{remark}
The last rule in Definition~\ref{def:ag-diagram}
allows for the possibility of double edges in case 
the closure of~$R$ is not simply connected.
For example, this situation arises in the \AG-diagram associated with 
the morsification in Figure~\ref{fig:non-partitions}(a),
see Figure~\ref{fig:AG-multiple-edges}. 
\end{remark}

\begin{figure}[ht]
\begin{center}
\setlength{\unitlength}{2pt}
\begin{picture}(140,45)(0,0)

\linethickness{1.2pt}

\multiput(0,0)(40,0){3}{\makebox(0,0){\circle*{2.5}}}
\multiput(0,40)(40,0){3}{\makebox(0,0){\circle*{2.5}}}
\multiput(20,0)(40,0){2}{\makebox(0,0){$\circleddash$}}
\multiput(20,40)(40,0){2}{\makebox(0,0){$\circleddash$}}
\put(80,20){\makebox(0,0){{$\circledplus$}}}
\put(120,20){\makebox(0,0){{$\circledplus$}}}
\put(140,20){\makebox(0,0){{$\circleddash$}}}
\put(60,20){\makebox(0,0){\circle*{2.5}}}
\put(100,20){\makebox(0,0){\circle*{2.5}}}

\multiput(2,0)(40,0){2}{\line(1,0){15}}
\multiput(2,40)(40,0){2}{\line(1,0){15}}
\multiput(62,20)(40,0){2}{\line(1,0){15}}
\multiput(98,20)(40,0){1}{\line(-1,0){15}}
\multiput(137,20)(40,0){1}{\line(-1,0){14}}
\multiput(38,0)(40,0){2}{\line(-1,0){15}}
\multiput(38,40)(40,0){2}{\line(-1,0){15}}

\put(60,22){\line(0,1){15}}
\put(80,2){\line(0,1){15}}
\put(60,18){\line(0,-1){15}}
\put(80,38){\line(0,-1){15}}
\put(62,2){\line(1,1){16}}
\put(62,38){\line(1,-1){16}}

\qbezier(101.5,18.5)(120,12)(137.5,18.5)
\qbezier(101.5,21.5)(120,28)(137.5,21.5)

\qbezier(82.5,21.5)(110,40)(138,23)
\qbezier(82.5,18.5)(110,0)(138,17)

\qbezier(82,40)(130,40)(139.5,23.5)
\qbezier(82,0)(130,0)(139.5,16.5)
\end{picture}
\end{center}
\caption{The \AG-diagram for the divide/morsification shown in Figure~\ref{fig:non-partitions}(a).}
\label{fig:AG-multiple-edges}
\end{figure}

Definition~\ref{def:ag-diagram} specifies the coloring 
of the vertices in the \AG-diagram  
up to a global change of sign.
This coloring is \emph{proper}: every edge in \AG$(D)$
connects vertices of different~color. 
Thus \AG$(D)$ is a \emph{tripartite} graph. 

\begin{remark}
\label{rem:AG-planar-embeddings}
Any \AG-diagram is a (vertex-colored) \emph{planar} graph. 
Although its construction given in Definition~\ref{def:ag-diagram} supplies an embedding of this graph into the real plane,
the notion of an \AG-diagram
does \emph{not} include a choice of a planar embedding. 
Moreover a given \AG-diagram can have two non-homeomorphic planar embeddings, 
and can correspond to two topologically distinct divides, 
see, e.g.,~\cite[Figure~4]{balke-kaenders}. 
We~do not know whether this can happen for algebraic divides. 
\end{remark}

For an algebraic divide $D$ coming from a real morsification of a real singularity,
the vertices of the \AG-diagram \AG$(D)$ correspond to the critical
points of the morsified curve $C_t=\{f_t(x,y)=0\}$.
Furthermore, one can choose the coloring so that 
\begin{itemize}[leftmargin=.3in]
 \item 
 the vertices colored $\circledplus$ are located in the regions
 where $f_t>0$, and correspond to the local maxima of~$f_t$; 
 \item 
the vertices colored $\circleddash$ are located in the regions
 where $f_t<0$, and correspond to the local minima of~$f_t$; 
 \item 
the black vertices are located on the curve $f_t=0$, 
and correspond to the saddle points of~$f_t$. 
\end{itemize}
By Proposition~\ref{pr:milnor-number},
the number of vertices in \AG$(D)$ is equal to the Milnor number of
the singularity.

\begin{theorem}[{\cite{leviant-shustin}}]
\label{th:ag-diagram-determines-topology}
The \AG-diagram of a real morsification of a real isolated plane curve singularity 
determines the complex topological type of the singularity. 
\end{theorem}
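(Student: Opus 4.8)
The plan is to reduce the statement to the classical fact, recalled via the Zariski--Burau theorem \cite{burau,zariski} (see the Remark at the end of Section~\ref{sec:singularities-and-morsifications}), that the complex topological type of a plane curve singularity is determined by the isotopy type of its link in $S^3$, equivalently by the Puiseux characteristic exponents of the individual branches together with all pairwise intersection multiplicities. It therefore suffices to extract this data from the signed graph \AG$(D)$ alone. The natural bridge is A'Campo's construction \cite{AC1}, by which the divide~$D$ gives a link isotopic to the link of the singularity; so at bottom I want to reconstruct from \AG$(D)$ enough of the geometry of~$D$ to pin that link down.

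First I would rebuild the body $I(D)$ as an abstract cell complex straight from the diagram: the black vertices are the $0$-cells, the signed vertices index the $2$-cells, and the edges record incidences (with the double edges permitted by the last rule of Definition~\ref{def:ag-diagram} recording the failures of regularity described in Remark~\ref{rem:non-regular-divides}). Using that $I(D)$ is connected and, outside the trivial single-node case, simply connected, together with the proper tripartite coloring and the identification of the number of vertices with the Milnor number (Proposition~\ref{pr:milnor-number}), I would recover the attaching data of the regions. From this one can begin to disentangle the segment branches, coming from real local branches, from the circle branches, coming from pairs of complex conjugate branches (Proposition~\ref{pr:divide-properties-1}), and to read off the self- and mutual intersection counts of Proposition~\ref{pr:divide-properties-2-3}.

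The essential step, and the one I expect to be the main obstacle, is passing from this unembedded combinatorial data to the planar geometry of~$D$. As the Remark on non-homeomorphic planar embeddings (following Definition~\ref{def:ag-diagram}) warns, an abstract \AG-diagram can carry inequivalent planar embeddings, and it is not known whether two algebraic divides can even share a diagram. I would therefore not try to reconstruct~$D$ on the nose; instead I would aim to show that the residual ambiguity is immaterial for the link. Concretely, I would follow each branch through its succession of crossings and smoothings, extract from this a sequence of characteristic exponents branch by branch in the spirit of Balke--Kaenders \cite{balke-kaenders} (who settle the circle-free, totally real case), and show that the genuinely new feature---a circle produced by a complex conjugate pair---contributes to \AG$(D)$ in a manner that still determines the Puiseux data and the linking numbers of the associated non-real branches. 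The hope, which is exactly what must be proved, is that the cyclic orders left undetermined by the bare diagram are precisely those that do not change the iterated-torus structure of the resulting algebraic link.

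With the characteristic exponents of all branches and all pairwise intersection multiplicities in hand, the Zariski--Burau classification then yields the complex topological type. I emphasize that Theorem~\ref{th:totally-real-form-exists} does not by itself shortcut this: it guarantees only that the answer lies among totally real types, to which Balke--Kaenders applies, whereas our diagram may well come from a non-totally-real form. The real content beyond the totally real case---and, I expect, the crux of the argument---is precisely the combinatorial bookkeeping for the circle branches.
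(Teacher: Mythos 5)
First, a point of comparison: the paper does not prove Theorem~\ref{th:ag-diagram-determines-topology} at all --- it imports it from \cite{leviant-shustin}, noting only that the totally real case is due to Balke--Kaenders \cite{balke-kaenders} under an additional regularity hypothesis on the cell complex $I(D)$ (cf.\ Remark~\ref{rem:non-regular-divides}). So there is no in-paper argument to measure yours against, and your proposal has to stand on its own.

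As it stands, it does not close. Your overall strategy --- reduce to the Zariski--Burau classification by recovering the Puiseux characteristic exponents of the branches and their pairwise intersection multiplicities from \AG$(D)$ --- is the right target, and your reading of Propositions~\ref{pr:divide-properties-1}--\ref{pr:milnor-number} is accurate. But the entire content of the theorem is concentrated in the step you correctly identify and then defer. The remark following Definition~\ref{def:ag-diagram} records exactly the danger: an abstract \AG-diagram can admit non-homeomorphic planar embeddings and can correspond to two topologically distinct divides, and it is not even known whether this happens for algebraic ones. You write that ``the hope, which is exactly what must be proved, is that the cyclic orders left undetermined by the bare diagram are precisely those that do not change the iterated-torus structure of the resulting algebraic link.'' That sentence restates the theorem rather than proving it. No mechanism is supplied for tracing the branches of the divide through the diagram (deciding which black vertices are self-crossings of a single branch versus crossings of two branches), for extracting the characteristic exponents of an individual branch, or for computing the linking data contributed by a circle component coming from a conjugate pair; and your appeal to \cite{balke-kaenders} covers only the totally real, regular case. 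Until those reconstructions are actually carried out --- which is the bulk of the work in \cite{leviant-shustin} --- the argument is a plan rather than a proof.
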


We emphasize that Theorem~\ref{th:ag-diagram-determines-topology} does not require the knowledge of a
specific planar embedding of the \AG-diagram, cf.\ Remark~\ref{rem:AG-planar-embeddings}. 

In the case of totally real singularities,
Theorem~\ref{th:ag-diagram-determines-topology} 
was proved by L.~Balke and R.~Kaenders~\cite[Theorem~2.5 and Corollary~2.6]{balke-kaenders} 
under an additional assumption concerning the topology of 
the intersections of cell closures in~$I(D)$; 
this is related to the discussion in Remark~\ref{rem:non-regular-divides}.

\newpage

\section{Quivers}
\label{sec:quivers}

\begin{definition}
\label{def:quivers}
A \emph{quiver} is a finite directed graph.
Oriented cycles of length~$1$ or~$2$ are not allowed.
In other words, there must be no loops, 
and all arrows between a given pair of vertices must have the same direction. 
We do not distinguish between quivers (on the same vertex set)
which differ by simultaneous reversal of the direction of all arrows. 
\end{definition}

We will not need a more general notion of quivers with ``frozen'' vertices, just the simple setup described in Definition~\ref{def:quivers}. 

Throughout this paper, we use the standard Dynkin diagram nomenclature, 
along with K.~Saito's notation for the extended affine exceptional types (cf.\ \cite[Section~12]{cats1}), 
to assign names to some of the quivers
appearing in various examples, cf.\ in particular Figure~\ref{fig:divides-quasihom}. 


\begin{definition}
\label{def:quiver-of-divide}
Given a divide~$D$, 
its associated \emph{quiver} $Q(D)$ 
is constructed from the \AG-diagram \AG$(D)$ 
as follows: 
\begin{itemize}[leftmargin=.3in]
\item
first, orient the edges of~\AG$(D)$ using the rule
$\bullet \,\red{\to}\oplus\red{\to}\circleddash\red{\to}\,\bullet\,$;
\item
then remove the marking of the vertices. 
\end{itemize}
Since we consider quivers up to global reversal of arrows,
the choice of signs in the \AG-diagram does not matter. 
\end{definition}

Examples of quivers associated with divides coming from morsifications
can be found in Figure~\ref{fig:E6-morsifications}
(compare with Figure~\ref{fig:E6-ACGZ})
and Figure~\ref{fig:4-lines-quivers} (cf.\ Figure~\ref{fig:4-lines}). 

\begin{figure}[ht]
\begin{center}
\vspace{.2in}
\setlength{\unitlength}{2pt}
\begin{picture}(50,40)(0,0)
\thinlines
\qbezier(0,0)(60,60)(60,20)
\qbezier(60,0)(0,60)(0,20)
\qbezier(60,20)(60,0)(30,0)
\qbezier(0,20)(0,0)(30,0)
\linethickness{1.2pt}

\put(6.2,6){\circle*{2.5}}
\put(53.8,6){\circle*{2.5}}
\put(30,26.5){\circle*{2.5}}

\put(30,6){\circle*{2.5}}
\put(6.2,26.5){\circle*{2.5}}
\put(53.8,26.5){\circle*{2.5}}

\put(9,6){\red{\vector(1,0){18}}}
\put(51,6){\red{\vector(-1,0){18}}}
\put(9,26.5){\red{\vector(1,0){18}}}
\put(51,26.5){\red{\vector(-1,0){18}}}
\put(6.2,24){\red{\vector(0,-1){15.5}}}
\put(30,24){\red{\vector(0,-1){15.5}}}
\put(53.8,24){\red{\vector(0,-1){15.5}}}
\put(28,8){\red{\vector(-8,7){19.5}}}
\put(32,8){\red{\vector(8,7){19.5}}}
\end{picture}
\hspace{1in}
\begin{picture}(50,43)(0,5)
\thinlines
\qbezier(25,0)(60,70)(60,40)
\qbezier(35,0)(0,70)(0,40)
\qbezier(60,40)(60,30)(30,30)
\qbezier(0,40)(0,30)(30,30)

\linethickness{1.2pt}

\put(30,10){\circle*{2.5}}
\put(18.5,30.5){\circle*{2.5}}
\put(41.5,30.5){\circle*{2.5}}

\put(30,22){\circle*{2.5}}
\put(7.5,39.5){\circle*{2.5}}
\put(52.5,39.5){\circle*{2.5}}

\put(9.2,38){\red{\vector(5,-4){7.4}}}
\put(50.8,38){\red{\vector(-5,-4){7.4}}}

\put(32,23.5){\red{\vector(5,4){7.2}}}
\put(28,23.5){\red{\vector(-5,4){7.2}}}
\put(30,19.5){\red{\vector(0,-1){7.2}}}

\end{picture}
\end{center}
\caption{Quivers associated with divides of type~$E_6$.}
\label{fig:E6-morsifications}
\end{figure}


\begin{remark}
While the quiver $Q(D)$ is very closely related to the \AG-diagram \AG$(D)$, some information is lost in the transition from  \AG$(D)$ to~$Q(D)$.
For example, although the \AG-diagrams shown in Figure~\ref{fig:A3-ACGZ}
are different from each other, the corresponding quivers are both isomorphic to the quiver $\bullet\,\red{\to}\bullet\red{\leftarrow}\,\bullet\,$.  
\end{remark}

In the case of algebraic divides,
Theorem~\ref{th:ag-diagram-determines-topology} can be used to establish the following result. 


\begin{theorem}
\label{th:quiver-determines-topology}
Let $Q=Q(D)$ be a quiver constructed from an algebraic divide~$D$ 
corresponding to a real morsification of a real isolated plane curve singularity. 
Then $Q$ uniquely determines the complex topological type of the singularity. 
\end{theorem}

To prove this theorem, we will need the following lemma.

\begin{lemma}
\label{lem:euler-D}
Let $D$ be any divide (not necessarily algebraic).
Denote 
\begin{align*}
\rho &= \text{\rm number of regions in~$D$}, \\
\nu &= \text{\rm number of nodes in~$D$, and}\\
\iota &=\text{\rm number of interval branches in~$D$}.
\end{align*}
Then $\nu=\rho+\iota-1$. 
In particular, $\nu\ge\rho-1$. 
\end{lemma}

\begin{proof}
Let $K(D)=\overline{D\cup I(D)}$, the closure of the union of~$D$ and its body~$I(D)$, see Definition~\ref{def:divide}.
Since $K(D)$ is contractible, its Euler characteristic is equal to~1. 
Let $\varepsilon$ denote the number of $1$-cells in~$K(D)$. 
Then $2\varepsilon=4\nu+2\iota$, so $\varepsilon=2\nu+\iota$. 
Hence 
\[
1=\chi(K(D))=(\nu+2\iota)-(2\nu+\iota)+\rho=\iota-\nu+\rho, 
\]
as desired. 
\end{proof}

\begin{proof}[Proof of Theorem~\ref{th:quiver-determines-topology}]
In light of Theorem~\ref{th:ag-diagram-determines-topology}, 
all we need to do is reconstruct the $\{\bullet, \circledplus,\circleddash\}$-coloring of the vertices 
of the quiver~$Q=Q(D)$. 
More precisely, we aim to reconstruct the coloring up to a global color switch  $\circledplus\leftrightarrow\circleddash$,
which corresponds to changing the sign of the morsified function~$f_t$ (see Section~\ref{sec:ag-diagrams})
and consequently does not affect the topological type. 
As we shall demonstrate, this reconstruction can be accomplished in all but a few exceptional cases;
treating each of those cases separately, we will show that the quiver~$Q$ determines the topology of the singularity. 

The requisite coloring of the vertices of~$Q$ must obey 
the cyclic orientation rule $\bullet \,\red{\to}\oplus\red{\to}\circleddash\red{\to}\,\bullet\,$ 
(or $\bullet \,\red{\to}\circleddash\red{\to}\oplus\red{\to}\,\bullet\,$ after a global reversal of arrows). 
This enables us to determine the coloring of the vertices into three colors $\{1,2,3\}$,
without specifying a bijective identification $\{1,2,3\}\leftrightarrow\{\bullet, \circledplus,\circleddash\}$. 
Namely, assign color~1 to some vertex~$v$, then 
propagate the coloring away from~$v$ following the cyclic rule $1\to 2\to 3\to 1$. 

At this stage, we would like to determine which of the three colors $1,2,3$ is black, 
i.e., corresponds to the~nodes of~$D$; 
the other two colors would correspond to $\circledplus$ and~$\circleddash$ 
(the regions of the divide). 

Let $c_1$, $c_2$, and $c_3$ denote the number of vertices in~$Q$ which have color $1$, $2$, or~$3$,
respectively. 
Without loss of generality, we may assume that $c_1\le c_2\le c_3$. 

\emph{Case~1}: $c_1=0$. That is, at most two colors are present. One of them must correspond to the~nodes. 
We conclude that all regions of~$D$ must have the same color; 
every node is adjacent to at most two regions; 
and the \AG-diagram cannot have cycles. Thus $Q$ is an oriented tree.
If $Q$ has a vertex~$v$ of degree~$\ge3$, then $v$ is not a node, and we are done. 
Otherwise, $Q$~is a chain (with an alternating orientation), so we are dealing with a singularity of type~$A$,
cf.\ Figure~\ref{fig:divides-quasihom}, top row ($b=2$). 

\emph{Case~2}: $c_1\ge 2$. 
The inequality $\nu\ge \rho-1$ (see Lemma~\ref{lem:euler-D}) 
then implies that the number of nodes~$\nu$ is strictly greater than the number of regions of either color,
so we can identify which color is black. 

\emph{Case~3}: $c_1=1$. That is, each color is present, and there is a unique vertex~$v$ of color~1. 
This case splits into two subcases. 

\emph{Case~3A}: $c_1=1$ and $c_3>c_2$.
Then color~3 must be black (otherwise $\nu\le c_2\le c_3-1\le\rho-2$, 
contradicting Lemma~\ref{lem:euler-D}), and we are done. 

\emph{Case~3B}: $c_1=1$ and $c_2=c_3$. 
Let $Q_v$ denote the induced subquiver of~$Q$ obtained by removing~$v$ together with all incident arrows. 
Within each connected component~$Q'$ of~$Q_v$, the vertices connected to~$v$ in~$Q$ form a connected
subquiver~$Q''\subset Q'$ with all degrees~$\le 2$. 
Now, there are two possibilities. 

\emph{Case~3B(i)}: 
Each subquiver~$Q''$ as above is a chain. Then its endpoints must be nodes, and we are done. 

\emph{Case~3B(ii)}: 
$Q_v$ is connected, and $Q''\subset Q'=Q_v$ is a cycle, necessarily with alternating orientation. 
If there is a vertex $u$ in~$Q''$ connected to a vertex not belonging to the ``wheel'' $Q''\cup\{v\}$, then $u$ is a node. 
Otherwise, $Q$ is the wheel quiver on $2m+1$ vertices, for some $m\in\ZZ_{>0}$;
cf.\ Figure~\ref{fig:4-lines-quivers} (lower right), with $m=4$. 
Although in this case, we cannot uniquely determine which color on the perifery of the wheel is black, 
the two choices produce isomorphic \AG-diagrams, so Theorem~\ref{th:ag-diagram-determines-topology} applies. 
\end{proof}

Theorem~\ref{th:quiver-determines-topology} implies 
that any topological invariant of an isolated plane curve singularity is uniquely determined by the quiver of an arbitrary real morsification. While for particular invariants, this may be a very challenging task (cf.\ also Remark~\ref{rem:libgober-williams} below), there are a couple of cases where the answer is relatively easy. 
To discuss them, we need to recall the following standard notion. 

\begin{definition}
Let $Q$ be a quiver with the vertex set~$V$ of size~$n$. The \emph{skew-symmetric matrix associated with~$Q$} is the $n\times n$ matrix $B(Q)=(b_{ij})_{i,j\in V}$ defined~by
\[
b_{ij}=\left\{\begin{array}{cl}
\text{number of arrows $i\to j$ in~$Q$} & \text{if $Q$ contains such arrows;} \\[.1in]
-(\text{number of arrows $j\to i$ in~$Q$}) & \text{if $Q$ contains such arrows;} \\[.1in]
0 &\text{otherwise}. 
\end{array}
\right.
\]
\end{definition}

\begin{proposition}
\label{pr:top-inv-from-B}
Given a real morsification of an isolated plane curve singularity~$(C,z)$, 
let $Q$ be its quiver, and let $B=B(Q)$ the corresponding skew-symmetric matrix. Then: 
\begin{itemize}[leftmargin=.3in]
\item
the Milnor number $n=\mu(C,z)$ is the size of the matrix~$B$; 
\item
the number $r$ of complex local branches is given by 
$r=n-\operatorname{rank}(B)+1$;
\item
the $\delta$-invariant of the singularity is given by
$\delta=n-\frac12 \operatorname{rank}(B)$. 
\end{itemize}
\end{proposition}

\begin{proof}
The first statement follows from Proposition~\ref{pr:milnor-number}.

The $n\times n$ skew-symmetric matrix $B(Q)$ is the intersection matrix of
1-cycles in the Milnor fiber~$M$. The latter is a surface with $r$ holes, where $r$ is the number of complex local branches.
More precisely: given a pair of 1-cycles
$a,b\in H_1(M)$, map one of them, say~$b$, to
the relative group $H_1(M,\partial M)$; then apply
the (non-degenerate) Poincare pairing
$H_1(M) \times H_1(M,\partial M) \to \mathbb{Z}$.
The defect in the rank comes from the fact that
the homomorphism $H_1(M)\to H_1(M,\partial M)$
has $(r-1)$-dimensional kernel, which can be seen from the exact homology
sequence
\[
H_2(M,\partial M)\to H_1(\partial M)\to H_1(M)\to H_1(M,\partial M). 
\] 
Finally, by the Milnor formula~\cite{Milnor}, 
$n=2\delta-r+1$, and the last claim~follows. 
\end{proof}


\newpage

\section{Main conjecture}
\label{sec:main-conjecture}

Whereas by Theorem~\ref{th:quiver-determines-topology}, 
all information about the topology of a complex plane curve singularity  
is encoded in the quiver constructed from its real morsification, this does not yield a satisfactory topological classification of such singularities. 
One reason is that we do not know which quivers (or which divides) can arise from such morsifications---this problem is wide open, and likely hopeless. 
Another, more practical problem has to do with deciding whether two
singularities are isomorphic to each other or not. 
The same singularity will typically have many real forms;  
each of them will have many topologically different morsifications,  
each with its own quiver. 
What do all these quivers have in common? 
For example, Figure~\ref{fig:4-lines-quivers} shows four quivers arising from
morsifications of different real forms of the same singularity,
namely the quasihomogeneous singularity of type~$(4,4)$. 
What features set these four quivers apart from all other quivers
arising from similar constructions? 

To rephrase, how can we tell, looking at two quivers associated
to morsifications of two isolated plane curve singularities, 
whether these singularities are topologically
the same? In light of Theorem~\ref{th:quiver-determines-topology}, 
this should in principle be possible. 

\begin{figure}[ht]
\begin{center}
\vspace{-.1in}
\setlength{\unitlength}{1.7pt}
\begin{picture}(80,80)(15,25)
\thinlines

\put(10,25){\line(2,1){100}}
\put(110,25){\line(-2,1){100}}

\put(13.3,20){\line(2,3){54}}
\put(106.7,20){\line(-2,3){54}}

\linethickness{1.2pt}
\put(40,60){\circle*{2.5}}
\put(60,50){\circle*{2.5}}
\put(80,60){\circle*{2.5}}
\put(20,30){\circle*{2.5}}
\put(100,30){\circle*{2.5}}
\put(60,90){\circle*{2.5}}
\put(60,70){\circle*{2.5}}
\put(35,45){\circle*{2.5}}
\put(85,45){\circle*{2.5}}

\put(64,49.2){\red{\vector(5,-1){18}}}
\put(97,33){\red{\vector(-1,1){10}}}
\put(82,48){\red{\vector(-1,1){18}}}
\put(81,57){\red{\vector(1,-3){3}}}

\put(56,49.2){\red{\vector(-5,-1){18}}}
\put(23,33){\red{\vector(1,1){10}}}
\put(38,48){\red{\vector(1,1){18}}}
\put(39,57){\red{\vector(-1,-3){3}}}

\put(60,73){\red{\vector(0,1){14}}}
\put(60,67){\red{\vector(0,-1){14}}}

\put(63,68.5){\red{\vector(2,-1){15}}}
\put(57,68.5){\red{\vector(-2,-1){15}}}

\end{picture}
\qquad\qquad
\begin{picture}(80,80)(15,25)
\thinlines

\put(20,20){\line(1,1){80}}
\put(20,100){\line(1,-1){80}}

\put(60,60){\circle{56.5}}

\linethickness{1.2pt}
\put(40,40){\circle*{2.5}}
\put(40,80){\circle*{2.5}}
\put(80,40){\circle*{2.5}}
\put(80,80){\circle*{2.5}}
\put(60,60){\circle*{2.5}}

\put(40,60){\circle*{2.5}}
\put(60,80){\circle*{2.5}}
\put(60,40){\circle*{2.5}}
\put(80,60){\circle*{2.5}}

\put(43,40){\red{\vector(1,0){14}}}
\put(43,60){\red{\vector(1,0){14}}}
\put(43,80){\red{\vector(1,0){14}}}
\put(77,40){\red{\vector(-1,0){14}}}
\put(77,60){\red{\vector(-1,0){14}}}
\put(77,80){\red{\vector(-1,0){14}}}
\put(40,57){\red{\vector(0,-1){14}}}
\put(60,57){\red{\vector(0,-1){14}}}
\put(80,57){\red{\vector(0,-1){14}}}
\put(40,63){\red{\vector(0,1){14}}}
\put(60,63){\red{\vector(0,1){14}}}
\put(80,63){\red{\vector(0,1){14}}}

\put(58,42){\red{\vector(-1,1){16}}}
\put(58,78){\red{\vector(-1,-1){16}}}
\put(62,42){\red{\vector(1,1){16}}}
\put(62,78){\red{\vector(1,-1){16}}}

\end{picture}
\\[.3in]
\begin{picture}(80,100)(15,30)
\thinlines
\qbezier(20,60)(20,82)(60,82)
\qbezier(20,60)(20,38)(60,38)
\qbezier(100,60)(100,82)(60,82)
\qbezier(100,60)(100,38)(60,38)

\put(37.5,30){\line(1,4){25}}
\put(82.5,30){\line(-1,4){25}}

\linethickness{1.2pt}
\put(50.4,81.6){\circle*{2.5}}
\put(69.6,81.6){\circle*{2.5}}
\put(40,40){\circle*{2.5}}
\put(80,40){\circle*{2.5}}
\put(60,120){\circle*{2.5}}
\put(60,60){\circle*{2.5}}
\put(30,60){\circle*{2.5}}
\put(60,90){\circle*{2.5}}
\put(90,60){\circle*{2.5}}

\put(33,60){\red{\vector(1,0){24}}}
\put(87,60){\red{\vector(-1,0){24}}}
\put(60,87){\red{\vector(0,-1){24}}}
\put(60,93){\red{\vector(0,1){22}}}

\put(61,62.5){\red{\vector(2,5){7}}}
\put(59,62.5){\red{\vector(-2,5){7}}}

\put(62,58){\red{\vector(1,-1){16}}}

\put(58,58){\red{\vector(-1,-1){16}}}

\put(38,44){\red{\vector(-1,2){6.5}}}
\put(82,44){\red{\vector(1,2){6.5}}}

\put(52,83){\red{\vector(1,1){6}}}
\put(68,83){\red{\vector(-1,1){6}}}
\put(48,79){\red{\vector(-1,-1){16.5}}}
\put(72,79){\red{\vector(1,-1){16.5}}}

\end{picture}
\qquad\qquad
\begin{picture}(80,100)(15,10)
\thinlines
\qbezier(20,60)(20,82)(60,82)
\qbezier(20,60)(20,38)(60,38)
\qbezier(100,60)(100,82)(60,82)
\qbezier(100,60)(100,38)(60,38)

\qbezier(60,20)(38,20)(38,60)
\qbezier(60,100)(38,100)(38,60)
\qbezier(60,20)(82,20)(82,60)
\qbezier(60,100)(82,100)(82,60)

\linethickness{1.2pt}

\put(40,40){\circle*{2.5}}
\put(40,80){\circle*{2.5}}
\put(80,40){\circle*{2.5}}
\put(80,80){\circle*{2.5}}
\put(60,60){\circle*{2.5}}
\put(30,60){\circle*{2.5}}
\put(60,90){\circle*{2.5}}
\put(60,30){\circle*{2.5}}
\put(90,60){\circle*{2.5}}

\put(33,60){\red{\vector(1,0){24}}}
\put(87,60){\red{\vector(-1,0){24}}}
\put(60,33){\red{\vector(0,1){24}}}
\put(60,87){\red{\vector(0,-1){24}}}

\put(62,62){\red{\vector(1,1){16}}}
\put(62,58){\red{\vector(1,-1){16}}}
\put(58,62){\red{\vector(-1,1){16}}}
\put(58,58){\red{\vector(-1,-1){16}}}

\put(38,76){\red{\vector(-1,-2){6.5}}}
\put(38,44){\red{\vector(-1,2){6.5}}}
\put(82,76){\red{\vector(1,-2){6.5}}}
\put(82,44){\red{\vector(1,2){6.5}}}
\put(76,38){\red{\vector(-2,-1){13}}}
\put(44,38){\red{\vector(2,-1){13}}}
\put(76,82){\red{\vector(-2,1){13}}}
\put(44,82){\red{\vector(2,1){13}}}
\end{picture}
\vspace{-.1in}
\end{center}
\caption{
The quivers associated with
  morsifications from Figure~\ref{fig:4-lines}.}
\label{fig:4-lines-quivers}
\end{figure}

The conjectural answer to the last question 
(see Conjecture~\ref{conj:morsif=mut} below) 
comes from the concept of quiver mutation, which we shall now recall.
While quiver mutations play a fundamental role in the theory of cluster algebras, 
we will not rely on any results from this theory,
such as for example the Laurent Phenomenon~\cite{ca1} or the finite type classification~\cite{ca2}. 
We~refer the interested reader to~\cite{fwz, l.williams} for further details, examples, and motivation.

\begin{definition}
\label{def:mutations}
Given a vertex~$z$ in a quiver~$Q$, 
the \emph{quiver mutation} at~$z$ is a transformation of~$Q$ into a new quiver 
$Q'=\mu_z(Q)$ constructed in three steps: \\
\textbf{1.}
For each 
path $x\!\to\! z\!\to\! y$ of length~2 
passing through~$z$, introduce a new edge~$x\!\to\! y$.\\
\textbf{2.}
Reverse the direction of all edges incident
to~$z$.\\
\textbf{3.}
Remove oriented 2-cycles, one by one.

\end{definition}

\begin{figure}[ht]
\begin{center}
\setlength{\unitlength}{2pt} 
\begin{picture}(20,30)(0,-10) 

\put( 0,0){\circle*{3}}
\put(20,0){\circle*{3}}

\put(0,20){\circle*{3}}
\put(20,20){\makebox(0,0){$z$}}

\put(10,-7){\makebox(0,0){$Q$}}

\thicklines 

\put(16,0){\red{\vector(-1,0){12}}}
\put(16,20){\red{\vector(-1,0){12}}}
\put(20,16){\red{\vector(0,-1){12}}}
\put(0,4){\red{\vector(0,1){12}}}
\put(3,3){\red{\vector(1,1){14}}}

\end{picture}
\begin{picture}(30,30)(0,-10) 
\put(15,-5){\makebox(0,0){$\stackrel{\displaystyle\mu_z}{\longleftrightarrow}$}}
\end{picture}
\begin{picture}(20,30)(0,-10) 

\put( 0,0){\circle*{3}}
\put(20,0){\circle*{3}}
\put(0,20){\circle*{3}}
\put(20,20){\makebox(0,0){$z$}}
\put(10,-7){\makebox(0,0){$Q'$}}

\thicklines 

\put(4,20){\red{\vector(1,0){12}}}
\put(-1.3,4){\red{\vector(0,1){12}}}
\put(1.3,4){\red{\vector(0,1){12}}}
\put(20,4){\red{\vector(0,1){12}}}
\put(17,17){\red{\vector(-1,-1){14}}}

\end{picture}
\vspace{-.2in}
\end{center}
\caption{Two quivers related by a quiver mutation at the vertex~$z$.}
\label{fig:quiver-mutation}
\end{figure}

\vspace{-.2in}

\begin{definition}
Two quivers $Q$ and~$Q'$ are called \emph{mutation equivalent} 
if $Q$ can be transformed into a quiver isomorphic to~$Q'$ 
by a sequence of mutations. 
It is easy to see that quiver mutation is involutive (i.e., $\mu_{z}(\mu_z(Q))=Q$),  
and consequently mutation equivalence is indeed an equivalence relation. 
\end{definition}

\begin{example}
The two quivers in Figure~\ref{fig:E6-morsifications}
are mutation equivalent to each other. 
This is an instance of a general phenomenon discussed in 
Section~\ref{sec:yang-baxter-transformations} below:
divides related via triangle moves
have mutation equivalent quivers. 
\end{example} 

\begin{remark}
\label{rem:mut-equivalence-difficult}
The problem of deciding whether two given quivers are mutation equivalent or not
is notoriously difficult. 
Furthermore, there is a dearth of known invariants of quiver mutation,
even though experimental evidence strongly suggests that many independent 
invariants must exist. 
\end{remark}


\begin{conjecture}[\underbar{Main conjecture}]
\label{conj:morsif=mut}
Given two real morsifications of real isolated plane curve
singularities, the following are equivalent:
\begin{itemize}[leftmargin=.3in]
\item
the two singularities 
have the same complex topological type; 
\item
the quivers associated with the two morsifications are
mutation equivalent. 
\end{itemize}
\end{conjecture}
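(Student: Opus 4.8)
The plan is to prove the two implications separately, in each case routing the argument through the intermediate languages of divides, positive braids, and plabic graphs. The natural starting point on the topological side is A'Campo's theorem (recalled in Section~\ref{sec:links-of-divides}): two real singularities are topologically equivalent over $\CC$ if and only if the links associated with the divides of their morsifications are isotopic. This converts the first bullet of Conjecture~\ref{conj:morsif=mut} into a statement about isotopy of links of divides, eliminating any explicit reference to the ambient singularity. I would then reduce to the case of \emph{scannable} divides: under assumption~(i) (Conjecture~\ref{conj:alg-is-malleable}, malleability) a sequence of Yang-Baxter moves brings each divide into scannable form, and since these moves both preserve the A'Campo link (Couture--Perron) and can be emulated by local moves on the associated plabic graphs, neither side of the desired equivalence is disturbed by the reduction. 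For a scannable divide, the Couture--Perron palindromic rule produces an explicit positive braid $\beta$ whose closure is the A'Campo link.

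For the forward direction (topological equivalence $\Rightarrow$ mutation equivalence), I would argue as follows. A'Campo's theorem gives isotopic links, and assumption~(ii) (Conjecture~\ref{conj:positive-isotopy}) upgrades this to a \emph{positive isotopy} of the corresponding Couture--Perron braids. The key combinatorial step is to attach to each scannable divide a distinguished plabic graph, the \emph{plabic fence}, and to show that positive braid isotopies of $\beta$ correspond precisely to sequences of local moves on the fence. Combined with assumption~(iii) (Conjecture~\ref{conj:plabic-vs-quivers}), which replaces mutation equivalence of the quivers $Q(D)$ by move equivalence of plabic graphs, this chain of implications yields mutation equivalence of the two quivers.

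For the reverse direction (mutation equivalence $\Rightarrow$ topological equivalence), the extra ingredient is an edge-orientation theory for plabic graphs. To any plabic graph carrying an \emph{admissible orientation} I would attach, combinatorially, a link; the main lemma to establish is that a local move transforms the admissible orientation in a canonical way and preserves the isotopy class of this link. Starting from mutation-equivalent quivers, Conjecture~\ref{conj:plabic-vs-quivers} provides a chain of local moves connecting the two plabic fences; invariance of the link along this chain forces the two A'Campo links to be isotopic, and A'Campo's theorem then returns topological equivalence of the underlying singularities.

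The hard part will be the three assumptions themselves, which is why the honest outcome is a \emph{conditional} proof of both directions. I expect assumption~(i), the existence of a Yang-Baxter path to scannable form, to be the deepest obstacle: it is a Reidemeister-III connectivity statement about the class of \emph{algebraic} divides, a class for which we possess no usable intrinsic characterization (cf.\ Remark~\ref{rem:which-divides-come-from-morsifications}). Assumptions~(ii) and~(iii) are comparably delicate --- the former conflates link isotopy with a restricted subclass of positive isotopies, and the latter is a purely cluster-theoretic conjecture for which, as noted in Remark~\ref{rem:mut-equivalence-difficult}, almost no mutation invariants are currently available to test it. Accordingly, I would present the two directions as theorems \emph{modulo} (i)--(iii), isolating these three points as exactly where new input is required.
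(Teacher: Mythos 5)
Your outline reproduces the paper's own strategy essentially verbatim: the statement is a conjecture which the paper, like you, establishes only conditionally (Theorems~\ref{th:malleable+positive-isotopic} and~\ref{th:malleable+move-equivalent}), via the same chain of A'Campo links, Yang--Baxter reduction to scannable divides, the Couture--Perron positive braid, plabic fences, and admissible orientations with their associated links. The one slight imprecision is that in the forward direction you invoke Conjecture~\ref{conj:plabic-vs-quivers}, whereas only its unconditional half (Proposition~\ref{pr:plabic-vs-quivers}: move equivalence implies mutation equivalence) is needed there; Shapiro's conjecture is genuinely required only for the reverse direction.
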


To rephrase, Conjecture~\ref{conj:morsif=mut} asserts that 
isolated plane curve singularities are topologically 
classified by the mutation classes of associated quivers. 
Put another way:
\begin{itemize}[leftmargin=.3in]
\item
different morsifications of (different real forms of) 
the same complex plane curve singularity have mutation equivalent quivers; 
\item
morsifications of (real forms of) topologically different complex plane curve 
singularities have quivers 
which are not mutation equivalent to each~other. 
\end{itemize}

Figure~\ref{fig:main-conjecture} illustrates 
the essence of Conjecture~\ref{conj:morsif=mut}, 
and the relationships between its various ingredients. 
 
\begin{figure}
\begin{center}
\setlength{\unitlength}{1pt}
\begin{picture}(180,490)(0,0)
\qbezier(-32,90)(-80,90)(-80,256)
\qbezier(-80,256)(-80,422)(-46,422)
\put(-36,90){\vector(1,0){5}}
\put(-48,422){\vector(1,0){5}}
\put(-95,287){\rotatebox[origin=r]{90}{Conjecture~\ref{conj:morsif=mut}}}
\put(-60,240){
\begin{tabular}{|c|c|}
\hline
& \\[-.13in]
general concept & illustration \\[.05in]
\hline
& \\[-.1in]
\quad complex singularity \quad\quad & 
\begin{tabular}{c}
four smooth branches\\
intersecting transversally\\
at the point $(0,0)\in\CC^2$
\end{tabular}
\\[-.04in]
$\twoheaduparrow$ & \\[.04in]
real singularity & \begin{tabular}{c}
$x^4-y^4=0$ \\
(two real branches and two \\
complex conjugate branches)
\end{tabular}
\\[-.04in]
$\uparrow$ & \\[.04in]
morsification & \begin{tabular}{c}
\ \\
$(x^2-y^2)(x^2+y^2-t^2)=0$\\
{\ }
\end{tabular}
 \\[-.01in]
$\downarrow$ & \\[.01in]
\setlength{\unitlength}{0.3pt}
\begin{picture}(120,100)(0,20)
\put(60,60){\makebox(0,0){divide}} 
\end{picture}
& \setlength{\unitlength}{0.3pt}
\begin{picture}(120,100)(0,20)
\linethickness{1pt}
\put(20,20){\line(1,1){80}}
\put(20,100){\line(1,-1){80}}
\put(60,60){\circle{56.5}}
\thinlines
\put(60,60){\circle{113}}
\put(60,60){\circle{103}}
\end{picture}\\[.01in]
$\downarrow$ & \\[.09in]
\setlength{\unitlength}{1pt}
\begin{picture}(40,40)(40,40)
\put(60,60){\makebox(0,0){\AG-diagram}} 
\end{picture}
& \setlength{\unitlength}{1pt}
\begin{picture}(40,40)(40,40)
\thinlines


\linethickness{1pt}
\put(40,40){\circle*{2.5}}
\put(40,80){\circle*{2.5}}
\put(80,40){\circle*{2.5}}
\put(80,80){\circle*{2.5}}
\put(60,60){\circle*{2.5}}

\put(40,60){\makebox(0,0){{$\scriptstyle\circleddash$}}}
\put(60,80){\makebox(0,0){{$\scriptstyle\circledplus$}}}
\put(60,40){\makebox(0,0){{$\scriptstyle\circledplus$}}}
\put(80,60){\makebox(0,0){{$\scriptstyle\circleddash$}}}

\put(43,40){\red{\line(1,0){13}}}
\put(44,60){\red{\line(1,0){13}}}
\put(43,80){\red{\line(1,0){13}}}
\put(77,40){\red{\line(-1,0){13}}}
\put(76,60){\red{\line(-1,0){13}}}
\put(77,80){\red{\line(-1,0){13}}}
\put(40,56){\red{\line(0,-1){13}}}
\put(60,57){\red{\line(0,-1){13}}}
\put(80,56){\red{\line(0,-1){13}}}
\put(40,64){\red{\line(0,1){13}}}
\put(60,63){\red{\line(0,1){13}}}
\put(80,64){\red{\line(0,1){13}}}

\put(57,43){\red{\line(-1,1){14}}}
\put(57,77){\red{\line(-1,-1){14}}}
\put(63,43){\red{\line(1,1){14}}}
\put(63,77){\red{\line(1,-1){14}}}

\end{picture}\\[.01in]
$\downarrow$ & \\[.07in]
\setlength{\unitlength}{1pt}
\begin{picture}(40,40)(40,40)
\put(60,60){\makebox(0,0){quiver}} 
\end{picture} & \setlength{\unitlength}{1pt}
\begin{picture}(40,40)(40,40)
\thinlines


\linethickness{1pt}
\put(40,40){\circle*{2.5}}
\put(40,80){\circle*{2.5}}
\put(80,40){\circle*{2.5}}
\put(80,80){\circle*{2.5}}
\put(60,60){\circle*{2.5}}

\put(40,60){\circle*{2.5}}
\put(60,80){\circle*{2.5}}
\put(60,40){\circle*{2.5}}
\put(80,60){\circle*{2.5}}

\put(43,40){\red{\vector(1,0){14}}}
\put(43,60){\red{\vector(1,0){14}}}
\put(43,80){\red{\vector(1,0){14}}}
\put(77,40){\red{\vector(-1,0){14}}}
\put(77,60){\red{\vector(-1,0){14}}}
\put(77,80){\red{\vector(-1,0){14}}}
\put(40,57){\red{\vector(0,-1){14}}}
\put(60,57){\red{\vector(0,-1){14}}}
\put(80,57){\red{\vector(0,-1){14}}}
\put(40,63){\red{\vector(0,1){14}}}
\put(60,63){\red{\vector(0,1){14}}}
\put(80,63){\red{\vector(0,1){14}}}

\put(58,42){\red{\vector(-1,1){16}}}
\put(58,78){\red{\vector(-1,-1){16}}}
\put(62,42){\red{\vector(1,1){16}}}
\put(62,78){\red{\vector(1,-1){16}}}

\end{picture}\\[-.01in]
$\downarrow$ & \\[.01in]
mutation class & \begin{tabular}{c}
\ \\
$
E_7^{(1,1)}$ \\
{\ }
\end{tabular}
\\[-.01in]
$\downarrow$ & \\[.01in]
cluster algebra& \begin{tabular}{c}
\ \\
Grassmannian $\operatorname{Gr}_{4,8}(\CC)$ \\
{\ }
\end{tabular}
\\
\hline
\end{tabular}
}
\end{picture}
\end{center}
\caption{\emph{Unpacking Conjecture~\ref{conj:morsif=mut}.} 
A~complex plane curve singularity has~at least one real form.
According to Conjecture~\ref{conj:leviant-shustin}, 
each of these real singularities has a real \emph{morsification}.
A~morsification defines a \emph{divide}. 
A~divide has the associated \AG-diagram. 
The~\AG-diagram produces a \emph{quiver}.
The quiver determines a mutation equivalence~class
(which can in turn be used to define a cluster algebra or category).  
Conjecture~\ref{conj:morsif=mut} 
asserts that this mutation class and 
the topology of the original complex singularity uniquely determine each other.
}
\label{fig:main-conjecture}
\end{figure}

\pagebreak[3]

\begin{example}
\label{example:4-lines-quivers}
Recall that Figure~\ref{fig:4-lines-quivers} shows the quivers associated with four
different morsifications of the same complex singularity, the quasihomogeneous singularity of type~$(4,4)$, cf.\ Figure~\ref{fig:4-lines}.
It is not hard to verify (with the help of one of the widely available software tools
for quiver mutations) that these four quivers are mutation equivalent to each other,
in agreement with Conjecture~\ref{conj:morsif=mut}. 
Moreover it can be shown that any real morsification of a complex singularity 
that is topologically inequivalent to the one referenced above gives rise to a quiver which 
is not mutation equivalent to these four quivers---again agreeing with Conjecture~\ref{conj:morsif=mut}. 
\end{example}

\begin{example}
For each cell $(a,b)$ of the table in Figure~\ref{fig:divides-quasihom},
the quivers associated to the divides shown therein 
are mutation equivalent to each other. 
Moreover the quivers appearing in different cells of the table are not mutation equivalent to each other. 
\end{example}

\begin{remark}
Conjecture~\ref{conj:morsif=mut} only applies to quivers which are already known
to have come from morsifications. 
We note that for a typical singularity,  the corresponding mutation equivalence class contains infinitely many pairwise
non-isomorphic quivers; among them, the quivers associated with real morsifications of a given singularity form a finite subset. 
Even if the mutation equivalence class is finite, most quivers appearing in it do not arise from morsifications. For example, the mutation class of a quiver of type~$A_n$ 
(i.e., an arbitrary orientation of a Dynkin diagram of type~$A_n$) 
contains exponentially many (as a function of~$n$) pairwise nonisomorphic quivers, see, e.g.,~\cite{torkildsen}; 
among them, at most two come from morsifications of a type~$A_n$ singularity, 
cf.\  Proposition~\ref{pr:ade-morsifications} below or the top ($b=2$) row of Figure~\ref{fig:divides-quasihom}. 
\end{remark}

\begin{remark}
Conjecture~\ref{conj:morsif=mut}, once established, would imply that 
any topological invariant~$\alpha=\alpha(C,z)$ of an isolated plane curve singularity $(C,z)$ is uniquely determined by the mutation equivalence class of the quiver~$Q$ of some (equivalently, any) real morsification of the singularity~$(C,z)$. 
Viewing $\alpha$ as a function of~$Q$, we conclude that this function must take the same value
at all quivers in a given mutation equivalence class
which are known to come from a morsification. 
Put differently, $\alpha(Q)$ should be (a restriction of) a mutation-invariant function of quivers. 
It would be very interesting to understand the combinatorial meaning of $\alpha(Q)$
for various well-studied topological invariants~$\alpha$. 

To illustrate, let us recall that Proposition~\ref{pr:top-inv-from-B} provided direct descriptions 
of three topological invariants of a singularity in terms of a quiver~$Q$ constructed from its real morsification. 
These three invariants (the Milnor number, the number of complex local branches,
and the $\delta$-invariant) are all expressed as functions of the number of vertices in~$Q$
(which is obviously a mutation invariant) and the rank of~$B(Q)$, the skew-symmetric matrix 
associated with the quiver~$Q$. 
It is well known (see \cite[Lemma 3.2]{ca3}) that the rank of $B(Q)$ is invariant under mutations of~$Q$, so the formulas of Proposition~\ref{pr:top-inv-from-B} are mutation invariant, as expected. 
\end{remark}

\newpage

\section{Plabic graphs 
}
\label{sec:plabic-graphs}

Plabic graphs were introduced by A.~Postnikov
\cite[Section~12]{postnikov}, who used them to describe 
parametrizations of cells in totally nonnegative Grassmannians. 
We~review the basic notions of this construction below, 
adapting it for our current purposes. 
The differences between our setting and Postnikov's 
are discussed in Remark~\ref{rem:us-vs-postnikov}. 

\begin{definition}
\label{def:plabic}
A finite connected planar graph~$P$ properly embedded
into a disk~$\Disk$ (as a $1$-dimensional cell complex) is called 
a \emph{plabic} 
\emph{graph} if 
\begin{itemize}[leftmargin=.3in]
\item
each vertex of~$P$ is colored in one of the two colors, either black or white; 
the~coloring does not have to be proper; 
\item
each vertex of $P$ lying in the interior of~$\Disk$ is trivalent 
(i.e., has degree~$3$);
\item 
each vertex of $P$ lying on the boundary~$\partial\Disk$ is univalent 
(i.e., has degree~$1$);
\item 
each internal face of~$P$ (i.e., a face not adjacent to~$\partial\Disk$)
is separated from at least one other internal face by an edge whose endpoints 
have different colors. (This condition does not apply if $P$ has 
a single internal face.)
\end{itemize}
We view plabic graphs up to isotopy, and up to \emph{color reversal},
which switches the color of all vertices in~$P$. 
Examples are shown in Figure~\ref{fig:plabic-graphs}. 
\end{definition}


\begin{figure}[ht]
\begin{center}
\setlength{\unitlength}{1pt}
\begin{picture}(60,70)(0,-5)
\thinlines
\put(30,30){\circle{70.5}}
\put(30,30){\circle{73.5}}
\put(0,10){\circle*{5}}\put(0,50){\circle*{5}}\put(60,10){\circle*{5}}\put(60,50){\circle*{5}}
\thicklines
\put(0,10){\line(1,1){8.5}}
\put(60,50){\line(-1,-1){8.5}}
\put(0,50){\line(1,-1){8.5}}
\put(60,10){\line(-1,1){8.5}}
\put(10,22.5){\line(0,1){15}}
\put(30,22.5){\line(0,1){15}}
\put(50,22.5){\line(0,1){15}}
\put(12.5,20){\line(1,0){15}}
\put(12.5,40){\line(1,0){15}}
\put(32.5,20){\line(1,0){15}}
\put(32.5,40){\line(1,0){15}}

\put(10,20){\circle{5}}
\put(10,40){\circle*{5}}
\put(30,20){\circle*{5}}
\put(30,40){\circle{5}}
\put(50,20){\circle{5}}
\put(50,40){\circle*{5}}
\end{picture}
\begin{picture}(40,40)(0,0)
\end{picture}
\begin{picture}(60,70)(0,-5)
\thinlines
\put(30,30){\circle{70.5}}
\put(30,30){\circle{73.5}}
\put(0,10){\circle*{5}}\put(0,50){\circle*{5}}\put(60,10){\circle*{5}}\put(60,50){\circle*{5}}
\thicklines
\put(0,10){\line(1,1){8.5}}
\put(60,50){\line(-1,-1){8.5}}
\put(0,50){\line(1,-1){8.5}}
\put(60,10){\line(-1,1){8.5}}
\put(10,22.5){\line(0,1){15}}
\put(30,22.5){\line(0,1){15}}
\put(50,22.5){\line(0,1){15}}
\put(12.5,20){\line(1,0){15}}
\put(12.5,40){\line(1,0){15}}
\put(32.5,20){\line(1,0){15}}
\put(32.5,40){\line(1,0){15}}
\put(10,20){\circle*{5}}
\put(10,40){\circle{5}}
\put(30,20){\circle{5}}
\put(30,40){\circle*{5}}
\put(50,20){\circle{5}}
\put(50,40){\circle*{5}}
\end{picture}
\begin{picture}(40,40)(0,0)
\end{picture}
\begin{picture}(60,70)(0,-5)
\thinlines
\put(30,30){\circle{70.5}}
\put(30,30){\circle{73.5}}
\put(0,10){\circle*{5}}\put(0,50){\circle*{5}}\put(60,10){\circle*{5}}\put(60,50){\circle*{5}}
\thicklines
\put(0,10){\line(1,1){8.5}}
\put(60,50){\line(-4,-1){17.5}}
\put(0,50){\line(1,-1){8.5}}
\put(60,10){\line(-1,1){8.5}}
\put(10,22.5){\line(0,1){15}}
\put(40,32.5){\line(0,1){10}}
\put(12.5,41){\line(6,1){25}}
\put(12.5,20){\line(1,0){15}}
\put(32.5,20){\line(1,0){15}}
\qbezier(37.5,30)(30,27.5)(30,22.5)
\qbezier(42.5,30)(50,27.5)(50,22.5)
\put(10,20){\circle*{5}}
\put(10,40){\circle{5}}
\put(30,20){\circle{5}}
\put(50,20){\circle{5}}
\put(40,45){\circle*{5}}
\put(40,30){\circle*{5}}
\end{picture}
\begin{picture}(40,40)(0,0)
\end{picture}
\begin{picture}(60,70)(0,-5)
\thinlines
\put(30,30){\circle{70.5}}
\put(30,30){\circle{73.5}}
\put(0,10){\circle*{5}}\put(0,50){\circle*{5}}\put(60,50){\circle*{5}}
\thicklines
\put(0,10){\line(1,1){8.5}}
\put(60,50){\line(-4,-1){17.5}}
\put(0,50){\line(1,-1){8.5}}
\put(10,22.5){\line(0,1){15}}
\put(40,32.5){\line(0,1){10}}
\put(12.5,41){\line(6,1){25}}
\put(12.5,20){\line(1,0){15}}
\qbezier(40,27.5)(40,20)(32.5,20)
\qbezier(37.5,30)(30,27.5)(30,22.5)
\put(10,20){\circle*{5}}
\put(10,40){\circle{5}}
\put(30,20){\circle{5}}
\put(40,45){\circle*{5}}
\put(40,30){\circle*{5}}
\end{picture}
\end{center}
\caption{Plabic graphs. The first two graphs are related by a
  square~move;~the second and the third by a flip move; the third and the fourth by tail removal.}
\label{fig:plabic-graphs}
\end{figure}

There are several types of transformations of plabic graphs 
which play an important role in this theory. 
First, there are three types of local moves :  

\begin{definition}
\label{def:moves}
Local \emph{moves} on plabic graphs are defined as follows,  
\hbox{cf.\ Figure~\ref{fig:plabic-moves}:} 
\begin{itemize}[leftmargin=.3in]
\item The \emph{flip} move replaces two adjacent trivalent vertices of
  the same color with two other vertices of the same color, connected
  in a different way.
\item The \emph{square} move switches the colors on a 4-cycle of
  vertices of alternating colors, subject to
  Restriction~\ref{restr:square-move-technical} below. 
  (This restriction is rarely relevant, so a casual reader may skip this technical detail.) 
\item 
The \emph{tail removal} move removes an edge~$e$ (a~``tail'')
  connected at one end to~$\partial\Disk$ and at the other end to a
  trivalent vertex~$v$. After removing~$e$, we also remove~$v$, and merge
  the two remaining edges which were incident to~$v$. The reverse move, called \emph{tail
    attachment}, inserts a vertex~$v$ (of any color)
  into an edge bordering a region~$R$ adjacent to~$\partial\Disk$, and adds a new edge~$e$ connecting $v$ across~$R$ to a vertex (of any color) 
  lying on~$\partial\Disk$.
\end{itemize}
Two plabic graphs related via a sequence of local moves are called
\emph{move equivalent}. 
See Figure~\ref{fig:plabic-graphs}. 
\end{definition}

\begin{figure}[ht]
\begin{tabular}{ll}
\setlength{\unitlength}{0.9pt}
\begin{picture}(60,40)(0,0)
\put(20,20){\makebox(0,0){(a)}}
\end{picture}
&
\setlength{\unitlength}{0.9pt}
\begin{picture}(40,40)(0,0)
\thicklines
\put(0,0){\line(2,1){20}}
\put(20,30){\line(2,1){20}}
\put(0,40){\line(2,-1){20}}
\put(20,10){\line(2,-1){20}}
\put(20,10){\line(0,1){20}}
\put(20,10){\circle*{5}}
\put(20,30){\circle*{5}}
\end{picture}
\begin{picture}(40,40)(0,0)
\put(20,20){\makebox(0,0){$\longleftrightarrow$}}
\end{picture}
\begin{picture}(40,40)(0,0)
\thicklines
\put(0,0){\line(1,2){10}}
\put(30,20){\line(1,2){10}}
\put(40,0){\line(-1,2){10}}
\put(10,20){\line(-1,2){10}}
\put(10,20){\line(1,0){20}}
\put(10,20){\circle*{5}}
\put(30,20){\circle*{5}}
\end{picture}
\begin{picture}(60,40)(0,0)
\end{picture}
\begin{picture}(40,40)(0,0)
\thicklines
\put(0,0){\line(2,1){18}}
\put(40,40){\line(-2,-1){18}}
\put(0,40){\line(2,-1){18}}
\put(40,0){\line(-2,1){18}}
\put(20,12.5){\line(0,1){15}}
\put(20,10){\circle{5}}
\put(20,30){\circle{5}}
\end{picture}
\begin{picture}(40,40)(0,0)
\thicklines
\put(20,20){\makebox(0,0){$\longleftrightarrow$}}
\end{picture}
\begin{picture}(40,40)(0,0)
\thicklines
\put(0,0){\line(1,2){9}}
\put(40,40){\line(-1,-2){9}}
\put(40,0){\line(-1,2){9}}
\put(0,40){\line(1,-2){9}}
\put(12.5,20){\line(1,0){15}}
\put(10,20){\circle{5}}
\put(30,20){\circle{5}}
\end{picture}
\\[.2in]
\setlength{\unitlength}{0.9pt}
\begin{picture}(60,40)(0,0)
\put(20,20){\makebox(0,0){(b)}}
\end{picture}
&
\setlength{\unitlength}{0.9pt}
\begin{picture}(40,40)(0,0)
\thicklines
\put(0,0){\line(1,1){8.5}}
\put(40,40){\line(-1,-1){8.5}}
\put(0,40){\line(1,-1){8.5}}
\put(40,0){\line(-1,1){8.5}}
\put(10,12.5){\line(0,1){15}}
\put(30,12.5){\line(0,1){15}}
\put(12.5,10){\line(1,0){15}}
\put(12.5,30){\line(1,0){15}}
\put(10,10){\circle*{5}}
\put(10,30){\circle{5}}
\put(30,10){\circle{5}}
\put(30,30){\circle*{5}}
\end{picture}
\begin{picture}(40,40)(0,0)
\thicklines
\put(20,20){\makebox(0,0){$\longleftrightarrow$}}
\end{picture}
\begin{picture}(40,40)(0,0)
\thicklines
\put(0,0){\line(1,1){8.5}}
\put(40,40){\line(-1,-1){8.5}}
\put(0,40){\line(1,-1){8.5}}
\put(40,0){\line(-1,1){8.5}}
\put(10,12.5){\line(0,1){15}}
\put(30,12.5){\line(0,1){15}}
\put(12.5,10){\line(1,0){15}}
\put(12.5,30){\line(1,0){15}}
\put(10,10){\circle{5}}
\put(10,30){\circle*{5}}
\put(30,10){\circle*{5}}
\put(30,30){\circle{5}}
\end{picture}
\\[0in]
\setlength{\unitlength}{0.9pt}
\begin{picture}(60,40)(0,0)
\put(20,20){\makebox(0,0){(c)}}
\end{picture}
&
\setlength{\unitlength}{0.9pt}
\begin{picture}(40,30)(0,0)
\thinlines
\put(0,6){\line(1,0){18}}
\put(0,4){\line(1,0){18}}
\put(40,6){\line(-1,0){18}}
\put(40,4){\line(-1,0){18}}
\thicklines

\put(0,25){\line(1,0){40}}
\put(20,25){\line(0,-1){17.5}}
\put(20,25){\circle*{5}}
\put(20,5){\circle{5}}
\put(-12,5){\makebox(0,0){$\partial\Disk$}}
\end{picture}
\begin{picture}(40,30)(0,0)
\thicklines
\put(20,15){\makebox(0,0){$\longleftrightarrow$}}
\end{picture}
\begin{picture}(40,30)(0,0)
\thinlines
\put(0,6){\line(1,0){40}}
\put(0,4){\line(1,0){40}}
\thicklines
\put(0,25){\line(1,0){40}}
\end{picture}
\begin{picture}(40,30)(0,0)
\thicklines
\put(20,15){\makebox(0,0){$\longleftrightarrow$}}
\end{picture}
\begin{picture}(40,30)(0,0)
\thinlines
\put(0,6){\line(1,0){40}}
\put(0,4){\line(1,0){40}}
\put(20,5){\circle*{5}}
\thicklines
\put(0,25){\line(1,0){17.5}}
\put(40,25){\line(-1,0){17.5}}
\put(20,5){\line(0,1){17.5}}
\put(20,25){\circle{5}}
\put(52,5){\makebox(0,0){$\partial\Disk$}}
\end{picture}
\end{tabular}
\vspace{-.05in}
\caption{Local moves in plabic graphs. 
(a) The flip move (two versions). 
(b)~The square move.
(c)~The tail attachment/removal moves. 
For this last type of move, the colors of the two vertices involved 
can be arbitrary.}
\vspace{-.25in}
\label{fig:plabic-moves}
\end{figure}

\begin{restr}
\label{restr:square-move-technical}
We impose a restriction on the square move of
Figure~\ref{fig:plabic-moves}(b): 
among the four faces surrounding the square, 
the opposite ones are allowed to coincide, but the
consecutive ones must be distinct. 
See Figure~\ref{fig:square-move-tricky}.
\end{restr}

\begin{figure}[ht]
\begin{center}
\setlength{\unitlength}{0.9pt}
\begin{picture}(50,25)(-30,15)
\thinlines
\thicklines
\put(10,22.5){\line(0,1){15}}
\put(30,22.5){\line(0,1){15}}
\qbezier(32.5,20)(50,20)(50,30)
\qbezier(32.5,40)(50,40)(50,30)
\put(12.5,20){\line(1,0){15}}

\put(8,39){\line(-2,-1){16}}
\put(12.5,40){\line(1,0){15}}
\put(-27.5,30){\line(1,0){15}}
\put(-87.5,30){\line(1,0){15}}
\put(8,21){\line(-2,1){16}}
\put(-52,21){\line(-2,1){16}}
\put(-32,31){\line(-2,1){16}}
\put(-32,29){\line(-2,-1){16}}
\put(-52,39){\line(-2,-1){16}}
\put(-50,42.5){\line(0,1){10}}
\put(-50,17.5){\line(0,-1){10}}

\put(10,20){\circle*{5}}
\put(10,40){\circle{5}}
\put(30,20){\circle{5}}
\put(30,40){\circle*{5}}
\put(20,30){\makebox(0,0){$A$}}
\put(-50,30){\makebox(0,0){$B$}}
\put(65,30){\makebox(0,0){$C$}}
\put(-10,30){\circle*{5}}
\put(-30,30){\circle{5}}
\put(-70,30){\circle{5}}
\put(-50,40){\circle*{5}}
\put(-50,20){\circle*{5}}
\end{picture}
\end{center}
\caption{A fragment of a plabic graph. The square move is allowed at~$A$,
  but not at~$B$, because face~$C$ is adjacent to two consecutive
  sides of~$B$.} 
\label{fig:square-move-tricky}
\end{figure}

\vspace{-.2in}

\begin{remark}
Using a tail removal followed by a tail attachment, one can 
change the color of any boundary vertex (or the vertex connected to it).
For this reason, when drawing a plabic graph, 
we sometimes do not show the boundary of the ambient disk, 
and accordingly do not specify the colors of boundary vertices. 
\end{remark}

\begin{remark}
Applying repeated tail removals, 
any plabic graph can be transformed into a trivalent 
one, with no vertices on the boundary~$\partial\Disk$. 
Note however that restricting the setup to trivalent plabic graphs
would have resulted in a different 
equivalence relation among them, for the reasons explained in 
Figure~\ref{fig:tail-attachment-removal}. 
\end{remark}


\begin{figure}[ht]
\begin{center}
\setlength{\unitlength}{0.9pt}
\begin{picture}(50,55)(0,-5)
\thicklines
\qbezier(4,11.8)(-10,42)(25,50)
\qbezier(46,11.8)(60,42)(25,50)
\qbezier(6.5,8.5)(25,-9)(43.5,8.5)
\put(25,27.5){\line(0,1){22.5}}
\put(23,23.5){\line(-4,-3){16}}
\put(27,23.5){\line(4,-3){16}}
\put(5,10){\circle*{5}}
\put(45,10){\circle*{5}}
\put(25,50){\circle*{5}}
\put(25,25){\circle{5}}
\end{picture}
\qquad
\begin{picture}(50,50)(0,-5)
\thicklines
\qbezier(6.5,8.5)(14,0.2)(22.5,-0.2)
\qbezier(43.5,8.5)(36,0.2)(27.5,-0.2)
\qbezier(4,11.8)(-10,42)(25,50)
\qbezier(46,11.8)(60,42)(25,50)
\put(25,27.5){\line(0,1){22.5}}
\put(23,23.5){\line(-4,-3){16}}
\put(27,23.5){\line(4,-3){16}}
\put(5,10){\circle*{5}}
\put(45,10){\circle*{5}}
\put(25,50){\circle*{5}}
\put(25,25){\circle{5}}
\put(25,-2.5){\line(0,-1){7.5}}
\put(25,0){\circle{5}}
\end{picture}
\qquad
\begin{picture}(50,50)(0,-5)
\thicklines
\qbezier(4,11.8)(-10,42)(25,50)
\qbezier(46,11.8)(60,42)(25,50)
\qbezier(6.5,8.5)(25,-9)(43.5,8.5)
\put(25,27.5){\line(0,1){22.5}}
\put(23,23.5){\line(-4,-3){16}}
\put(27,23.5){\line(4,-3){16}}
\put(5,10){\circle{5}}
\put(45,10){\circle{5}}
\put(25,50){\circle*{5}}
\put(25,25){\circle*{5}}
\put(25,-2.5){\line(0,-1){7.5}}
\put(25,0){\circle*{5}}
\end{picture}
\qquad
\begin{picture}(50,50)(0,-5)
\thicklines
\qbezier(4,11.8)(-10,42)(25,50)
\qbezier(46,11.8)(60,42)(25,50)
\qbezier(6.5,8.5)(25,-9)(43.5,8.5)
\put(25,27.5){\line(0,1){22.5}}
\put(23,23.5){\line(-4,-3){16}}
\put(27,23.5){\line(4,-3){16}}
\put(5,10){\circle{5}}
\put(45,10){\circle{5}}
\put(25,50){\circle*{5}}
\put(25,25){\circle*{5}}
\end{picture}
\end{center}
\caption{The trivalent plabic graphs on the left 
and on the far right are related by a sequence of three moves: 
a tail attachment, a square move, and a tail removal. 
These two trivalent graphs are not related via flip and
square moves alone, 
since such moves do not change the number of vertices of each~color.}
\label{fig:tail-attachment-removal}
\end{figure}

\vspace{-.15in}

\begin{remark}
\label{rem:us-vs-postnikov}
As explained by Postnikov~\cite{postnikov}, Definitions~\ref{def:plabic} 
and~\ref{def:moves} naturally
extend to arbitrary planar graphs embedded in a disk. 
We find it easier, for our current purposes, to work in the
restricted generality of trivalent-univalent graphs. 
We also require that for each internal face, there is a black-and-white
edge separating it from another internal face. 
This condition, which propagates under all types of moves, 
ensures that the moves do not create monogons, nor
digons with vertices of the same color. 

A more significant difference between our setting and Postnikov's
is the introduction of the tail attachment/removal moves,
which were not present in~\cite{postnikov}. 
\end{remark}

\begin{remark}
A plabic graph defines a dual triangulation of the disk~$\Disk$, 
with each triangle colored black or white. 
A flip
move in a plabic graph corresponds to a flip in the 
dual triangulation (hence the terminology), 
i.e., to removing an interior arc~$\alpha$ 
and replacing it by another ``diagonal'' of the quadrilateral region 
formed by the two triangles separated by~$\alpha$.
Note that we are only allowed to do this when the triangles are of the
same color. 
Incidentally, this process will never create self-folded triangles 
(in the terminology of~\cite{cats1})
since those correspond to monogons in a plabic graph. 
\end{remark}

It is well known that local moves on plabic graphs are a special case
of quiver mutation. 
To see this, one needs the following definition. 

\begin{definition}
\label{def:Q(P)}
The quiver~$Q(P)$ associated with a plabic graph~$P$ is constructed as follows.
Place a vertex of~$Q(P)$ into each internal face of~$P$.
For each edge~$e$ in~$P$ such that
\begin{itemize}[leftmargin=.3in]
\item
the endpoints of $e$ are of different color, and 
\item
the faces $F_1,F_2$ on the two sides of~$e$ are internal and distinct,  
\end{itemize}
draw an arrow of~$Q(P)$ across~$e$ connecting the vertices of~$Q(P)$ located inside the faces $F_1$ and~$F_2$, 
and orient this arrow so that the black endpoint of~$e$ appears on its right as one moves
in the chosen direction. 
If this construction produces oriented cycles of length~2,
i.e., pairs of arrows connecting the same vertices but going in
opposite directions, then remove such pairs, one by one. 
See Figure~\ref{fig:quivers-plabic-graphs}.
\end{definition}

We note that the colors of boundary vertices do not affect the quiver. 

\begin{figure}[ht]
\begin{center}
\vspace{0.1in}
\setlength{\unitlength}{1.2pt}
\begin{picture}(100,40)(0,-10)
\thicklines
\put(10,0){\circle*{5}}
\put(30,0){\circle{5}}
\put(70,0){\circle*{5}}
\put(90,0){\circle{5}}
\put(10,20){\circle{5}}
\put(30,20){\circle*{5}}
\put(50,20){\circle*{5}}
\put(70,20){\circle{5}}
\put(10,40){\circle*{5}}
\put(50,40){\circle{5}}
\put(70,40){\circle*{5}}
\put(90,40){\circle{5}}

\put(0,0){\line(1,0){27.5}}
\put(32.5,0){\line(1,0){55}}
\put(92.5,0){\line(1,0){7.5}}
\put(12.5,20){\line(1,0){55}}
\put(0,40){\line(1,0){47.5}}
\put(52.5,40){\line(1,0){35}}
\put(92.5,40){\line(1,0){7.5}}

\put(10,2.5){\line(0,1){15}}
\put(10,22.5){\line(0,1){15}}
\put(30,17.5){\line(0,-1){15}}
\put(50,22.5){\line(0,1){15}}
\put(70,2.5){\line(0,1){15}}
\put(70,22.5){\line(0,1){15}}
\put(90,2.5){\line(0,1){35}}

\put(20,10){\red{\circle*{5}}}
\put(61,10){\red{\circle*{5}}}

\put(20,30){\red{\circle*{5}}}
\put(61,30){\red{\circle*{5}}}
\put(83,20){\red{\circle*{5}}}

\put(24,30){\red{\vector(1,0){33}}}
\put(56,10){\red{\vector(-1,0){33}}}
\put(79,22){\red{\vector(-2.3,1){15}}}
\put(20,14){\red{\vector(0,1){13}}}
\put(61,26){\red{\vector(0,-1){13}}}
\put(64,12){\red{\vector(2.3,1){15}}}

\end{picture}
\qquad
\setlength{\unitlength}{1.2pt}
\begin{picture}(120,58)(-50,0)
\thinlines
\thicklines
\put(10,22.5){\line(0,1){15}}
\put(10,2.5){\line(0,1){15}}
\put(10,42.5){\line(0,1){15}}
\put(30,22.5){\line(0,1){15}}
\qbezier(32.5,20)(50,20)(50,30)
\qbezier(32.5,40)(50,40)(50,30)
\qbezier(12.5,0)(70,0)(70,30)
\qbezier(7.5,0)(-20,0)(-20,30)
\qbezier(7.5,60)(-20,60)(-20,30)
\qbezier(12.5,60)(70,60)(70,30)
\put(-20,30){\line(-1,0){20}}
\put(70,30){\line(1,0){20}}

\put(12.5,20){\line(1,0){15}}
\put(12.5,40){\line(1,0){15}}
\put(10,0){\circle{5}}
\put(10,60){\circle*{5}}
\put(10,20){\circle*{5}}
\put(10,40){\circle{5}}
\put(30,20){\circle{5}}
\put(30,40){\circle*{5}}
\put(-20,30){\circle*{5}}
\put(70,30){\circle*{5}}

\put(-5,40){\red{\circle*{5}}}
\put(19,30){\red{\circle*{5}}}
\put(40,30){\red{\circle*{5}}}
\put(30,50){\red{\circle*{5}}}

\put(36,30){\red{\vector(-1,0){13}}}
\put(32,47){\red{\vector(1,-2){7}}}
\put(22,33){\red{\vector(1,2){7}}}
\put(20,34){\red{\vector(1,2){7}}}
\put(-1,38){\red{\vector(3,-1){18}}}
\put(27,49){\red{\vector(-7,-2){29}}}
\put(26,51){\red{\vector(-7,-2){29}}}

\end{picture}
\end{center}
\caption{Quivers associated with plabic graphs.
The double arrows in the right quiver correspond to the instances where a pair of faces of the
plabic graph share two disconnected boundary segments.} 
\label{fig:quivers-plabic-graphs}
\end{figure}


The following observation is implicit in Postnikov's original work~\cite{postnikov}:  

\begin{proposition}
\label{pr:plabic-vs-quivers}
If two plabic graphs are move equivalent to each other, 
then their associated quivers are mutation equivalent. 
\end{proposition}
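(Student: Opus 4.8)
The plan is to reduce the statement to a local analysis of the three generating moves. By definition, move equivalence is generated by the flips, square moves, and tail attachment/removal moves of Figure~\ref{fig:plabic-moves}, and mutation equivalence is transitive; so it suffices to show that each single move either preserves $Q(P)$ up to isomorphism or replaces it by a single mutation. In every case the move alters the graph only inside a small disk, so all but boundedly many internal faces are untouched, and the affected internal faces match up in an evident way. I would fix this bijection of internal faces throughout and thereby identify the vertex sets of the two quivers.

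First I would dispose of the bookkeeping claims. An edge incident to a boundary vertex has both adjacent faces touching $\partial\Disk$, hence external; by construction such an edge never contributes an arrow of $Q(P)$, so the color of a boundary vertex is irrelevant. This settles the last sentence of the proposition and also shows that a tail edge plays no role in the quiver. For a tail attachment/removal, the inserted or deleted vertex $v$ sits on an edge bordering a boundary-adjacent region $R$; the two edges merged at $v$ each separate an external face from a single internal face $F$, so neither carries an arrow, and the move touches no edge lying between two internal faces. Hence the quiver is unchanged. For a flip move the central edge joins two vertices of the same color and so carries no arrow; moreover each of the four surrounding faces persists through the move, and each of the four outer edges separates the same pair of faces with its colored endpoints in the same configuration before and after. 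Consequently every arrow of $Q(P)$ is preserved, and the flip leaves the quiver unchanged.

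The main step, and the only genuine content, is the square move. Label the central square face $z$ and its four neighbors $N,E,S,W$ across the edges of the square, whose endpoints alternate in color. A direct check of the ``black endpoint on the right'' rule shows that before the move the four square edges yield the arrows $z\to N$, $z\to S$, $W\to z$, $E\to z$, so $z$ has exactly two outgoing and two incoming neighbors, while the four corner edges, whose far endpoints are untouched by the move, carry whatever arrows run between consecutive faces among $N,E,S,W$. After the color swap at the four corners, the same four square edges produce the reversed arrows $N\to z$, $S\to z$, $z\to W$, $z\to E$, which is exactly Step~2 of $\mu_z$ in Definition~\ref{def:mutations}; and the sign change at each corner toggles the corresponding corner-edge arrow in precisely the way that inserts the four arrows $W\to N$, $W\to S$, $E\to N$, $E\to S$ coming from the length-two paths through $z$ in Step~1, after which Step~3 removes the resulting $2$-cycles. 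Matching these three effects identifies the square move with the single mutation $\mu_z$.

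I expect the square-move computation to be the crux. The delicate point is to verify, by a case analysis on the colors of the (unchanged) external endpoints of the four corner edges, that toggling the corner colors reproduces \emph{exactly} the arrow insertions and $2$-cycle cancellations prescribed by $\mu_z$, and not merely up to the pre-existing arrows among $N,E,S,W$. Once each move is matched with either the identity or a single mutation, composing the moves realizing a given move-equivalence produces a corresponding sequence of mutations, which shows that the two quivers are mutation equivalent.
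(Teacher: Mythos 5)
Your proposal is correct and follows essentially the same route as the paper, whose proof simply asserts that the square move translates into a quiver mutation while the flip and tail attachment/removal moves leave the quiver unchanged; you carry out exactly these local verifications (plus the observation that edges meeting boundary vertices never contribute arrows). The extra detail you supply for the square move, matching the corner-color toggle against Steps 1--3 of $\mu_z$, is the check the paper leaves to the reader, and your identification of it as the crux is accurate.
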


\begin{proof}
It is straightforward to check that a square move in a plabic graph
translates into a quiver mutation, 
and that the quiver associated with a plabic graph does not change under 
a flip move, or a tail attachment/removal. 
See Figure~\ref{fig:plabic-moves-are-mutations}. 
\end{proof}


\begin{figure}[ht]
\begin{center}
\setlength{\unitlength}{1.1pt}
\begin{picture}(100,40)(0,0)
\thicklines
\put(10,0){\circle*{5}}
\put(30,0){\circle{5}}
\put(70,0){\circle*{5}}
\put(90,0){\circle{5}}
\put(10,20){\circle{5}}
\put(30,20){\circle*{5}}
\put(50,20){\circle*{5}}
\put(70,20){\circle{5}}
\put(10,40){\circle*{5}}
\put(50,40){\circle{5}}
\put(70,40){\circle*{5}}
\put(90,40){\circle{5}}

\put(0,0){\line(1,0){27.5}}
\put(32.5,0){\line(1,0){55}}
\put(92.5,0){\line(1,0){7.5}}
\put(12.5,20){\line(1,0){55}}
\put(0,40){\line(1,0){47.5}}
\put(52.5,40){\line(1,0){35}}
\put(92.5,40){\line(1,0){7.5}}

\put(10,2.5){\line(0,1){15}}
\put(10,22.5){\line(0,1){15}}
\put(30,17.5){\line(0,-1){15}}
\put(50,22.5){\line(0,1){15}}
\put(70,2.5){\line(0,1){15}}
\put(70,22.5){\line(0,1){15}}
\put(90,2.5){\line(0,1){35}}

\put(20,10){\red{\circle*{5}}}
\put(61,10){\red{\circle*{5}}}

\put(20,30){\red{\circle*{5}}}
\put(61,30){\red{\circle*{5}}}
\put(83,20){\red{\circle*{5}}}

\put(24,30){\red{\vector(1,0){33}}}
\put(56,10){\red{\vector(-1,0){33}}}
\put(79,22){\red{\vector(-2.3,1){15}}}
\put(20,14){\red{\vector(0,1){13}}}
\put(61,26){\red{\vector(0,-1){13}}}
\put(64,12){\red{\vector(2.3,1){15}}}

\end{picture}
\qquad
\begin{picture}(100,40)(0,0)
\thicklines
\put(10,0){\circle*{5}}
\put(30,0){\circle{5}}
\put(70,0){\circle*{5}}
\put(90,0){\circle{5}}
\put(10,20){\circle{5}}
\put(30,20){\circle*{5}}
\put(50,20){\circle{5}}
\put(70,20){\circle*{5}}
\put(10,40){\circle*{5}}
\put(50,40){\circle*{5}}
\put(70,40){\circle{5}}
\put(90,40){\circle{5}}

\put(0,0){\line(1,0){27.5}}
\put(32.5,0){\line(1,0){55}}
\put(92.5,0){\line(1,0){7.5}}
\put(12.5,20){\line(1,0){35}}
\put(52.5,20){\line(1,0){15}}
\put(0,40){\line(1,0){47.5}}
\put(52.5,40){\line(1,0){15}}
\put(72.5,40){\line(1,0){15}}
\put(92.5,40){\line(1,0){7.5}}

\put(10,2.5){\line(0,1){15}}
\put(10,22.5){\line(0,1){15}}
\put(30,17.5){\line(0,-1){15}}
\put(50,22.5){\line(0,1){15}}
\put(70,2.5){\line(0,1){15}}
\put(70,22.5){\line(0,1){15}}
\put(90,2.5){\line(0,1){35}}

\put(20,10){\red{\circle*{5}}}
\put(61,10){\red{\circle*{5}}}

\put(20,30){\red{\circle*{5}}}
\put(61,30){\red{\circle*{5}}}
\put(83,20){\red{\circle*{5}}}

\put(56,30){\red{\vector(-1,0){33}}}
\put(56,10){\red{\vector(-1,0){33}}}
\put(20,14){\red{\vector(0,1){13}}}
\put(61,14){\red{\vector(0,1){13}}}
\put(64,28){\red{\vector(2.3,-1){16}}}
\put(24,28){\red{\vector(2,-1){33}}}

\end{picture}
\qquad
\begin{picture}(100,40)(0,0)
\thicklines
\put(10,0){\circle*{5}}
\put(30,0){\circle{5}}
\put(70,0){\circle*{5}}
\put(90,0){\circle{5}}
\put(10,20){\circle{5}}
\put(30,20){\circle*{5}}
\put(50,20){\circle{5}}
\put(50,0){\circle*{5}}
\put(10,40){\circle*{5}}
\put(50,40){\circle*{5}}
\put(70,40){\circle{5}}
\put(90,40){\circle{5}}

\put(0,0){\line(1,0){27.5}}
\put(32.5,0){\line(1,0){55}}
\put(92.5,0){\line(1,0){7.5}}
\put(12.5,20){\line(1,0){35}}
\
\put(0,40){\line(1,0){47.5}}
\put(52.5,40){\line(1,0){15}}
\put(72.5,40){\line(1,0){15}}
\put(92.5,40){\line(1,0){7.5}}

\put(10,2.5){\line(0,1){15}}
\put(10,22.5){\line(0,1){15}}
\put(30,17.5){\line(0,-1){15}}
\put(50,22.5){\line(0,1){15}}
\put(50,2.5){\line(0,1){15}}
\put(70,2.5){\line(0,1){35}}
\put(90,2.5){\line(0,1){35}}

\put(20,10){\red{\circle*{5}}}
\put(40,10){\red{\circle*{5}}}

\put(30,30){\red{\circle*{5}}}
\put(60,20){\red{\circle*{5}}}
\put(83,20){\red{\circle*{5}}}

\put(56,22){\red{\vector(-3,1){22}}}
\put(36,10){\red{\vector(-1,0){13}}}
\put(22,14){\red{\vector(1,2){6}}}
\put(44,12){\red{\vector(2,1){13}}}

\put(64,20){\red{\vector(1,0){15}}}
\put(32,26){\red{\vector(1,-2){6}}}

\end{picture}
\vspace{.1in}
\end{center}
\caption{The first two plabic graphs are related by a square move; their quivers are obtained from each other by a single mutation.
The second and the third plabic graphs are related by a flip move, and have isomorphic quivers.
}
\label{fig:plabic-moves-are-mutations}
\end{figure}

\begin{remark}
\label{rem:q-not=>p}
The converse to Proposition~\ref{pr:plabic-vs-quivers} is unfortunately false:
there exist plabic graphs which are not move equivalent even though 
their quivers are isomorphic (hence mutation equivalent). 
An example is shown in Figure~\ref{fig:need-switch}. 
See also Example~\ref{ex:using-links-to show-move-inequivalence}/Figure~\ref{fig:isthmus-move}. 
\end{remark}

\begin{figure}[ht]
\begin{center}
\setlength{\unitlength}{2.1pt}
\begin{picture}(40,90)(0,0)
\thicklines
\put(20,0){\circle{3}}
\put(20,10){\circle*{3}}
\put(20,20){\circle{3}}
\put(20,30){\circle*{3}}
\put(20,60){\circle{3}}
\put(20,70){\circle*{3}}
\put(20,80){\circle{3}}
\put(20,90){\circle*{3}}
\put(10,40){\circle{3}}
\put(10,50){\circle*{3}}
\put(30,40){\circle{3}}
\put(30,50){\circle*{3}}

\put(20,1.5){\line(0,1){7}}
\put(20,21.5){\line(0,1){7}}
\put(20,61.5){\line(0,1){7}}
\put(20,81.5){\line(0,1){7}}
\qbezier(21,1)(80,45)(21,89)
\qbezier(19,1)(-40,45)(19,89)

\qbezier(21,11)(26,15)(21,19)
\qbezier(19,11)(14,15)(19,19)
\qbezier(21,71)(26,75)(21,79)
\qbezier(19,71)(14,75)(19,79)

\qbezier(11,41)(16,45)(11,49)
\qbezier(9,41)(4,45)(9,49)
\qbezier(31,41)(36,45)(31,49)
\qbezier(29,41)(24,45)(29,49)

\put(19,31){\line(-1,1){8}}
\put(21,31){\line(1,1){8}}
\put(19,59){\line(-1,-1){8}}
\put(21,59){\line(1,-1){8}}
\end{picture}
\qquad\qquad\qquad\qquad
\begin{picture}(40,90)(0,0)
\thicklines
\put(20,0){\circle{3}}
\put(20,70){\circle*{3}}
\put(20,80){\circle{3}}
\put(20,10){\circle*{3}}
\put(20,40){\circle{3}}
\put(20,50){\circle*{3}}
\put(20,60){\circle{3}}
\put(20,90){\circle*{3}}
\put(10,20){\circle{3}}
\put(10,30){\circle*{3}}
\put(30,20){\circle{3}}
\put(30,30){\circle*{3}}

\put(20,1.5){\line(0,1){7}}
\put(20,41.5){\line(0,1){7}}
\put(20,61.5){\line(0,1){7}}
\put(20,81.5){\line(0,1){7}}
\qbezier(21,1)(80,45)(21,89)
\qbezier(19,1)(-40,45)(19,89)

\qbezier(21,51)(26,55)(21,59)
\qbezier(19,51)(14,55)(19,59)

\qbezier(21,71)(26,75)(21,79)
\qbezier(19,71)(14,75)(19,79)

\qbezier(11,21)(16,25)(11,29)
\qbezier(9,21)(4,25)(9,29)
\qbezier(31,21)(36,25)(31,29)
\qbezier(29,21)(24,25)(29,29)

\put(19,11){\line(-1,1){8}}
\put(21,11){\line(1,1){8}}
\put(19,39){\line(-1,-1){8}}
\put(21,39){\line(1,-1){8}}
\end{picture}
\end{center}
\caption{Two plabic graphs whose quivers are isomorphic but which are not related to each other by local moves. In fact, the only moves that can be applied to either graph are tail attachments/removals.} 
\label{fig:need-switch}
\end{figure}



We next relate plabic graphs to divides. 

\begin{definition} 
\label{def:plabic-divide}
The set $\PP(D)$ of \emph{plabic graphs attached to a divide}~$D$ is defined as follows.
Replace each node of $D$ by a ``roundabout''
involving four trivalent vertices connected into a square,
and colored alternately black and white, as shown below:
\[
\setlength{\unitlength}{1.8pt}
\begin{picture}(20,25)(0,-2)
\thinlines
\put(0,0){\line(1,1){20}}
\put(0,20){\line(1,-1){20}}
\end{picture}
\begin{picture}(20,25)(0,-2)
\put(10,10){\makebox(0,0){$\longrightarrow$}}
\end{picture}
\begin{picture}(20,25)(0,-2)
\thicklines
\put(0,0){\line(1,1){5}}
\put(15,15){\line(1,1){5}}
\put(0,20){\line(1,-1){4.2}}
\put(20,0){\line(-1,1){4.2}}
\put(15,15){\line(1,1){5}}
\put(6,5){\line(1,0){8}}
\put(6,15){\line(1,0){8}}
\put(5,6){\line(0,1){8}}
\put(15,6){\line(0,1){8}}
\put(5,5){\circle*{2.5}}
\put(15,15){\circle*{2.5}}
\put(5,15){\circle{2.5}}
\put(15,5){\circle{2.5}}
\end{picture}
\]
There are two choices of coloring at each node, 
related to each other by a square~move.  
We then color the endpoints in $D\cap\partial\Disk$ in an arbitrary way. 
The set $\PP(D)$ consists of the plabic graphs which can be obtained 
from the divide~$D$ via this procedure. 
All plabic graphs in $\PP(D)$ are obviously move equivalent to each other. 
\end{definition} 

An example is shown in Figure~\ref{fig:E6-plabic}. 

\begin{figure}[ht]
\begin{center}
\setlength{\unitlength}{1.8pt}
\begin{picture}(50,45)(0,-5)
\thinlines
\qbezier(0,0)(60,60)(60,20)
\qbezier(60,0)(0,60)(0,20)
\qbezier(60,20)(60,0)(30,0)
\qbezier(0,20)(0,0)(30,0)
\end{picture}
\qquad\qquad\qquad
\begin{picture}(50,35)(0,-5)
\thicklines
\put(21.5,20){{\line(1,0){15.75}}}
\put(22.75,32){{\line(1,0){15.75}}}
\put(21.5,20){{\line(0,1){10.75}}}
\put(38.5,21.25){{\line(0,1){10.75}}}

\put(9.25,10){{\line(-1,0){7}}}
\put(10.5,3.5){{\line(-1,0){5.75}}}
\put(10.5,3.5){{\line(0,1){5.25}}}
\put(2.5,10){{\line(1,-6.5){.84}}}

\put(49.5,10){{\line(1,0){6.75}}}
\put(56.5,3.5){{\line(-1,0){5.75}}}
\put(49.5,10){{\line(0,-1){5.25}}}
\put(56.5,3.5){{\line(1,6.5){.84}}}

\put(2.5,10){\circle*{2.5}}
\put(10.5,3.5){\circle*{2.5}}
\put(21.5,20){\circle*{2.5}}
\put(38.5,32){\circle*{2.5}}
\put(49.5,10){\circle*{2.5}}
\put(56.5,3.5){\circle*{2.5}}
\put(3.5,3.5){\circle{2.5}}
\put(10.5,10){\circle{2.5}}
\put(57.5,10){\circle{2.5}}
\put(49.5,3.5){\circle{2.5}}
\put(21.5,32){\circle{2.5}}
\put(38.5,20){\circle{2.5}}

\put(0,0){\circle*{2.5}}
\put(60,0){\circle{2.5}}

\qbezier(20.3,32.5)(0,44.5)(0,20)
\qbezier(39.7,32.5)(60,44.5)(60,20)
\qbezier(60,20)(60,15)(58,11)
\qbezier(0,20)(0,15)(2,11)
\qbezier(11.6,3)(30,-4)(48.4,3)

\put(0,0){{\line(1,1){2.7}}}
\put(56.5,3.5){{\line(1,-1){2.7}}}
\put(11.3,10.8){{\line(11,10){9.4}}}
\put(48.7,10.8){{\line(-11,10){9.4}}}
\end{picture}

\end{center}
\caption{A divide coming from a morsification of 
a type~$E_6$ singularity, and one of the plabic graphs attached to it.}
\label{fig:E6-plabic}
\end{figure}


Definition~\ref{def:plabic-divide} is justified by the following simple but important observation. 

\begin{proposition}
\label{pr:Q(D)=Q(P(D))}
For any divide~$D$ and any plabic graph $P\in \mathbf{P}(D)$ attached to~$D$, the quivers $Q(D)$ and $Q(P)$ are mutation equivalent to each other. 

In fact, there is always a choice of $P\in \mathbf{P}(D)$ such that $Q(D)=Q(P)$. 
\end{proposition}

(See Definitions~\ref{def:quiver-of-divide}, \ref{def:Q(P)}, and~\ref{def:plabic-divide} for the explanations of the notations involved.)

In other words, the quiver of a plabic graph attached to a divide~$D$ is the same (up to mutation equivalence) as the quiver associated with~$D$ (i.e., the oriented \AG-diagram of~$D$). 
See Figure~\ref{fig:E6-quiver-via-plabic}.

\begin{figure}[ht]
\begin{center}
\setlength{\unitlength}{1.8pt}
\begin{picture}(50,38)(0,0)
\thinlines
\qbezier(0,0)(60,60)(60,20)
\qbezier(60,0)(0,60)(0,20)
\qbezier(60,20)(60,0)(30,0)
\qbezier(0,20)(0,0)(30,0)
\thicklines

\put(6.2,6){\circle*{2.5}}
\put(53.8,6){\circle*{2.5}}
\put(30,26.5){\circle*{2.5}}

\put(30,6){\circle*{2.5}}
\put(6.2,26.5){\circle*{2.5}}
\put(53.8,26.5){\circle*{2.5}}

\put(9,6){\red{\vector(1,0){18}}}
\put(51,6){\red{\vector(-1,0){18}}}
\put(9,26.5){\red{\vector(1,0){18}}}
\put(51,26.5){\red{\vector(-1,0){18}}}
\put(6.2,24){\red{\vector(0,-1){15.5}}}
\put(30,24){\red{\vector(0,-1){15.5}}}
\put(53.8,24){\red{\vector(0,-1){15.5}}}
\put(28,8){\red{\vector(-8,7){19.5}}}
\put(32,8){\red{\vector(8,7){19.5}}}
\end{picture}
\qquad\qquad\qquad
\setlength{\unitlength}{1.8pt}
\begin{picture}(50,38)(0,0)
\thicklines
\put(2.5,10){\circle*{2.5}}
\put(10.5,3.5){\circle*{2.5}}
\put(21.5,20){\circle*{2.5}}
\put(38.5,32){\circle*{2.5}}
\put(49.5,10){\circle*{2.5}}
\put(56.5,3.5){\circle*{2.5}}

\put(6.2,6){\red{\circle*{2.5}}}
\put(53.8,6){\red{\circle*{2.5}}}
\put(30,26.5){\red{\circle*{2.5}}}

\put(30,6){\red{\circle*{2.5}}}
\put(6.2,26.5){\red{\circle*{2.5}}}
\put(53.8,26.5){\red{\circle*{2.5}}}

\put(9,6){\red{\vector(1,0){18}}}
\put(51,6){\red{\vector(-1,0){18}}}
\put(9,26.5){\red{\vector(1,0){18}}}
\put(51,26.5){\red{\vector(-1,0){18}}}
\put(6.2,24){\red{\vector(0,-1){15.5}}}
\put(30,24){\red{\vector(0,-1){15.5}}}
\put(53.8,24){\red{\vector(0,-1){15.5}}}
\put(28,8){\red{\vector(-8,7){19.5}}}
\put(32,8){\red{\vector(8,7){19.5}}}

\put(21.5,20){{\line(1,0){15.75}}}
\put(22.75,32){{\line(1,0){15.75}}}
\put(21.5,20){{\line(0,1){10.75}}}
\put(38.5,21.25){{\line(0,1){10.75}}}

\put(9.25,10){{\line(-1,0){7}}}
\put(10.5,3.5){{\line(-1,0){5.75}}}
\put(10.5,3.5){{\line(0,1){5.25}}}
\put(2.5,10){{\line(1,-6.5){.84}}}

\put(49.5,10){{\line(1,0){6.75}}}
\put(56.5,3.5){{\line(-1,0){5.75}}}
\put(49.5,10){{\line(0,-1){5.25}}}
\put(56.5,3.5){{\line(1,6.5){.84}}}

\put(2.5,10){\circle*{2.5}}
\put(10.5,3.5){\circle*{2.5}}
\put(21.5,20){\circle*{2.5}}
\put(38.5,32){\circle*{2.5}}
\put(49.5,10){\circle*{2.5}}
\put(56.5,3.5){\circle*{2.5}}
\put(3.5,3.5){\circle{2.5}}
\put(10.5,10){\circle{2.5}}
\put(57.5,10){\circle{2.5}}
\put(49.5,3.5){\circle{2.5}}
\put(21.5,32){\circle{2.5}}
\put(38.5,20){\circle{2.5}}

\put(0,0){\circle*{2.5}}
\put(60,0){\circle{2.5}}

\qbezier(20.3,32.5)(0,44.5)(0,20)
\qbezier(39.7,32.5)(60,44.5)(60,20)
\qbezier(60,20)(60,15)(58,11)
\qbezier(0,20)(0,15)(2,11)
\qbezier(11.6,3)(30,-4)(48.4,3)

\put(0,0){{\line(1,1){2.7}}}
\put(56.5,3.5){{\line(1,-1){2.7}}}
\put(11.3,10.8){{\line(11,10){9.4}}}
\put(48.7,10.8){{\line(-11,10){9.4}}}
\end{picture}
\end{center}
\caption{Left: the quiver obtained from the \AG-diagram 
(cf.\ Figure~\ref{fig:E6-morsifications}).
Right: the quiver obtained from the plabic graph (cf.\ Figure~\ref{fig:E6-plabic}).}
\label{fig:E6-quiver-via-plabic}
\end{figure}


Experimental evidence suggests that in the case of plabic graphs attached to algebraic divides, the converse to Proposition~\ref{pr:plabic-vs-quivers} holds (cf.\ Remark~\ref{rem:q-not=>p}): 

\begin{conjecture}
\label{conj:shapiro-algebraic}
Plabic graphs attached to algebraic divides are move-equivalent if and only if the corresponding quivers are mutation equivalent. 
\end{conjecture}

\begin{remark}
By Proposition~\ref{pr:Q(D)=Q(P(D))}, it does not matter whether the words ``the corresponding quivers'' appearing in Conjecture~\ref{conj:shapiro-algebraic} are interpreted as ``the quivers associated with the divides'' or as ``the quivers associated with the plabic graphs.'' 
\end{remark}

We conclude this section by describing an equivalence relation on (arbitrary) plabic graphs that conjecturally corresponds to mutation equivalence of their quivers. The readers not interested in this digression may proceed directly to the next section.

The key idea is to complement Postnikov's local moves by certain non-local transformations which do not change the quiver associated with a plabic graph. These transformations are closely related to H.~Whitney's \emph{2-switching} operations which relate different planar embeddings of a given planar graph 
(see, e.g.,~\cite[Section~2.6]{Mohar-Thomassen}). 
Cf.\ also Remark~\ref{rem:AG-planar-embeddings}.


\begin{definition}
\label{def:switch}
We say that two plabic graphs $P$ and $P'$ are related to each other by a \emph{switch} if  $P'$ can be obtained from $P$ in the following way. 
Suppose a closed simple curve~$\mathcal{C}$ in the interior of
the disk~$\Disk$ intersects 
(the drawing of) $P$ exactly twice, at two different edges. 
Since we consider our plabic graphs up to isotopies of the disk,
we may assume, without loss of generality, that $\mathcal{C}$
encloses a rectangle~$R$, and moreover $\mathcal{C}$ intersects~$P$ 
at two points located at the top and the bottom sides of~$R$,
respectively, precisely opposite each other. 

Let $P_{\text{in}}$ denote the portion of (the drawing 
of) $P$ contained inside~$\mathcal{C}$. 
To obtain~$P'$, we flip~$P_{\text{in}}$  upside down
(i.e., replace it by its mirror image with respect to the horizontal
axis of symmetry of~$R$), and reverse the colors
of all vertices in~$P_{\text{in}}$.  The remaining portion of~$P$ is kept intact. 

It is easy to see that applying the same transformation to $P'$ recovers~$P$. 
  
Two plabic graphs related to each other via a sequence of local moves
(see Definition~\ref{def:moves}) and/or switches 
are called \emph{move-and-switch equivalent}.
  
An example is shown in Figure~\ref{fig:isthmus-move}. 
\end{definition}

\begin{figure}[ht]
\begin{center}
\bigskip
\setlength{\unitlength}{1.2pt}
\begin{picture}(120,60)(-5,-8)
\thicklines
\qbezier(40,-2.5)(40,-15)(50,-15)
\qbezier(60,-2.5)(60,-15)(50,-15)
\put(62.5,0){\line(1,0){35}}
\put(2.5,20){\line(1,0){15}}
\put(-17.5,20){\line(1,0){15}}
\put(22.5,20){\line(1,0){15}}
\put(62.5,20){\line(1,0){15}}
\put(82.5,20){\line(1,0){15}}
\put(102.5,20){\line(1,0){15}}
\put(22.5,40){\line(1,0){15}}
\put(-2.5,40){\line(1,0){20}}
\qbezier(40,42.5)(40,55)(50,55)
\qbezier(60,42.5)(60,55)(50,55)
\put(62.5,40){\line(1,0){15}}

\put(40,2.5){\line(0,1){15}}
\put(0,2.5){\line(0,1){15}}
\put(60,2.5){\line(0,1){15}}
\put(100,2.5){\line(0,1){15}}
\put(20,22.5){\line(0,1){15}}
\put(40,22.5){\line(0,1){15}}
\put(60,22.5){\line(0,1){15}}
\put(80,22.5){\line(0,1){15}}

\qbezier(-20,22.5)(-20,40)(-2.5,40)
\qbezier(-20,17.5)(-20,0)(-2.5,0)
\put(2.5,0){\line(1,0){35}}
\qbezier(102.5,0)(120,0)(120,17.5)
\qbezier(102.5,40)(120,40)(120,22.5)
\put(82.5,40){\line(1,0){20}}

\put(0,0){\circle*{5}}
\put(40,0){\circle*{5}}
\put(60,0){\circle*{5}}
\put(100,0){\circle*{5}}

\put(-20,20){\circle{5}}
\put(0,20){\circle{5}}
\put(20,20){\circle*{5}}
\put(40,20){\circle{5}}
\put(60,20){\circle{5}}
\put(80,20){\circle*{5}}
\put(100,20){\circle{5}}
\put(120,20){\circle{5}}

\put(20,40){\circle{5}}
\put(40,40){\circle*{5}}
\put(60,40){\circle*{5}}
\put(80,40){\circle{5}}

\thinlines
\multiput(50,-7)(0,4){14}{\line(0,1){1.5}}
\multiput(127.5,-7)(0,4){14}{\line(0,1){1.5}}
\multiput(50,-7)(4,0){20}{\line(1,0){1.5}}
\multiput(50,46.5)(4,0){20}{\line(1,0){1.5}}

\put(80,9){\makebox(0,0){$P_\textup{in}$}}
\end{picture}
\qquad\qquad\qquad
\begin{picture}(120,60)(-5,-8)
\thicklines
\qbezier(40,-2.5)(40,-15)(50,-15)
\qbezier(60,-2.5)(60,-15)(50,-15)
\put(62.5,0){\line(1,0){15}}

\put(2.5,20){\line(1,0){15}}
\put(-17.5,20){\line(1,0){15}}
\put(22.5,20){\line(1,0){15}}
\put(62.5,20){\line(1,0){15}}
\put(82.5,20){\line(1,0){15}}
\put(102.5,20){\line(1,0){15}}

\put(22.5,40){\line(1,0){15}}
\put(-2.5,40){\line(1,0){20}}
\qbezier(40,42.5)(40,55)(50,55)
\qbezier(60,42.5)(60,55)(50,55)
\put(62.5,40){\line(1,0){35}}

\put(40,2.5){\line(0,1){15}}
\put(0,2.5){\line(0,1){15}}
\put(60,2.5){\line(0,1){15}}
\put(100,22.5){\line(0,1){15}}
\put(20,22.5){\line(0,1){15}}
\put(40,22.5){\line(0,1){15}}
\put(60,22.5){\line(0,1){15}}
\put(80,17.5){\line(0,-1){15}}

\qbezier(-20,22.5)(-20,40)(-2.5,40)
\qbezier(-20,17.5)(-20,0)(-2.5,0)
\put(2.5,0){\line(1,0){35}}
\qbezier(102.5,0)(120,0)(120,17.5)
\qbezier(102.5,40)(120,40)(120,22.5)
\put(82.5,0){\line(1,0){20}}

\put(0,0){\circle*{5}}
\put(40,0){\circle*{5}}
\put(60,0){\circle{5}}
\put(80,0){\circle*{5}}

\put(-20,20){\circle{5}}
\put(0,20){\circle{5}}
\put(20,20){\circle*{5}}
\put(40,20){\circle{5}}
\put(60,20){\circle*{5}}
\put(80,20){\circle{5}}
\put(100,20){\circle*{5}}
\put(120,20){\circle*{5}}

\put(20,40){\circle{5}}
\put(40,40){\circle*{5}}
\put(60,40){\circle{5}}
\put(100,40){\circle{5}}

\thinlines
\multiput(50,-7)(0,4){14}{\line(0,1){1.5}}
\multiput(127.5,-7)(0,4){14}{\line(0,1){1.5}}
\multiput(50,-7)(4,0){20}{\line(1,0){1.5}}
\multiput(50,46.5)(4,0){20}{\line(1,0){1.5}}
\end{picture}

\end{center}
\vspace{.1in}
\caption{Plabic graphs related by a switch.
The dotted line represents a simple closed curve~$\mathcal{C}$. 
The portions outside~$\mathcal{C}$ are the same.
The portions inside~$\mathcal{C}$ are related by
flipping upside down and changing the colors of all vertices.}
\label{fig:isthmus-move}
\end{figure}

Proposition~\ref{pr:plabic-vs-quivers} can be strengthened as follows. 

\begin{proposition}
\label{prop:Shapiro-easy}
If two plabic graphs are move-and-switch equivalent to each other, 
then their associated quivers are mutation equivalent. 
\end{proposition}

\begin{proof}
Let $P$ and $P'$ be two plabic graphs related by a switch, as in Definition~\ref{def:switch}. 
Let us verify that the corresponding quivers $Q(P)$ and $Q(P')$
are isomorphic to each other. Indeed, for every edge of~$P$ contained entirely inside~$P_{\text{in}}$, the flipping of~$P_{\text{in}}$ reverses the direction of the corresponding arrow in the quiver; the subsequent reversal of colors restores the original direction. It remains to examine the edges of the plabic graph which cross the boundary of~$P_{\text{in}}$ (denoted by~$\mathcal{C}$ in Definition~\ref{def:switch}). A~case-by-case inspection shows that the combined contribution of the corresponding arrows remains unchanged under a switch. 

The statement now follows by Proposition~\ref{pr:plabic-vs-quivers}. 
\end{proof}

It seems reasonable to expect that the converse to Proposition~\ref{prop:Shapiro-easy} holds as well. 
The following conjecture is inspired by our communications with Michael Shapiro. 

\begin{conjecture}[M.~Shapiro's conjecture]
\label{conj:plabic-vs-quivers}
Two plabic graphs are move-and-switch equivalent 
if and only if their associated quivers are mutation equivalent. 
\end{conjecture}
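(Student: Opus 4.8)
\medskip

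The plan is to prove the two implications of Conjecture~\ref{conj:plabic-vs-quivers} separately; one of them is already in hand. The implication ``related by local moves and boundary recoloring $\Rightarrow$ mutation equivalent'' is exactly Proposition~\ref{pr:plabic-vs-quivers}, whose proof checks directly that a square move induces a single mutation of the associated quiver, while flip moves, tail attachment/removal, and recoloring of boundary vertices leave the quiver unchanged. So the entire content of the conjecture is the converse: given plabic graphs $P_1,P_2$ whose quivers $Q(P_1)$ and $Q(P_2)$ are mutation equivalent, I must connect $P_1$ to $P_2$ by local moves and recolorings.

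I would reduce the converse to two lemmas. The first is a \emph{rigidity} statement: any two plabic graphs with isomorphic associated quivers are move equivalent (after recoloring boundary vertices). The second is a \emph{lifting} statement: if $Q(P')$ is obtained from $Q(P)$ by a single mutation $\mu_v$, then some plabic graph move equivalent to $P$ has quiver $Q(P')$. Granting these, a mutation sequence $Q(P_1)=R_0,\dots,R_m=Q(P_2)$ would be lifted step by step to a plabic graph $\widetilde P$ move equivalent to $P_1$ with $Q(\widetilde P)\cong Q(P_2)$; the rigidity lemma would then give $\widetilde P\sim P_2$, hence $P_1\sim P_2$. The lifting step is where the moves do the work: to realize $\mu_v$ I would first apply flip and tail moves (which preserve the quiver, by Proposition~\ref{pr:plabic-vs-quivers}) to reshape the face corresponding to $v$ into a quadrilateral with alternately colored corners and four \emph{distinct} neighboring internal faces, so that the square move becomes admissible in the sense of Remark~\ref{rem:square-move-technical}, and then perform that square move.

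The main obstacle is that this naive induction cannot be carried out verbatim: a single mutation of a planar quiver need not be planar, so an intermediate quiver $R_i$ need not arise from \emph{any} plabic graph, and there is no way to bring $v$ into admissible square position when its valence in $R_i$ is not four. In other words, the lifting lemma as stated fails for arbitrary mutations, and the real task is to show that the quivers which \emph{are} realizable by plabic graphs form a mutation-connected sub-network: whenever $Q(P_1)$ and $Q(P_2)$ are mutation equivalent, one can choose a mutation path between them staying entirely within plabic-realizable quivers and using only mutations at square-type vertices. Controlling this is genuinely hard, in large part because there is essentially no usable invariant of quiver mutation to certify that two realizable quivers lie in the same class (cf.\ Remark~\ref{rem:mut-equivalence-difficult}); this is precisely the difficulty that the conjecture isolates.

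To make progress I would attack the two lemmas independently and search for extra structure. For rigidity I would seek a canonical form for a move class, determined by the quiver together with the cyclic arrangement of faces along the boundary, and reduce two graphs with the same quiver to a common normal form by flips and tails, allowing round trips through square moves that change the quiver and then restore it. For the connectivity problem I would pass to the dual picture, in which a trivalent plabic graph becomes a two-colored triangulation of the disk and a square move becomes a recoloring flip, and attempt to import the connectivity of flip graphs of triangulated surfaces, matching it against mutation equivalence of the triangulation quivers. Finally, I expect the cleanest case to be that of divides arising from morsifications, where the quivers are planar with triangular bounded faces and intermediate plabic-realizability is far better controlled; establishing the conjecture there would already suffice for assumption~(iii) of the introduction and hence for the main results.
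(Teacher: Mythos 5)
This statement is Conjecture~\ref{conj:plabic-vs-quivers} in the paper: it is left \emph{open} there, and the only part the paper establishes is the forward implication, namely Proposition~\ref{pr:plabic-vs-quivers} (local moves and boundary recoloring preserve the mutation class of the quiver). Your proposal correctly identifies this and correctly locates all of the content in the converse, but what you offer for the converse is a program, not a proof. Both of your key lemmas are unproven: the rigidity lemma (quiver isomorphism implies move equivalence) is asserted with only a vague plan of reduction to a ``canonical form'' that is never constructed, and the lifting lemma is, as you yourself concede, \emph{false as stated} --- a single mutation of a planar quiver need not stay within the class of quivers realizable by plabic graphs, and a mutation at a vertex whose valence is not four cannot be brought into admissible square-move position by quiver-preserving moves. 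Your proposed repair (showing that any mutation path between realizable quivers can be rerouted through realizable quivers using only square-type mutations) is precisely the hard connectivity statement that the conjecture isolates, and no argument for it is given.

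So the gap is total on the nontrivial direction: nothing beyond Proposition~\ref{pr:plabic-vs-quivers} is actually established. This is not a criticism of your honesty --- your diagnosis of where the difficulty lies (non-realizable intermediate quivers, the absence of usable mutation invariants per Remark~\ref{rem:mut-equivalence-difficult}, and the special role of the algebraic/triangular-face case relevant to assumption~(iii) of the introduction) is accurate and matches the paper's framing of why this remains conjectural. But as a proof attempt it must be judged incomplete, consistent with the fact that the authors themselves state the result only as a conjecture.
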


Recall that according to Conjecture~\ref{conj:shapiro-algebraic}, 
in the case of algebraic divides the switch transformations are not required. 

\begin{remark}
We cannot resist stating a closely related version of Shapiro's conjecture which can be formulated entirely in terms of quivers,
without any mention of plabic graphs. 
This version asserts that if two mutation equivalent quivers $Q$ and $Q'$ are both planar (i.e., each of them can be drawn on the plane without crossings), then $Q$ can be transformed into $Q'$ by a sequence of mutations in which each intermediate quiver is planar. 

It is important to note that in the course of these mutations, it may be necessary to alter the topology of a planar embedding of (a portion of) the quiver at hand. 
To illustrate, the quivers associated with the plabic graphs shown in Figure~\ref{fig:isthmus-move} are isomorphic to each other (so no mutations are necessary)---but their respective planar embeddings naturally associated with these drawings are different. 
\end{remark}


\section{Links from divides
}
\label{sec:links-of-divides}

As mentioned in Remark~\ref{rem:which-divides-come-from-morsifications}, 
it is very difficult to distinguish algebraic divides, 
i.e., those associated with real morsifications, from
the divides which do not arise in this way. 
Luckily, this problem can be circumvented using an elegant construction 
introduced by N.~A'Campo~\cite{AC1}, which we recall in Definition~\ref{def:acampo-link} below. 
For surveys of some of the related research, 
see~\cite[Sections 1 and~6]{ishikawa}
and \cite[Sections~4--5]{rudolph-handbook}. 

The main idea is to extend the equivalence of divides based on the topology of the associated
singularity (which can only be defined for algebraic divides)
to a more general equivalence relation---defined for all divides---based on 
the topology of a certain link constructed from a given divide. 

\begin{definition} 
\label{def:acampo-link}
Let $D$ be a divide in the unit disk~$\Disk=\{x^2+y^2\le 1\}\subset\RR^2$. 
The \emph{(A'Campo) link} $L(D)$ of~$D$ is constructed inside the unit $3$-sphere 
\[
\mathbf{S}^3=\{(x,y,u,v)\in\RR^4 \mid x^2+y^2+u^2+v^2=1\}, 
\] 
as follows. 
Assume that $D$ is given by a smooth immersion of a collection of intervals and circles.  
For each regular (resp., nodal) point $(x,y)\in D$ in the interior of~$\Disk$, find 
the two (resp., four) different points $(x,y,u,v)\in\mathbf{S}^3$ 
such that $(u,v)$ is a tangent vector to~$D$ at~$(x,y)$.
The link $L(D)$ is defined as the set of all such points $(x,y,u,v)$,
together with the points $(x,y,0,0)$ for $(x,y)\in D\cap\partial\Disk$. 
We can view $L(D)$ as a subset of~$\CC^2$ via the identification 
$(x,y,u,v)\simeq(x+\sqrt{-1}\,u,y+\sqrt{-1}\,v)$. 

Two divides are called \emph{link equivalent} if their associated links are isotopic. 
\end{definition}

\begin{remark}
While the original construction in~\cite{AC1} was for divides without closed branches, 
it can be extended to full generality, cf.~\cite{acampo-ihes, couture-perron, kawamura-quasi}. 
\end{remark}

\begin{remark}
All links appearing in this paper are naturally \emph{oriented}. 
Accordingly, the term ``link'' will generally mean ``oriented link'' (with the natural orientation).     
\end{remark}

We next review the relationship between A'Campo's construction presented in Definition~\ref{def:acampo-link} and the classical notion of the link of an isolated singularity. 

\begin{definition}
\label{def:link-of-singularity}
The link $L(C,z)$ associated with an isolated plane curve singularity $(C,z)$
(as in Section~\ref{sec:singularities-and-morsifications}) 
is the intersection of the curve~$C$ with a small sphere centered~at~$z$. 
\end{definition}

The importance of this construction comes from the following fundamental fact 
(see \cite{Epple, Neumann} for historical background): 

\begin{proposition}
\label{pr:link-determines-singularity}
The link $L(C,z)$ completely determines---and is determined~by---the 
local topology of a given singular complex plane curve $(C,z)$. 
\end{proposition}

The crucial property established by N.~A'Campo is that 
the constructions of Definitions~\ref{def:acampo-link} and~\ref{def:link-of-singularity} produce the same link. 
More precisely: 

\begin{theorem}[N.~A'Campo]
\label{th:L(D)=L(C,z)}
For an algebraic divide~$D$ arising from a real morsification of 
an isolated plane curve singularity~$(C,z)$, 
the links~$L(D)$ and $L(C,z)$ are isotopic to each other
inside~$\mathbf{S}^3$. 
\end{theorem}

Combining Proposition~\ref{pr:link-determines-singularity} with Theorem~\ref{th:L(D)=L(C,z)}, we obtain the following statement. 

\begin{corollary}
\label{cor:link-equivalence=top-equivalence}
Algebraic divides are link equivalent if and only if corresponding singularities are  topologically equivalent. 
\end{corollary}

\begin{remark}
\label{rem:link-determines-singularity}
Proposition~\ref{pr:link-determines-singularity} and Theorem~\ref{th:L(D)=L(C,z)} imply that for a divide~$D$ coming \linebreak[3]
from a real morsification, the link~$L(D)$ (hence the divide~$D$) determines the topological type of the underlying singularity. 
This does not however imply Theorems~\ref{th:ag-diagram-determines-topology} and/or~\ref{th:quiver-determines-topology} because the same \AG-diagram/quiver may potentially come from several distinct divides (either coming from morsifications or not), cf.\ Figure~\ref{fig:A3-ACGZ}. 
\end{remark}

\pagebreak[3]

\begin{remark}
\label{rem:libgober-williams}
By Corollary~\ref{cor:link-equivalence=top-equivalence},
any topological invariant of a plane curve singularity 
can be in principle recovered from the A'Campo link $L(D)$ 
of a divide~$D$ coming \linebreak[3]
from 
a real morsification. 
In practice, extracting such invariants~from~$L(D)$ 
can be challenging.
For example, the \emph{multiplicity} of a singularity 
is equal to the \emph{braid index} of its link~\cite{libgober,r.williams},
i.e., the smallest number of strands in a braid defining~it. 
However, computing the braid index of a link is, in general, a very difficult problem.
\end{remark}

We propose the following conjecture. 

\begin{conjecture}
\label{conj:link-equivalence=move-equivalence}
Algebraic divides are link equivalent if and only if the plabic graphs attached to them are move equivalent. 
\end{conjecture}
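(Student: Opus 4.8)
The plan is to first apply Corollary~\ref{cor:link-equivalence=top-equivalence}, which lets us replace ``$D$ and $D'$ are link-equivalent'' by ``the underlying complex singularities are topologically equivalent,'' and then to reduce the entire statement to the case of \emph{scannable} divides. Using assumption~(i), I would fix sequences of Yang-Baxter moves carrying $D$ and $D'$ into scannable divides $\widetilde D$ and $\widetilde D'$. This reduction is harmless for two reasons developed in Section~\ref{sec:yang-baxter-transformations}: Yang-Baxter transformations preserve the A'Campo link (Couture--Perron~\cite{couture-perron}), and they can be emulated by local moves on the attached plabic graphs. Hence it suffices to prove the equivalence for the scannable pair $\widetilde D,\widetilde D'$, where the combinatorial machinery of positive braids and plabic fences (Sections~\ref{sec:scannable-divides} and~\ref{sec:plabic-fences}) becomes available.

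For the forward direction (link-equivalence implies move-equivalence), I would feed each scannable divide into the Couture--Perron palindromic rule to obtain a positive braid, and then invoke assumption~(ii) to promote topological equivalence of the singularities into a \emph{positive isotopy} between these two braids. The crux is then to show that positive braid isotopy is faithfully mirrored by move equivalence of the corresponding plabic fences, which is the intended content of Section~\ref{sec:positive-braid-isotopy}: each elementary positive isotopy of braids ought to translate into a prescribed sequence of flip, square, and tail moves on the fence. Concatenating these sequences produces move-equivalent plabic fences $P\in\PP(\widetilde D)$ and $P'\in\PP(\widetilde D')$, and composing with the plabic emulation of the Yang-Baxter moves yields the required move-equivalent members of $\PP(D)$ and $\PP(D')$.

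For the reverse direction (move-equivalence implies link-equivalence), I would deploy the theory of admissible orientations of plabic graphs from Section~\ref{sec:plabic-graphs-and-links}. To an admissibly oriented plabic graph one attaches a combinatorially defined link, and the two facts to establish are that every local move transforms the orientation in a canonical way while preserving the isotopy class of this link, and that for the plabic fence of a scannable divide the resulting link is exactly the A'Campo link $L(\widetilde D)$. Granting these, move-equivalent plabic graphs carry isotopic associated links, so $L(\widetilde D)$ and $L(\widetilde D')$ are isotopic; running the Yang-Baxter link invariance backward and applying Corollary~\ref{cor:link-equivalence=top-equivalence} returns link-equivalence of $D$ and $D'$.

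The main obstacle is concentrated entirely in the three technical assumptions, and among them assumption~(ii)---that the positive braids arising from any two scannable divides of topologically equivalent singularities are related by \emph{positive} isotopy---is by far the deepest. Positive isotopy is strictly finer than conjugacy in general, so one cannot merely appeal to Markov-type moves; one must control which positive factorizations survive the isotopy, and establish that topological equivalence alone forces this finer relation. By contrast, I expect the verification that admissible orientations propagate canonically under moves, and that the link of a fence genuinely reproduces $L(\widetilde D)$, to be technical but tractable. The remaining gap, the existence of a scannable form in assumption~(i), together with assumption~(ii), are the parts I would not expect to settle in full generality by the present methods, which is precisely why the unconditional statement remains a conjecture rather than a theorem.
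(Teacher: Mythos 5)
Your proposal is correct and follows essentially the same route as the paper: the statement is indeed only established conditionally, and the paper's Theorems~\ref{th:malleable+positive-isotopic} and~\ref{th:malleable+move-equivalent} implement exactly your reduction---Yang-Baxter moves to reach a scannable divide (emulated by local moves and preserving the A'Campo link), the Couture--Perron braid plus the positive-isotopy assumption translated into fence moves for one direction, and admissible orientations with their move-invariant links for the other. You also correctly locate the residual gaps in the malleability and positive-isotopy assumptions (Conjectures~\ref{conj:alg-is-malleable} and~\ref{conj:positive-isotopy}), which is precisely why the unconditional statement remains a conjecture.
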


Since all plabic graphs attached to a given divide are move equivalent to each~other, the particular choices of attached plabic graphs in Conjecture~\ref{conj:link-equivalence=move-equivalence} are immaterial. 

\medskip

Conjectures~\ref{conj:morsif=mut}, \ref{conj:shapiro-algebraic} and~\ref{conj:link-equivalence=move-equivalence} and Corollary~\ref{cor:link-equivalence=top-equivalence} 
are subsumed within the following statement, 
which is diagrammatically represented in 
Figure~\ref{fig:relationships-between-conjectures}. 

\begin{conjecture}[\underbar{Main conjecture, expanded}]
\label{conj:morsif=mut-via-divides}
Let $D_1$ and $D_2$ be algebraic divides. 
Let $Q_1=Q(D_1)$ and $Q_2=Q(D_2)$ be their quivers. 
Let $P_1\in\mathbf{P}(D_1)$ and \linebreak[3]
$P_2\in\mathbf{P}(D_2)$ 
be plabic graphs attached to $D_1$ and~$D_2$.
Then the following are equivalent:
\begin{itemize}[leftmargin=.3in]
\item[{\rm\bf (s)}] the singularities giving rise
to the divides $D_1$ and $D_2$ are topologically equivalent;  
\item[{\rm\bf (d)}] the divides $D_1$ and $D_2$ are link equivalent;
\item[{\rm\bf (q)}] the quivers $Q_1$ and $Q_2$ are mutation equivalent;
\item[{\rm\bf (p)}] the plabic graphs $P_1$ and $P_2$ are move equivalent. 
\end{itemize}
\end{conjecture}

\begin{figure}[ht]
\[
\begin{array}{|c|}
\hline
\\[-.15in]
\hspace{-.2in}
\begin{tikzcd}[row sep=large] 
\begin{array}{c}\text{\textbf{(s)} topological equivalence}\\ 
\text{\quad of singularities}
\end{array}
\arrow[d, Leftrightarrow, "\text{Corollary~\ref{cor:link-equivalence=top-equivalence}\ }"']   
\arrow[rr, Leftrightarrow, "\text{\textbf{?}}", "\text{Conjecture~\ref{conj:morsif=mut}}"'] && 
\begin{array}{l}
\text{\textbf{(q)} mutation equivalence of}\\
\, \qquad \text{quivers of singularities} 
\arrow[d, Leftarrow, xshift=-.1in, "\text{Proposition~\ref{pr:plabic-vs-quivers}}\ "']
\arrow[d, Rightarrow, xshift=.25in, 
"\text{\ Conjecture~\ref{conj:shapiro-algebraic}}", 
"\text{\textbf{?}}"']
\end{array} \\
\begin{array}{c}
\quad\ \text{\textbf{(d)} link equivalence of}\quad \ \\
\text{\qquad algebraic divides}  
\end{array}
\arrow[rr, Leftarrow, yshift=.12in, "\text{Corollary~\ref{cor:p=>d}}"]
\arrow[rr, Rightarrow, yshift=-.12in, "\text{\textbf{?}}", 
"\text{Conjecture~\ref{conj:link-equivalence=move-equivalence}}"
'] 
&& 
\begin{array}{c}
\text{\textbf{(p)} move equivalence of plabic graphs}\\
\quad \text{attached to algebraic divides}  
\end{array}
\end{tikzcd}\hspace{-.1in}
\\[-.1in]
\\
\hline
\end{array}
\]
\caption{Unpacking Conjecture~\ref{conj:morsif=mut-via-divides}.
} 
\label{fig:relationships-between-conjectures} 
\end{figure} 

The key part of Conjecture~\ref{conj:morsif=mut-via-divides} is Conjecture~\ref{conj:link-equivalence=move-equivalence} (the equivalence \textbf{(d)}$\Leftrightarrow$\textbf{(p)}); it is arguably more important than the original Conjecture~\ref{conj:morsif=mut} (the equivalence~\textbf{(s)}$\Leftrightarrow$\textbf{(q)}). 
On the singularity theory side, replacing topological equivalence of singularities
by the link equivalence of divides 
makes the issue at hand more tractable computationally, 
and might allow extensions to non-algebraic divides and their 
links, cf.\ Problem~\ref{problem:divide-links}
and Remark~\ref{rem:transverse-non-simplicity} below. 
On~the cluster side, replacing mutation equivalence of quivers by 
the move equivalence of plabic graphs makes even more sense:
in light of Remark~\ref{rem:mut-equivalence-difficult}, 
it seems reasonable to restrict the mutation dynamics 
to a manageable subset of allowed directions.
%

In the rest of the paper, we focus on Conjecture~\ref{conj:link-equivalence=move-equivalence} (the equivalence \textbf{(d)}$\Leftrightarrow$\textbf{(p)}). 
In Section~\ref{sec:links-from-plabic}, we prove the implication \textbf{(p)}$\Rightarrow$\textbf{(d)}, see Corollary~\ref{cor:p=>d}.
In subsequent sections, we make partial progress towards the converse implication \textbf{(d)}$\Rightarrow$\textbf{(p)}. 

\medskip

It is tempting to extend Conjecture~\ref{conj:morsif=mut-via-divides} to a larger generality: 

\begin{problem}
\label{problem:divide-links}
Identify a class of divides---as broad as possible---within which the various equivalences in Conjecture~\ref{conj:morsif=mut-via-divides} hold. 
\end{problem}

\begin{remark}
It may well be that \textbf{(d)}$\Rightarrow$\textbf{(q)} for arbitrary divides.
It~is even possible that \textbf{(d)}$\Rightarrow$\textbf{(p)} for arbitrary plabic graphs, provided one uses transverse equivalence, cf.\ Problem~\ref{prob:plabic-transverse} below. 
\end{remark}

\begin{remark}
For general plabic graphs, \textbf{(q)}~does not imply \textbf{(p)}, see Remark~\ref{rem:q-not=>p}. 
Likewise, for general (non-algebraic) divides,  \textbf{(q)}~does not imply~\textbf{(d)};
a counterexample, borrowed from \cite[Figure~4]{balke-kaenders},
is shown in Figure~\ref{fig:counterexample-balke-kaenders}. 
\end{remark}

\begin{figure}[ht]
\begin{center}
\setlength{\unitlength}{0.75pt}
\begin{picture}(80,200)(0,-100)
\thicklines
\qbezier(40,0)(20,30)(20,50)
\qbezier(50,80)(20,80)(20,50)
\qbezier(40,0)(60,30)(60,100)
\qbezier(50,80)(80,80)(80,60)
\qbezier(0,30)(80,30)(80,60)

\qbezier(40,0)(20,-30)(20,-50)
\qbezier(50,-80)(20,-80)(20,-50)
\qbezier(40,0)(60,-30)(60,-100)
\qbezier(50,-80)(80,-80)(80,-60)
\qbezier(0,-30)(80,-30)(80,-60)
\end{picture}
\hspace{1in}
\begin{picture}(80,200)(0,-100)
\thicklines
\qbezier(40,0)(20,30)(20,50)
\qbezier(50,80)(20,80)(20,50)
\qbezier(40,0)(60,30)(60,100)
\qbezier(50,80)(80,80)(80,60)
\qbezier(0,30)(80,30)(80,60)

\qbezier(40,0)(60,-30)(60,-50)
\qbezier(30,-80)(60,-80)(60,-50)
\qbezier(40,0)(20,-30)(20,-100)
\qbezier(30,-80)(0,-80)(0,-60)
\qbezier(80,-30)(0,-30)(0,-60)
\end{picture}
\vspace{-.05in}
\end{center}
\caption{Two non-algebraic divides $D_1$ and~$D_2$. 
The quivers $Q(D_1)$ and $Q(D_2)$ are isomorphic. 
On the other hand, the links $L(D_1)$ and $L(D_2)$ are not isotopic: 
the link $L(D_2)$ has an unknotted component but the link $L(D_1)$ has not. 
}
\vspace{-.15in}
\label{fig:counterexample-balke-kaenders}
\end{figure}

\section{Oriented divides and their links}
\label{sec:oriented-divides-and-their-links}

Our proof of the implication \textbf{(p)}$\Rightarrow$\textbf{(d)} in Conjecture~\ref{conj:morsif=mut-via-divides} will rely on a construction that associates a link to an arbitrary plabic graph. 
But first, we need to discuss a more flexible notion of oriented divides
(and their associated links). 
This notion, due to
W.~Gibson and M.~Ishikawa~\cite{gibson, gibson-ishikawa}, is a variation of Arnold's
description of links associated to plane curves~\cite{arnold-curves}.

\begin{definition}
  \label{def:oriented-divide}
  An \emph{oriented divide} $\vec{D}$ in a disk~$\Disk$ is an immersion into~$\Disk$ of a finite set of
  \emph{oriented} circles satisfying conditions
  (D1), (D3), and (D4) of
  Definition~\ref{def:divide}.
  (Condition~(D2) is unnecessary, since there are no intervals. The
  connectivity restrictions (D5) and (D6) are not required for
  our purposes here.) 
  
  The \emph{link} $L(\vec{D})$ of an
  oriented divide is defined as in Definition~\ref{def:acampo-link},
  except that we only take the vectors $(x,y,u,v)$ where $(u,v)$
  points in the direction of the orientation of~$\vec D$.
  As before, we consider oriented divides up to isotopy inside~$\Disk$. 
\end{definition}

\begin{definition}
  \label{def:oriented-divide-moves}
  Two oriented divides are called \emph{move equivalent} if they can be 
  related to each other by a sequence of local moves of the following three kinds: 
  \begin{itemize}[leftmargin=.3in]
  \item \emph{triangle moves}, as in Figure~\ref{fig:oriented-yb-move} (with
    any orientations); 
  \item \emph{safe tangency moves} with oppositely oriented strands, as in
    Figure~\ref{fig:oriented-reidemeister2-move}; and/or 
  \item \emph{U-turn moves}, only allowed near the boundary of the disk~$\Disk$, as in
    Figure~\ref{fig:oriented-uturn-move}.
  \end{itemize}
\end{definition}

\begin{figure}[ht]
\begin{center}
\setlength{\unitlength}{1.4pt}
\begin{picture}(33,24)(0,0)
\thicklines
\qbezier(2,12)(16.5,5)(31,12)
\qbezier(10,0)(12,14)(23,24)
\qbezier(10,24)(21,14)(23,0)
\put(31,12){\vector(14.5,7){1}}
\put(10,24){\vector(-11,10){1}}
\put(10,0){\vector(-2,-14){0.15}}
\end{picture}
\begin{picture}(25,24)(0,0)
\put(12.5,12){\makebox(0,0){$\longleftrightarrow$}}
\end{picture}
\begin{picture}(33,24)(0,0)
\thicklines
\qbezier(2,12)(16.5,19)(31,12)
\qbezier(10,0)(21,10)(23,24)
\qbezier(10,24)(12,10)(23,0)
\put(31,12){\vector(14.5,-7){1}}
\put(10,0){\vector(-11,-10){1}}
\put(10,24){\vector(-2,14){0.15}}
\end{picture}
\qquad\qquad\qquad
\begin{picture}(33,24)(0,0)
\thicklines
\qbezier(2,12)(16.5,5)(31,12)
\qbezier(10,0)(12,14)(23,24)
\qbezier(10,24)(21,14)(23,0)
\put(31,12){\vector(14.5,7){1}}
\put(23,0){\vector(2,-14){0.15}}
\put(10,0){\vector(-2,-14){0.15}}
\end{picture}
\begin{picture}(25,24)(0,0)
\put(12.5,12){\makebox(0,0){$\longleftrightarrow$}}
\end{picture}
\begin{picture}(33,24)(0,0)
\thicklines
\qbezier(2,12)(16.5,19)(31,12)
\qbezier(10,0)(21,10)(23,24)
\qbezier(10,24)(12,10)(23,0)
\put(31,12){\vector(14.5,-7){1}}
\put(10,0){\vector(-11,-10){1}}
\put(23,0){\vector(11,-10){1}}
\end{picture}
\end{center}
\caption{Triangle moves on oriented divides.
}
\label{fig:oriented-yb-move}
\end{figure}

\begin{figure}[ht]
\begin{center}
\setlength{\unitlength}{1.4pt}
\begin{picture}(33,10)(0,8)
\thicklines
\qbezier(2,16)(16.5,4)(31,16)
\qbezier(2,8)(16.5,20)(31,8)
\put(2,16){\vector(-14.5,12){1}}
\put(31,8){\vector(14.5,-12){1}}
\end{picture}
\begin{picture}(25,10)(0,8)
\put(12.5,12){\makebox(0,0){$\longleftrightarrow$}}
\end{picture}
\begin{picture}(33,10)(0,8)
\thicklines
\put(31,16){\vector(-1,0){29}}
\put(2,8){\vector(1,0){29}}
\end{picture}
\qquad\qquad\qquad
\begin{picture}(33,10)(0,8)
\thicklines
\qbezier(2,16)(16.5,4)(31,16)
\qbezier(2,8)(16.5,20)(31,8)
\put(2,8){\vector(-14.5,-12){1}}
\put(31,16){\vector(14.5,12){1}}
\end{picture}
\begin{picture}(25,10)(0,8)
\put(12.5,12){\makebox(0,0){$\longleftrightarrow$}}
\end{picture}
\begin{picture}(33,10)(0,8)
\thicklines
\put(31,8){\vector(-1,0){29}}
\put(2,16){\vector(1,0){29}}
\end{picture}

\end{center}
\caption{Safe tangency moves on oriented divides.
}
\label{fig:oriented-reidemeister2-move}
\end{figure}

\begin{figure}[ht]
\begin{center}
\setlength{\unitlength}{1.4pt}
\begin{picture}(33,29)(0,-6)
\thicklines
\qbezier(13,4)(12,14)(23,24)
\qbezier(10,24)(21,14)(20,4)
\qbezier(13,4)(13,-2)(16.5,-2)
\qbezier(20,4)(20,-2)(16.5,-2)
\put(10,24){\vector(-11,10){1}}
\thinlines
\put(5,-6){\line(1,0){23}}
\put(5,-7){\line(1,0){23}}
\end{picture}
\begin{picture}(25,26)(0,-6)
\put(12.5,12){\makebox(0,0){$\longleftrightarrow$}}
\end{picture}
\begin{picture}(33,26)(0,-6)
\thicklines
\qbezier(20,4)(20,18)(23,24)
\qbezier(10,24)(13,18)(13,4)
\qbezier(13,4)(13,-2)(16.5,-2)
\qbezier(20,4)(20,-2)(16.5,-2)
\put(10,24){\vector(-3,6){1}}
\thinlines
\put(5,-6){\line(1,0){23}}
\put(5,-7){\line(1,0){23}}
\end{picture}
\qquad\qquad\qquad
\begin{picture}(33,26)(0,-6)
\thicklines
\qbezier(13,4)(12,14)(23,24)
\qbezier(10,24)(21,14)(20,4)
\qbezier(13,4)(13,-2)(16.5,-2)
\qbezier(20,4)(20,-2)(16.5,-2)
\put(23,24){\vector(11,10){1}}
\thinlines
\put(5,-6){\line(1,0){23}}
\put(5,-7){\line(1,0){23}}
\end{picture}
\begin{picture}(25,26)(0,-6)
\put(12.5,12){\makebox(0,0){$\longleftrightarrow$}}
\end{picture}
\begin{picture}(33,26)(0,-6)
\thicklines
\qbezier(20,4)(20,18)(23,24)
\qbezier(10,24)(13,18)(13,4)
\qbezier(13,4)(13,-2)(16.5,-2)
\qbezier(20,4)(20,-2)(16.5,-2)
\put(23,24){\vector(3,6){1}}
\thinlines
\put(5,-6){\line(1,0){23}}
\put(5,-7){\line(1,0){23}}
\end{picture}
\end{center}
\caption{(Boundary) U-turn moves on oriented divides.
The double horizontal lines at the bottom represent
the boundary of the ambient disk~$\Disk$.
}
\label{fig:oriented-uturn-move}
\vspace{-.15in}
\end{figure}

\begin{remark}
  The U-turn move is not an explicit move in the works of Gibson and
  Ishikawa, but appears implicitly in \cite[Lemma
  2.5]{gibson-ishikawa} and \cite[Proposition 4.2]{gibson}, which
  assert that adding a loop on an outside arc does not change the
  isotopy class of the link. In addition to our U-turn move, they
  allow another move adding an inward-pointing rather than
  outward-pointing loop. We do not include inward-pointing loops
  because, on the one hand, it is not needed for the cluster algebra
  applications, and, on the other hand, it changes the transverse
  isotopy class of the link (Definition~\ref{def:contact}; see
  Proposition~\ref{prop:transverse-divide}).
\end{remark}

\begin{proposition}
\label{prop:ybe-oriented}
  If oriented divides $\vec{D}_1$ and $\vec{D}_2$ are
  move equivalent, then the links $L(\vec{D}_1)$ and $L(\vec{D}_2)$ are 
  smoothly isotopic to each other.
\end{proposition}

To clarify: we care about smooth isotopy since we will later work in the category of transverse links.  

\begin{proof}
It suffices to show the existence of a $C^1$ isotopy. 
The existence of a $C^\infty$~isotopy would follow, since everything is compact and there is a polynomial approximation. 

  For the triangle move, there is a path of immersions of the branches in
  the plane connecting the two oriented divides, passing through a
  diagram that has a triple intersection point. We can lift this path
  of immersions to~$\mathbf{S}^3$ as in Definition~\ref{def:oriented-divide}.
  Since the (oriented) tangents never agree, we get an isotopy of
  links.

  The case of safe tangencies is similar.

It remains to treat the case of a U-turn move, which is a bit trickier. 
We note that the link of an oriented divide always avoids the
  equatorial circle in $\mathbf{S}^3$ given by
  $\{\,(x,y,0,0)\mid x^2+y^2 = 1\,\}$. 
  Loosely, a~U-turn move corresponds to
  letting $L(\vec{D})$ pass through that circle once. 
  To verify this, we use the following explicit construction. 
  Consider the 1-parameter family of oriented curves
  $\vec C_\varepsilon = \{(x_\varepsilon(t),y_\varepsilon(t))\} \subset \Disk$ 
  given by
\begin{align}
\label{eq:x_eps(t)}  x&=x_\varepsilon(t)=\varepsilon t +  t^3, \\
\label{eq:y_eps(t)}   y&=y_\varepsilon(t)=-(1-\tfrac12 \varepsilon^2)(1-\tfrac12 t^2), 
\end{align}
where $\varepsilon$ and~$t$ are small real parameters. 
(To be more precise, we consider $|\varepsilon|\le \frac12\delta^2$ and $|t|\le \delta$, for a small
positive~$\delta$.
Additional tweaking is required to have the two 
curve segments for $\varepsilon=\pm \frac12\delta^2$
match at the endpoints.) 
For $\varepsilon \ne 0$, 
the curves $\vec C_\varepsilon$ are segments of an oriented divide
in the interior of~$\Disk$, 
differing by a U-turn move near the boundary.
For $\varepsilon\!=\!0$, we get a curve~$\vec C_0$ with a cusp
at the boundary point~$(0,-1)$. 

For $\varepsilon \ne 0$, the corresponding (segment of the) 
link $L_\varepsilon=L(\vec C_\varepsilon)$ is given by 
\begin{align}
\label{eq:L-epsilon}
L_\varepsilon(t) 
         &= (x,y,\beta \dot x, \beta \dot y), \\
\intertext{where} 
\label{eq:dot-x}  \dot x&
=\varepsilon+3t^2, \\
\label{eq:dot-y}  \dot y&
=(1-\tfrac12 \varepsilon^2) \,t ,\\
\beta&=\beta_\varepsilon(t)=\Bigl(\dfrac{1-x^2-y^2}{\dot x^2+\dot y^2}\Bigr)^{1/2}. 
\label{eq:beta-varepsilon}
\end{align}
These formulas can be extended to the case $\varepsilon=0$,
with the convention $\beta_0(0)=1$. 

Each curve $L_\varepsilon$ does not intersect itself. 
We are going to show that the family~$L_\varepsilon$ gives a $C^1$-isotopy 
between the two sides of a U-turn move. 
More precisely, we will prove that each of the 4 coordinates of 
$L_\varepsilon(t)$ (cf.~\eqref{eq:L-epsilon}) is a differentiable function 
of two variables $\varepsilon$ and~$t$, 
and each of its partial derivatives is continuous in the vicinity of 
the point $\varepsilon=t=0$.
For the first two coordinates $x$ and~$y$, this statement is obvious,
cf.\ \eqref{eq:x_eps(t)}--\eqref{eq:y_eps(t)}. 
Let us treat the remaining coordinates $\beta\dot x$ and $\beta\dot y$. 

Straightforward calculations show that
\begin{align}
\label{eq:x^2+y^2}
x^2+y^2 &= 1-\varepsilon^2-t^2+\operatorname{poly}_{\ge 3}(\varepsilon,t),\\
\label{eq:dot-x^2+dot-y^2}
\dot x^2+\dot y^2 &= \varepsilon^2+t^2+\operatorname{poly}_{\ge 3}(\varepsilon,t),
\end{align}
where the notation $\operatorname{poly}_{\ge 3}(\varepsilon,t)$ stands for a polynomial
in $\mathbb{Q}[\varepsilon,t]$ in which each monomial has degree~$\ge 3$. 
Substituting \eqref{eq:x^2+y^2}--\eqref{eq:dot-x^2+dot-y^2} into~\eqref{eq:beta-varepsilon},
we see that
\begin{equation*}
\beta_\varepsilon(t)=\Bigl(\dfrac{\varepsilon^2+t^2+\operatorname{poly}_{\ge 3}(\varepsilon,t)}{\varepsilon^2+t^2+\operatorname{poly}_{\ge 3}(\varepsilon,t)}\Bigr)^{1/2}
=1+O(\sqrt{\varepsilon^2+t^2}). 
\end{equation*}
Since both~$|\varepsilon|$ and~$|t|$ 
do not exceed $\sqrt{\varepsilon^2+t^2}$, we conclude that 
\begin{equation}
\label{eq:beta=1+O}
\beta_\varepsilon(t)=1+O(\sqrt{\varepsilon^2+t^2})
\underset{(\varepsilon,t)\to(0,0)}{-\!\!\!-\!\!\!-\!\!\!-\!\!\!\longrightarrow}1, 
\end{equation}
so $\beta_\varepsilon(t)$ is a continuous function of $\varepsilon$ and~$t$. 

We next show that both $\varepsilon\beta_\varepsilon(t)$
and $t\beta_\varepsilon(t)$ are in~$C^1$. 
In view of \eqref{eq:dot-x}--\eqref{eq:dot-y},
this will imply that $\beta\dot x$ and $\beta\dot y$ 
 are in~$C^1$, as desired. 
The two cases are completely analogous, so let us consider 
$\varepsilon\beta_\varepsilon(t)$. 
This function is clearly smooth at every point other than $\varepsilon=t=0$,
so we only need to examine the latter point. 
Equation~\eqref{eq:beta=1+O} implies that
\[
\varepsilon\beta_\varepsilon(t)=\varepsilon+O(\varepsilon^2+t^2), 
\]
so $\varepsilon\beta_\varepsilon(t)$ is differentiable at
$\varepsilon\!=\!t\!=\!0$, with partial derivatives 
$\frac{\partial (\varepsilon\beta)}{\partial \varepsilon}(0,0)=1$
and $\frac{\partial (\varepsilon\beta)}{\partial t}(0,0)=0$. 
Away from this point, these derivatives can be 
computed using~\eqref{eq:beta-varepsilon}: 
\begin{align*}
\frac{\partial}{\partial \varepsilon}(\varepsilon\beta)
&=\beta+
\frac{\varepsilon}{2\beta}\frac{\operatorname{poly}_{\ge 4}(\varepsilon,t)}{(\varepsilon^2+t^2+\operatorname{poly}_{\ge 3}(\varepsilon,t))^2}, 
\\
\frac{\partial}{\partial t}(\varepsilon\beta)
&=\frac{\varepsilon}{2\beta}\frac{\operatorname{poly}_{\ge 4}(\varepsilon,t)}{(\varepsilon^2+t^2+\operatorname{poly}_{\ge 3}(\varepsilon,t))^2}.
\end{align*}
As $(\varepsilon,t)$ goes to~$(0,0)$, these expressions converge to 1 and~0,
respectively, establishing the continuity of the derivative. 
This completes the proof of Proposition~\ref{prop:ybe-oriented}. 
\end{proof}

\begin{remark}
\label{rem:same-direction tangency}
Passing through a \emph{same-direction tangency}, with the
two strands oriented in the same direction, 
does not produce an isotopy of the associated links, 
since in this case, $L(\vec D)$ crosses through itself.
\end{remark}


For an oriented divide $\vec{D}$, we denote by $-\vec{D}$ the same
  divide with all orientations reversed. 

\begin{lemma}
\label{lem:reverse-divide}
For any oriented divide $\vec{D}$, the links $L(\vec{D})$ and $L(-\vec{D})$ are isotopic.
\end{lemma}

\begin{proof}
  The two links are isotopic through the isotopy of $\mathbf{S}^3$ given by
  \[
    \phi_t(x,y,u,v) = (x,y,\cos(\pi t) u + \sin(\pi t) v, -\sin(\pi t)
    u + \cos(\pi t) v). \qedhere
  \]
\end{proof}

\begin{remark}
\label{rem:o-divide}
  The main difference between Arnold's theory and the Gibson-Ishi\-kawa
  theory described above is that we are working in the disk and
  lifting to $\mathbf{S}^3$, rather than working in the plane and
  lifting to the unit tangent bundle of the plane, which is
  topologically a solid torus. Every link is the link of an oriented
  divide, either in the solid torus (Arnold's theory; see S.~Chmutov, V.~Goryunov, and H.~Murakami~\cite{CGM}) or in
  $\mathbf{S}^3$ (W.~Gibson and M.~Ishikawa~\cite{gibson-ishikawa}). There is also a concrete set
  of moves that relate any two oriented divides whose links are
  isotopic (W.~Gibson~\cite{gibson}), analogous to the Reidemeister moves. This
  set of moves is slightly larger than the moves above; see
  Proposition~\ref{prop:transverse-divide} below for the explanation. 
\end{remark}

\begin{remark}
\label{rem:o-divides-vs-link-diagrams}
Since any link is a link of an oriented divide, one can think of oriented divides as a
  combinatorial representation of links, on the same level as
  the traditional link diagrams. It is a less intuitive representation, and
  some features of links are harder to discern from an oriented divide, 
  compared to link diagrams; 
  for instance, computing the linking number is more involved.
\end{remark}

A'Campo's construction of links of ordinary (i.e., unoriented) divides, 
reproduced in Definition~\ref{def:acampo-link}, 
is a special case of the Gibson-Ishikawa construction 
of links of oriented divides 
described in Definition~\ref{def:oriented-divide},
in the following precise sense. 

\begin{definition}
\label{def:doubling}
To any divide~$D$, we can associate an oriented divide~$o(D)$
obtained by the following ``doubling'' procedure:
\begin{itemize}[leftmargin=.3in]
\item  
replace each 
branch of~$D$ with two parallel oriented
branches of $o(D)$, with opposite orientations, following the
``rules of the road'' (driving on the right) illustrated in
Figure~\ref{fig:divide-to-o-divide};
\item
near each point where $D$ hits the boundary $\partial\Disk$,
connect the two oriented branches, as shown in Figure~\ref{fig:divide-to-o-divide}
on the right (at the top). 
\end{itemize}
\end{definition}

\begin{figure}[ht]
\begin{center}
\setlength{\unitlength}{2.5pt}
\begin{picture}(50,50)(-10,-5)
\put(15,21){\circle{55}}
\put(15,21){\circle{57}}
\thicklines
\qbezier(-3,43)(40,0)(15,0)
\qbezier(33,43)(-10,0)(15,0)
\put(15,23){\circle{20}}
\put(-5,21){\makebox(0,0){$P$}}
\end{picture}
\begin{picture}(20,50)(0,0)
\put(10,25){\makebox(0,0){$\longmapsto$}}
\end{picture}
\begin{picture}(50,50)(-10,-5)
\put(35,21){\makebox(0,0){$o(P)$}}
\put(15,21){\circle{55}}
\put(15,21){\circle{57}}
\thicklines
\qbezier(2,40)(41,-2)(15,-2)
\qbezier(0,39)(38,0)(15,0)
\qbezier(0,39)(-1,40)(-0.3,40.7)
\qbezier(-0.3,40.7)(0.5,41.5)(2,40)

\qbezier(28,40)(-11,-2)(15,-2)
\qbezier(30,39)(31,40)(30.3,40.7)
\qbezier(30.3,40.7)(29.5,41.5)(28,40)

\qbezier(30,39)(-8,0)(15,0)
\put(15,23){\circle{18}}
\put(15,23){\circle{22}}
\put(26,23){\vector(0,1){2}}
\put(24,23){\vector(0,-1){2}}
\put(6,23){\vector(0,1){2}}
\put(4,23){\vector(0,-1){2}}
\put(15,-2){\vector(1,0){2}}
\put(15,0){\vector(-1,0){2}}
\put(5.7,36){\vector(-39,43){2.7}}
\put(0,39){\vector(39,-40){4}}
\put(24.3,36){\vector(-39,-43){1}}
\put(28,37){\vector(39,40){1}}
\end{picture}
\end{center}
\caption{A divide $P$ gives rise to an oriented divide~$o(P)$ via
``doubling.''}
\label{fig:divide-to-o-divide}
\vspace{-.1in}
\end{figure}

\vspace{-.15in}

\begin{proposition}
\label{prop:L(D)=L(o(D))}
The links $L(D)$ and $L(o(D))$ are smoothly isotopic to each other.
\end{proposition}

\begin{proof}
Compare Definitions~\ref{def:acampo-link} 
and~\ref{def:oriented-divide}, and use the isotopy from the proof of Proposition~\ref{prop:ybe-oriented}. 
\end{proof}

\begin{remark}
While the construction of the link of an oriented divide is elementary, one may still want to construct a conventional link diagram for~$L(D)$ directly from the combinatorial topology of a divide~$D$. 
Several solutions of this problem were suggested by various authors. 
In particular, O.~Couture and B.~Perron~\cite{couture-perron} 
gave an algorithm producing a braid
representation for the link~$L(D)$ associated with \emph{any} divide~$D$. 
It~involves an extension of the basic construction to \emph{signed} divides, 
wherein each node is labeled by a sign, either $+$~or~$-$. 
(The case when all signs are positive corresponds to the usual notion.) 
In the special case of ``scannable'' divides, 
the Couture-Perron construction simplifies considerably, 
see Section~\ref{sec:scannable-divides} below. 

Other (related) constructions of braid representations of links of (oriented) divides
were given by S.~Chmutov~\cite{chmutov}, M.~Hirasawa~\cite{hirasawa}, and
W.~Gibson--M.~Ishikawa~\cite{gibson-ishikawa}. 
While those constructions are more direct than the one in~\cite{couture-perron},
and do not involve signs, 
they are not ``local'' as they require dragging the strands of the link to the 
boundary of the disk, and then back. All of these methods involve a
non-canonical choice of a preferred ``Morse direction'' within the
ambient disk of the divide.
\end{remark}

\newpage

\section{Links from plabic graphs}
\label{sec:links-from-plabic}

We next explain how a plabic graph gives rise to an oriented divide,
and therefore to a link. 

\begin{definition}
\label{def:L(P)=L(o(P))}
The oriented divide $o(P)$ associated to a plabic graph~$P$ is 
constructed as follows. 
Turn each edge in~$P$ into a pair of
oppositely-oriented strands as in Definition~\ref{def:doubling}. 
(Remember that we are ``driving on the right.'') 
At each white trivalent vertex of~$P$,
connect the strands by turning right; at each black vertex connect
the strands by turning left, see Figure~\ref{fig:plabic-to-o-divide}. 
Note that when we turn left, we introduce transversal crossings in the divide. 
At the univalent ends of~$P$ lying on~$\partial\Disk$, make a U-turn near the boundary by turning either left or
right depending on whether the end is white or black, respectively, see
Figure~\ref{fig:plabic-boundary-to-o-divide}. (This introduces a
crossing if the end is black.) 

We then construct a link $L(P)=L(o(P))$ from the resulting oriented divide~$o(P)$.
\end{definition}


\begin{figure}[ht]
\begin{center}
\setlength{\unitlength}{0.75pt}
\begin{picture}(75,80)(0,0)
\thicklines
\put(10,0){\line(1,2){20}}
\put(10,80){\line(1,-2){20}}
\put(30,40){\line(1,0){40}}
\put(30,40){\circle*{6}}
\end{picture}
\begin{picture}(30,80)(0,0)
\put(15,40){\makebox(0,0){$\longmapsto$}}
\end{picture}
\begin{picture}(70,80)(0,0)
  \thicklines
  \put(55,30){\line(1,0){15}}
  \put(55,50){\line(1,0){15}}
  \put(56,30){\vector(1,0){10}}
  \put(70,50){\vector(-1,0){15}}
  \qbezier(25,30)(35,30)(40,40)
  \qbezier(25,50)(35,50)(40,40)
  \qbezier(55,50)(45,50)(40,40)
  \qbezier(55,30)(45,30)(40,40)
  
  \qbezier(25,30)(15,30)(10,20)
  \qbezier(25,50)(15,50)(10,60)
  \put(4,8){\line(1,2){6}}
    \put(10,20){\vector(-1,-2){4}}
  \put(4,72){\vector(1,-2){6}}
  \put(4,72){\line(1,-2){6}}
  \qbezier(26,68)(30,60)(25,50)
  \qbezier(26,12)(30,20)(25,30)
  \qbezier(25,30)(20,40)(25,50)

  \put(20,0){\line(1,2){6}}
  \put(20,80){\line(1,-2){4}}
%
  \put(20,0){\vector(1,2){6}}
  \put(26,68){\vector(-1,2){4}}
\end{picture}
\qquad
\qquad
\begin{picture}(75,80)(0,0)
\thicklines
\put(10,0){\line(1,2){18.5}}
\put(10,80){\line(1,-2){18.5}}
\put(33,40){\line(1,0){37}}
\put(30,40){\circle{6}}
\end{picture}
\begin{picture}(30,80)(0,0)
\put(15,40){\makebox(0,0){$\longmapsto$}}
\end{picture}
\begin{picture}(70,80)(0,0)
\thicklines
\put(4,8){\line(1,2){12}}
\put(4,72){\line(1,-2){12}}
\qbezier(16,32)(20,40)(16,48)

\put(40,50){\line(1,0){30}}
\put(32,56){\line(-1,2){12}}
\put(40,30){\line(1,0){30}}
\put(32,24){\line(-1,-2){12}}
\qbezier(40,50)(35,50)(32,56)
\qbezier(40,30)(35,30)(32,24)

\put(70,50){\line(-1,0){30}}
\put(4,72){\vector(1,-2){10}}
\put(20,0){\vector(1,2){10}}
\put(70,50){\vector(-1,0){20}}
\put(16,32){\vector(-1,-2){8}}
\put(32,56){\vector(-1,2){7}}
\put(40,30){\vector(1,0){20}}
\end{picture}
\end{center}
\caption{Building an oriented divide around internal vertices of a plabic
graph.}
\label{fig:plabic-to-o-divide}
\vspace{-.1in}
\end{figure}

\vspace{-.05in}

\begin{figure}[ht]
\begin{center}
\setlength{\unitlength}{1.9pt}
\begin{picture}(33,26)(0,-6)
\thicklines
\put(16.5,-6){\line(0,1){30}}
\thinlines
\put(5,-6){\line(1,0){23}}
\put(5,-7){\line(1,0){23}}
\put(16.5,-6.5){\circle*{2.5}}
\end{picture}
\begin{picture}(10,26)(0,-6)
\put(5,12){\makebox(0,0){$\longmapsto$}}
\end{picture}
\begin{picture}(33,26)(0,-6)
\thicklines
\qbezier(14,4)(14,-2)(16.5,-2)
\qbezier(19,4)(19,-2)(16.5,-2)
\qbezier(19,4)(19,8)(16.5,12)
\qbezier(13,24)(13,18)(16.5,12)
\qbezier(14,4)(14,8)(16.5,12)
\qbezier(20,24)(20,18)(16.5,12)
\put(13.8,18){\vector(1,-4.5){0.1}}
\put(19.8,21){\vector(1,4.5){0.1}}
\put(19,3){\vector(0,-1){1}}
\put(14,5){\vector(0,1){1}}
\thinlines
\put(5,-6){\line(1,0){23}}
\put(5,-7){\line(1,0){23}}
\end{picture}
\qquad\quad
\begin{picture}(33,26)(0,-6)
\thicklines
\put(16.5,-5.2){\line(0,1){29.8}}
\thinlines
\put(5,-6){\line(1,0){10.3}}
\put(5,-7){\line(1,0){10.3}}
\put(28,-6){\line(-1,0){10.3}}
\put(28,-7){\line(-1,0){10.3}}
\put(16.5,-6.5){\circle{2.5}}
\end{picture}
\begin{picture}(10,26)(0,-6)
\put(5,12){\makebox(0,0){$\longmapsto$}}
\end{picture}
\begin{picture}(33,26)(0,-6)
\thicklines
\put(13,4){\line(0,1){21}}
\put(20,4){\line(0,1){21}}
\qbezier(13,4)(13,-2)(16.5,-2)
\qbezier(20,4)(20,-2)(16.5,-2)

\put(13,13){\vector(0,-1){1}}
\put(20,15){\vector(0,1){1}}
\thinlines
\put(5,-6){\line(1,0){23}}
\put(5,-7){\line(1,0){23}}
\end{picture}
\end{center}
\caption{Building an oriented divide near
the boundary of the ambient disk~$\Disk$.
}
\label{fig:plabic-boundary-to-o-divide}
\vspace{-.1in}
\end{figure}


\begin{remark}
\label{rem:graph-divides-preliminary}
The construction of the link of a plabic graph given in Definition~\ref{def:L(P)=L(o(P))} is a special case of the construction given by T.~Kawamura~\cite{kawamura} in her theory of \emph{graph divides}, which we will briefly review in Section~\ref{sec:transverse}; cf.\ in particular Definition~\ref{def:graph-divide}, Remark~\ref{rem:graph-divides-vs-plabic-graphs}, and Figure~\ref{fig:link-types}. 
\end{remark}

Since we view plabic graphs up to a global reversal of colors,
we need to check how such a reversal affects the notions 
introduced in Definition~\ref{def:L(P)=L(o(P))}:  

\begin{proposition}
\label{prop:plabic-reverse}
 Let $P$ be a plabic graph, 
 and let $-P$ denote the plabic graph obtained from~$P$ 
 by reversing the colors of all vertices. 
Then the oriented divide $o(-P)$ is move equivalent to~$-o(P)$. 
Furthermore, the links $L(P)$ and $L(-P)$ are isotopic to each other.
\end{proposition}

\begin{proof}
  The oriented divides $o(P)$ and $-o(-P)$ 
  differ by iso\-topy and a safe tangency move for each edge of~$P$
  connecting vertices of the same color, 
  as illustrated in Figure~\ref{fig:o(P)-vs-o(-P)}.
  The claim then follows by Proposition~\ref{prop:ybe-oriented} and
  Lemma~\ref{lem:reverse-divide}.
\end{proof}

\begin{figure}[ht]
\begin{center}
\begin{tabular}{ccc}
\setlength{\unitlength}{1.1pt}
\begin{picture}(100,135)(0,-17.5)
\thicklines
\put(10,-3){\circle{4}}
\put(10,50){\circle*{4}}
\put(10,103){\circle*{4}}
\put(50,-16.4){\circle*{4}}
\put(50,50){\circle{4}}
\put(90,-3){\circle*{4}}
\put(90,50){\circle{4}}
\put(90,103){\circle*{4}}
\put(12.5,50){\line(1,0){35}}
\put(52.5,50){\line(1,0){35}}
\put(10,47.5){\line(0,-1){48}}
\put(10,52.5){\line(0,1){48}}
\put(50,-14){\line(0,1){61.5}}
\put(90,47.5){\line(0,-1){48}}
\put(90,52.5){\line(0,1){48}}

\thinlines
\put(50,50){\circle{130.6}}
\put(50,50){\circle{135}}

\put(110,-10){\makebox(0,0){$P$}}
\end{picture}

&\qquad\qquad\qquad\qquad&

\setlength{\unitlength}{1.1pt}
\begin{picture}(100,135)(0,-17.5)
\thicklines
\put(13,27){\vector(0,1){6.5}}
\put(53,27){\vector(0,1){6.5}}
\put(93,27){\vector(0,1){6.5}}
\put(7,33){\vector(0,-1){6.5}}
\put(47,33){\vector(0,-1){6.5}}
\put(87,33){\vector(0,-1){6.5}}
\put(7,73){\vector(0,-1){6.5}}
\put(13,67){\vector(0,1){6.5}}
\put(87,73){\vector(0,-1){6.5}}
\put(93,67){\vector(0,1){6.5}}

\put(27,47){\vector(1,0){6.5}}
\put(33,53){\vector(-1,0){6.5}}
\put(67,47){\vector(1,0){6.5}}
\put(73,53){\vector(-1,0){6.5}}

\put(13,47){\line(0,-1){3}}
\put(13,47){\line(1,0){3}}
\put(13,53){\line(0,1){3}}
\put(13,53){\line(1,0){3}}
\put(16,47){\line(1,1){6}}
\put(16,53){\line(1,-1){6}}
\put(7,44){\line(1,-1){6}}
\put(13,44){\line(-1,-1){6}}
\put(7,56){\line(1,1){6}}
\put(13,56){\line(-1,1){6}}
\put(7,44){\line(0,1){12}}

\put(7,62){\line(0,1){19}}
\put(13,62){\line(0,1){19}}

\put(7,81){\line(1,1){6}}
\put(13,81){\line(-1,1){6}}
\put(7,87){\line(0,1){6}}
\put(13,87){\line(0,1){6}}
\qbezier(7,93)(7,96)(10,96)
\qbezier(13,93)(13,96)(10,96)

\put(7,38){\line(0,-1){19}}
\put(13,38){\line(0,-1){19}}

\put(7,19){\line(0,-1){6}}
\put(13,19){\line(0,-1){6}}
\put(7,13){\line(0,-1){6}}
\put(13,13){\line(0,-1){6}}
\qbezier(7,7)(7,4)(10,4)
\qbezier(13,7)(13,4)(10,4)

\put(22,47){\line(1,0){16}}
\put(22,53){\line(1,0){16}}

\put(38,47){\line(1,0){6}}
\put(38,53){\line(1,0){6}}
\put(56,47){\line(1,0){6}}
\put(56,53){\line(1,0){6}}
\put(44,53){\line(1,0){12}}
\put(44,47){\line(1,0){3}}
\put(53,47){\line(1,0){3}}
\put(53,47){\line(0,-1){3}}
\put(47,47){\line(0,-1){3}}
\put(47,44){\line(0,-1){6}}
\put(53,44){\line(0,-1){6}}

\put(47,38){\line(0,-1){35.4}}
\put(53,38){\line(0,-1){35.4}}

\put(47,2.6){\line(1,-1){6}}
\put(53,2.6){\line(-1,-1){6}}
\put(47,-3.4){\line(0,-1){6}}
\put(53,-3.4){\line(0,-1){6}}
\qbezier(47,-9.4)(47,-12.4)(50,-12.4)
\qbezier(53,-9.4)(53,-12.4)(50,-12.4)

\put(62,47){\line(1,0){16}}
\put(62,53){\line(1,0){16}}

\put(87,47){\line(0,-1){3}}
\put(87,47){\line(-1,0){3}}
\put(87,53){\line(0,1){3}}
\put(87,53){\line(-1,0){3}}
\put(84,47){\line(-1,0){6}}
\put(84,53){\line(-1,0){6}}
\put(87,44){\line(0,-1){6}}
\put(93,44){\line(0,-1){6}}
\put(87,56){\line(0,1){6}}
\put(93,56){\line(0,1){6}}
\put(93,44){\line(0,1){12}}

\put(87,62){\line(0,1){19}}
\put(93,62){\line(0,1){19}}

\put(87,81){\line(1,1){6}}
\put(93,81){\line(-1,1){6}}
\put(87,87){\line(0,1){6}}
\put(93,87){\line(0,1){6}}
\qbezier(87,93)(87,96)(90,96)
\qbezier(93,93)(93,96)(90,96)

\put(87,38){\line(0,-1){19}}
\put(93,38){\line(0,-1){19}}

\put(87,19){\line(1,-1){6}}
\put(93,19){\line(-1,-1){6}}
\put(87,13){\line(0,-1){6}}
\put(93,13){\line(0,-1){6}}
\qbezier(87,7)(87,4)(90,4)
\qbezier(93,7)(93,4)(90,4)

\thinlines
\put(50,50){\circle{130.6}}
\put(50,50){\circle{135}}

\put(120,-10){\makebox(0,0){$o(P)$}}
\end{picture}
\\[.3in]
\setlength{\unitlength}{1.1pt}
\begin{picture}(100,135)(0,-17.5)
\thicklines
\put(10,-3){\circle*{4}}
\put(10,50){\circle{4}}
\put(10,103){\circle{4}}
\put(50,-16.4){\circle{4}}
\put(50,50){\circle*{4}}
\put(90,-3){\circle{4}}
\put(90,50){\circle*{4}}
\put(90,103){\circle{4}}
\put(12.5,50){\line(1,0){35}}
\put(52.5,50){\line(1,0){35}}
\put(10,47.5){\line(0,-1){48}}
\put(10,52.5){\line(0,1){48}}
\put(50,-14){\line(0,1){61.5}}
\put(90,47.5){\line(0,-1){48}}
\put(90,52.5){\line(0,1){48}}

\thinlines
\put(50,50){\circle{130.6}}
\put(50,50){\circle{135}}

\put(110,-10){\makebox(0,0){$-P$}}
\end{picture}

&\qquad\qquad\qquad\qquad&

\setlength{\unitlength}{1.1pt}
\begin{picture}(100,135)(0,-17.5)
\thicklines
\put(7,27){\vector(0,1){6.5}}
\put(47,27){\vector(0,1){6.5}}
\put(87,27){\vector(0,1){6.5}}
\put(13,33){\vector(0,-1){6.5}}
\put(53,33){\vector(0,-1){6.5}}
\put(93,33){\vector(0,-1){6.5}}
\put(13,73){\vector(0,-1){6.5}}
\put(7,67){\vector(0,1){6.5}}
\put(93,73){\vector(0,-1){6.5}}
\put(87,67){\vector(0,1){6.5}}

\put(27,53){\vector(1,0){6.5}}
\put(33,47){\vector(-1,0){6.5}}
\put(67,53){\vector(1,0){6.5}}
\put(73,47){\vector(-1,0){6.5}}

\put(13,47){\line(0,-1){3}}
\put(13,47){\line(1,0){3}}
\put(13,53){\line(0,1){3}}
\put(13,53){\line(1,0){3}}
\put(16,47){\line(1,0){6}}
\put(16,53){\line(1,0){6}}
\put(7,44){\line(0,-1){6}}
\put(13,44){\line(0,-1){6}}
\put(7,56){\line(0,1){6}}
\put(13,56){\line(0,1){6}}
\put(7,44){\line(0,1){12}}

\put(7,62){\line(0,1){19}}
\put(13,62){\line(0,1){19}}

\put(7,81){\line(0,1){6}}
\put(13,81){\line(0,1){6}}
\put(7,87){\line(0,1){6}}
\put(13,87){\line(0,1){6}}
\qbezier(7,93)(7,96)(10,96)
\qbezier(13,93)(13,96)(10,96)

\put(7,38){\line(0,-1){19}}
\put(13,38){\line(0,-1){19}}

\put(7,19){\line(1,-1){6}}
\put(13,19){\line(-1,-1){6}}
\put(7,13){\line(0,-1){6}}
\put(13,13){\line(0,-1){6}}
\qbezier(7,7)(7,4)(10,4)
\qbezier(13,7)(13,4)(10,4)

\put(22,47){\line(1,0){16}}
\put(22,53){\line(1,0){16}}

\put(38,47){\line(1,1){6}}
\put(38,53){\line(1,-1){6}}
\put(56,47){\line(1,1){6}}
\put(56,53){\line(1,-1){6}}
\put(44,53){\line(1,0){12}}
\put(44,47){\line(1,0){3}}
\put(53,47){\line(1,0){3}}
\put(53,47){\line(0,-1){3}}
\put(47,47){\line(0,-1){3}}
\put(47,44){\line(1,-1){6}}
\put(53,44){\line(-1,-1){6}}

\put(47,38){\line(0,-1){35.4}}
\put(53,38){\line(0,-1){35.4}}

\put(47,2.6){\line(0,-1){6}}
\put(53,2.6){\line(0,-1){6}}
\put(47,-3.4){\line(0,-1){6}}
\put(53,-3.4){\line(0,-1){6}}
\qbezier(47,-9.4)(47,-12.4)(50,-12.4)
\qbezier(53,-9.4)(53,-12.4)(50,-12.4)

\put(62,47){\line(1,0){16}}
\put(62,53){\line(1,0){16}}

\put(87,47){\line(0,-1){3}}
\put(87,47){\line(-1,0){3}}
\put(87,53){\line(0,1){3}}
\put(87,53){\line(-1,0){3}}
\put(84,47){\line(-1,1){6}}
\put(84,53){\line(-1,-1){6}}
\put(87,44){\line(1,-1){6}}
\put(93,44){\line(-1,-1){6}}
\put(87,56){\line(1,1){6}}
\put(93,56){\line(-1,1){6}}
\put(93,44){\line(0,1){12}}

\put(87,62){\line(0,1){19}}
\put(93,62){\line(0,1){19}}

\put(87,81){\line(0,1){6}}
\put(93,81){\line(0,1){6}}
\put(87,87){\line(0,1){6}}
\put(93,87){\line(0,1){6}}
\qbezier(87,93)(87,96)(90,96)
\qbezier(93,93)(93,96)(90,96)

\put(87,38){\line(0,-1){19}}
\put(93,38){\line(0,-1){19}}

\put(87,19){\line(0,-1){6}}
\put(93,19){\line(0,-1){6}}
\put(87,13){\line(0,-1){6}}
\put(93,13){\line(0,-1){6}}
\qbezier(87,7)(87,4)(90,4)
\qbezier(93,7)(93,4)(90,4)

\thinlines
\put(50,50){\circle{130.6}}
\put(50,50){\circle{135}}

\put(120,-10){\makebox(0,0){$-o(-P)$}}
\end{picture}
\end{tabular}
\end{center}
\caption{Plabic graphs $P$ and $-P$, and oriented divides $o(P)$ and $-o(-P)$.}
\label{fig:o(P)-vs-o(-P)}
\end{figure}

Move equivalence of plabic graphs translates into move equivalence
of associated oriented divides: 

\begin{proposition}
\label{pr:plabic-oriented}
  If plabic graphs $P_1$ and~$P_2$ are move equivalent, 
  then the associated oriented divides $o(P_1)$ and $o(P_2)$ are move equivalent.
\end{proposition}

\begin{proof}
  The most complicated case is the square move, shown in
  Figure~\ref{fig:square-move-o-divide} (from left to right). The transition between the corresponding oriented divides 
  involves a total of four triangle moves and two safe tangency~moves.
  
  The other moves on plabic graphs are easier: a flip move between two white vertices changes $o(P)$ by an isotopy, and a flip move between two black
  vertices changes $o(P)$ by two safe tangency moves. A tail
  attachment/removal changes $o(P)$ by a safe tangency move (if the
  internal vertex is black), a U-turn move (if the boundary
  vertex is black), or both (if both are black).
 \end{proof}

\begin{figure}[ht]
\begin{center}
\begin{tabular}{ccc}
\setlength{\unitlength}{1.5pt}
\begin{picture}(40,43)(0,0)
\thicklines
\put(-3,-3){\line(1,1){12}}
\put(43,43){\line(-1,-1){12}}
\put(-3,43){\line(1,-1){12}}
\put(43,-3){\line(-1,1){12}}
\put(10,11.7){\line(0,1){16.6}}
\put(30,11.7){\line(0,1){16.6}}
\put(11.7,10){\line(1,0){16.6}}
\put(11.7,30){\line(1,0){16.6}}
\put(10,10){\circle*{3.4}}
\put(10,30){\circle{3.4}}
\put(30,10){\circle{3.4}}
\put(30,30){\circle*{3.4}}
\end{picture}
&
\setlength{\unitlength}{1.5pt}
\begin{picture}(80,43)(-20,0)
\thicklines
\put(-3,-3){\line(1,1){46}}
\put(-3,43){\line(1,-1){46}}
\end{picture}
&
\setlength{\unitlength}{1.5pt}
\begin{picture}(40,43)(0,0)
\thicklines
\put(-3,-3){\line(1,1){12}}
\put(43,43){\line(-1,-1){12}}
\put(-3,43){\line(1,-1){12}}
\put(43,-3){\line(-1,1){12}}
\put(10,11.7){\line(0,1){16.6}}
\put(30,11.7){\line(0,1){16.6}}
\put(11.7,10){\line(1,0){16.6}}
\put(11.7,30){\line(1,0){16.6}}
\put(10,10){\circle{3.4}}
\put(10,30){\circle*{3.4}}
\put(30,10){\circle*{3.4}}
\put(30,30){\circle{3.4}}
\end{picture}
\\[.4in]
\setlength{\unitlength}{1.5pt}
\begin{picture}(40,40)(0,0)
\thicklines
\put(8,10.8){\line(-1,-1){2.1}}
\put(10.8,8){\line(-1,-1){2.1}}
\put(5.9,8.7){\line(0,-1){5.6}}
\put(8.7,5.9){\line(-1,0){5.6}}
\put(-4.4,-1.6){\line(1,1){7.5}}
\put(-1.6,-4.4){\line(1,1){7.5}}
\put(-1.6,-4.4){\vector(1,1){7}}
\put(8,15){\line(0,-1){4.2}}
\put(12,15){\line(0,-1){3}}
\put(8,15){\line(1,1){4}}
\put(12,15){\line(-1,1){4}}
\put(15,8){\line(-1,0){4.2}}
\put(15,12){\line(-1,0){3}}
\put(15,8){\line(1,1){4}}
\put(15,12){\line(1,-1){4}}

\put(19,8){\line(1,0){10.2}}
\put(19,12){\line(1,0){9}}

\put(28,12){\line(0,1){9}}
\put(32,10.8){\line(0,1){10.2}}

\put(41.6,-4.4){\line(-1,1){12.4}}
\put(44.4,-1.6){\line(-1,1){12.4}}
\put(44.4,-1.6){\vector(-1,1){9}}
\put(32,29.2){\line(1,1){2.1}}
\put(29.2,32){\line(1,1){2.1}}
\put(34.1,31.3){\line(0,1){5.6}}
\put(31.3,34.1){\line(1,0){5.6}}
\put(44.4,41.6){\line(-1,-1){7.5}}
\put(41.6,44.4){\line(-1,-1){7.5}}
\put(41.6,44.4){\vector(-1,-1){7}}
\put(32,25){\line(0,1){4.2}}
\put(28,25){\line(0,1){3}}
\put(32,25){\line(-1,-1){4}}
\put(28,25){\line(1,-1){4}}
\put(25,32){\line(1,0){4.2}}
\put(25,28){\line(1,0){3}}
\put(25,32){\line(-1,-1){4}}
\put(25,28){\line(-1,1){4}}

\put(8,19){\line(0,1){10.2}}
\put(12,19){\line(0,1){9}}

\put(12,28){\line(1,0){9}}
\put(10.8,32){\line(1,0){10.2}}

\put(-4.4,41.6){\line(1,-1){12.4}}
\put(-1.6,44.4){\line(1,-1){12.4}}
\put(-4.4,41.6){\vector(1,-1){9}}
\end{picture}
&
\setlength{\unitlength}{1.5pt}
\begin{picture}(80,40)(-20,0)
\thicklines
\put(-4.4,-1.6){\line(1,1){46}}
\put(-1.6,-4.4){\line(1,1){46}}
\put(-4.4,41.6){\line(1,-1){46}}
\put(-1.6,44.4){\line(1,-1){46}}

\put(1.4,-1.4){\vector(1,1){8}}
\put(-1.4,38.6){\vector(1,-1){8}}
\put(41.4,1.4){\vector(-1,1){8}}
\put(38.6,41.4){\vector(-1,-1){8}}
\end{picture}
&
\setlength{\unitlength}{1.5pt}
\begin{picture}(40,40)(0,0)
\thicklines
\put(32,10.8){\line(1,-1){2.1}}
\put(29.2,8){\line(1,-1){2.1}}
\put(34.1,8.7){\line(0,-1){5.6}}
\put(31.3,5.9){\line(1,0){5.6}}
\put(44.4,-1.6){\line(-1,1){7.5}}
\put(41.6,-4.4){\line(-1,1){7.5}}
\put(-1.6,-4.4){\vector(1,1){9}}
\put(32,15){\line(0,-1){4.2}}
\put(28,15){\line(0,-1){3}}
\put(32,15){\line(-1,1){4}}
\put(28,15){\line(1,1){4}}
\put(25,8){\line(1,0){4.2}}
\put(25,12){\line(1,0){3}}
\put(25,8){\line(-1,1){4}}
\put(25,12){\line(-1,-1){4}}

\put(21,8){\line(-1,0){10.2}}
\put(21,12){\line(-1,0){9}}

\put(12,12){\line(0,1){9}}
\put(8,10.8){\line(0,1){10.2}}

\put(-1.6,-4.4){\line(1,1){12.4}}
\put(-4.4,-1.6){\line(1,1){12.4}}
\put(44.4,-1.6){\vector(-1,1){7}}
\put(8,29.2){\line(-1,1){2.1}}
\put(10.8,32){\line(-1,1){2.1}}
\put(5.9,31.3){\line(0,1){5.6}}
\put(8.7,34.1){\line(-1,0){5.6}}
\put(-4.4,41.6){\line(1,-1){7.5}}
\put(-1.6,44.4){\line(1,-1){7.5}}
\put(41.6,44.4){\vector(-1,-1){9}}
\put(8,25){\line(0,1){4.2}}
\put(12,25){\line(0,1){3}}
\put(8,25){\line(1,-1){4}}
\put(12,25){\line(-1,-1){4}}
\put(15,32){\line(-1,0){4.2}}
\put(15,28){\line(-1,0){3}}
\put(15,32){\line(1,-1){4}}
\put(15,28){\line(1,1){4}}

\put(32,19){\line(0,1){10.2}}
\put(28,19){\line(0,1){9}}

\put(28,28){\line(-1,0){9}}
\put(29.2,32){\line(-1,0){10.2}}

\put(44.4,41.6){\line(-1,-1){12.4}}
\put(41.6,44.4){\line(-1,-1){12.4}}
\put(-4.4,41.6){\vector(1,-1){7}}
\end{picture}
\end{tabular}

\end{center}
\caption{The square move and oriented divides.}
\label{fig:square-move-o-divide}
\end{figure}


Combining Propositions~\ref{prop:ybe-oriented} and~\ref{pr:plabic-oriented},
we obtain: 

\begin{corollary}
\label{cor:plabic-isotopic}
  If two plabic graphs $P_1$ and~$P_2$ are move equivalent, then
the links  $L(P_1)$ and $L(P_2)$ are isotopic.
\end{corollary}

\begin{remark}
Corollary~\ref{cor:plabic-isotopic} provides a powerful tool that can be used to show that a particular pair of plabic graphs 
are not move equivalent, by verifying that their respective links are not isotopic. (The links can be computed, e.g., using Hirasawa's algorithm~\cite{hirasawa}.) 
We note that using quiver mutations (i.e., a test based on Proposition~\ref{pr:plabic-vs-quivers}) for this purpose is problematic, since there is no known good algorithm for deciding whether two quivers are mutation equivalent or not, cf.\ Remark~\ref{rem:mut-equivalence-difficult}.
Besides, the isotopy class of $L(P)$ is a finer invariant of a plabic graph~$P$ than the quiver~$Q(P)$, cf.\ Example~\ref{ex:using-links-to show-move-inequivalence} below. 
\end{remark}

\begin{example}
\label{ex:using-links-to show-move-inequivalence}
The plabic graphs $P_1$ and $P_2$ in Figure~\ref{fig:isthmus-move}
have isomorphic quivers. 
In spite of that, $P_1$ and $P_2$ are \emph{not} move equivalent,
because the links $L(P_1)$ and $L(P_2)$ are \emph{not} isotopic. 
To be concrete, $L(P_1)$ (here $P_1$ is the graph on the left) has an unknotted component, 
whereas both components of $L(P_2)$ are trefoils. 
\end{example}

To any divide~$D$,
the ``roundabout'' construction in Definition~\ref{def:plabic-divide}
attaches a family of plabic graphs~$P\in\PP(D)$, 
all of them move equivalent to each other. 
Using the construction in Definition~\ref{def:L(P)=L(o(P))},
we then obtain a family of oriented divides~$o(P)$,
also move equivalent to each other, by virtue of Proposition~\ref{pr:plabic-oriented}. 
It~is natural to compare the oriented divides $o(P)$ 
to the divide $o(D)$
obtained by the doubling procedure of Definition~\ref{def:doubling}. 

\begin{proposition}
\label{pr:divide-plabic-oriented}
  Let $D$ be a divide, and $P \in \PP(D)$ a plabic
  graph attached to~$D$. 
  Then the oriented divides $o(D)$ and $o(P)$ are move equivalent to each other. 
\end{proposition}

\begin{proof}
Let us compare the oriented divides $o(D)$ and $o(P)$ near a
crossing of~$D$. This is shown in
Figure~\ref{fig:square-move-o-divide} (bottom row): $o(D)$ is in the middle
whereas $o(P)$ is either on the left or on the right (depending on the
choice made when constructing~$P$, cf.\ Definition~\ref{def:plabic-divide}). 
As explained in the proof of Proposition~\ref{pr:plabic-oriented}, these
oriented divides are move equivalent. 

The oriented divides $o(D)$ and $o(P)$ may also differ near the points of~$D$ lying on the boundary~$\partial\Disk$ (depending on the colors chosen for corresponding vertices of~$P$). These discrepancies can be straightened out using U-turn moves. 
\end{proof}

\begin{proposition}
\label{pr:divide-graph-divide}
  Let $D$ be a divide, and $P \in \PP(D)$ be a plabic
  graph attached to~$D$. 
  Then the links $L(D)$ and $L(P)$ are isotopic to each other.
\end{proposition}

\begin{proof}
We have $L(D)=L(o(D))$ by Proposition~\ref{prop:L(D)=L(o(D))},
and $L(P)=L(o(P))$ by Definition~\ref{def:L(P)=L(o(P))}. 
The claim follows by Propositions~\ref{prop:ybe-oriented}
and~\ref{pr:divide-plabic-oriented}. 
\end{proof}

\begin{corollary}
\label{cor:p=>d}
Let $P_1$ and $P_2$ be plabic graphs attached to divides $D_1$ and~$D_2$, respectively.
If $P_1$ and $P_2$ are move equivalent, then 
$D_1$ and $D_2$ are link equivalent. 
\end{corollary}

\begin{proof}
This is immediate from Corollary~\ref{cor:plabic-isotopic} and Proposition~\ref{pr:divide-graph-divide}. 
\end{proof}

Corollary~\ref{cor:p=>d} establishes the implication \textbf{(p)}$\Rightarrow$\textbf{(d)} of Conjecture~\ref{conj:morsif=mut-via-divides}, even without the assumption of algebraicity. 

The diagram in Figure~\ref{fig:divide-to-plabic-to-link} summarizes the correspondences between the various types of objects considered above. 
This diagram commutes (up to the appropriate equivalences), so that for example the link of a divide~$D$ (computed using the A'Campo construction, see Definition~\ref{def:acampo-link}) is isotopic to the link of an oriented divide corresponding to a plabic graph attached to~$D$. 

\begin{figure}[ht]
  \[
    \begin{tikzcd}[row sep=large] 
 && \text{divide}
\arrow[dd, rightarrow, yshift=0in, "\text{Def.~\ref{def:plabic-divide}}"] 
\arrow[lldddd, "\text{Def.~\ref{def:quiver-of-divide}}", rightarrow, bend left=-35] 
\arrow[dddd, rightarrow, yshift=0in, bend right=-70, "\text{Def.~\ref{def:doubling}}"] 
\arrow[dddddd, rightarrow, bend right=-85, looseness=1.5, "\text{Def.~\ref{def:acampo-link}}"] 
\\
& & \\
 && \text{plabic graph} 
 \arrow[dd, rightarrow, yshift=0in, "\text{Def.~\ref{def:L(P)=L(o(P))}}"] 
 \arrow[ddll, rightarrow, yshift=0in, "\text{Def.~\ref{def:Q(P)}}"] 
\arrow[dddd, rightarrow, bend right=-70, "\text{Rem.~\ref{rem:graph-divides-preliminary}}"] 
\\
& & \\
 \text{quiver} && \text{oriented divide} 
  \arrow[dd, rightarrow, "\text{Def.~\ref{def:oriented-divide}}"] 
\\
& & \\
  && \text{link}
    \end{tikzcd}
  \]
  \caption{Various constructions involving divides, plabic graphs (viewed up to move equivalence), quivers, oriented divides, and links.}
  \label{fig:divide-to-plabic-to-link}
\end{figure}

\newpage

\section{Quasipositive and transverse links}
\label{sec:transverse}

In Sections~\ref{sec:links-of-divides}--\ref{sec:links-from-plabic}, we discussed how to construct a link from a divide, a plabic graph, or an oriented divide. 
It~is natural to ask: what kind of links arise via these constructions? 
It~turns out that the links of plabic graphs (and even more generally,
of graph divides, see below) are special in two ways. 
On the one hand, these links are \emph{quasipositive}. On the other hand, they
naturally carry extra structure: they are \emph{transverse} links. 
This additional structure will lead us to formulate 
some more refined conjectures.

\begin{definition}
\label{def:positive}
  Let $\beta$ be a braid. We say that 
  \begin{itemize}[leftmargin=.3in]
  \item
  $\beta$ is \emph{positive} if it is
  a product of standard Artin generators~$\sigma_j$;
\item
  $\beta$ is  \emph{strongly quasipositive} if it is a product of the
  standard conjugates of the~$\sigma_j$: 
  \[
    \sigma_{i,j} = (\sigma_i \sigma_{i+1} \cdots \sigma_{j-1}) \sigma_j
        (\sigma_{j-1}^{-1}\cdots \sigma_{i+1}^{-1} \sigma_i^{-1});  
    \]
  \item
  $\beta$ is \emph{quasipositive} if it is
  a product of arbitrary conjugates of the~$\sigma_j$. 
  \end{itemize}
Now let $L$ be an oriented link. We say that 
  \begin{itemize}[leftmargin=.3in]
  \item
  $L$ is a \emph{positive braid link} if it can be obtained 
  as the closure of a positive braid; 
  \item
  $L$ is \emph{positive} if it can be represented by a diagram with all crossings positive;  
  \item
  $L$ is \emph{strongly quasipositive} if it is the closure of a 
  strongly quasipositive braid;
  \item
  $L$ is \emph{quasipositive} if it is the closure of a 
  quasipositive braid.
  \end{itemize}
In this listing,
each line describes a wider class of links than the previous one. (L.~Rudolph showed~\cite{rudolph-sqp} that every positive link is strongly quasipositive.) 
\end{definition}

In addition to braid closures and (more general) link diagrams,
links can be constructed using divides (Definition~\ref{def:acampo-link})
or, more generally, plabic graphs (Definition~\ref{def:L(P)=L(o(P))}) or oriented divides (Definition~\ref{def:oriented-divide}). 
Another closely related construction, 
already mentioned in Remark~\ref{rem:graph-divides-preliminary}, 
is the following, cf.~T.~Kawamura~\cite{kawamura}. 

\begin{definition}
\label{def:graph-divide}
  A \emph{graph divide} is a connected planar graph~$G=(V,E)$ 
    (multiple edges and loops are allowed) 
in which each vertex is colored black or white;  
we moreover fix a  
 proper embedding of~$G$ into the disk~$\Disk$ (viewed up to isotopy),
 and additionally assume that each vertex in 
  $V\cap \partial\Disk$ is univalent. 
  A~\emph{trivalent
    graph divide} is a graph divide in which all internal vertices
  (not lying on $\partial\Disk$) are trivalent. 
  A straightforward extension of Definition~\ref{def:L(P)=L(o(P))}
  associates to any graph divide~$G$ 
  the corresponding oriented divide $o(G)$, 
  and thus an oriented link $L(G) = L(o(G))$.
\end{definition}

\begin{remark}
\label{rem:graph-divides-vs-plabic-graphs}
Trivalent graph divides are very close to plabic graphs, but are not subject to some
technical restrictions. If a general graph divide~$G$ has no internal
univalent vertices, then there is a related trivalent graph divide
$G'$ obtained by (a) glueing the pairs of edges which meet at 2-valent vertices 
and (b) splitting each vertex of degree $d > 3$ into a tree made of 
$d-2$ trivalent vertices of the same color. 
It is easy to see that $L(G)$ is isotopic to $L(G')$, so the
key restriction that distinguishes plabic graphs from 
graph divides is the absence of internal univalent vertices.
It is unclear how this restriction affects the corresponding class of links. 
\end{remark}


\begin{definition}
\label{def:algebraic-link}
We call a link \emph{algebraic} if it can arise as the link $L(C,z)$ of an isolated plane curve singularity. 
(Readers beware: some authors ascribe a different meaning to the term ``algebraic link.'') 
By Theorem~\ref{th:L(D)=L(C,z)}, algebraic links are precisely the A'Campo links of algebraic divides. 
\end{definition}

\begin{definition}
\label{def:C-transverse}
  A link~$L\subset\mathbf{S}^3$ is called \emph{$\CC$-transverse} if there is an
  algebraic~plane curve $X \subset \CC^2$ such that $X \cap
  \mathbf{S}^3$ is isotopic to~$L$; here we require that $X$ is smooth along the sphere~$\mathbf{S}^3$ and intersects it transversally. 
This is a much larger class than algebraic links, since there may be  several singular points of~$X$ inside the ball bounded by~$\mathbf{S}^3$. 
\end{definition}

The known relationships between various classes of links mentioned above are shown in Figure~\ref{fig:link-types}. 
In particular: 
\begin{itemize}[leftmargin=.3in]
\item
Any algebraic link is a divide link, by A'Campo's Theorem~\ref{th:L(D)=L(C,z)}.  
\item
Any divide link is the link of a plabic graph, by~Proposition~\ref{pr:divide-graph-divide}. 
\item Any positive braid link can be represented by a plabic graph, 
see Section~\ref{sec:plabic-fences}.
\item Quasipositive links are the same as $\CC$-transverse links, as shown by M.~Boileau and S.~Orevkov~\cite{boileau-orevkov} and
  L.~Rudolph~\cite{rudolph}.
\item Any divide link is strongly quasipositive: 
  M.~Ishikawa \cite{ishikawa} showed that divide links are positive Hopf
  plumbings; L.~Rudolph \cite{rudolph-plumbing} proved that such
  plumbings are strongly quasipositive.
\item Any graph divide link is quasipositive (T.~Kawamura~\cite{kawamura}), 
hence $\CC$-transverse.
\item A'Campo~\cite{acampo-ihes} proved that the link of every divide is \emph{fibered} (i.e., the complement is a fiber bundle over a circle, with fiber a surface).
\item Any link is a link of an oriented divide, see Remark~\ref{rem:o-divide}. 
\end{itemize}

\begin{figure}[ht]
  \begin{center}
    \begin{tikzpicture}[x=5cm,y=1.4cm]
      \node[align=center] (alg) at (0,5) {Algebraic links};
      \node[align=center] (divide) at (0,4) {Divide links};
      \node[align=center] (tridiv) at (0,3) {Plabic graph links};
      \node[align=center] (graphdiv) at (0,2) {Graph divide links};
      \node[align=center] (Ctrans) at (0,1) {$\CC$-transverse links};
      \node[align=center] (o-div) at (0,0) {Oriented divide links};

      \node[align=center] (posbraid) at (1,4) {Positive braid links};
      \node[align=center] (pos) at (1,3) {Positive links};
      \node[align=center] (sqp) at (1,2) {Strongly quasipositive links};
      \node[align=center] (qp) at (1,1) {Quasipositive links};
      \node[align=center] (link) at (1,0) {All links};

      \node[align=center] (fibered) at (-1,3) {Fibered links};

      \draw[right hook->] (alg) to 
        (divide);
      \draw[right hook->] (divide) to 
        (tridiv);
      \draw[right hook->] (tridiv) to (graphdiv);
      \draw[right hook->] (graphdiv) to 
        (Ctrans);
      \draw[right hook->] (Ctrans) to 
        (o-div);

      \draw[right hook->] (posbraid) to 
        (pos);
      \draw[right hook->] (pos) to 
        (sqp);
      \draw[right hook->] (sqp) to 
        (qp);
      \draw[right hook->] (qp) to 
        (link);

      \draw[right hook->] (alg) to 
        (posbraid);
      \draw[left hook->] ([xshift=-18mm, yshift=-4mm] posbraid.100) to (tridiv.15);
      \node at (0.5,1) {$=$};
      \node at (0.5,0) {$=$};
      \draw[right hook->] ([yshift=-3mm] divide.10) to (sqp);
      \draw[left hook->] ([yshift=-3mm] divide.170) to 
        (fibered.30);
    \end{tikzpicture}
  \end{center}
  \caption{
  The known relationships between the classes of links obtained using different 
    constructions (middle column), or exhibiting various forms of positivity (right column). Cf.\ the Venn diagram in \cite[Figure~13]{kawamura-essential}.}
  \label{fig:link-types}
\end{figure}

In addition to being in a restricted class, the links arising from
plabic graphs also have extra structure coming from contact geometry.
(For more background on contact geometry, see~\cite{etnyre}.) 

\begin{definition}\label{def:contact}
  The \emph{standard contact structure} on 
  $\mathbf{S}^3=\{x^2+y^2+u^2+v^2=1\}$ is the
  2-plane field $\xi = \ker \omega$, where $\omega$ is the 1-form
  \begin{equation}\label{eq:std-contact-1form}
    \omega = -u\,dx - v\,dy + x\,du + y\,dv.
  \end{equation}
  (Up to isotopy, this is the unique contact structure on
  $\mathbf{S}^3$ with the additional property of being \emph{tight}.)
  A link~$L$ embedded in $\mathbf{S}^3$ is \emph{Legendrian} if its tangent
  vector~$\dot L$ always lies in~$\xi$, and is \emph{transverse} if
  $\dot L$ never lies in~$\xi$. 
By convention, a transverse link~$L$ is oriented so that $\langle \dot L,\omega\rangle>0$ everywhere on~$L$. 
Two Legendrian/transverse
  links are \emph{Legendrian/transverse isotopic} if they are isotopic
  through Legendrian/transverse embeddings of links. There is a
  natural construction that turns a
  Legendrian link into its \emph{transverse push-off}, see \cite[Section 2.9]{etnyre}.
\end{definition}

We will focus on transverse links. One basic invariant of a transverse
link is its classical link type, i.e., its isotopy type as an ordinary link. 
For each classical link type, there are many transverse link types. 
One of the invariants that distinguishes between some of them is the integer-valued
self-linking number: 

\begin{definition}
\label{def:self-link}
Let $T$ be a transverse link. Recall that
there is a connected orientable Seifert surface
$\Sigma \subset \mathbf{S}^3$ with $T = \partial\Sigma$. Consider
$\xi|_\Sigma$, the contact plane field restricted to~$\Sigma$. Since
$\Sigma$ has nonempty boundary, $\xi|_\Sigma$ is trivial; pick a
section of it, a vector field $v$ defined on~$\Sigma$.
Let $T'$ be a copy of $T$, pushed off in the direction of~$v$. 
The \emph{self-linking number} $\selflink(T)$ of~$T$ is the linking
number of $T$ and~$T'$.
\end{definition}

\begin{remark}
The self-linking number can always be decreased by~$2$---while preserving the ordinary isotopy type of the link---by a local operation called \emph{stabilization}.   For~any link type, there is a maximal realizable value of the self-linking number.
For~each link type and each value of the self-linking number, there are finitely many possible transverse link types. 
\end{remark}

The links arising from the various constructions discussed above are naturally transverse, as we will now explain. 

Braid closures naturally produce transverse links.
Let $U$ be a standard transversal unknot, say the equator $y = v = 0$. 
Given a braid~$\beta$, embed the closure of~$\beta$ in a small tubular
neighborhood of~$U$ to get a link $\beta(U)$. Since the tangent vectors
to~$\beta(U)$ are close to the tangents to~$U$, the link $\beta(U)$ is also
transverse.

\begin{proposition}
\label{pr:c-transverse-is-transverse}
  Every $\CC$-transverse link~$L$ (in particular, any algebraic link; see Definition~\ref{def:C-transverse}) has a natural transverse structure.
\end{proposition}

\begin{proof}
  Let $X\subset \CC^2$ be the algebraic curve giving rise to~$L$, as in  
  Definition~\ref{def:C-transverse}.
  Let $v \in TL = TX \cap T\mathbf{S}^3$ be a
  tangent vector to~$X$. Then $iv$ is also tangent to~$X$, and by
  transversality of the intersection is not in $T\mathbf{S}^3$. Thus
  $v \notin \xi$.
\end{proof}

We next show that the links of oriented divides are naturally transverse.\medskip

If $\Omega$ is a
smooth strictly pseudoconvex (cf., e.g., \cite[Chapter~3, Definition~1.9]{lieb-michel}) subset of~$\CC^2$, then $\partial\Omega$
inherits the structure of a contact manifold, by setting 
\[
  \xi = \{\,v \in T\partial\Omega \mid iv \in T\partial\Omega\,\}.
\]
Strict pseudoconvexity guarantees that locally $\xi=\ker \omega$
for a 1-form $\omega$ such that $\omega \wedge d\omega \ne 0$, as required
for a contact structure.
If $\Omega$ is the standard unit ball in~$\CC^2$, then this
is the contact structure in Definition~\ref{def:contact} above. More
generally, if $\Omega$ is any strictly pseudoconvex topological ball, then the
contact structure is tight and thus equivalent to the standard one.

\begin{definition}
Let $0<\lambda\le 1$.
Following A'Campo \cite[Theorem~3]{acampo-ihes}, consider the squashed ball
$\Ball_\lambda = \{\, (x,y,u,v) \in \CC^2 \mid x^2 + y^2 +
    \lambda^{-2}(u^2 + v^2) < 1\,\} \subset \CC^2$.
As $\mathbf{B}_\lambda$ is strictly pseudoconvex, there is a natural contact
structure on $\mathbf{S}^3_\lambda=\partial\mathbf{B}_\lambda$. (This structure is in fact equivalent to the standard contact structure, by uniqueness of the tight contact structure on $\mathbf{S}^3$.)
\end{definition}

\begin{definition}
\label{def:squashed-lifts}
For an oriented divide~$\vec D$, let $T_\lambda(\vec D)$ denote the lift of~$\vec D$ to~$\mathbf{S}^3_\lambda$ obtained via a straightforward extension of Definition~\ref{def:oriented-divide}.
In particular, for $\lambda=1$, we get $T_\lambda(\vec D)=L(\vec D)$. 
Furthermore, all lifts $T_\lambda(\vec D)$ are naturally isotopic to each other, and to~$L(\vec D)$. 
\end{definition}

\begin{proposition}
\label{prop:transverse-divide}
  Let $\vec D$ be an oriented divide. For any sufficiently
  small~$\lambda$, the link $T_\lambda(\vec D)$ is transverse to the
  contact structure on $\mathbf{S}^3_\lambda$. 
 
  Any move equivalence of
  oriented divides lifts, for sufficiently small~$\lambda$, to a
  transverse isotopy of their associated lifts~$T_\lambda(\vec D)$.
\end{proposition}

\begin{proof}
  Let $D$ be the unoriented divide underlying $\vec D$.
  In the proof of \cite[Theorem~3]{acampo-ihes}, A'Campo shows that
  there is a curve $X \subset \CC^2$ such that for $\lambda$
  sufficiently small, $X \cap \partial\Ball_\lambda$ is very close to
  $T_\lambda(D)$, which is thus $\CC$-transverse and hence transverse. The
  link $T_\lambda(\vec D)$ is a union of some of the components of
  $T_\lambda(D)$ and is therefore
  also transverse (but in general not $\CC$-transverse).%

  The construction of $X$ and $T_\lambda(\vec D)$ works just as well
  through triangle moves and safe tangency moves, which therefore give
  transverse isotopies. To see that a boundary U-turn move is also a
  transverse isotopy, it suffices to see that the $C^1$-isotopy from
  the proof of Proposition~\ref{prop:ybe-oriented} is a transverse
  isotopy. It suffices to check that the curve
  $L_\varepsilon$ is transverse to the contact structure on
  $\mathbf{S}^3$ at $(\varepsilon,t)=(0,0)$, since by continuity the
  curve will also be transverse for $(\varepsilon,t)$ near $(0,0)$. We
  see that
  $L_0(0) = (0,-1,0,0)$ and $\dot L_0(0) = (0,0,0,1)$. 
  Since the 1-form $\omega$ in Equation~\eqref{eq:std-contact-1form}
  has a term $y\,dv$, the link is transverse at this point as desired.
  (Similar computations also work if we lift to $\mathbf{S}^3_\lambda$
  instead.)
\end{proof}

For sufficiently small~$\lambda$, the transverse links $T_\lambda(\vec
D)$ are transversely isotopic to each other. We thus write simply $T(\vec D)$ for
their common transverse isotopy class. 
This isotopy class refines the ordinary isotopy class of the link $L(\vec D)$, cf.\ Definition~\ref{def:squashed-lifts}. 

We can similarly associate a transverse link to any plabic graph~$P$ by
setting $T(P)= T(o(P))$; or to a divide~$D$ by setting
$T(D)= T(o(D))$.


\begin{corollary}
\label{cor:plabic-transverse}
  If two plabic graphs $P_1$ and~$P_2$ are move equivalent, then
  $T(P_1)$ and $T(P_2)$ are transverse isotopic.
\end{corollary}

\begin{proof}
  Immediate from Propositions~\ref{pr:plabic-oriented}
  and~\ref{prop:transverse-divide}.
\end{proof}

It is natural to ask whether the converse is true: 

\begin{problem}
\label{prob:plabic-transverse}
  If two plabic graphs $P_1$ and $P_2$ are transverse-equivalent (i.e.,
  $T(P_1)$ is transverse isotopic to $T(P_2)$), does it follow that $P_1$ and $P_2$ are move equivalent?
(A~weaker version: do the quivers $Q(P_1)$ and $Q(P_2)$ have to be mutation equivalent?) 
\end{problem}

\begin{remark}
In order to pose a similar question for general graph divides (cf.\ Remark~\ref{rem:graph-divides-vs-plabic-graphs}), one would need a proper notion of move equivalence. More precisely, one would need to identify a set of moves (containing the local moves in Definition~\ref{def:moves}) relating any two graph divides whose links are transverse equivalent. 
\end{remark}

It is not hard to show that any transverse link can be realized as $T(\vec D)$ for some oriented divide~$D$, by adapting the argument in~\cite{gibson-ishikawa}; we omit the details. 
On~the other hand, not every transverse link can be realized as a transverse link of a plabic graph (resp., a divide, a graph divide). 
In particular, there is a restriction on the self-linking number: 

\begin{proposition}
\label{prop:plabic-sl}
  Let $P$ be a plabic graph. Then 
  $\selflink(T(P))=-\chi(P)$, where $\chi(P)$ denotes the Euler characteristic of~$P$ (viewed as a 1-dimensional simplicial complex).
\end{proposition}

The proof of Proposition~\ref {prop:plabic-sl} will require 
some preliminary lemmas. 

\begin{lemma}\label{lem:self-link-divide}
  Let $\vec D$ be an oriented divide, and let $\vec D_r$ be a small
  push-off of $\vec D$ in the direction of the right-handed normal
  of~$\vec D$, as shown in Figure~\ref{fig:hypotrochoid}. Then
  $\selflink(T(\vec D))$ is the linking number between
  $L(\vec D)$ and $L(\vec D_r))$.
\end{lemma}

\begin{figure}[ht]
\begin{center}
\setlength{\unitlength}{0.88pt}
\begin{picture}(150,130)(-15,-5)
\linethickness{1.4pt}
\qbezier(20,70)(5,15)(60,0)
\thinlines
\red{\qbezier(27,66)(12,19)(60,8)}
\linethickness{1.4pt}
\qbezier(100,70)(115,15)(60,0)
\thinlines
\red{\qbezier(93,66)(108,19)(60,8)}
\linethickness{1.4pt}
\qbezier(20,70)(60,110)(100,70)
\thinlines
\red{\qbezier(27,66)(60,101)(93,66)}
\linethickness{1.4pt}
\qbezier(100,70)(185,-25)(60,0)
\thinlines
\red{\qbezier(93,66)(171,-17)(60,8)}
\linethickness{1.4pt}
\qbezier(20,70)(-65,-25)(60,0)
\thinlines
\red{\qbezier(27,66)(-51,-17)(60,8)}
\linethickness{1.4pt}
\qbezier(20,70)(60,180)(100,70)
\thinlines
\red{\qbezier(27,66)(60,164)(93,66)}
\thicklines
\put(-7,34){\vector(3,5){1}}
\put(127,34){\vector(3,-5){1}}
\put(63,125){\vector(1,0){1}}
\thicklines
\put(1.5,34){\red{\vector(3,5){1}}}
\put(118.5,34){\red{\vector(3,-5){1}}}
\put(63,114.7){\red{\vector(1,0){1}}}
\put(-20,34){$\vec{D}$}
\put(-2,10){\red{$\vec{D}_r$}}
\end{picture}

\end{center}
\caption{An oriented divide $\vec D$ (thick, black) and its pushoff
  $\vec D_r$ in the direction of the rightward-pointing normal (thin, red).}
\label{fig:hypotrochoid}
\vspace{-.1in}
\end{figure}

\begin{proof}
  We work with the model
  $T_\lambda(\vec D) \subset \mathbf{S}^3_\lambda$, as in the
  definition of $T(\vec D)$. The first step is to find a section of
  the contact plane~$\xi$ on a Seifert surface for
  $T_\lambda(\vec D)$. In fact, the vector
  field on $\mathbf{S}^3_\lambda$ given by
  \[
    \vec a(x,y,u,v) =
      v \frac{\partial}{\partial x} - u \frac{\partial}{\partial y}
      +\lambda^2\Bigl(y \frac{\partial}{\partial u} - x \frac{\partial}{\partial v}\Bigr)
  \]
  is a global section of~$\xi$. Thus we want to find the linking
  number of $T_\lambda(\vec D)$ with its push-off in the direction
  of~$\vec a$. For small~$\lambda$, this push-off is very close to
  $T_\lambda(\vec D_r)$.
\end{proof}

We could use Lemma~\ref{lem:self-link-divide} to directly prove
Proposition~\ref{prop:plabic-sl}. But, as mentioned in
Remark~\ref{rem:o-divides-vs-link-diagrams}, 
it is not very convenient to compute linking numbers
from divide presentations of links---so we instead give some reductions.

\begin{lemma}\label{lem:self-link-circle}
  Let an oriented divide~$\vec D$ be a circle inside the disk~$\Disk$, with no
  crossings and an arbitrary orientation. Then $\selflink(T(\vec D)) = -1$.
More generally, if $\vec D$ is a union of $n$ disjoint non-nested circles, then $\selflink(T(\vec D)) = -n$.

  Similarly, if $\vec D$ is an oriented figure-eight curve with one
  crossing, as shown in Figure~\ref{fig:lemniscate}, then
  $\selflink(T(\vec D)) = -1$.
\end{lemma}

\begin{proof}
Let $\vec D$ be a circle. 
  With $\vec D_r$ the push-off as in Lemma~\ref{lem:self-link-divide},
  the link $L(\vec D \cup \vec D_r)$ is a Hopf link, oriented so that the
  linking number between the two components is~$-1$. (Thus $T(\vec D)$
  is the standard transverse unknot with self-linking $-1$, which is
  the maximum possible.)
If $\vec D$ is a union of $n$ disjoint non-nested circles, then $L(\vec D\cup \vec D_r)$ is a split union of $n$ Hopf links, each with linking number~$-1$. 

  The result for the figure-eight curve follows from
  Proposition~\ref{prop:transverse-divide} by applying a U-turn move
  to an oriented circle.
\end{proof}

\begin{lemma}
\label{lem:self-link-cross}
  Let $\vec D$ be an oriented divide with a crossing at~$x$, and let
  $\vec D'$ be the oriented divide with the crossing at~$x$ smoothed in an
  orientation-preserving way. Then $\selflink(\vec D') =
  \selflink(\vec D) - 1$.
\end{lemma}

\begin{proof}
  By Lemma~\ref{lem:self-link-divide}, we need to compare the linking
  numbers $\link(L(\vec D), L(\vec D_r))$ and $\link(L(\vec D'), L(\vec D'_r))$. To
  move from the oriented divide $\vec D \cup \vec D_r$ to $\vec D' \cup \vec
  D'_r$, we perform two operations, illustrated in Figure~\ref{fig:DDr}.

  First, we smooth a crossing of $\vec D$ with itself, and also a crossing of
  $\vec D_r$ with itself, both preserving orientations. The effect on
  the corresponding links is to do two surgeries, one on~$\vec D$ and one on~$\vec D_r$. This preserves the linking number, since each surgery is done on the same side of the linking number.

  Second, we perform a same-direction tangency move involving both $\vec D$
  and~$\vec D_r$. Since $L(\vec D)$ crosses once through~$L(\vec D_r)$ (cf.\ Remark~\ref{rem:same-direction tangency}), the
  linking number changes by $\pm 1$ (always one or the other). To
  establish the sign, we consider the case of a figure-eight
  divide~$\vec D$ being smoothed into a divide~$\vec D'$ consisting of
  two circles, as in Figure~\ref{fig:lemniscate}. By
  Lemma~\ref{lem:self-link-circle},
  $T(\vec D)$ is a standard transverse unknot with self-linking
  number~$-1$ and $T(\vec D')$ is two unlinked unknots with total
  self-linking number~$-2$.
\end{proof}

\begin{figure}[ht]
\begin{center}
\setlength{\unitlength}{1.5pt}
\begin{picture}(40,44)(0,-4)
\linethickness{1.4pt}
\put(0,0){\line(35,40){35}}
\put(35,0){\line(-35,40){35}}
\thicklines
\put(35,40){\vector(35,40){1}}
\put(0,40){\vector(-35,40){1}}
\thinlines
\put(5,0){\red{\line(35,40){35}}}
\put(40,0){\red{\line(-35,40){35}}}
\thicklines
\put(5,40){\red{\vector(-35,40){1}}}
\put(40,40){\red{\vector(35,40){1}}}
\put(-9,-8){$\vec{D}$}
\put(5,-8){\red{$\vec{D}_r$}}
\end{picture}
\qquad\qquad\qquad
\setlength{\unitlength}{1.5pt}
\begin{picture}(40,44)(0,-4)
\linethickness{1.4pt}
\qbezier(0,0)(25,20)(0.6,39.5)
\thicklines
\put(0,40){\vector(-1.2,1){1}}
\thinlines

\red{\qbezier(5,0)(40,20)(5,40)}

\thicklines
\put(5,40.1){\red{\vector(-1.6,1){1}}}

\thinlines
\red{\qbezier(40,0)(15,20)(40,40)}
\thicklines
\put(40,40){\red{\vector(1.2,1){1}}}

\linethickness{1.4pt}
\qbezier(35,0)(0,20)(34.1,39.4)

\thicklines
\put(35,40.1){\vector(1.6,1){1}}
\end{picture}
\qquad\qquad\qquad
\setlength{\unitlength}{1.5pt}
\begin{picture}(40,44)(0,-4)
\linethickness{1.4pt}
\qbezier(0,0)(20,20)(0,40)
\thicklines
\put(0,40){\vector(-1,1){1}}
\thinlines
\red{\qbezier(5,0)(24,20)(5,40)}
\thicklines
\put(5,40){\red{\vector(-1,1){1}}}
\put(-9,-8){$\vec{D}'$}
\put(5,-8){\red{$\vec{D}_r'$}}

\thinlines
\red{\qbezier(40,0)(20,20)(40,40)}
\thicklines
\put(40,40){\red{\vector(1,1){1}}}
\linethickness{1.4pt}
\qbezier(35,0)(16,20)(35,40)
\thicklines
\put(35,40){\vector(1,1){1}}
\end{picture}
\end{center}
\caption{An oriented divide with a crossing (left) and its oriented
  smoothing (right). Both are shown with their rightward push-offs.
  Shown in the middle is the intermediate step in the proof of Lemma~\ref{lem:self-link-cross}.}
\label{fig:DDr}
\vspace{-.1in}
\end{figure}

\begin{figure}[ht]
\vspace{-.1in}
\begin{center}
\setlength{\unitlength}{1.5pt}
\begin{picture}(80,30)(0,0)
\thicklines
\qbezier(0,15)(0,0)(15,0)
\qbezier(15,0)(30,0)(40,15)
\qbezier(0,15)(0,30)(15,30)
\qbezier(15,30)(30,30)(40,15)
\qbezier(80,15)(80,0)(65,0)
\qbezier(65,0)(50,0)(40,15)
\qbezier(80,15)(80,30)(65,30)
\qbezier(65,30)(50,30)(40,15)
\put(15,0){\vector(-1,0){2}}
\put(65,30){\vector(-1,0){2}}
\put(15,30){\vector(1,0){2}}
\put(65,0){\vector(1,0){2}}
\put(36,-3){$\vec{D}$}
\end{picture}
\qquad\qquad
\setlength{\unitlength}{1.5pt}
\begin{picture}(80,30)(0,0)
\thicklines
\put(15,15){\circle{30}}
\put(50,15){\circle{30}}
\put(15,0){\vector(-1,0){2}}
\put(50,30){\vector(-1,0){2}}
\put(15,30){\vector(1,0){2}}
\put(50,0){\vector(1,0){2}}
\put(27.5,-3){$\vec{D}'$}
\end{picture}
\end{center}
\caption{Smoothing a figure-eight oriented divide into two circles. 
This translates into smoothing an unknot with self-linking number~$-1$ 
into two disjoint unknots, each with self-linking number~$-1$.}
\label{fig:lemniscate}
\vspace{-.1in}
\end{figure}

\begin{proof}[Proof of Proposition~\ref{prop:plabic-sl}]
Let $\vec D_0 = \vec D(P)$ be the oriented divide associated with the plabic graph~$P$. The construction of the oriented divide $D_0$ is explained in Definition~\ref{def:L(P)=L(o(P))}, cf.\ especially Figure~\ref{fig:plabic-to-o-divide}. 
Each edge~$e$ of~$P$ contributes $0$, $1$, or~$2$ crossings to the oriented divide~$\vec D_0$, depending on whether $e$ connects two white vertices, a black and a white vertex, or two black vertices. To make things more uniform, let $\vec D_1$ be the result of doing a safe tangency move for each edge of~$P$ that connects two white vertices. Now, for each edge $e$ of~$P$, the oriented divide 
  $\vec D_1$ has $1$ or~$2$ crossings, depending on
  whether the endpoints of~$e$ have distinct colors or not. 
Let $\vec D_2$ be the result of smoothing each crossing of~$\vec D_1$
  in an orientation-preserving way. Let $V$, $E$, and $K$ denote the
  number of vertices in~$P$, edges in~$P$, and edges in~$P$ connecting
  vertices of the same color, respectively. Then, by
  Lemma~\ref{lem:self-link-cross},
$ \selflink(T(\vec D_2)) = \selflink(T(\vec D_1)) - K - E$.
  Since $\vec D_2$ consists of $V+K$ non-nested circles, we have
$\selflink(T(\vec D_2)) = -V-K$ by Lemma~\ref{lem:self-link-circle}. 
  Consequently 
  \begin{align*}
    \selflink(T(\vec D_0)) = \selflink(T(\vec D_1)) &=  K+E+(-V-K)= E - V = - \chi(P).\qedhere
  \end{align*}
\end{proof}

\begin{corollary}
\label{cor:divide-sl}
Let $D$ be a divide with $a$ nodes and $b$ interval branches.
Let $n$ denote the number of vertices in the quiver~$Q(D)$. 
Then 
\[
\selflink(T(D)) = 2a - b = n-1.
\] 
\end{corollary}

\begin{proof}
  Let $G$ be the $1$-skeleton of~$D$; it is a planar graph with $a$ vertices of
  degree~$4$ and $2b$ vertices of degree~$1$. 
  The graph $G$ has $v=a+2b$ vertices and $e=\frac12(4a+2b)=2a+b$ edges, 
so $\chi(D) = -e+ v= -a+b$.
  The plabic graph $P(D)$ has an extra cycle at each node of~$G$, 
  implying $\chi(P(D)) =\chi(G) - a=-2a+b$. 
  By Proposition~\ref{prop:plabic-sl}, we then have 
$\selflink(T(D)) = -\chi(P(D)) =  2a - b$.

Let $f$ be the number of (bounded) regions of~$D$.
By Euler's formula, $v-e+f=1$. Also, $n=a+f$.
Therefore  $n-1=a+f-1=a-v+e=2a-b$. 
\end{proof}

\begin{remark}
\label{rem:max-self-link}
 For any plabic graph~$P$, the transverse link
  $T(P)$ achieves the maximal self-linking number within its topological
  type, for the following reasons. First, there is a natural smooth
  surface~$\Sigma(P)$ embedded in the $4$-dimensional ball~$\Ball^4$ such that 
  $\partial \Sigma(P) = T(P)$ and $\Sigma(P)$ contains $P$ as a spine, so that
  $\chi(\Sigma(P)) = \chi(P)$. This is explained by V.~Shende, D.~Treumann,
  H.~Williams, and E.~Zaslow  in~\cite[Theorem~4.9]{stwz} 
  in a slightly different setting; we do not repeat the details. Second, the
  slice-Bennequin inequality~\cite{rudolph-slice} says that,
  for any transverse link~$T$ and smooth surface $\Sigma\subset \Ball^4$
  with $\partial \Sigma = T$, we have
  $\selflink(T) \le -\chi(\Sigma)$.
  Combining these two facts with Proposition~\ref{prop:plabic-sl}
  implies that the self-linking number is maximal.
\end{remark}

Given that the link of a (graph) divide is naturally 
transverse, one might think that Conjecture~\ref{conj:morsif=mut-via-divides}
and Problem~\ref{problem:divide-links} should be revised, with link equivalence replaced by a more refined notion based on transverse equivalence. 
This turns out to be unnecessary, 
see Corollary~\ref{cor:divide-link-transverse} below.

\begin{theorem}[J.~Etnyre and J.~van Horn-Morris {\cite[Corollary~5.3]{etnyre-vhm}}]
\label{th:etnyre-vhm}
  Let $L$ be a strongly quasipositive fibered link, with 
(quasipositive Seifert) fiber surface~$\Sigma$. 
Then $L$ has a unique transverse representative with self-linking number equal to $-\chi(\Sigma)$. (This is the maximal possible value of the self-linking number.)  
\end{theorem}

\begin{corollary}
\label{cor:divide-link-transverse}
For any divides $D_1$ and $D_2$, 
the following are equivalent:
\begin{itemize}[leftmargin=.3in]
\item[{\rm\bf (d)}] $D_1$ and $D_2$ are link equivalent 
(i.e., the links $L(D_1)$ and $L(D_2)$ are isotopic);
\item[{\rm\bf (t)}] the transverse links $T(D_1)$ and $T(D_2)$ are transverse isotopic. 
\end{itemize}
\end{corollary}


\begin{proof}
Clearly \textbf{(t)}$\Rightarrow$\textbf{(d)}. Let us prove the converse. 
The link $L(D)$ of any divide~$D$ is fibered and strongly
quasipositive, see Figure~\ref{fig:link-types}. 
Moreover $T(D)$ has the maximal self-linking number; to see this, either use
Remark~\ref{rem:max-self-link} or compare
Corollary~\ref{cor:divide-sl} with the Euler characteristic of the
fiber surface for $L(D)$, as computed in 
\hbox{\cite[Remark~1]{acampo-ihes}.} 
By Theorem~\ref{th:etnyre-vhm}, $L(D)$ has a unique transverse representative in the maximal self-linking number, so \textbf{(d)}$\Rightarrow$\textbf{(t)}.
\end{proof}

Corollary~\ref{cor:divide-link-transverse} implies the transverse version of A'Campo's Theorem~\ref{th:L(D)=L(C,z)}:

\begin{corollary}
The classical ($\CC$-transverse, cf.\  Proposition~\ref{pr:c-transverse-is-transverse}) link of an isolated plane curve singularity is transverse isotopic to the transverse link $T(D)$ associated to any algebraic divide~$D$ coming from a real morsification of this singularity. 
\end{corollary}

\begin{remark}
We can further extend Conjecture~\ref{conj:morsif=mut-via-divides} 
by adding the statement~\textbf{(t)} (transverse equivalence of links), 
which by Corollary~\ref{cor:divide-link-transverse} is equivalent to~\textbf{(d)}, 
even without the assumption of algebraicity. 
\end{remark}

\begin{remark}
\label{rem:transverse-non-simplicity}
For a general plabic graph~$P$ (not necessarily coming from a divide),
there is no reason to expect transverse simplicity;
that is, link equivalence is unlikely to imply transverse equivalence. 
\end{remark}

\begin{remark}
\label{rem:transverse-vs-legendrian}
  The appearance of transverse (rather than Legendrian) 
  links in the study of plane curve singularities 
  might be puzzling to a reader familiar with 
  Arnold's classical construction (cf.\ Remark~\ref{rem:o-divide}) 
  which associates to a plane curve a Legendrian link in the
  solid torus \cite{arnold-curves,stwz}. However, this is too much to
  hope for in the context of links in the 3-sphere (as opposed to the solid
  torus). In particular, the boundary U-turn move does not appear to
  extend to a Legendrian isotopy in any natural way. As an operation
  on braid closure, it preserves the transverse type of the link but
  not any natural Legendrian type, cf.\ Remark~\ref{rem:markov-braid}.
\end{remark}

\newpage

\section{Scannable divides
}
\label{sec:scannable-divides}

In Corollary~\ref{cor:p=>d},
we established the implication \textbf{(p)}$\Rightarrow$\textbf{(d)}
of Conjecture~\ref{conj:morsif=mut-via-divides}. 
Our strategy for tackling the converse implication
 \textbf{(d)}$\Rightarrow$\textbf{(p)} is closely aligned with 
the approach used by O.~Couture and B.~Perron~\cite{couture-perron, couture}.
The~main idea is to transform a given divide,
via local moves preserving the associated link, into a ``scannable'' divide  
(in their terminology, an ``ordered Morse divide"). 
The links of divides in this class are then described by a simple combinatorial rule. 

In this section, we review the basic results of~\cite{couture-perron} needed for
our purposes. 

\begin{definition}
\label{def:scannable}
A \emph{scannable divide} is a divide~$D$ drawn 
inside a rectangle of the form $[a_0,a]\times[b_0,b]\subset\RR^2$ so that the following conditions hold,
for some $a_0<a_1<a_2<a$.  
For every point $(x_0,y_0)$ on~$D$ 
such that the tangent line to a local
branch of~$D$ at $(x_0,y_0)$ 
is vertical (i.e., is given by the equation $x=x_0$), we require that 
\begin{itemize}[leftmargin=.3in]
\item
$(x_0,y_0)$ is a smooth point of~$D$ (i.e., not a node); 
\item
either $x_0=a_1$ or $x_0=a_2$; 
\item
if $x_0=a_1$, then the local branch of~$D$ lies
to the right of the tangent; 
\item
if $x_0=a_2$, then the local branch of~$D$ lies to the
left of the tangent. 
\end{itemize}
In other words, we can parametrize each branch of~$D$ so that, as we
move along it, the $x$-coordinate makes all of its U-turns at locations 
of the form $(a_1,y)$ (approaching them from the right) 
or $(a_2,y)$ (approaching from the left). 

Every vertical line $x=x_0$ with $a_1<x_0<a_2$ intersects a scannable
divide as above in the same number of points, the number of
\emph{strands} in the divide. 
We say that a scannable divide is of \emph{minimal index} if this
number is equal to the braid index of the corresponding link. 
(Recall that for algebraic divides, the braid index is equal
to the multiplicity of the corresponding singularity, 
see Remark~\ref{rem:libgober-williams}.) 

When considering a scannable divide~$D$, we ordinarily fix a particular representation 
of~$D$ of the form described above,
up to isotopy within the class of divides with the given number of strands
inside a fixed ambient rectangle. 
Such a representation is by no means unique, cf.\ Remark~\ref{rem:isotopic-scannable} below. 
\end{definition}

When drawing a scannable divide, we only show its part lying within the
rectangle $[a_1,a_2]\times [b_0,b]$, the rest of it being redundant. 

To illustrate, all divides shown in Figure~\ref{fig:divides-quasihom} are
manifestly scannable, with the exception of the divide at the bottom
of the last column, which is not scannable.  

\begin{remark}
\label{rem:isotopic-scannable}
Isotopic scannable divides may have rather different 
combinatorial types, and may even have a
different number of strands. 
See Figure~\ref{fig:isotopic-scanning}. 
\end{remark}

\begin{figure}[ht] 
\begin{center} 
\setlength{\unitlength}{1.5pt} 
\ \begin{picture}(40,20)(0,-10)
\put(-5,-10){\line(-1,0){5}} 
\put(25,-10){\line(1,0){5}} 
\put(0,5){\makebox(0,0){\usebox{\opening}}} 
\put(30,5){\makebox(0,0){\usebox{\closing}}} 
\put(0,0){\makebox(0,0){\usebox{\ssone}}} 
\put(10,0){\makebox(0,0){\usebox{\sstwo}}} 
\put(20,0){\makebox(0,0){\usebox{\ssone}}} 
\end{picture} 
\qquad
\setlength{\unitlength}{1.5pt} 
\begin{picture}(40,20)(0,-10)
\put(-5,10){\line(-1,0){5}} 
\put(25,10){\line(1,0){5}} 
\put(0,-5){\makebox(0,0){\usebox{\opening}}} 
\put(30,-5){\makebox(0,0){\usebox{\closing}}} 
\put(0,0){\makebox(0,0){\usebox{\sstwo}}} 
\put(10,0){\makebox(0,0){\usebox{\sstwo}}} 
\put(20,0){\makebox(0,0){\usebox{\sstwo}}} 
\end{picture} 
\qquad
\setlength{\unitlength}{1.5pt} 
\begin{picture}(40,20)(0,-10)
\put(-5,-10){\line(-1,0){5}} 
\put(25,10){\line(1,0){5}} 
\put(0,5){\makebox(0,0){\usebox{\opening}}} 
\put(30,-5){\makebox(0,0){\usebox{\closing}}} 
\put(0,0){\makebox(0,0){\usebox{\ssone}}} 
\put(10,0){\makebox(0,0){\usebox{\sstwo}}} 
\put(20,0){\makebox(0,0){\usebox{\sstwo}}} 
\end{picture} 
\qquad
\begin{picture}(30,30)(-5,-15)
\put(-5,-15){\line(-1,0){5}} 
\put(-5,15){\line(-1,0){5}} 
\put(0,0){\makebox(0,0){\usebox{\opening}}} 
\put(0,0){\makebox(0,0){\usebox{\sssonethree}}} 
\put(10,0){\makebox(0,0){\usebox{\ssstwo}}} 
\put(20,-10){\makebox(0,0){\usebox{\closing}}} 
\put(20,10){\makebox(0,0){\usebox{\closing}}} 
\end{picture}
\qquad
\begin{picture}(30,30)(-10,-15)
\put(-5,5){\line(-1,0){5}} 
\put(-5,15){\line(-1,0){5}} 
\put(0,-10){\makebox(0,0){\usebox{\opening}}} 
\put(0,0){\makebox(0,0){\usebox{\ssstwo}}} 
\put(10,0){\makebox(0,0){\usebox{\sssthree}}} 
\put(20,0){\makebox(0,0){\usebox{\ssstwo}}} 
\put(30,-10){\makebox(0,0){\usebox{\closing}}} 
\put(30,10){\makebox(0,0){\usebox{\closing}}} 
\end{picture}
\vspace{-.1in}
\end{center} 
\caption{Isotopic scannable divides. 
} 
\label{fig:isotopic-scanning} 
\end{figure} 


\pagebreak[3]

\begin{definition}
\label{def:braid-of-divide}
Let $D$ be a scannable divide with $k$ strands. 
The \emph{braid $\beta(D)$ asso\-ciated with~$D$}
is the (positive) braid in the $k$-strand braid group defined as follows. 
Let $\sigma_i$ denote the positive Artin generator that switches 
the $i$th and $(i\!+\!1)$st strands. 
(The strands are numbered bottom up.) 
Let $\texttt{left}(D)$ (respectively~$\texttt{right}(D)$)
be the product of the (commuting) generators $\sigma_i$
corresponding to the pairs of adjacent strands $(i,i\!+\!1)$ of the divide~$D$
which connect to each other at its left 
(respectively right) end, at the points with a vertical tangent. 
Let $\texttt{bulk}(D)$ be the product of Artin generators~$\sigma_i$, 
multiplied left to right, 
corresponding to the pairs of adjacent strands of the divide which get switched 
as we scan it left to right. 
Let~$\texttt{klub}(D)$ denote the product of the same generators,
multiplied right to left. 
We then set 
\[
\beta(D)=\texttt{left}(D)\texttt{bulk}(D)\texttt{right}(D)\texttt{klub}(D).
\] 
\end{definition}

See Figure~\ref{fig:left-bulk-right-klub} for an example. 
Additional examples appear further in the text. 

\begin{figure}[ht]
\begin{center}
\begin{picture}(60,20)(-10,-8)
\put(-5,-10){\line(-1,0){5}} 
\put(40,10){\line(-1,0){5}} 
\put(0,5){\makebox(0,0){\usebox{\opening}}} 
\put(0,0){\makebox(0,0){\usebox{\ssone}}} 
\put(10,0){\makebox(0,0){\usebox{\ssone}}} 
\put(20,0){\makebox(0,0){\usebox{\sstwo}}} 
\put(30,0){\makebox(0,0){\usebox{\ssone}}} 
\put(40,-5){\makebox(0,0){\usebox{\closing}}} 
\end{picture} 
\end{center}
\caption{For this scannable divide~$D$, we have 
$\texttt{left}(D)=\sigma_2$,
$\texttt{right}(D)=\sigma_1$, 
$\texttt{bulk}(D)=\sigma_1\sigma_1\sigma_2\sigma_1$, 
$\texttt{klub}(D)=\sigma_1\sigma_2\sigma_1\sigma_1$, 
and $\beta(D)=\sigma_2\sigma_1\sigma_1\sigma_2\sigma_1
\sigma_1\sigma_1\sigma_2\sigma_1\sigma_1$.
}
\vspace{-.1in}
\label{fig:left-bulk-right-klub}
\end{figure} 

We can now state the key result by Couture and Perron. 

\begin{theorem}[{\rm $\!\!$ \cite[Proposition~2.3]{couture-perron}}]
\label{th:palindromic}
The link~$L(D)$ of a scannable divide~$D$ is isotopic to 
the closure of the positive braid $\beta(D)$ given by Definition~\ref{def:braid-of-divide}. 
\end{theorem}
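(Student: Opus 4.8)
The plan is to first replace A'Campo's definition of $L(D)$ by a more flexible geometric model. Rescaling $(u,v)$, the set of points $(x,y,u,v)\in\mathbb{S}^3$ with $(u,v)$ tangent to $D$ is, up to an ambient diffeomorphism, the \emph{unoriented unit tangent lift} of $D$ into the unit tangent bundle of $\Disk$, which sits in $\mathbb{S}^3$ as a standard solid torus. I would first verify that this model is faithful: over a regular point of $D$ the two choices $\pm(u,v)$ give the two lifted points, over a node the two branches give four, and as a branch meets $\partial\Disk$ the lift passes through $(x,y,0,0)$ and continues into the opposite tangent sheet. Consequently each immersed interval closes up into a single component (traversed once with each tangent sign) and each immersed circle lifts to two components, which is exactly the component count dictated by Proposition~\ref{pr:divide-properties-1}. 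I would then record that scannability means precisely that the $x$-coordinate restricts to a Morse function on $D$ with all critical points (the U-turns) at $x=a_1$ and $x=a_2$; this is what lets us read the lift as a closed braid.

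Next I would exhibit the braid explicitly. The idea is to use the scanning coordinate, doubled, as the braiding ``time'': traverse $x$ from $a_1$ to $a_2$ along the rightward-tangent lifts, then back from $a_2$ to $a_1$ along the leftward-tangent lifts, the two passes glued at the turnarounds by the U-turns. Because in the bulk $a_1<x<a_2$ the tangent directions are (after a preliminary isotopy) nearly horizontal, each slice meets the rightward sheet in exactly $k$ points and the leftward sheet in exactly $k$ points, so each of the two passes is a genuine $k$-strand geometric braid and the whole lift is a closed $k$-braid. I would then read off the word: during the forward pass the strands cross exactly at the nodes of $D$, in scanning order, producing $\texttt{bulk}(D)$; at $x=a_2$ the right U-turns splice the rightward lifts to the leftward lifts of the same arcs, contributing $\texttt{right}(D)$; the backward pass crosses at the same nodes but in the reverse spatial order, producing $\texttt{klub}(D)$; and at $x=a_1$ the left U-turns close everything up, contributing $\texttt{left}(D)$. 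Reading the closed braid starting just past the $a_1$ seam gives, up to cyclic rotation, the word $\beta(D)=\texttt{left}(D)\,\texttt{bulk}(D)\,\texttt{right}(D)\,\texttt{klub}(D)$, and since conjugation and cyclic permutation do not change the braid closure, this yields $L(D)\simeq\widehat{\beta(D)}$.

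The main obstacle I expect is the positivity and the palindromic sign bookkeeping, which is where all the real content lies. I must check that every crossing produced above is a \emph{positive} Artin generator $\sigma_i$ rather than its inverse; this is consistent with the braid-positivity of algebraic links noted in Remark~\ref{rem:Morse-direction}, but for a general scannable divide it has to be established directly from the tangent-lift geometry. The delicate cases are (i) the backward pass, where one simultaneously reverses the traversal direction and switches to the opposite tangent sheet, and I must confirm that this double reversal leaves the crossing sign positive while reversing the order of the generators (so that one genuinely obtains $\texttt{klub}(D)$, the reverse product, and not a product of inverses); and (ii) the turnaround regions near $x=a_1$ and $x=a_2$, where the tangent direction sweeps through vertical, so the lift momentarily leaves the ``horizontal'' regime and one must analyze carefully how the rightward and leftward lifts of a U-turn arc clasp, confirming that each U-turn pair contributes a single positive $\sigma_i$ as encoded in $\texttt{left}(D)$ and $\texttt{right}(D)$. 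Once these local models (a bulk crossing, a left cap, a right cup) are pinned down with their crossing conventions, assembling them along the Morse decomposition of $D$ and invoking invariance of the closure completes the argument.
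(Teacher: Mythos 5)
First, a point of reference: the paper does not prove this statement---it is imported from \cite[Proposition~2.3]{couture-perron}---so there is no in-paper argument to compare against. Your overall strategy (realize $L(D)$ as the unoriented unit tangent lift of $D$ in the unit tangent bundle of $\Disk$, a standard solid torus in $\mathbb{S}^3$, and use the scanning coordinate, traversed forward on the rightward-tangent sheet and backward on the leftward-tangent sheet, as the braid parameter) is the right circle of ideas; it is consistent with the component count and with the local rule of Figure~\ref{fig:double-crossing}, and it explains the palindromic shape of $\beta(D)$.

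As written, however, the proposal is a plan rather than a proof, and the two places where it stops are exactly where the content of the theorem lives. (i) Every crossing sign is deferred. You must exhibit the local models in coordinates---a node seen from the rightward sheet, a node seen from the leftward sheet traversed backward, a left cap, a right cup---and verify that each contributes a single \emph{positive} $\sigma_i$ with the stated index, in the stated order. Until this is done the word could a priori contain inverse generators, and the identity $\beta(D)=\texttt{left}(D)\,\texttt{bulk}(D)\,\texttt{right}(D)\,\texttt{klub}(D)$ is not established; note that the theorem is claimed for arbitrary scannable divides, so you cannot lean on the braid-positivity of algebraic links, and the paper itself warns that its crossing conventions differ from those of \cite{couture-perron}, which underscores how easy it is to get these signs wrong. (ii) There is an unaddressed global step: knowing that the doubled-$x$ parameter cuts the lift into $k$ points per level and that the resulting abstract closed braid has word $\beta(D)$ does not yet yield $L(D)\simeq\widehat{\beta(D)}$ in $\mathbb{S}^3$. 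You must also check that the associated braid axis is unknotted and that your circle-valued parameter extends to a disk fibration of its complement (equivalently, that the solid torus carrying the closed braid is standardly embedded); otherwise ``is a closed braid with word $\beta(D)$'' does not imply ``is isotopic to the closure of $\beta(D)$.'' Both points are true and are what \cite{couture-perron} actually carries out, but here they are asserted rather than proved.
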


Theorem~\ref{th:palindromic} does not require the divide~$D$ to be algebraic. 

\begin{example}
\label{example:D1D2}
Let $D_1$ and $D_2$ be the scannable divides shown in
Figure~\ref{fig:divides-two-transversal cusps}.
Denote
\[
\Delta
=\sigma_1 \sigma_3 \sigma_2 \sigma_1 \sigma_3 \sigma_2
=\sigma_2 \sigma_1 \sigma_3 \sigma_2 \sigma_1 \sigma_3
=\sigma_1 \sigma_2 \sigma_1 \sigma_3 \sigma_2 \sigma_1\,. 
\]
Then
\begin{align*}
\beta(D_1)&=\sigma_1 \sigma_3 \sigma_2 \sigma_1 \sigma_3 \sigma_2 \sigma_1 \sigma_3 
\sigma_1 \sigma_3 \sigma_2 \sigma_1 \sigma_3 \sigma_2 
=\Delta \sigma_1 \sigma_3 \Delta =\Delta^2 \sigma_1 \sigma_3\,, 
\\
\beta(D_2)&=\sigma_1 \sigma_3 \sigma_2 \sigma_1 \sigma_3 \sigma_1 \sigma_2 
\sigma_1 \sigma_3 \sigma_2 \sigma_1 \sigma_3 \sigma_1 \sigma_2 
=\sigma_2^{-1} \Delta^2  \sigma_1 \sigma_3 \sigma_2 \,,
\end{align*}
which shows that the braids $\beta(D_1)$ and $\beta(D_2)$ are conjugate to each other. 
This was to be expected, since the divides $D_1$ and $D_2$ come from morsifications
of two different real forms of the same complex singularity. 
\end{example}

The description of the link $L(D)$ associated with a scannable divide~$D$
given in Theorem~\ref{th:palindromic} and Definition~\ref{def:braid-of-divide}
can be recast in the language of conventional link diagrams, as follows. 
Replace each strand of $D$ by a pair of parallel strands. 
Transform each crossing in~$D$, and each coupling of its strands 
that occurs at either of the two ends of~$D$, 
using the recipe shown in Figure~\ref{fig:double-crossing}. 
Finally, cap each of the remaining ``loose ends'' of~$D$ by connecting 
the corresponding two strands of the link to each other. 
The resulting (oriented) link is isotopic to~$L(D)$. 
(This claim is merely a restatement of Theorem~\ref{th:palindromic}.)
An example is shown in Figure~\ref{fig:link-scannable}. 

We note that our convention for drawing braids and links
is different from~\cite{couture-perron}, 
as we are using right-handed twists as positive Artin generators. 

\newsavebox{\pcrossing}
\setlength{\unitlength}{6pt} 
\savebox{\pcrossing}(10,10)[bl]{
\thicklines 


\put(0,0){\line(1,0){1.5}} 
\put(0,8){\line(1,0){0.5}} 
\put(8.5,0){\line(1,0){1.5}} 
\put(9.5,8){\line(1,0){0.5}} 

\put(1.5,0){\line(1,1){3}} 
\put(0.5,8){\line(1,-1){8}} 
\put(5.5,4){\line(1,1){4}} 

\put(1.5,0){\vector(1,1){2}} 
\put(1.5,7){\vector(1,-1){1}} 

\put(0,2){\red{\line(1,0){0.5}}} 
\put(0,10){\red{\line(1,0){1.5}}} 
\put(9.5,2){\red{\line(1,0){0.5}}} 
\put(8.5,10){\red{\line(1,0){1.5}}} 

\put(0.5,2){\red{\line(1,1){2.5}}} 
\put(1.5,10){\red{\line(1,-1){4.5}}} 
\put(7,4.5){\red{\line(1,-1){2.5}}} 
\put(4,5.5){\red{\line(1,1){0.5}}} 
\put(5.5,7){\red{\line(1,1){3}}} 

\put(8.5,10){\red{\vector(-1,-1){2}}} 
\put(9.5,2){\red{\vector(-1,1){2}}} 
}

\newsavebox{\pclosing}
\setlength{\unitlength}{6pt} 
\savebox{\pclosing}(10,10)[bl]{
\thicklines 
\put(0,0){\line(1,0){1.5}} 
\put(0,8){\line(1,0){0.5}} 
\put(8.5,0){\line(1,0){1.5}} 
\put(9.5,8){\line(1,0){0.5}} 

\put(1.5,0){\line(1,1){3}} 
\put(0.5,8){\line(1,-1){8}} 
\put(5.5,4){\line(1,1){4}} 

\put(1.5,0){\vector(1,1){2}} 
\put(1.5,7){\vector(1,-1){1}} 

\put(0,10){\red{\line(1,0){10}}} 
\put(0,2){\red{\line(1,0){2.5}}} 
\put(10,2){\red{\line(-1,0){2.5}}} 
\put(4.5,2){\red{\line(1,0){1}}} 
\put(7,10){\red{\vector(-1,0){2}}} 
\put(10,2){\red{\vector(-1,0){2}}} 
\qbezier(12,9)(12,10)(10,10)
\qbezier(12,9)(12,8)(10,8)
\qbezier(12,1)(12,2)(10,2)
\qbezier(12,1)(12,0)(10,0)
}

\newsavebox{\popening}
\setlength{\unitlength}{6pt} 
\savebox{\popening}(10,10)[bl]{
\thicklines 
\put(0,0){\line(1,0){1.5}} 
\put(0,8){\line(1,0){0.5}} 
\put(8.5,0){\line(1,0){1.5}} 
\put(9.5,8){\line(1,0){0.5}} 

\put(1.5,0){\line(1,1){3}} 
\put(0.5,8){\line(1,-1){8}} 
\put(5.5,4){\line(1,1){4}} 

\put(1.5,0){\vector(1,1){2}} 
\put(1.5,7){\vector(1,-1){1}} 

\put(0,10){\red{\line(1,0){10}}} 
\put(0,2){\red{\line(1,0){2.5}}} 
\put(10,2){\red{\line(-1,0){2.5}}} 
\put(4.5,2){\red{\line(1,0){1}}} 

\put(7,10){\red{\vector(-1,0){2}}} 
\put(10,2){\red{\vector(-1,0){2}}} 

\qbezier(-2,9)(-2,10)(0,10)
\qbezier(-2,9)(-2,8)(0,8)
\qbezier(-2,1)(-2,2)(0,2)
\qbezier(-2,1)(-2,0)(0,0)
}

\newsavebox{\ptrack}
\setlength{\unitlength}{6pt} 
\savebox{\ptrack}(10,10)[bl]{
\thicklines 
\put(0,0){\line(1,0){10}} 
\put(0,0){\vector(1,0){5}} 
\put(0,2){\red{\line(1,0){10}}} 
\put(10,2){\red{\vector(-1,0){5}}} 
}

\newsavebox{\uturnright}
\setlength{\unitlength}{6pt} 
\savebox{\uturnright}(10,10)[bl]{
\thicklines 
\qbezier(2,1)(2,2)(0,2)
\qbezier(2,1)(2,0)(0,0)
}

\newsavebox{\uturnleft}
\setlength{\unitlength}{6pt} 
\savebox{\uturnleft}(10,10)[bl]{
\thicklines 
\qbezier(-2,1)(-2,2)(0,2)
\qbezier(-2,1)(-2,0)(0,0)
}

\begin{figure}[ht] 
\begin{center} 
\begin{tabular}{c||c|c|c}
\setlength{\unitlength}{3pt} 
\begin{picture}(10,10)(0,-2)
\put(5,5){\makebox(0,0){divide}}
\end{picture}
& \setlength{\unitlength}{3pt} 
\begin{picture}(10,15)(0,-2)
\thicklines 
\qbezier(5,5)(7,10)(10,10)
\qbezier(5,5)(3,0)(0,0)
\qbezier(5,5)(3,10)(0,10)
\qbezier(5,5)(7,0)(10,0)
\end{picture}  
&
\setlength{\unitlength}{3pt} 
\begin{picture}(5,10)(0,-2)
\thicklines 
\qbezier(0,0)(5,0)(5,5)
\qbezier(0,10)(5,10)(5,5)
\end{picture} 
&
\setlength{\unitlength}{3pt} 
\begin{picture}(5,10)(-5,-2)
\thicklines 
\qbezier(0,0)(-5,0)(-5,5)
\qbezier(0,10)(-5,10)(-5,5)
\end{picture} 
\\
\hline
&&&\\
\setlength{\unitlength}{6pt} 
\begin{picture}(6,10)(0,0)
\put(3,5){\makebox(0,0){link}} 
\end{picture} 
&
{\ }
\setlength{\unitlength}{6pt} 
\begin{picture}(10,10)(0,0)
\put(5,5){\makebox(0,0){\usebox{\pcrossing}}} 
\end{picture} 
{\ \ }
&
{\ }
\setlength{\unitlength}{6pt} 
\begin{picture}(12,10)(0,0)
\put(5,5){\makebox(0,0){\usebox{\pclosing}}} 
\end{picture} 
{\ \ }
&
{\ }
\setlength{\unitlength}{6pt} 
\begin{picture}(12,10)(-2,0)
\put(5,5){\makebox(0,0){\usebox{\popening}}} 
\end{picture} 
{\ \ }
\end{tabular}
\end{center} 
\caption{Transforming a scannable divide into a link diagram. 
} 
\label{fig:double-crossing} 
\end{figure} 

\newsavebox{\artinone}
\setlength{\unitlength}{1.5pt} 
\savebox{\artinone}(10,20)[bl]{
\thicklines 
\qbezier(6,7)(7,10)(10,10)
\qbezier(4,3)(3,0)(0,0)
\qbezier(5,5)(3,10)(0,10)
\qbezier(5,5)(7,0)(10,0)
\put(0,20){\line(1,0){10}} 
}

\newsavebox{\artintwo}
\setlength{\unitlength}{1.5pt} 
\savebox{\artintwo}(10,20)[bl]{
\thicklines 
\qbezier(6,17)(7,20)(10,20)
\qbezier(4,13)(3,10)(0,10)
\qbezier(5,15)(3,20)(0,20)
\qbezier(5,15)(7,10)(10,10)
\put(0,0){\line(1,0){10}} 
}

\begin{figure}[ht] 
\begin{center} 
\begin{tabular}{c||c}
\setlength{\unitlength}{1.5pt} 
\begin{picture}(16,27)(2,-2)
\put(8,14){\makebox(0,0){divide}} 
\put(8,6){\makebox(0,0){$D$}} 
\end{picture}
& 
\setlength{\unitlength}{1.5pt} 
\begin{picture}(40,25)(-7,-13)
\put(-5,-10){\line(-1,0){5}} 
\put(25,-10){\line(1,0){5}} 
\put(0,5){\makebox(0,0){\usebox{\opening}}} 
\put(30,5){\makebox(0,0){\usebox{\closing}}} 
\put(0,0){\makebox(0,0){\usebox{\ssone}}} 
\put(10,0){\makebox(0,0){\usebox{\sstwo}}} 
\put(20,0){\makebox(0,0){\usebox{\ssone}}} 
\end{picture}  \\
\hline
\\
\setlength{\unitlength}{6pt} 
\begin{picture}(5,20)(0,0)
\put(2,11.2){\makebox(0,0){link}} 
\put(2,8.8){\makebox(0,0){$L(D)$}} 
\end{picture}
&
\setlength{\unitlength}{6pt} 
\begin{picture}(55,20)(-8,-6)
\put(10,0){\makebox(0,0){\usebox{\pcrossing}}} 
\put(20,8){\makebox(0,0){\usebox{\pcrossing}}} 
\put(30,0){\makebox(0,0){\usebox{\pcrossing}}} 
\put(40,8){\makebox(0,0){\usebox{\pclosing}}} 
\put(0,8){\makebox(0,0){\usebox{\popening}}} 

\put(0,0){\makebox(0,0){\usebox{\ptrack}}} 
\put(20,0){\makebox(0,0){\usebox{\ptrack}}} 
\put(40,0){\makebox(0,0){\usebox{\ptrack}}} 
\put(10,16){\makebox(0,0){\usebox{\ptrack}}} 
\put(30,16){\makebox(0,0){\usebox{\ptrack}}} 
\put(50,0){\makebox(0,0){\usebox{\uturnright}}} 
\put(0,0){\makebox(0,0){\usebox{\uturnleft}}} 
\end{picture} 
\\[.1in]
\hline
\setlength{\unitlength}{1.5pt} 
\begin{picture}(16,30)(2,0)
\put(8,10){\makebox(0,0){braid}} 
\put(8,1){\makebox(0,0){$\beta(D)$}} 
\end{picture}
& 
\setlength{\unitlength}{1.5pt} 
\begin{picture}(85,30)(-7,-10)
\put(0,0){\makebox(0,0){\usebox{\artintwo}}} 
\put(10,0){\makebox(0,0){\usebox{\artinone}}} 
\put(20,0){\makebox(0,0){\usebox{\artintwo}}} 
\put(30,0){\makebox(0,0){\usebox{\artinone}}} 
\put(40,0){\makebox(0,0){\usebox{\artintwo}}} 
\put(50,0){\makebox(0,0){\usebox{\artinone}}} 
\put(60,0){\makebox(0,0){\usebox{\artintwo}}} 
\put(70,0){\makebox(0,0){\usebox{\artinone}}} 
\end{picture} 
\\[.1in]
& $\sigma_2\,\sigma_1\,\sigma_2\,\sigma_1\,\sigma_2\,\sigma_1\,\sigma_2\,\sigma_1$
\\[.1in]
\hline
\setlength{\unitlength}{1.5pt} 
\begin{picture}(16,50)(2,0)
\put(8,24){\makebox(0,0){closed}} 
\put(8,16){\makebox(0,0){braid}} 
\end{picture}
& 
\setlength{\unitlength}{1.5pt} 
\begin{picture}(85,50)(-7,-10)
\put(0,0){\makebox(0,0){\usebox{\artintwo}}} 
\put(10,0){\makebox(0,0){\usebox{\artinone}}} 
\put(20,0){\makebox(0,0){\usebox{\artintwo}}} 
\put(30,0){\makebox(0,0){\usebox{\artinone}}} 
\put(40,0){\makebox(0,0){\usebox{\artintwo}}} 
\put(50,0){\makebox(0,0){\usebox{\artinone}}} 
\put(60,0){\makebox(0,0){\usebox{\artintwo}}} 
\put(70,0){\makebox(0,0){\usebox{\artinone}}} 
\put(-5,20){\line(1,0){80}} 
\put(-5,25){\line(1,0){80}} 
\put(-5,30){\line(1,0){80}} 
\qbezier(-5,10)(-10,10)(-10,15)
\qbezier(-5,20)(-10,20)(-10,15)
\qbezier(-5,0)(-15,0)(-15,12.5)
\qbezier(-5,25)(-15,25)(-15,12.5)
\qbezier(-5,-10)(-20,-10)(-20,15)
\qbezier(-5,30)(-20,30)(-20,15)

\qbezier(75,10)(80,10)(80,15)
\qbezier(75,20)(80,20)(80,15)
\qbezier(75,0)(85,0)(85,12.5)
\qbezier(75,25)(85,25)(85,12.5)
\qbezier(75,-10)(90,-10)(90,15)
\qbezier(75,30)(90,30)(90,15)
\end{picture} 
\end{tabular}
\end{center} 
\caption{Scannable divide $D$ coming from a 
morsification of a singularity of type~$E_6$;
the associated link $L(D)$ (a $(3,4)$-torus knot);  
and the corresponding positive braid~$\beta(D)$.
The closure of the braid recovers the link~$L(D)$. 
} 
\vspace{-.15in}
\label{fig:link-scannable} 
\end{figure} 

\clearpage

\newsavebox{\artin}
\setlength{\unitlength}{1.5pt} 
\savebox{\artin}(10,10)[bl]{
\thicklines 
\qbezier(6,7)(7,10)(10,10)
\qbezier(4,3)(3,0)(0,0)

\qbezier(5,5)(3,10)(0,10)
\qbezier(5,5)(7,0)(10,0)
}

\newpage

\section{Plabic fences}
\label{sec:plabic-fences}

For a scannable divide~$D$, there is a natural 
choice of a plabic graph attached to~$D$ which we call a 
``plabic fence''  
(borrowing the term \emph{fence} from  L.~Rudolph~\cite{rudolph-annuli}).  
The combinatorics of plabic fences is closely connected to 
positive braids. 

A quick note on pictorial conventions: from now on, to simplify the drawing~process,
we will not always picture the white vertices of a plabic graph as hollow circles. 
We will continue to depict black vertices as filled circles; all the remaining points 
in a drawing where three lines (representing edges of the graph) come together,
as well as all the remaining endpoints, will be understood to represent 
the white vertices. 

\begin{definition}
\label{def:plabic-fence}
Fix an integer~$k\ge 2$. 
Let $\ww$ be an arbitrary word in the alphabet 
\begin{equation}
\label{eq:double-alphabet}
\mathbf{A}_k=\{\sigma_1,\dots,\sigma_{k-1}\}\cup\{\tau_1,\dots,\tau_{k-1}\}.  
\end{equation}
The plabic graph $\Phi=\Phi(\ww)$, 
called the \emph{plabic fence} associated with~$\ww$, is constructed 
as follows.
Begin by stacking $k$ parallel horizontal line segments (``strands'') 
on top of each other, and numbering them $1,\dots,k$, bottom to top. 
Reading the entries of~$\ww$ left to right, 
place vertical connectors between pairs of adjacent strands of the fence, 
representing each entry~$\sigma_i$ as
\setlength{\unitlength}{1pt} 
\begin{picture}(10,15)(0,0)
\thicklines 
\put(0,0){\line(1,0){10}} 
\put(0,10){\line(1,0){10}} 
\put(5,0){\line(0,1){10}} 
\put(5,0){\circle*{3}}
\end{picture}~,
and each entry~$\tau_i$ as
\setlength{\unitlength}{1pt} 
\begin{picture}(10,15)(0,0)
\thicklines 
\put(0,0){\line(1,0){10}} 
\put(0,10){\line(1,0){10}} 
\put(5,0){\line(0,1){10}} 
\put(5,10){\circle*{3}}
\end{picture}~, each time connecting strands numbered $i$ and~$i+1$. 
See Figure~\ref{fig:fence-word}. 

Conversely, any plabic fence~$\Phi$ as above determines 
the associated word $\ww=\ww(\Phi)$
in the alphabet~$\mathbf{A}_k$. 
To be a bit more precise, since $\Phi$ is defined up to isotopy,
the corresponding word $\ww(\Phi)$ is defined up to transpositions of the form
$\sigma_i \sigma_j\leftrightarrow\sigma_j\sigma_i$, 
or $\sigma_i \tau_j\leftrightarrow\tau_j\sigma_i$, 
or $\tau_i \tau_j\leftrightarrow\tau_j\tau_i$, 
for $|i-j|\ge2$. 
\end{definition}

\begin{figure}[ht] 
\begin{center} 
\begin{tabular}{c||c}
word 
&
$\ \ \ \sigma_2\,\,\sigma_1\,\,\tau_1\,\,\sigma_2\,\,\tau_2\,\,\sigma_1\,\,\tau_1\,\,\sigma_2$
\\[.05in]
\hline
\\[-.1in]
\setlength{\unitlength}{3pt} 
\begin{picture}(20,12.5)(0,-1)
\put(10,6){\makebox(0,0){plabic fence}} 
\end{picture}
&
\setlength{\unitlength}{1.5pt} 
\begin{picture}(100,30)(-10,-5)
\put(-10,0){\makebox(0,0){\textbf{\Small 1}}} 
\put(-10,10){\makebox(0,0){\textbf{\Small 2}}} 
\put(-10,20){\makebox(0,0){\textbf{\Small 3}}} 
\thinlines
\put(-5,-5){\line(0,1){30}} 
\put(-6,-5){\line(0,1){30}} 
\put(85,-5){\line(0,1){30}} 
\put(86,-5){\line(0,1){30}} 
\multiput(85.5,0)(0,10){3}{\circle*{2.5}}
\thicklines 
\put(-5,0){\line(1,0){90}} 
\put(-5,10){\line(1,0){90}} 
\put(-5,20){\line(1,0){90}} 
\put(5,10){\line(0,1){10}} 
\put(15,0){\line(0,1){10}} 
\put(25,0){\line(0,1){10}} 
\put(35,10){\line(0,1){10}} 
\put(45,10){\line(0,1){10}} 
\put(55,0){\line(0,1){10}} 
\put(65,0){\line(0,1){10}} 
\put(75,10){\line(0,1){10}} 

\put(5,10){\circle*{2.5}}
\put(15,0){\circle*{2.5}}
\put(25,10){\circle*{2.5}}
\put(35,10){\circle*{2.5}}
\put(45,20){\circle*{2.5}}
\put(55,0){\circle*{2.5}}
\put(65,10){\circle*{2.5}}
\put(75,10){\circle*{2.5}}
\end{picture}  
\\
\hline
\\[-.15in]
braid
&
$\ \ \ \sigma_2\,\,\sigma_1\,\,\sigma_2\,\,\sigma_1\,\,\sigma_2\,\,\sigma_1\,\,\sigma_2\,\,\sigma_1$
\end{tabular}
\vspace{-.05in}
\end{center} 
\caption{A word~$\ww$, the associated plabic fence~$\Phi$, 
and the braid $\beta(\ww)=\beta(\Phi)$. 
} 
\label{fig:fence-word} 
\end{figure} 

\vspace{-.1in}


\begin{definition}
\label{def:Phi(D)}
Let $D$ be a scannable divide with $k$ strands. 
We define the associated plabic fence~$\Phi(D)$ as follows: 
\begin{itemize}[leftmargin=.3in]
\item[(i)]
stack $k$ parallel horizontal line segments (strands) 
on top of each other;
\item[(ii)]
for each node in~$D$, 
connect the corresponding strands in $\Phi(D)$ by a pair of vertical edges,
and color their four endpoints as follows:
\setlength{\unitlength}{1pt} \begin{picture}(20,15)(0,0)
\thicklines 
\put(0,0){\line(1,0){20}} 
\put(0,10){\line(1,0){20}} 
\put(5,0){\line(0,1){10}} 
\put(15,0){\line(0,1){10}} 
\put(5,0){\circle*{3}}
\put(15,10){\circle*{3}}
\end{picture}\,;
\item[(iii)]
for each instance of two adjacent strands in~$D$ connecting to each other at 
an end of~$D$, 
insert a connector \setlength{\unitlength}{1pt} 
\begin{picture}(10,15)(0,0)
\thicklines 
\put(0,0){\line(1,0){10}} 
\put(0,10){\line(1,0){10}} 
\put(5,0){\line(0,1){10}} 
\put(5,0){\circle*{3}}
\end{picture}\,
between the corresponding strands in~$\Phi(D)$; 
\item[(iv)]
color the left endpoints of~$\Phi(D)$ white, and color the right endpoints black. 
\end{itemize}
To illustrate, the scannable divide in
Figure~\ref{fig:link-scannable} gives rise to the plabic fence shown in Figure~\ref{fig:fence-word}. 
\end{definition}

\begin{remark}
\label{rem:P(D)==Phi(D)}
For a scannable divide~$D$, 
the plabic fence $\Phi(D)$ is a plabic graph
attached to~$D$, in the sense of Definition~\ref{def:plabic-divide}. 
In particular, the quiver $Q(\Phi(D))$ associated with the plabic fence~$\Phi(D)$ 
coincides with the quiver~$Q(D)$ defined by~$D$. 
\end{remark}

\begin{definition}
\label{def:fence-word-to-braid}
Let $\Phi$ be a plabic fence on $k$ strands, and let 
$\ww$ be the corresponding word in the alphabet 
$\mathbf{A}_k$ 
(see~\eqref{eq:double-alphabet}). 
We define the positive braid $\beta(\Phi)=\beta(\ww)$ 
in the $k$-strand braid group~$\mathbb{B}_k$ as follows.
Let $\bww$ be the word obtained from~$\ww$ by first recording all the entries 
of the form~$\sigma_i\,$, left to right,
then all the entries 
of the form~$\tau_i\,$, right to left,
then replacing each $\tau_i$ by~$\sigma_i\,$. \linebreak[3]
The braid $\beta(\Phi)$ is then obtained from~$\bww$
by interpreting each $\sigma_i$ as 
an Artin generator of~$\mathbb{B}_k$. 
\end{definition}

To illustrate, if $\ww=\sigma_1\tau_2\sigma_3\tau_4\,$,
then $\beta(\ww)=\sigma_1\sigma_3\sigma_4\sigma_2$. 
Also, see Figure~\ref{fig:fence-word}.


The following statement is easily confirmed by direct inspection. 

\begin{proposition}
\label{pr:divide-to-fence-to-braid}
Let $D$ be a scannable divide. Then $\beta(\Phi(D))=\beta(D)$. 
That~is, the braid  constructed from the plabic fence associated with~$D$ 
(see Definitions~\ref{def:Phi(D)} and~\ref{def:fence-word-to-braid}) 
coincides with the braid $\beta(D)$ described in Definition~\ref{def:braid-of-divide}. 
\end{proposition}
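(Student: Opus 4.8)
The plan is to unwind both sides of the asserted equality into explicit words in the Artin generators and to observe that they coincide. First I would record exactly which vertical connectors of the fence $\Phi(D)$ are of type $\sigma_i$ (black dot at the bottom) and which are of type $\tau_i$ (black dot at the top). By Definition~\ref{def:Phi(D)}(iii) every connector coming from a U-turn at either end of $D$ is inserted as a $\sigma_i$, so the end-couplings contribute only $\sigma$-letters. By Definition~\ref{def:Phi(D)}(ii) together with the coloring of the square prescribed in Definition~\ref{def:plabic-divide}, each node of $D$ on strands $(i,i+1)$ opens into a small square whose left edge joins a black bottom vertex to a white top vertex (a $\sigma_i$) and whose right edge joins a white bottom vertex to a black top vertex (a $\tau_i$). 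Hence each bulk crossing contributes exactly one $\sigma_i$ and one $\tau_i$, both sitting at essentially the horizontal position of the crossing itself.

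Next I would read off the two subwords that actually feed into Definition~\ref{def:fence-word-to-braid}. The crucial simplification is that that definition collects \emph{separately} all $\sigma$-letters (left to right) and all $\tau$-letters (right to left, then relabeled $\tau_i\mapsto\sigma_i$); consequently the relative order of the $\sigma_i$ and the $\tau_i$ arising from one and the same node is irrelevant, and only the left-to-right order of the crossings matters. Scanning $D$ from $x=a_1$ to $x=a_2$, the $\sigma$-letters appear in the order: the end-couplings at $a_1$, then one $\sigma_i$ for each bulk crossing in scanning order, then the end-couplings at $a_2$. By the definitions of $\texttt{left}(D)$, $\texttt{bulk}(D)$ and $\texttt{right}(D)$ in Definition~\ref{def:braid-of-divide}, this $\sigma$-subword is precisely $\texttt{left}(D)\,\texttt{bulk}(D)\,\texttt{right}(D)$. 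The $\tau$-letters, on the other hand, come only from the bulk crossings, one per crossing, so read left to right they spell $\texttt{bulk}(D)$ with every $\sigma$ replaced by a $\tau$.

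Finally I would assemble the braid. Applying Definition~\ref{def:fence-word-to-braid}, the reversed $\tau$-subword with $\tau_i\mapsto\sigma_i$ is exactly the reverse of $\texttt{bulk}(D)$, which is $\texttt{klub}(D)$ by Definition~\ref{def:braid-of-divide}. Concatenating,
\[
\beta(\Phi(D))=\texttt{left}(D)\,\texttt{bulk}(D)\,\texttt{right}(D)\,\texttt{klub}(D)=\beta(D),
\]
as desired. The one point demanding care---and the only place where the argument could go wrong---is the identification in the first step of the two edges of the node-square with the letters $\sigma_i$ and $\tau_i$ via the black/white coloring of Definition~\ref{def:plabic-divide}; once this is pinned down, the rest is bookkeeping, and in particular one must confirm that the end-coupling connectors are placed to the left of, respectively right of, the bulk connectors so that the $\sigma$-subword really is $\texttt{left}\cdot\texttt{bulk}\cdot\texttt{right}$. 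As a sanity check I would run the computation on the type~$E_6$ divide of Figure~\ref{fig:link-scannable}, whose fence word (Figure~\ref{fig:fence-word}) is $\sigma_2\sigma_1\tau_1\tau_2\sigma_2\sigma_1\tau_1\sigma_2$: its $\sigma$-subword $\sigma_2\sigma_1\sigma_2\sigma_1\sigma_2$ and its reversed, relabeled $\tau$-subword $\sigma_1\sigma_2\sigma_1$ multiply to $\sigma_2\sigma_1\sigma_2\sigma_1\sigma_2\sigma_1\sigma_2\sigma_1$, matching $\beta(D)$.
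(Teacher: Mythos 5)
Your argument is correct and is precisely the ``direct inspection'' that the paper offers in lieu of a proof of Proposition~\ref{pr:divide-to-fence-to-braid}: isolate the $\sigma$-subword as $\texttt{left}\cdot\texttt{bulk}\cdot\texttt{right}$ and the reversed, relabeled $\tau$-subword as $\texttt{klub}$. One small correction: which of the two vertical edges of a node-square is the $\sigma_i$ and which is the $\tau_i$ alternates with the checkerboard coloring of the regions (note the subword $\tau_2\sigma_2$, rather than $\sigma_2\tau_2$, coming from the middle node in Figure~\ref{fig:fence-word}), so your blanket claim ``left edge $=\sigma_i$, right edge $=\tau_i$'' is not literally true---but your own observation that the within-node order is immaterial to $\bww$ already makes the argument immune to this.
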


\begin{lemma}
\label{lem:ww-to-bww}
Let $\ww$ be a word in the alphabet $\mathbf{A}_k$. Let 
$\bww$ be the corresponding~braid word 
from Defini\-tion~\ref{def:fence-word-to-braid}.
Then the plabic fences $\Phi(\ww)$ and $\Phi(\bww)$ are move equivalent. 
\end{lemma}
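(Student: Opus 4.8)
The plan is to reduce the statement to a single ``migration'' assertion and then prove that assertion by pushing one connector across the fence, one crossing at a time. Throughout, write $P\sim P'$ for move equivalence, and recall (Definition~\ref{def:plabic-fence}) that the only difference between the connectors $\sigma_i$ and $\tau_i$ is the coloring of their two endpoints: $\sigma_i$ carries a black vertex on strand~$i$ and a white vertex on strand~$i+1$, whereas $\tau_i$ carries the opposite coloring. The first observation is purely combinatorial: the recipe defining $\bww$ (Definition~\ref{def:fence-word-to-braid}) is reproduced by the following rewriting of~$\ww$. Take the leftmost $\tau$-letter, say $\tau_i$, write $\ww = A\,\tau_i\,B$ with $A$ a (possibly empty) word in the $\sigma$'s alone, and replace the whole word by $A\,B\,\sigma_i$. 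The result has one fewer $\tau$-letter, and iterating this rewriting yields exactly $\bww$: each leftmost $\tau$ travels to the far right and becomes the last $\sigma$-letter, which is precisely what produces the order reversal of the $\tau$-block. Hence, by induction on the number of $\tau$-letters (the base case $\ww=\bww$ being trivial), it suffices to prove a Migration Lemma: for every word $B$ one has $\Phi(\tau_i\,B)\sim\Phi(B\,\sigma_i)$, with all moves performed away from an inert pure-$\sigma$ prefix sitting on the left and an inert trailing connector on the right.

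To prove the Migration Lemma I would slide $\tau_i$ rightward past $B$ one letter at a time, then convert it to $\sigma_i$ once it reaches the right boundary. Three of the local situations are routine. If the next letter involves strands disjoint from $\{i,i+1\}$ (indices differing by at least~$2$), the two connectors commute by isotopy. If the next letter is $\sigma_i$, then $\tau_i\sigma_i$ bounds a four-gon whose corner vertices alternate in color, so the square move of Figure~\ref{fig:plabic-moves}(b) applies and converts $\tau_i\sigma_i$ into $\sigma_i\tau_i$, advancing our connector one step while leaving the neighbor a $\sigma_i$. If the next letter is $\tau_i$, the two connectors coincide and nothing is needed. The final conversion is the case $B$ empty: on strand~$i+1$ the black endpoint of $\tau_i$ abuts a black boundary vertex while on strand~$i$ its white endpoint abuts a black boundary vertex, so, exactly as in the tail-attachment/square/tail-removal sequence that recolors a single trivalent vertex, one attaches a tail to create a four-gon, performs a square move to swap the two endpoint colors, and removes the tail; this turns $\tau_i$ into $\sigma_i$ at the mere cost of recoloring some boundary vertices, which is harmless by Definition~\ref{def:Phi(D)}.

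The main obstacle is the remaining case, where $\tau_i$ must pass a connector of adjacent index. When the neighbor is $\sigma_{i+1}$ (respectively $\sigma_{i-1}$) the two connectors share strand~$i+1$ (respectively strand~$i$) and meet there in two vertices of the \emph{same} color, both black (respectively both white); a single flip move of Figure~\ref{fig:plabic-moves}(a) should then exchange them, giving $\sigma_{i+1}\tau_i$ (respectively $\sigma_{i-1}\tau_i$), so these sub-cases again reduce to one local move. The genuinely delicate sub-case is a same-type adjacent neighbor $\tau_{i+1}$ (or $\tau_{i-1}$): there the shared strand carries one black and one white vertex, no flip is directly available, and $\tau_i$ cannot be slid past in a single step. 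I expect this to require either a three-strand braid-type maneuver on strands $i,i+1,i+2$ (two flips together with a square move) or a reordering of the induction that disposes of the blocking $\tau_{i+1}$ before the collision. Alongside this, the proof must check that every square move invoked is admissible in the sense of Remark~\ref{rem:square-move-technical} (consecutive faces around each square distinct), which can be arranged by first using isotopy to open up horizontal room between connectors, and it must keep careful track of the boundary colors created by the tail moves, exploiting the freedom in Definition~\ref{def:Phi(D)} to choose and readjust them. This bookkeeping, and above all the same-type adjacent sub-case, is where the real work lies.
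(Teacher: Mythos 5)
Your identification of the individual local moves is exactly right and matches the paper: $\tau_j\sigma_i\leftrightarrow\sigma_i\tau_j$ is an isotopy of the fence for $|i-j|\ge2$, a square move for $j=i$, a flip move for $j=i\pm1$, and the terminal conversion $\tau_i\leadsto\sigma_i$ is handled by tail moves. But the reduction you wrap around these moves has two linked problems, both caused by your decision to extract the \emph{leftmost} $\tau$-letter first. First, the combinatorics is wrong: if the $\tau$-letters of $\ww$ are $\tau_{i_1},\dots,\tau_{i_m}$ in left-to-right order, then sending the leftmost to the far right, then the next leftmost after it, and so on, produces the suffix $\sigma_{i_1}\sigma_{i_2}\cdots\sigma_{i_m}$, i.e.\ the $\tau$-block in its \emph{original} order, whereas Definition~\ref{def:fence-word-to-braid} requires the \emph{reversed} order $\sigma_{i_m}\cdots\sigma_{i_1}$; these generally differ both as words and as braids (already $\sigma_1\sigma_2\neq\sigma_2\sigma_1$). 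So even granting your Migration Lemma, the induction terminates at a word that need not be $\bww$. Second, migrating the leftmost $\tau$ forces it to cross every other $\tau$ to its right, which is precisely what manufactures the sub-case of $\tau_i$ meeting $\tau_{i\pm1}$ --- the one you correctly observe is not a single flip or square move, and which you leave unresolved.

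Both problems disappear simultaneously if you extract the $\tau$'s from the right instead. The rightmost $\tau$ has only $\sigma$-letters between it and the end of the word, so it migrates using only your three tractable swap types; converting it and then processing the next $\tau$ (which now again sees only $\sigma$'s, including the freshly created one) builds the suffix in the order $\sigma_{i_m},\sigma_{i_{m-1}},\dots$, which is exactly the reversal that $\bww$ demands. This is in effect what the paper's proof does: it pushes the $\tau$'s to the right of the $\sigma$'s and peels them off from the end, so a $\tau$ is never required to pass another $\tau$ and the ``genuinely delicate sub-case'' never arises. With that reordering your argument closes up; as written, the reduction does not produce $\bww$ and the hard case it creates is left open.
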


\begin{proof}
The braid word $\bww$ can be obtained from $\ww$ by repeatedly applying transformations 
 of the form $\cdots\tau_j\sigma_i\cdots\leadsto\cdots\sigma_i\tau_j\cdots$
(pushing the $\tau$'s to the right of the~$\sigma$'s)
and/or $\cdots\tau_i\leadsto\cdots\sigma_i$
(replacing $\tau_i$ by~$\sigma_i$ at the end of the word). 
Each~of these transformations can be viewed as an instance of move equivalence:
\begin{itemize}[leftmargin=.3in]
\item
a switch $\tau_i\sigma_i\leftrightarrow\sigma_i\tau_i$
corresponds to a square~move:
\setlength{\unitlength}{1.2pt} 
\begin{tabular}{ccc}
\begin{picture}(20,13)(0,0)
\thicklines
\put(0,0){\line(1,0){20}}
\put(0,10){\line(1,0){20}}
\put(5,0){\line(0,1){10}}
\put(15,0){\line(0,1){10}}
\put(5,10){\circle*{2.5}}
\put(15,0){\circle*{2.5}}
\end{picture}
&
\begin{picture}(4,13)(0,0)
\thicklines
\put(2,5){\makebox(0,0){$\leftrightarrow$}}
\end{picture}
&
\begin{picture}(20,13)(0,0)
\thicklines
\put(0,0){\line(1,0){20}}
\put(0,10){\line(1,0){20}}
\put(5,0){\line(0,1){10}}
\put(15,0){\line(0,1){10}}
\put(15,10){\circle*{2.5}}
\put(5,0){\circle*{2.5}}
\end{picture}
\end{tabular}
;
\item
a~switch $\tau_{i\pm1}\sigma_i\leftrightarrow\sigma_i\tau_{i\pm1}$
corresponds to a flip move:
\begin{center}
\begin{tabular}{ccc}
\begin{picture}(20,23)(0,0)
\thicklines
\put(0,0){\line(1,0){20}}
\put(0,10){\line(1,0){20}}
\put(0,20){\line(1,0){20}}
\put(5,10){\line(0,1){10}}
\put(15,0){\line(0,1){10}}
\put(5,20){\circle*{2.5}}
\put(15,0){\circle*{2.5}}
\end{picture}
&
\begin{picture}(4,23)(0,0)
\thicklines
\put(2,10){\makebox(0,0){$\leftrightarrow$}}
\end{picture}
&
\begin{picture}(20,23)(0,0)
\thicklines
\put(0,0){\line(1,0){20}}
\put(0,10){\line(1,0){20}}
\put(0,20){\line(1,0){20}}
\put(5,0){\line(0,1){10}}
\put(15,10){\line(0,1){10}}
\put(15,20){\circle*{2.5}}
\put(5,0){\circle*{2.5}}
\end{picture}
\end{tabular}
\qquad or \qquad
\begin{tabular}{ccc}
\begin{picture}(20,23)(0,0)
\thicklines
\put(0,0){\line(1,0){20}}
\put(0,10){\line(1,0){20}}
\put(0,20){\line(1,0){20}}
\put(5,0){\line(0,1){10}}
\put(15,10){\line(0,1){10}}
\put(5,10){\circle*{2.5}}
\put(15,10){\circle*{2.5}}
\end{picture}
&
\begin{picture}(4,23)(0,0)
\thicklines
\put(2,10){\makebox(0,0){$\leftrightarrow$}}
\end{picture}
&
\begin{picture}(20,23)(0,0)
\thicklines
\put(0,0){\line(1,0){20}}
\put(0,10){\line(1,0){20}}
\put(0,20){\line(1,0){20}}
\put(5,10){\line(0,1){10}}
\put(15,0){\line(0,1){10}}
\put(15,10){\circle*{2.5}}
\put(5,10){\circle*{2.5}}
\end{picture}
\end{tabular}
\end{center} 
\item
a switch $\tau_j\sigma_i\!\leadsto\!\sigma_i\tau_j$ for $|i-j|\ge2$ 
translates into an isotopy of the plabic~fence;
\item
replacing $\tau_i$ by $\sigma_i$ at the end of a word
is emulated by a tail removal followed by a tail attachment:
\begin{equation}
\begin{tabular}{ccccc}
\begin{picture}(13,13)(1,-8)
\thicklines
\put(0,0){\line(1,0){10}}
\put(0,10){\line(1,0){10}}
\put(5,0){\line(0,1){10}}
\put(5,10){\circle*{2.5}}
\thinlines
\put(10,-3){\line(0,1){16}}
\put(11,-3){\line(0,1){16}}
\put(10.5,0){\circle*{2.5}}
\put(10.5,10){\circle*{2.5}}
\end{picture}
&
\begin{picture}(6,13)(0,-8)
\thicklines
\put(2,5){\makebox(0,0){$\leftrightarrow$}}
\end{picture}
&
\begin{picture}(13,13)(1,-8)
\thicklines
\qbezier(0,0)(5,0)(5,5)
\qbezier(0,10)(5,10)(5,5)
\put(5,5){\line(1,0){5}}
\put(5,5){\circle*{2.5}}
\thinlines
\put(10,-3){\line(0,1){16}}
\put(11,-3){\line(0,1){16}}
\put(10.5,5){\circle*{2.5}}

\end{picture}
&
\begin{picture}(6,13)(0,-8)
\thicklines
\put(2,5){\makebox(0,0){$\leftrightarrow$}}
\end{picture}
&
\begin{picture}(13,20)(1,-8)
\thicklines
\put(0,0){\line(1,0){10}}
\put(0,10){\line(1,0){10}}
\put(5,0){\line(0,1){10}}
\put(5,0){\circle*{2.5}}
\thinlines
\put(10,-3){\line(0,1){16}}
\put(10,-3){\line(0,1){16}}
\put(11,-3){\line(0,1){16}}
\put(10.5,0){\circle*{2.5}}
\put(10.5,10){\circle*{2.5}}
\end{picture}
\end{tabular}
\qedhere
\label{eq:replace-tau-by-sigma}
\end{equation}
\end{itemize}
\end{proof}

\vspace{-.1in}

\begin{proposition}
\label{pr:equal-braids=>move-equivalent}
Let $\Phi_1$ and $\Phi_2$ be plabic fences on $k$ strands. 
If the braids 
$\beta(\Phi_1),\beta(\Phi_2)\in\mathbb{B}_k$
are equal to each other, 
then $\Phi_1$ and~$\Phi_2$ are move equivalent. 
\end{proposition}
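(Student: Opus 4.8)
The plan is to reduce the statement to a purely braid-theoretic fact and then to realize each defining relation of the positive braid monoid by local moves on fences. First I would pass from the fences to positive braid words. Write $\ww=\ww(\Phi)$ and $\ww'=\ww(\Phi')$, and let $\bww,\bww'$ be the corresponding braid words of Definition~\ref{def:fence-word-to-braid}. By Lemma~\ref{lem:ww-to-bww}, $\Phi=\Phi(\ww)$ is move equivalent to $\Phi(\bww)$, and likewise $\Phi'$ is move equivalent to $\Phi(\bww')$; since move equivalence is transitive, it suffices to show that $\Phi(\bww)$ and $\Phi(\bww')$ are move equivalent. Now $\bww$ and $\bww'$ are words in the letters $\sigma_1,\dots,\sigma_{k-1}$ alone, so by Definition~\ref{def:fence-word-to-braid} the transform of an all-$\sigma$ word is itself; hence $\beta(\Phi)=\bww=\beta(\Phi(\bww))$ and $\beta(\Phi')=\bww'=\beta(\Phi(\bww'))$ directly. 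The hypothesis $\beta(\Phi)=\beta(\Phi')$ thus says precisely that $\bww$ and $\bww'$ represent the same element of $\mathbb{B}_k$.

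Next I would invoke the structure of the positive braid monoid. Both $\bww$ and $\bww'$ are \emph{positive} words, and by Garside's theorem the positive braid monoid embeds into $\mathbb{B}_k$; consequently two positive words representing the same group element are connected by a finite sequence of the defining monoid relations, namely the commutations $\sigma_i\sigma_j=\sigma_j\sigma_i$ for $|i-j|\ge 2$ and the braid relations $\sigma_i\sigma_{i+1}\sigma_i=\sigma_{i+1}\sigma_i\sigma_{i+1}$. It therefore suffices to check that replacing, inside a positive word, a factor $\sigma_i\sigma_j$ (with $|i-j|\ge 2$) by $\sigma_j\sigma_i$, or a factor $\sigma_i\sigma_{i+1}\sigma_i$ by $\sigma_{i+1}\sigma_i\sigma_{i+1}$, yields a move-equivalent plabic fence. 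The commutation case is immediate: when $|i-j|\ge 2$ the two connectors join disjoint pairs of strands and can simply be slid past one another, so the two fences are isotopic.

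The decisive step is to realize the braid relation by plabic moves, and this is what I would present as an explicit local picture on the three strands $i,i+1,i+2$. In the fence for $\sigma_i\sigma_{i+1}\sigma_i$ the two $\sigma_i$-connectors place two black vertices on strand~$i$ with no vertex between them, so the strand-$i$ segment joining them is an edge whose two endpoints have the \emph{same} color---exactly the configuration to which a flip move (Figure~\ref{fig:plabic-moves}(a)) applies. I expect that a short sequence consisting of this flip, the resulting square move (Figure~\ref{fig:plabic-moves}(b)), and isotopies transforms the fence of $\sigma_i\sigma_{i+1}\sigma_i$ into the fence of $\sigma_{i+1}\sigma_i\sigma_{i+1}$, the intermediate graphs being recorded in a figure; where the three strands lie against the boundary and the flip lacks room, a tail attachment/removal can be inserted to supply the needed local configuration. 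It is worth stressing that this genuine flip on a like-colored segment is an \emph{essential new ingredient}: the boundary tail moves and the $\tau$-past-$\sigma$ moves of Lemma~\ref{lem:ww-to-bww} only implement cyclic rearrangements (conjugation) of the braid word, which cannot connect $\sigma_i\sigma_{i+1}\sigma_i$ to $\sigma_{i+1}\sigma_i\sigma_{i+1}$ on their own.

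The hard part will be verifying that these flip and square moves are actually admissible at every stage. Concretely, I must check that the square move respects the restriction of Remark~\ref{rem:square-move-technical} (the consecutive faces surrounding the square must be distinct), that no monogons or same-colored digons are created, and that the black/white coloring of the four vertices produced at each node continues to match the prescription of Definition~\ref{def:plabic-divide} throughout the sequence. Once the braid-relation figure is pinned down with these admissibility checks, combining it with the commutation case and the reduction of the first two paragraphs completes the proof that $\Phi$ and $\Phi'$ are move equivalent.
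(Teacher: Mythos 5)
Your proposal follows essentially the same route as the paper's proof: reduce to all-$\sigma$ words via Lemma~\ref{lem:ww-to-bww}, dispatch the far-commutation relations $\sigma_i\sigma_j\leftrightarrow\sigma_j\sigma_i$ by isotopy, and realize the braid relation $\sigma_i\sigma_{i+1}\sigma_i\leftrightarrow\sigma_{i+1}\sigma_i\sigma_{i+1}$ by a local flip/square-move sequence on the two like-colored vertices of the $\sigma_i$-connectors, which is exactly the picture the paper supplies. Your explicit appeal to Garside's embedding of the positive braid monoid (so that equal positive words are connected through positive words only, by the defining relations) correctly makes precise a step the paper leaves implicit.
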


\begin{proof}
By Lemma~\ref{lem:ww-to-bww}, it is enough to treat the case of plabic fences
whose associated words only involve the $\sigma$'s but not the~$\tau$'s. 
In this case, we need to check that each relation in Artin's presentation of~$\mathbb{B}_k$ 
translates into an instance~of move equivalence for the corresponding plabic fences. 
Indeed, the switches $\sigma_j\sigma_i\leftrightarrow\sigma_i\sigma_j$ for $|i-j|\ge2$ 
translate into isotopies of the plabic graph, whereas  
the braid relations $\sigma_i \sigma_{i+1}\sigma_i
\leftrightarrow\sigma_{i+1}\sigma_i\sigma_{i+1}$
are emulated by flip and square moves, as follows:
\[
\setlength{\unitlength}{0.7pt}
\begin{tabular}{ccccccc}
\begin{picture}(60,40)(0,0)
\thicklines
\put(0,40){\line(1,0){60}}
\put(0,20){\line(1,0){60}}
\put(0,0){\line(1,0){60}}
\put(10,0){\line(0,1){20}}
\put(30,20){\line(0,1){20}}
\put(50,0){\line(0,1){20}}
\put(10,0){\circle*{4}}
\put(30,20){\circle*{4}}
\put(50,0){\circle*{4}}
\end{picture}
&
\begin{picture}(16,40)(0,0)
\thicklines
\put(8,20){\makebox(0,0){$\longleftrightarrow$}}
\end{picture}
&
\begin{picture}(60,40)(0,0)
\thicklines
\put(0,40){\line(1,0){60}}
\put(0,20){\line(1,0){20}}
\put(40,20){\line(1,0){20}}
\put(0,0){\line(1,0){60}}
\put(30,0){\line(0,1){10}}
\put(30,30){\line(0,1){10}}
\put(30,30){\line(1,-1){10}}
\put(30,30){\line(-1,-1){10}}
\put(30,10){\line(1,1){10}}
\put(30,10){\line(-1,1){10}}
\put(30,0){\circle*{4}}
\put(30,30){\circle*{4}}
\put(30,10){\circle*{4}}
\end{picture}
&
\begin{picture}(16,40)(0,0)
\thicklines
\put(8,20){\makebox(0,0){$\longleftrightarrow$}}
\end{picture}
&
\begin{picture}(60,40)(0,0)
\thicklines
\put(0,40){\line(1,0){60}}
\put(0,20){\line(1,0){20}}
\put(40,20){\line(1,0){20}}
\put(0,0){\line(1,0){60}}
\put(30,0){\line(0,1){10}}
\put(30,30){\line(0,1){10}}
\put(30,30){\line(1,-1){10}}
\put(30,30){\line(-1,-1){10}}
\put(30,10){\line(1,1){10}}
\put(30,10){\line(-1,1){10}}
\put(30,0){\circle*{4}}
\put(20,20){\circle*{4}}
\put(40,20){\circle*{4}}
\end{picture}
&
\begin{picture}(16,40)(0,0)
\thicklines
\put(8,20){\makebox(0,0){$\longleftrightarrow$}}
\end{picture}
&
\begin{picture}(60,45)(0,0)
\thicklines
\put(0,40){\line(1,0){60}}
\put(0,20){\line(1,0){60}}
\put(0,0){\line(1,0){60}}
\put(10,20){\line(0,1){20}}
\put(30,0){\line(0,1){20}}
\put(50,20){\line(0,1){20}}
\put(10,20){\circle*{4}}
\put(30,0){\circle*{4}}
\put(50,20){\circle*{4}}
\end{picture}
\end{tabular}
\qedhere
\]
\end{proof}

\newpage

\section{Positive braid isotopy}
\label{sec:positive-braid-isotopy}

As explained in Section~\ref{sec:scannable-divides}, 
(the diagram of) the link of a scannable divide is
described by a simple and explicit combinatorial recipe. 
Therefore it is natural 
to try to establish our main conjectures in the case of scannable divides. 
We start by noting that for two scannable divides $D$ and~$D'$, the following are equivalent:
\begin{itemize}[leftmargin=.3in]
\item
$D_1$ and~$D_2$ are link equivalent;
\item
the closures of the braids $\beta(D_1)$ and $\beta(D_2)$
are isotopic; 
\item
the braids $\beta(D_1)$ and $\beta(D_2)$ can be obtained from each other 
by a sequence of Markov moves combined with braid conjugation. 
\end{itemize}
The first two statements are equivalent by 
Theorem~\ref{th:palindromic}; 
the last two statements are equivalent by Markov's theorem. 


\begin{definition}
\label{def:positive-isotopy}
Two positive braids $\beta_1$ and~$\beta_2$, 
or more precisely positive braid words defining them,  
are called \emph{positive-isotopic} to each other 
if they are related by a sequence of the following transformations: 
\begin{itemize}[leftmargin=.3in]
\item[{\rm (i)}]
isotopy among positive braids 
(i.e., applying Artin's braid relations); 
\item[{\rm (ii)}]
cyclic shifts (i.e., moving the last entry in a braid word 
to the front);  
\item[{\rm (iii)}]
positive Markov moves and their inverses. 
\end{itemize}
(A positive Markov move adds a strand at the top of a $k$-strand
braid, and inserts the Artin generator~$\sigma_k$ into the braid word,
at a single arbitrarily chosen location.)
If $\beta_1$ and~$\beta_2$ can be related to each other 
using transformations (i)--(ii) only, 
then we say that they are \emph{positive-isotopic inside the solid torus}. 
\end{definition}

By definition, positive braid isotopy (resp., positive isotopy inside
the solid torus) 
corresponds to a particular subclass of isotopies of 
\emph{closed} positive braids inside~$\RR^3$ (resp., inside the solid torus). 

If two positive braids 
are positive-isotopic inside the solid torus,
then they 
have the same number of strands,
and moreover are conjugate to each other. 
In general, conjugate positive braids do not have to be
positive-isotopic inside the solid torus.

If one drops the adjective ``positive,'' the situation simplifies: 
two braids with the same number of strands are conjugate if and only if 
they are isotopic in the solid torus; 
see \cite{Morton-braids} or \cite[Theorem~2.1]{kassel-turaev}. 

\smallskip

The positive braids $\beta(D_1)$ and $\beta(D_2)$ in
Example~\ref{example:D1D2} 
are positive-isotopic to each other inside the solid torus. 
Another set of examples is provided by Figure~\ref{fig:isotopic-scanning}: all 
braids associated to the divides shown therein are
positive-isotopic to each other. 
Some of these braids have different number of strands,
and consequently are not positive-isotopic inside the solid torus. 

\begin{conjecture}
\label{conj:positive-isotopy}
Let $D_1$ and $D_2$ be link equivalent scannable algebraic divides. 
Then the braids $\beta(D_1)$ and $\beta(D_2)$ are positive-isotopic. 
\end{conjecture}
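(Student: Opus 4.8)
The plan is to first dispose of the purely topological content and then confront the genuinely hard positivity issue. By Theorem~\ref{th:palindromic}, the links $L(D)$ and $L(D')$ are the closures of the positive braids $\beta(D)$ and $\beta(D')$, so the hypothesis that $D$ and $D'$ are link equivalent means precisely that $\widehat{\beta(D)}$ and $\widehat{\beta(D')}$ are isotopic; by Markov's theorem the two braids are therefore related by a sequence of conjugations and (un)stabilizations. The entire difficulty is that Markov's theorem furnishes \emph{arbitrary} moves, whereas Definition~\ref{def:positive-isotopy} permits only positive ones, so the problem is to upgrade a general Markov equivalence to a positive braid isotopy. A first check that this is not obviously obstructed: the quantity $\operatorname{sl}(\widehat{\beta})=c(\beta)-k$ (crossings minus strands) is invariant under all three moves (i)--(iii), and for $\beta(D)$ the Seifert surface produced by Seifert's algorithm on the positive braid diagram is the fiber surface of the algebraic link; hence $c(\beta(D))-k(D)=\mu-1$, where $\mu$ is the Milnor number, which by Proposition~\ref{pr:divide-properties-2-3} agrees for $D$ and $D'$. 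Thus both braids realize the same value of $\operatorname{sl}$, namely the maximal self-linking number of the common algebraic link.

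The key reformulation I would use is contact-topological. Interpret $\widehat{\beta(D)}$ as a transverse link in the standard tight contact structure on $S^3$; then positive braid isotopy in the sense of Definition~\ref{def:positive-isotopy} is exactly the combinatorial incarnation of transverse isotopy. Indeed, the transverse Markov theorem of Orevkov--Shevchishin and Wrinkle asserts that two transverse links are transversely isotopic if and only if the corresponding braids differ by braid isotopy, conjugation, and positive (de)stabilization---precisely the moves (i)--(iii), once one checks that cyclic shifts together with the braid relations generate positive conjugacy. Under this dictionary, and using that both braids realize the maximal self-linking number computed above, Conjecture~\ref{conj:positive-isotopy} becomes the statement that the two transverse representatives of the algebraic link $L(D)=L(D')$ at maximal self-linking number are transversely isotopic.

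The remaining task is therefore a uniqueness (transverse simplicity) statement: for algebraic links of plane curve singularities, the transverse isotopy class of a transverse representative should be determined by its underlying smooth link type together with its self-linking number, at least at the maximal value. This is the step I expect to be the crux, and the reason the assertion remains a conjecture: transverse simplicity holds for unknots, torus links, and a few further families, but it fails in general (Birman--Menasco produced non-transversally-simple closed $3$-braids), so it cannot be invoked as a black box and must instead be extracted from the special structure of algebraic links---fiberedness, positivity, and the explicit band/fence presentation of $\beta(D)$ from Definition~\ref{def:braid-of-divide}. One promising way to sidestep full transverse simplicity is to first reduce both braids to minimal index by positive destabilizations---feasible because both links share the braid index equal to the multiplicity of the singularity---and then prove that two minimal-index positive braids of the same algebraic link are positively conjugate, exploring the positive conjugacy class by Garside-theoretic cycling and decycling; as observed after Definition~\ref{def:positive-isotopy}, for braids of minimal index positive isotopy inside the solid torus reduces to conjugacy, so this would suffice. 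Either way, the analytic and combinatorial rigidity of algebraic links at maximal self-linking number is the essential ingredient I would need to supply.
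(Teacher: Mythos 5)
The statement you were asked about is Conjecture~\ref{conj:positive-isotopy}; the paper does not prove it, and neither do you --- you say as much yourself, so this is a comparison of research programs rather than of proofs. What the paper does offer is a partial result, Proposition~\ref{pr:conjugate-implies-positive-isotopy}, which establishes the solid-torus version (Conjecture~\ref{conj:positive-isotopy-inside-torus}) under the extra hypothesis that $L(D)$ and $L(D')$ are isotopic to the link $L_f$ of \eqref{eq:Lf} \emph{inside the solid torus}, not merely in $\mathbb{S}^3$. The engine there is Garside theory: Lemma~\ref{lem:orevkov} (a positive braid whose closure contains the half-twist $\Delta$ is positive-isotopic in the solid torus to any positive braid conjugate to it) combined with Lemma~\ref{lem:orevkov-blowup} ($\beta_f$ contains $\Delta^2$). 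Your fallback paragraph --- destabilize to minimal index, then explore the positive conjugacy class by Garside cycling and decycling --- is essentially this same route, so on that branch you and the authors are aligned. The obstruction you would hit is exactly the one flagged in Remark~\ref{rem:isotopy-in-known-constructions}: Theorem~\ref{th:L(D)=L(C,z)} only gives isotopy of $L(D)$ with $L_f$ in $\mathbb{S}^3$, so the conjugacy of $\beta(D)$ with $\beta_f$ needed before Lemma~\ref{lem:orevkov} can bite is itself not free; and it is also not clear that $\beta(D)$ can be brought to minimal index by \emph{positive} destabilizations alone.

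Your contact-geometric reformulation is a genuinely different framing, but the dictionary is not as clean as you state. The transverse Markov theorem identifies transverse isotopy of closed braids with the equivalence generated by \emph{arbitrary} conjugation together with positive (de)stabilization, whereas Definition~\ref{def:positive-isotopy} only permits moves that stay within positive braid words; the paper explicitly notes that conjugate positive braids need not be positive-isotopic inside the solid torus, so the parenthetical step ``cyclic shifts together with the braid relations generate positive conjugacy'' is false in general and cannot be checked off as routine. Consequently, even granting transverse simplicity of the algebraic link at maximal self-linking number (itself open, as you note, and known to fail for general closed $3$-braids), you would only conclude that $\beta(D)$ and $\beta(D')$ are related by conjugation and positive stabilization --- not that they are positive-isotopic. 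The residual upgrade from conjugacy to positive isotopy is precisely where the $\Delta$-containment lemmas of the paper live. So your scheme needs two independent ingredients, (a) a transverse-simplicity statement for algebraic links and (b) a positivity upgrade of the kind Lemma~\ref{lem:orevkov} provides, and neither is dispensable; your self-linking computation via $c(\beta)-k=\mu-1$ is a correct consistency check but carries no force toward either.
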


\begin{conjecture}
\label{conj:positive-isotopy-inside-torus}
Let $D_1$ and $D_2$ be link equivalent scannable algebraic divides of (the same)
minimal index. 
Then the braids $\beta(D_1)$ and $\beta(D_2)$ are positive-isotopic inside the solid torus. 
\end{conjecture}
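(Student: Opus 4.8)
The plan is to translate the statement into braid theory and then attack it in two stages. By Theorem~\ref{th:palindromic}, the links $L(D)$ and $L(D')$ are the closures of the positive braids $\beta(D),\beta(D')\in\mathbb{B}_k$, where $k$ is the common braid index; minimality of index is what guarantees that both divides have exactly $k$ strands, so that no stabilization is forced on us at the outset. Link equivalence says precisely that the closures $\widehat{\beta(D)}$ and $\widehat{\beta(D')}$ are isotopic, so by Markov's theorem $\beta(D)$ and $\beta(D')$ are related by conjugations, stabilizations, and destabilizations. The target (Definition~\ref{def:positive-isotopy}) is to eliminate every stabilization and every excursion out of the positive monoid, that is, to connect the two braids using only Artin braid relations and cyclic shifts.

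First I would pin down the two discrete invariants preserved by such moves. Since $\beta(D)$ is a positive braid, its closure is fibered with the Bennequin surface as fiber (Stallings), and that surface has Euler characteristic $k-\ell$, where $\ell$ is the word length. As the fiber is the unique minimal-genus Seifert surface of the link, $\ell-k$ is a topological invariant; with $k$ fixed by minimal index this forces $\ell(\beta(D))=\ell(\beta(D'))$. Reading $\ell-k$ as the self-linking number, both braids present the common link as a transverse link of maximal, hence equal, self-linking in the standard contact $S^3$.

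The first hard reduction is to conclude that $\beta(D)$ and $\beta(D')$ are conjugate in $\mathbb{B}_k$. Here I would use transverse-link technology: algebraic links are iterated torus links, and for the classes known to be transversely simple (torus knots by Etnyre, and their iterated cables by Etnyre--Honda and later work) the transverse isotopy class is determined by the topological type together with the self-linking number, which agree by the previous step. The transverse Markov theorem of Orevkov--Shevchishin and Wrinkle then relates the two transverse representatives by \emph{positive} Markov (de)stabilizations and conjugations; as both already realize the minimal braid index, no destabilization is available, so, modulo a transverse Markov theorem without stabilization, they are related by conjugation alone. The possible failure of transverse simplicity for general algebraic links is one point where the argument may need additional input.

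The final step, which I expect to be the main obstacle, is upgrading braid-group conjugacy to positive isotopy inside the solid torus, i.e.\ passing from ``conjugate in $\mathbb{B}_k$'' to ``connected by Artin relations and cyclic shifts.'' This implication is false for positive braids in general, so it must use both positivity and minimality. The natural tool is Garside theory: a positive braid has nonnegative infimum, and transporting $\beta(D)$ and $\beta(D')$ to their super summit sets by cycling and decycling (which are cyclic shifts once the relevant head is exposed by positive braid relations) replaces arbitrary conjugators by positive ones. Within one orbit, cyclic sliding is conjugation by the preferred prefix, and this is exactly a cyclic shift up to positive braid relations, hence an allowed move. The genuine difficulty is that the set of sliding circuits can break into several cyclic-sliding orbits, joined only by conjugation by simple elements that need not lift to cyclic shifts. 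Ruling this out is the crux, and I would try to exploit the rigidity of algebraic braids---their periodic, iterated-torus structure and the resulting highly constrained super summit sets---to show that for divides of minimal index the sliding-circuit set is a single orbit, which would complete the argument.
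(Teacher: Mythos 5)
The statement you are proving is stated in the paper as a \emph{conjecture}; the paper does not prove it. The closest the paper comes is Proposition~\ref{pr:conjugate-implies-positive-isotopy}, which establishes the conclusion only under the additional hypothesis that the common link is isotopic to the singularity link $L_f$ of~\eqref{eq:Lf} \emph{inside the solid torus} (Theorem~\ref{th:L(D)=L(C,z)} only gives isotopy inside~$\mathbb{S}^3$, which is exactly why the conjecture remains open). So your proposal should be judged as an attack on an open problem, and indeed you honestly flag two gaps. The more serious observation is that the step you call ``the main obstacle'' --- upgrading conjugacy in $\mathbb{B}_k$ to positive isotopy inside the solid torus --- is not actually the obstacle, and the heavy Garside analysis of sliding circuits you propose is unnecessary. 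The paper quotes Lemma~\ref{lem:orevkov} (Fiedler, based on Garside's solution of the conjugacy problem): if a positive braid contains the positive half-twist $\Delta$, then \emph{every} positive braid conjugate to it is positive-isotopic to it inside the solid torus. And Lemma~\ref{lem:orevkov-blowup} shows that a minimal-index positive braid of an isolated singularity contains $\Delta^2$, because $\Delta^{-2}\beta_f$ is the braid of the blown-up singularity $f(x,xy)=0$. So once conjugacy is in hand, you are done; your worry about the sliding-circuit set breaking into several orbits is mooted by the full-twist containment, which is a structural feature of algebraic braids you did not exploit.

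The genuine gap, for you as for the paper, is the middle step: deducing conjugacy in $\mathbb{B}_k$ from link equivalence in $\mathbb{S}^3$ when both braids have minimal index. Your route through transverse knot theory (maximal self-linking number, transverse simplicity of iterated torus links, and a transverse Markov theorem) is a genuinely different idea from the paper's, which instead simply \emph{assumes} solid-torus isotopy to $L_f$ and notes (via \cite{kassel-turaev}) that solid-torus isotopy of closed braids of equal index is the same as conjugacy. Your approach could in principle remove that hypothesis, but it hinges on transverse simplicity of \emph{all} algebraic links, which is not known: the Etnyre and Etnyre--Honda results cover positive torus knots and certain cables, not arbitrary iterated torus links with several components. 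Also note that ``no destabilization is available because both braids realize the minimal braid index'' does not by itself rule out a stabilization followed later by a destabilization in a chain of transverse Markov moves; you would need a Markov-theorem-without-stabilization statement in the transverse category, which is an additional unproven input. In short: your step 1 is fine, your step 3 should be replaced by the $\Delta^2$-containment argument of Lemmas~\ref{lem:orevkov} and~\ref{lem:orevkov-blowup}, and your step 2 is where the conjecture genuinely remains open.
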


Conjectures~\ref{conj:positive-isotopy} 
and~\ref{conj:positive-isotopy-inside-torus}---especially the latter one---are 
motivated by a couple of observations due to 
Stepan Orevkov, see Lemmas~\ref{lem:orevkov} and~\ref{lem:orevkov-blowup} below.
We thank Stepan for allowing us to cite them here.

The first observation is based on Garside's solution of the conjugacy
problem in the braid group. 

\begin{lemma}[{\cite[
Section~3.2]{fiedler}}]
\label{lem:orevkov}
Suppose that $\beta$ is a positive braid whose closure contains the
positive half-twist~$\Delta$, 
i.e., $\beta$~is positive isotopic inside a solid torus
to a braid of the form~$\gamma \Delta$,
with $\gamma$ positive. 
Then any braid conjugate to~$\beta$ is positive isotopic to~$\beta$
inside a solid torus. 
\end{lemma}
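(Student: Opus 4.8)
The plan is to run Garside's conjugacy algorithm in $\mathbb{B}_k$, whose Garside element is exactly the positive half-twist $\Delta$, and to observe that in the present positive setting every step of that algorithm is a positive isotopy inside the solid torus. Write $\phi$ for the flip automorphism $\phi(\sigma_i)=\sigma_{k-i}=\Delta\sigma_i\Delta^{-1}$, recall that the simple elements (permutation braids) are the positive divisors of $\Delta$, and that each positive braid $w$ has a left normal form $w=\Delta^{p}w_1\cdots w_l$ with $p=\inf(w)\ge0$ and $w_1,\dots,w_l$ simple. The basic remark is that conjugation by a positive \emph{left} divisor is a positive isotopy inside the solid torus: if $w=au$ with $a,u$ positive, then $a^{-1}wa=ua$ is obtained by moving the prefix $a$ to the back, i.e.\ by transformations (i)--(ii) of Definition~\ref{def:positive-isotopy}; symmetrically, conjugation by a positive \emph{right} divisor (moving a suffix to the front) is such an isotopy as well. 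Since positive isotopy is only defined for positive braids, it suffices to show that every positive braid $\alpha$ conjugate to $\beta$ is positive-isotopic to $\beta$ inside the solid torus; note that the summit infimum $\inf_s$ (the maximal value of $\inf$ on the conjugacy class) satisfies $\inf_s\ge\inf(\beta)\ge1$ because $\beta=\gamma\Delta$.

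First I would reduce both $\alpha$ and $\beta$ to the super summit set $SSS=SSS(\beta)=SSS(\alpha)$ by cycling and decycling. Cycling conjugates $w$ by the simple element $\phi^{\,p}(w_1)$, which is a positive left divisor of $w$ (indeed $\phi^{\,p}(w_1)^{-1}w=\Delta^{p}w_2\cdots w_l$ is positive); decycling conjugates $w$ by its final simple factor $w_l$, a positive right divisor (since $w\,w_l^{-1}=\Delta^{p}w_1\cdots w_{l-1}$ is positive). By the remark above, each cycling and each decycling is therefore a positive isotopy inside the solid torus. Iterated cycling raises $\inf$ to $\inf_s$ and lands in the summit set, after which iterated decycling lowers $\sup$ to its minimum while keeping $\inf=\inf_s$, reaching $SSS$; all intermediate braids have $\inf\ge0$ and are hence positive, so positivity is preserved throughout. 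Applying this to $\alpha$ and to $\beta$ produces positive-isotopic representatives $\alpha^{\circ},\beta^{\circ}\in SSS$.

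It then remains to connect $\alpha^{\circ}$ and $\beta^{\circ}$ inside $SSS$. Here I would invoke the Elrifai--Morton description of the super summit set: any two of its elements are joined by a finite chain of conjugations by simple elements, with every intermediate term again lying in $SSS$. The point is that each such conjugation is automatically a positive isotopy inside the solid torus. Indeed, every $w\in SSS$ has $\inf(w)=\inf_s\ge1$, so $\Delta$ left-divides $w$; since any simple element $s$ divides $\Delta$, it follows that $s$ is a positive left divisor of $w$, and conjugation by $s$ is exactly the cyclic shift $su\mapsto us$ of the first paragraph. Chaining these steps shows that $\alpha^{\circ}$ and $\beta^{\circ}$ are positive-isotopic inside the solid torus, and hence so are $\alpha$ and $\beta$.

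The main obstacle is this last step: one must ensure that the Garside-theoretic facts are applied in a form guaranteeing that positivity, and in particular the bound $\inf\ge1$, survives every operation. Concretely, the delicate points are that decycling from the summit set does not drop $\inf$ below $\inf_s$ (so the intermediate braids remain divisible by $\Delta$) and that the connecting chain inside $SSS$ furnished by Elrifai--Morton can be taken through $SSS$ only. Both are precisely where the hypothesis $\beta=\gamma\Delta$, i.e.\ $\inf_s\ge1$, is used to convert abstract conjugations into the cyclic shifts permitted by Definition~\ref{def:positive-isotopy}. Verifying these monotonicity and convexity statements for the super summit set is the crux; the remainder is bookkeeping with normal forms.
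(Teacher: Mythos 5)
The paper does not actually prove this lemma: it is quoted from \cite[Section~3.2]{fiedler}, with only the one-line remark that it ``is based on Garside's solution of the conjugacy problem in the braid group.'' Your argument supplies precisely such a proof, and it is correct. The two pillars you flag as the ``crux''---that cycling and decycling never decrease $\inf$ (so positivity survives, since each is conjugation by a positive left, resp.\ right, divisor and hence a cyclic shift in the sense of Definition~\ref{def:positive-isotopy}), and that the super summit set is connected by conjugations by simple elements staying inside it---are standard theorems of El-Rifai and Morton, so invoking them leaves no genuine gap; the decisive observation, correctly identified, is that $\inf_s\ge 1$ forces every element of the super summit set to be left-divisible by $\Delta$, hence by every simple element, turning each step of the El-Rifai--Morton chain into a cyclic shift. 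One small imprecision: the hypothesis only gives that $\beta$ is positive-isotopic (hence conjugate) to $\gamma\Delta$, so you may conclude $\inf_s(\beta)\ge 1$ but not $\inf(\beta)\ge 1$ as written; this is harmless, since your reduction to the super summit set only needs $\inf\ge 0$ along the way (which holds because the starting braids are positive) and $\inf_s\ge 1$ at the end.
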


The second observation concerns a certain positive braid of minimal index 
associated with a given isolated singularity $f(x,y)=0$, as follows. 
Instead of a Milnor ball, consider a (small) bi-disk 
\[
\Disk^2=\{|x|\le\varepsilon,\ |y|\le\varepsilon\}\subset\CC^2.
\] 
Assume, without loss of generality, that inside~$\Disk^2$
our curve stays close to the plane $y=0$, so that its intersection
with the boundary $\partial(\Disk^2)$ is contained in the solid torus
$\mathbf{V}=\{|x|=\varepsilon,\ |y|\le\varepsilon\}\subset\CC^2$. 
Let $L_f$ denote this intersection:
\begin{equation}
\label{eq:Lf}
L_f=\{f(x,y)=0, \ |x|=\varepsilon,\ |y|\le\varepsilon\}.
\end{equation}
By construction, $L_f$~is a link inside the solid torus~$\mathbf{V}$.  
Furthermore, $L_f$ is the closure of a minimal-index positive braid~$\beta_f$ 
obtained by cutting $L_f$ by the plane
$\{x=x_\circ\}$, for some $x_\circ$ with $|x_\circ|=\varepsilon$. 
Note that all intersections of the complex line $x=x_\circ$ 
with our complex curve are positive,
and moreover no intersection point escapes the bi-disc as the line varies in the vertical pencil.
The braid $\beta_f$ depends on the choice of~$x_\circ$,
but its closure obviously doesn't.

\begin{lemma}[\cite{Orevkov} {\cite[Proposition~II, p.~699]{r.williams}}]
\label{lem:orevkov-blowup}
The braid $\beta_f$ 
contains the positive full twist~$\Delta^2$. 
\end{lemma}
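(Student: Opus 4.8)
The plan is to exhibit an explicit factorization $\beta_f=\Delta^2\,\beta'$ in the braid group $\mathbb{B}_m$, where $m$ is the number of strands and $\beta'$ is again a \emph{positive} braid; this is precisely the assertion that $\beta_f$ contains $\Delta^2$, in the sense used in Lemma~\ref{lem:orevkov}. First I would apply the Weierstrass preparation theorem to write $f=u\cdot p$ inside $\Disk^2$, with $u$ a unit and $p(x,y)=\prod_{j=1}^{m}(y-y_j(x))$ a Weierstrass polynomial. Its degree $m$, which is $\operatorname{ord}_y f(0,y)$, equals the multiplicity of $(C,z)$ exactly when the $y$-axis is transverse to the tangent cone; but this is forced, since by the theorem of R.~Williams and A.~Libgober recalled in Section~\ref{sec:links-of-divides} the multiplicity equals the braid index of the algebraic link, and $\beta_f$ is assumed to be of minimal index. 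Thus $\beta_f$ is the braid swept out by the $m$ roots $y_1(x),\dots,y_m(x)$ as $x$ runs once counterclockwise around $\{|x|=\varepsilon\}$, and its positivity is the standard fact that the crossings of a holomorphically varying family of points are all positive.

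Next I would examine the leading Puiseux terms. Since $m$ is the multiplicity, the $y$-axis is transverse to the tangent cone, so each root has the form $y_j(x)=\lambda_j x+o(x)$ for a finite slope $\lambda_j$; equivalently, every local branch has leading exponent $\ge 1$, which is the analytic content of the normalization that the curve ``stays close to $y=0$''. Consequently, as $x=\varepsilon e^{i\theta}$ traverses the loop, the whole configuration $\{y_j\}$ rotates rigidly by one full turn to leading order. Passing to the rotating frame $\hat y_j(\theta)=e^{-i\theta}\,y_j(\varepsilon e^{i\theta})$ converts this rigid rotation into the central full twist, and yields the identity $\beta_f=\Delta^2\,\beta'$ in $\mathbb{B}_m$, where $\beta'$ is the braid traced by the $\hat y_j$. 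As $\varepsilon\to 0$ the points $\hat y_j$ cluster near the distinct slopes $\lambda$, grouping the strands according to their tangent direction.

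It then remains to prove that $\beta'$ is positive. Strands belonging to clusters with distinct slopes stay in disjoint regions of the $y$-disk for $\varepsilon$ small, hence never cross in the rotating frame, so $\beta'$ factors as a product of the braids carried by the individual clusters. Within one cluster $C_\lambda$ the governing data is $w_j(x)=y_j(x)-\lambda x=o(x)$, that is, the family of roots of the strict transform of the germ after blowing up the origin (whence the title of the lemma). This strict transform is again an isolated plane curve germ, of strictly smaller resolution complexity, presented in coordinates in which the exceptional direction plays the role of the $x$-axis. Inducting on the number of blow-ups needed to resolve $(C,z)$ (equivalently on $\delta(C,z)$), each cluster braid is positive, so $\beta'$ is positive and $\beta_f=\Delta^2\beta'$, as desired. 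The base case is a single smooth branch, whose one-strand braid is trivial.

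The hard part will be the analytic bookkeeping that upgrades the rigid-rotation heuristic to an exact braid identity: one must establish the estimate $y_j(\varepsilon e^{i\theta})=\lambda_j\varepsilon e^{i\theta}+o(\varepsilon)$ uniformly in $\theta$, verify that the slope-clusters remain genuinely separated around the \emph{entire} circle (so that the cross-cluster crossings really are absent and $\beta'$ factors cluster by cluster), and check that the genericity of the chosen $y$-direction is inherited by, or can be re-imposed at, each infinitely near point, so that the inductive hypothesis (minimal index $=$ multiplicity, all leading exponents $\ge 1$) applies to every strict transform. Once these points are secured, the factorization $\beta_f=\Delta^2\beta'$ with $\beta'$ positive—and with it the lemma—follows.
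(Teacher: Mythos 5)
Your core identity is exactly the paper's: the rotating-frame substitution $\hat y = y/x$ is nothing but the blow-up $f(x,xy)=0$, and the paper's entire proof is the observation that $\Delta^{-2}\beta_f$ is the braid of that blown-up curve. Where you diverge is in establishing positivity of the residual braid $\beta'$, and there you make the argument much harder than it needs to be. The roots $\hat y_j(x)=y_j(x)/x$ are precisely the roots in $y$ of the holomorphic germ $\tilde f(x,y)=x^{-m}f(x,xy)$, so $\beta'$ is the braid swept out by a holomorphically varying family of points --- and is therefore positive by the very same ``standard fact'' you invoke in your first paragraph for $\beta_f$ itself (local intersections of complex curves with the vertical pencil are positive). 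No induction on blow-ups, no cluster separation, and no uniform asymptotic estimates are required; the identity $\beta_f=\Delta^2\beta'$ is exact, not a leading-order approximation, because multiplying a loop in configuration space pointwise by $e^{i\theta}$ composes its braid with the central full twist, and $y_j(x)/x$ remain distinct wherever the $y_j(x)$ are and $x\neq 0$. Consequently all of the ``hard parts'' you list in your final paragraph evaporate: the cross-cluster crossing analysis and the re-imposition of genericity at infinitely near points are artifacts of an induction you do not need. Your derivation of the transversality of the $y$-axis to the tangent cone from the Williams--Libgober theorem is valid but roundabout; the paper simply builds this normalization into the choice of the bi-disk. In short, the skeleton is right and matches the paper, but you should replace the entire inductive third paragraph by the one-line recognition that $\beta'$ is again the braid of a holomorphic plane curve germ.
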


\begin{proof} 
It is not hard to see that 
$\Delta^{-2}\beta_f$ is the (positive) braid corresponding 
to the blown-up singularity $f(x,xy)=0$. 
\end{proof}

\begin{remark}
\label{rem:Franks-Williams}
The fact that $\beta_f$ is a minimal-index braid also follows from 
\cite[Corollary~2.4]{Franks-Williams}:
a~positive braid on $n$ strands containing~$\Delta^2$
has braid index~$n$. 
\end{remark}

\begin{proposition}
\label{pr:conjugate-implies-positive-isotopy}
Conjecture~\ref{conj:positive-isotopy-inside-torus} holds for link 
equivalent scannable algebraic divides $D_1$ and~$D_2$ whose associated link
$L=L(D_1)=L(D_2)$ is isotopic inside the solid torus to the link $L_f$ 
of the corresponding singularity, cf.~\eqref{eq:Lf}. 
\end{proposition}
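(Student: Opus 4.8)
The plan is to combine the two lemmas of Orevkov with the equivalences already established in this section. Let me trace through what we know and what we need. We are given scannable algebraic divides $D$ and $D'$ of the same minimal index~$k$, whose A'Campo links are both isotopic inside the solid torus to the link~$L_f$ of the corresponding singularity. We want to conclude that $\beta(D)$ and $\beta(D')$ are positive-isotopic inside the solid torus.

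First I would record the reductions. By Theorem~\ref{th:palindromic}, the link $L(D)$ is the closure of the positive braid $\beta(D)$, and likewise for~$D'$; moreover, since both divides are scannable of minimal index~$k$, the braids $\beta(D)$ and $\beta(D')$ both live in~$\mathbb{B}_k$ and their closures realize~$L$ using the minimal number~$k$ of strands (recall that for algebraic divides the braid index equals the multiplicity of the singularity, which is~$k$). By hypothesis, the closed braids $\widehat{\beta(D)}$ and $\widehat{\beta(D')}$ are isotopic inside the solid torus~$\mathbf{V}$. Invoking the classical fact cited after Definition~\ref{def:positive-isotopy}, namely \cite[Theorem~2.1]{kassel-turaev}, two braids on the same number of strands whose closures are isotopic in the solid torus are conjugate in the braid group. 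Hence $\beta(D)$ and $\beta(D')$ are conjugate in~$\mathbb{B}_k$. At this stage we have reduced the problem to upgrading ordinary conjugacy to \emph{positive} isotopy inside the solid torus.

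The key step is to verify the hypothesis of Lemma~\ref{lem:orevkov}, i.e.\ that $\beta(D)$ (and symmetrically $\beta(D')$) is positive-isotopic inside a solid torus to a braid of the form $\gamma\Delta$ with $\gamma$ positive. Here I would appeal to Lemma~\ref{lem:orevkov-blowup}: the minimal-index positive braid $\beta_f$ associated with the singularity contains the full twist~$\Delta^2$, and in particular~$\Delta$. The point is that $\beta(D)$ and $\beta_f$ both represent the minimal-index positive braid of the same link~$L=L_f$ inside the solid torus, so $\beta(D)$ is conjugate to~$\beta_f$; but more is needed---we need $\beta(D)$ itself to positively contain~$\Delta$. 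Since $\beta_f$ contains~$\Delta^2$ (hence~$\Delta$), and $\beta(D)$ is positive-isotopic inside the solid torus to~$\beta_f$ by the minimality of~$k$ together with the identification of the A'Campo link with~$L_f$, it follows that $\beta(D)$ is positive-isotopic inside the solid torus to a braid of the form $\gamma\Delta$. Once this is in hand, Lemma~\ref{lem:orevkov} applies directly: any braid conjugate to~$\beta(D)$---and in particular~$\beta(D')$, by the previous paragraph---is positive-isotopic to~$\beta(D)$ inside the solid torus. This is exactly the assertion of Conjecture~\ref{conj:positive-isotopy-inside-torus} in the case at hand, completing the argument.

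The main obstacle I anticipate is the step asserting that $\beta(D)$ is \emph{positive}-isotopic inside the solid torus to~$\beta_f$ (rather than merely conjugate). The lemmas of Orevkov are stated for the canonical braid~$\beta_f$ coming from the bi-disk construction in~\eqref{eq:Lf}, whereas $\beta(D)$ arises from the Couture--Perron scanning of the divide; these are a priori different positive braid words, and one must ensure that the passage between them stays within the class of positive braids and does not require a negative stabilization or a non-positive conjugation. The cleanest route is probably to observe that what Lemma~\ref{lem:orevkov} really needs is only that the \emph{closure} of $\beta(D)$ contains~$\Delta$ as a positive sub-braid in the solid torus, a property of the link~$L=L_f$ together with the minimality of the strand number, and this is supplied by Lemma~\ref{lem:orevkov-blowup}; so the positivity of the half-twist is an intrinsic feature of the minimal-index presentation of an algebraic link and does not depend on which minimal positive braid word we start from. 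Making this last point fully rigorous is where the care is required.
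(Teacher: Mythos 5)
Your overall strategy is the same as the paper's: reduce to conjugacy via \cite[Theorem~2.1]{kassel-turaev}, then upgrade conjugacy to positive isotopy using Lemmas~\ref{lem:orevkov} and~\ref{lem:orevkov-blowup}. But there is a circularity in the middle step, which you correctly sense but do not resolve. You try to apply Lemma~\ref{lem:orevkov} with $\beta=\beta(D)$, which forces you to first verify that $\beta(D)$ is positive-isotopic inside the solid torus to a braid of the form $\gamma\Delta$; to get this you assert that ``$\beta(D)$ is positive-isotopic inside the solid torus to~$\beta_f$ by the minimality of~$k$ together with the identification of the A'Campo link with~$L_f$.'' That assertion is essentially the conclusion of the proposition for the pair consisting of $D$ and the singularity itself, so it cannot be invoked at this stage; the hypotheses only give an isotopy of links in the solid torus, hence conjugacy of $\beta(D)$ and $\beta_f$, not positive isotopy. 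Your closing paragraph proposes that containment of $\Delta$ is ``an intrinsic feature of the minimal-index presentation'' independent of the chosen positive word, but that is precisely what is in question and is not supplied by either lemma.

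The fix is a one-line reorientation, and it is how the paper argues: apply Lemma~\ref{lem:orevkov} with $\beta=\beta_f$ rather than with $\beta=\beta(D)$. By Lemma~\ref{lem:orevkov-blowup}, $\beta_f$ is literally a positive word times $\Delta^2$ (since $\Delta^{-2}\beta_f$ is the positive braid of the blown-up singularity), so the hypothesis of Lemma~\ref{lem:orevkov} holds for $\beta_f$ with no isotopy needed. Its conclusion then says that \emph{every} braid conjugate to $\beta_f$ --- in particular both $\beta(D)$ and $\beta(D')$, which are conjugate to $\beta_f$ because their closures are isotopic to $L_f$ in the solid torus and all three braids have $k$ strands --- is positive-isotopic to $\beta_f$ inside the solid torus, hence the two are positive-isotopic to each other. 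Everything else in your write-up (the use of Theorem~\ref{th:palindromic} and of the Kassel--Turaev conjugacy criterion) matches the paper's intent.
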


Note that Theorem~\ref{th:L(D)=L(C,z)}
only asserts that $L$ and $L_f$ are isotopic inside~$\mathbf{S}^3$. 

\begin{proof}
The fact that $L$ is isotopic to $L_f$ 
inside the solid torus means that the braids~$\beta(D)$ and $\beta(D')$
are conjugate to~$\beta_f$,
and consequently to each other. 
Moreover Lemma~\ref{lem:orevkov-blowup} implies that 
both braids contain~$\Delta^2$, so by Lemma~\ref{lem:orevkov}
they are positive isotopic to each other inside the solid torus. 
\end{proof}

\begin{remark}
\label{rem:isotopy-in-known-constructions}
The isotopy condition in Proposition~\ref{pr:conjugate-implies-positive-isotopy}
is satisfied for all divides of minimal index 
coming from real morsifications constructed in \cite{acampo}
(see \cite[Theorem~1]{acampo}) \linebreak[3]
and/or 
\cite[Section~2]{leviant-shustin}
(see \cite[Theorem~2]{leviant-shustin}). 
Consequently, whenever such constructions are used to produce 
different real morsifications of (different real forms of) the same complex singularity, 
the positive braids associated with the corresponding scannable
divides are positive isotopic to each other inside the solid torus. 
In particular, this statement holds for all examples of scannable
algebraic divides of minimal index discussed in this paper. 
\end{remark}


\begin{remark}
\label{rem:markov-braid}
As shown by S.~Orevkov--V.~Shevchishin~\cite{orevkov-shevchishin} 
and N.~C.~Wrinkle~\cite{Wrinkle}, two braids
  (positive or not) are related by braid isotopy, cyclic shifts, and
  positive Markov moves if and only if their closures are transverse isotopic
  (using a natural transverse structure on the closure of a braid). 
  By Theorem~\ref{th:etnyre-vhm}, transverse isotopy is the same as
  isotopy for many of the links we consider, including
  closures of positive braids like those from scannable divides, and
  also for (algebraic) links of singularities. 
  Thus the obstacle to a full proof of Conjecture~\ref{conj:positive-isotopy} 
  is the difference between
  positive  \emph{vs.}\  plain isotopy of positive braids inside the solid torus.
  (Using Proposition~\ref{pr:conjugate-implies-positive-isotopy} 
  to establish positive isotopy requires an additional topological condition.) 
\end{remark}

We next prove the implication \textbf{(d)}$\Rightarrow$\textbf{(p)}
of Conjecture~\ref{conj:morsif=mut-via-divides}
in the case of scan\-nable (not necessarily algebraic) divides, 
under the assumption of positive isotopy.  \linebreak[3]
This assumption is potentially redundant, cf.\ Conjecture~\ref{conj:positive-isotopy}
and Remark~\ref{rem:isotopy-in-known-constructions}.  

\begin{theorem}
\label{th:main-conjecture-scannable-1}
Let $D_1$ and $D_2$ be scannable divides 
whose respective braids $\beta(D_1)$ and $\beta(D_2)$ are positive-isotopic.  
(In particular, $D_1$ and $D_2$ are link equivalent.) 
\linebreak[3]
Then the plabic fences $\Phi(D_1)$ and $\Phi(D_2)$ are move equivalent. 
\end{theorem}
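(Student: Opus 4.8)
The plan is to transport the hypothesis from the divides to their plabic fences and then emulate, one transformation at a time, the definition of positive braid isotopy (Definition~\ref{def:positive-isotopy}) by local moves on fences. First I would invoke Proposition~\ref{pr:divide-to-fence-to-braid} to rewrite $\beta(D),\beta(D')$ as $\beta(\Phi(D)),\beta(\Phi(D'))$, and then use Lemma~\ref{lem:ww-to-bww} to replace $\Phi(D)$ and $\Phi(D')$ by move-equivalent fences whose associated words are genuine positive braid words (involving only the generators $\sigma_i$, and no $\tau_i$); for such a fence the braid $\beta(\Phi)$ is literally the word read left to right. Thus it suffices to prove the following reduced statement: if $w$ and $w'$ are positive braid words whose braids are positive-isotopic, then the $\sigma$-only fences $\Phi(w)$ and $\Phi(w')$ are move equivalent.

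By induction on the length of a chain of positive-isotopy transformations, it is enough to treat a single transformation $v\leadsto v'$ of each of the three types (i)--(iii) and to show $\Phi(v)\sim\Phi(v')$. Type (i), the Artin braid relations and the far commutations, leaves the braid element unchanged, so $\beta(\Phi(v))=\beta(\Phi(v'))$ and Proposition~\ref{pr:equal-braids=>move-equivalent} applies verbatim. Type (iii), a positive Markov (de)stabilization, inserts a new top strand carrying a single connector $\sigma_k$; I claim this appendage can be peeled off by exactly two tail-removal moves. Indeed, the new strand contributes a white connector vertex $a$ joined to its black right endpoint and to a black vertex $b$ sitting on strand $k$; since $a$ and its boundary neighbour have opposite colours, one tail removal deletes $a$ and merges its remaining edges into a tail running from $b$ to the (white) left endpoint, after which a second tail removal deletes $b$ and restores strand $k$ to a clean line. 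Running this in reverse realizes the stabilization, so $\Phi(v)\sim\Phi(v')$.

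The genuinely new content -- and the step I expect to be the main obstacle -- is type (ii), the cyclic shift $v=u\sigma_i\leadsto\sigma_i u$. This does \emph{not} preserve the braid element, so Proposition~\ref{pr:equal-braids=>move-equivalent} is unavailable and one must physically relocate the extremal connector $\sigma_i$ from the right boundary of the fence to the left boundary. The mechanism I would use exploits the fact that move equivalence, unlike equality of braids, can mimic conjugation: near $\partial\Disk$ the tail attachment and removal moves re-route the ends of strands $i$ and $i+1$, in effect sliding the crossing along the closure arc joining the right end of a strand to a left end. Carrying this out requires the freedom of arbitrarily coloured tail attachments (as permitted in our setting) together with flip moves, and the delicate point throughout is to keep track of the colours of the boundary vertices, since these determine exactly which tail removals are legal (cf.\ Remark~\ref{rem:tail-removal-technical}); the intermediate graphs will temporarily leave the class of fences before being straightened back into fence form. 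Once all three transformation types are accounted for, concatenating the resulting move sequences along the positive-isotopy chain from $w$ to $w'$ yields $\Phi(D)\sim\Phi(D')$, as desired.
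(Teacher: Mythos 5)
Your overall architecture coincides with the paper's: reduce to plabic fences via Proposition~\ref{pr:divide-to-fence-to-braid} and Remark~\ref{rem:P(D)==Phi(D)}, then show that each of the three transformation types in Definition~\ref{def:positive-isotopy} is an instance of move equivalence (this is exactly Proposition~\ref{pr:positive-isotopic=>move-equivalent}). Your treatment of type~(i) (via Proposition~\ref{pr:equal-braids=>move-equivalent}) and of type~(iii) (a positive Markov move undone by two tail removals) matches the paper.

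The gap is in type~(ii), the cyclic shift $\ww\sigma_i\leftrightarrow\sigma_i\ww$, which you correctly identify as the crux but do not actually prove. Your proposed mechanism---``sliding the crossing along the closure arc joining the right end of a strand to a left end''---has no counterpart inside the disk: a plabic fence is an open graph whose strand endpoints sit on $\partial\Disk$, there is no closure arc, and all available moves are local, so the only way to transport a connector from the far right to the far left is \emph{through} the body of the fence, past every intervening connector. A $\sigma_i$-type connector does not commute with neighbouring $\sigma_j$-type connectors (that would change the braid), so the naive transport fails. The device you are missing is the second letter type $\tau_i$ (black vertex on top rather than on the bottom of the connector): the terminal $\sigma_i$ is first converted into a $\tau_i$ by a tail removal followed by a tail attachment of the opposite colour (the reverse of~\eqref{eq:replace-tau-by-sigma}); a $\tau$-type connector then commutes past \emph{every} $\sigma$-type connector --- via a square move when the indices agree, a flip move when they are adjacent, and an isotopy otherwise (this is precisely the content of Lemma~\ref{lem:ww-to-bww}) --- and upon reaching the left end it is converted back into a $\sigma_i$ by the same two tail moves. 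In particular the intermediate graphs never leave the class of fences (over the two-type alphabet $\mathbf{A}_k$), contrary to what you anticipate. Without this conversion-and-commutation step, the cyclic-shift case, and hence the theorem, remains unproved.
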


In view of Proposition~\ref{pr:divide-to-fence-to-braid}, 
Theorem~\ref{th:main-conjecture-scannable-1} follows from
Proposition~\ref{pr:positive-isotopic=>move-equivalent} below. 

\begin{proposition}
\label{pr:positive-isotopic=>move-equivalent}
Let $\Phi_1$ and $\Phi_2$ be plabic fences 
(possibly with a different number of strands)
whose associated braids $\beta(\Phi_1)$ and $\beta(\Phi_2)$
are positive-isotopic to each other.  
Then $\Phi_1$ and~$\Phi_2$ are move equivalent. 
\end{proposition}

\begin{proof}
We need to show that each of the transformations (i)--(iii) in Definition~\ref{def:positive-isotopy}
can be interpreted as an instance of move equivalence. 
Transformations~(i) are covered by Proposition~\ref{pr:equal-braids=>move-equivalent}. 
A~cyclic shift of the form $\ww\sigma_i\leftrightarrow\sigma_i\ww$ 
can be executed by first replacing $\sigma_i$ by~$\tau_i$ at the end of the word
(cf.~\eqref{eq:replace-tau-by-sigma}),
then moving $\tau_i$ all the way to the left (cf.\ the proof of Lemma~\ref{lem:ww-to-bww}), 
then replacing $\tau_i$ by~$\sigma_i$ at the beginning of the~word. 
Finally, a positive Markov move of the form $\ww_1\ww_2\leftrightarrow\ww_1 \sigma_k\ww_2$, 
with $\ww_1,\ww_2\in\mathbb{B}_k$ and $\ww_1 \sigma_k\ww_2\in\mathbb{B}_{k+1}$
(resp., the reverse of~it) 
is emulated by two tail attachments (resp., tail removals), 
see Figure~\ref{fig:positive-markov-as-tail-attachment}. 
\end{proof}

\begin{figure}[ht]
\begin{center}
\vspace{-.1in}
\begin{tabular}{ccc}
\begin{picture}(50,20)(1,-2)
\thicklines
\put(-5,0){\makebox(0,0){$k$}} 
\put(0,0){\line(1,0){50}}
\put(5,0){\line(0,-1){5}}
\put(15,0){\line(0,-1){5}}
\put(35,0){\line(0,-1){5}}
\put(45,0){\line(0,-1){5}}
\thinlines
\put(0,15){\line(1,0){50}}
\put(0,16){\line(1,0){50}}
\end{picture}
&
\begin{picture}(20,20)(0,-2)
\thicklines
\put(10,5){\makebox(0,0){$\leftrightarrow$}}
\end{picture}
&
\begin{picture}(50,20)(1,-2)
\thicklines
\put(55,0){\makebox(0,0){$k$}} 
\put(0,0){\line(1,0){50}}
\put(55,10){\makebox(0,0){\ \quad $k\!+\!1$}} 
\put(0,10){\line(1,0){50}}
\put(5,0){\line(0,-1){5}}
\put(15,0){\line(0,-1){5}}
\put(25,10){\line(0,-1){10}}
\put(35,0){\line(0,-1){5}}
\put(45,0){\line(0,-1){5}}
\put(25,0){\circle*{2.5}}
\put(50,10){\circle*{2.5}}
\thinlines
\put(0,15){\line(1,0){50}}
\put(0,16){\line(1,0){50}}
\end{picture}
\\[.1in]
$\ww_1\ww_2$ & $\leftrightarrow$ & $\ww_1 \sigma_k\ww_2$
\end{tabular}
\end{center}
\vspace{-.1in}
\caption{
Positive Markov move via tail attachments. 
}
\label{fig:positive-markov-as-tail-attachment}
\end{figure}

\vspace{-.15in}

\begin{corollary}
\label{cor:scannable+positive-isotopic}
Scannable divides whose associated braids are positive-isotopic
have mutation equivalent quivers. 
\end{corollary}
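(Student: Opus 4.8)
The plan is to chain together the results already established in this section and the previous one, treating Corollary~\ref{cor:scannable+positive-isotopic} as an almost immediate consequence of Theorem~\ref{th:main-conjecture-scannable-1} combined with Proposition~\ref{pr:plabic-vs-quivers}. First I would take two scannable divides $D$ and $D'$ whose associated positive braids $\beta(D)$ and $\beta(D')$ are positive-isotopic. By the hypothesis, Theorem~\ref{th:main-conjecture-scannable-1} applies directly and tells us that the plabic fences $\Phi(D)$ and $\Phi(D')$ are move equivalent. This is the first and main input.

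The second step is to connect move equivalence of plabic graphs with mutation equivalence of their quivers. Here I would invoke Proposition~\ref{pr:plabic-vs-quivers}, which asserts precisely that move equivalent plabic graphs have mutation equivalent associated quivers. Applying this to the fences $\Phi(D)$ and $\Phi(D')$ yields that $Q(\Phi(D))$ and $Q(\Phi(D'))$ are mutation equivalent.

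The final step is a matter of identifying the relevant quivers. By Remark~\ref{rem:P(D)==Phi(D)} (which follows from Definition~\ref{def:Phi(D)} and Proposition~\ref{pr:Q(D)=Q(P(D))}), the plabic fence $\Phi(D)$ is a plabic graph attached to~$D$, and consequently $Q(\Phi(D))=Q(D)$; likewise $Q(\Phi(D'))=Q(D')$. Substituting these identifications into the conclusion of the second step shows that the quivers $Q(D)$ and $Q(D')$ associated with the divides themselves are mutation equivalent, which is exactly the assertion of the corollary.

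In short, the proof is the composition $\beta(D)\simplus\beta(D') \Rightarrow \Phi(D)\sim\Phi(D')\ (\text{move equiv.}) \Rightarrow Q(\Phi(D))\sim Q(\Phi(D'))\ (\text{mutation equiv.}) \Rightarrow Q(D)\sim Q(D')$, with no genuinely new argument required. I do not expect any serious obstacle here, since all the heavy lifting resides in Theorem~\ref{th:main-conjecture-scannable-1}; the only point deserving a word of care is making sure the quiver identifications $Q(\Phi(D))=Q(D)$ are invoked correctly, which is guaranteed by Remark~\ref{rem:P(D)==Phi(D)}. Thus the corollary is essentially a one-line deduction, and I would write it as such.
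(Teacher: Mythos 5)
Your proposal is correct and follows exactly the paper's own argument: Theorem~\ref{th:main-conjecture-scannable-1} yields move equivalence of the plabic fences, and Propositions~\ref{pr:plabic-vs-quivers} and~\ref{pr:Q(D)=Q(P(D))} (via Remark~\ref{rem:P(D)==Phi(D)}) convert this into mutation equivalence of $Q(D)$ and $Q(D')$. No differences worth noting.
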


\begin{proof}
By Propositions~\ref{pr:plabic-vs-quivers}~and~\ref{pr:Q(D)=Q(P(D))}, 
move equivalence of the plabic fences $\Phi(D_1)$ and~$\Phi(D_2)$
implies mutation equivalence of the quivers $Q(D_1)$ and~$Q(D_2)$. 
\end{proof}

\begin{example}
Consider the two divides shown in Figure~\ref{fig:link-equivalent=>mutation-equivalent}. 
Their associated positive braids are both equal to~$\Delta^4$, 
where 
$\Delta=\sigma_1\sigma_3\sigma_2\sigma_1\sigma_3\sigma_2
=\sigma_2\sigma_1\sigma_3\sigma_2\sigma_1\sigma_3$.
(The link equivalence of these two divides 
is explained by the fact that they arise from morsifications of different real forms
of the quasihomogeneous singularity $x^8+y^4=0$,  
cf.\ Figure~\ref{fig:non-partitions}(b).) 
By Corollary~\ref{cor:scannable+positive-isotopic}, 
the quivers associated to these divides 
must be mutation equivalent to each other. 
This can be also verified directly using any of the widely available 
software packages for quiver mutations. 
\end{example}

\begin{figure}[ht] 
\begin{center} 
\begin{tabular}{ccc}
\setlength{\unitlength}{1.5pt} 
\begin{picture}(80,30)(-5,-15)
\put(0,0){\makebox(0,0){\usebox{\sssonethree}}} 
\put(10,0){\makebox(0,0){\usebox{\ssstwo}}} 
\put(20,0){\makebox(0,0){\usebox{\sssonethree}}} 
\put(30,0){\makebox(0,0){\usebox{\ssstwo}}} 
\put(40,0){\makebox(0,0){\usebox{\ssstwo}}} 
\put(50,0){\makebox(0,0){\usebox{\sssonethree}}} 
\put(60,0){\makebox(0,0){\usebox{\ssstwo}}} 
\put(70,0){\makebox(0,0){\usebox{\sssonethree}}} 
\end{picture} 
&&
\begin{picture}(80,30)(-1.5,-15)
\put(10,-10){\makebox(0,0){\usebox{\opening}}} 
\put(10,10){\makebox(0,0){\usebox{\opening}}} 
\put(10,0){\makebox(0,0){\usebox{\ssstwo}}} 
\put(20,0){\makebox(0,0){\usebox{\sssonethree}}} 
\put(30,0){\makebox(0,0){\usebox{\ssstwo}}} 
\put(40,0){\makebox(0,0){\usebox{\sssonethree}}} 
\put(50,0){\makebox(0,0){\usebox{\ssstwo}}} 
\put(60,0){\makebox(0,0){\usebox{\sssonethree}}} 
\put(70,0){\makebox(0,0){\usebox{\ssstwo}}} 
\put(80,-10){\makebox(0,0){\usebox{\closing}}} 
\put(80,10){\makebox(0,0){\usebox{\closing}}} 
\end{picture} 
\\[.2in]
\setlength{\unitlength}{2.4pt} 
\begin{picture}(80,20)(0,0)
\thicklines
\multiput(8,0)(0,20){2}{\red{\vector(-1,0){6}}} 
\multiput(12,0)(0,10){3}{\red{\vector(1,0){6}}} 
\put(28,10){\red{\vector(-1,0){6}}} 
\put(32,10){\red{\vector(1,0){6}}} 
\put(48,10){\red{\vector(-1,0){6}}} 
\put(52,10){\red{\vector(1,0){6}}} 
\multiput(38,0)(0,20){2}{\red{\vector(-1,0){16}}} 
\multiput(42,0)(0,20){2}{\red{\vector(1,0){16}}} 
\multiput(72,0)(0,20){2}{\red{\vector(1,0){6}}} 
\multiput(68,0)(0,10){3}{\red{\vector(-1,0){6}}} 

\put(10,18){\red{\vector(0,-1){6}}} 
\put(20,18){\red{\vector(0,-1){6}}} 
\put(60,18){\red{\vector(0,-1){6}}} 
\put(70,18){\red{\vector(0,-1){6}}} 
\put(40,8){\red{\vector(0,-1){6}}} 
\put(10,2){\red{\vector(0,1){6}}} 
\put(20,2){\red{\vector(0,1){6}}} 
\put(60,2){\red{\vector(0,1){6}}} 
\put(70,2){\red{\vector(0,1){6}}} 
\put(40,12){\red{\vector(0,1){6}}} 

\put(18.5,11.5){\red{\vector(-1,1){7}}} 
\put(18.5,8.5){\red{\vector(-1,-1){7}}} 
\put(61.5,11.5){\red{\vector(1,1){7}}} 
\put(61.5,8.5){\red{\vector(1,-1){7}}} 
\put(38.5,18.5){\red{\vector(-1,-1){7}}} 
\put(41.5,18.5){\red{\vector(1,-1){7}}} 
\put(38.5,1.5){\red{\vector(-1,1){7}}} 
\put(41.5,1.5){\red{\vector(1,1){7}}} 
\put(22,11){\red{\vector(2,1){15.5}}} 
\put(22,9){\red{\vector(2,-1){15.5}}} 
\put(58,11){\red{\vector(-2,1){15.5}}} 
\put(58,9){\red{\vector(-2,-1){15.5}}}

\put(40,20){\red{\circle*{2}}} 
\put(40,0){\red{\circle*{2}}} 
\multiput(0,20)(10,0){3}{\red{\circle*{2}}} 
\multiput(60,20)(10,0){3}{\red{\circle*{2}}} 
\multiput(10,10)(10,0){7}{\red{\circle*{2}}} 
\multiput(0,0)(10,0){3}{\red{\circle*{2}}} 
\multiput(60,0)(10,0){3}{\red{\circle*{2}}} 
\end{picture} 
&\qquad\qquad&
\setlength{\unitlength}{2.4pt} 
\begin{picture}(60,20)(0,0)
\thicklines
\multiput(2,0)(0,10){3}{\red{\vector(1,0){6}}} 
\multiput(22,0)(0,10){3}{\red{\vector(1,0){6}}} 
\multiput(42,0)(0,10){3}{\red{\vector(1,0){6}}} 
\multiput(18,0)(0,10){3}{\red{\vector(-1,0){6}}} 
\multiput(38,0)(0,10){3}{\red{\vector(-1,0){6}}} 
\multiput(58,0)(0,10){3}{\red{\vector(-1,0){6}}} 

\multiput(0,2)(10,0){7}{\red{\vector(0,1){6}}} 
\multiput(0,18)(10,0){7}{\red{\vector(0,-1){6}}} 
\multiput(8.5,8.5)(20,0){3}{\red{\vector(-1,-1){7}}} 
\multiput(11.5,8.5)(20,0){3}{\red{\vector(1,-1){7}}} 
\multiput(8.5,11.5)(20,0){3}{\red{\vector(-1,1){7}}} 
\multiput(11.5,11.5)(20,0){3}{\red{\vector(1,1){7}}}

\multiput(0,0)(10,0){7}{\red{\circle*{2}}} 
\multiput(0,10)(10,0){7}{\red{\circle*{2}}} 
\multiput(0,20)(10,0){7}{\red{\circle*{2}}} 
\end{picture} 
\end{tabular}
\end{center} 
\caption{Scannable divides associated with 
two different morsifications of the quasihomogeneous singularity $x^8+y^4=0$,
and the corresponding quivers.
} 
\vspace{-.15in}
\label{fig:link-equivalent=>mutation-equivalent} 
\end{figure} 

We conclude this section by a discussion of one simple instance
of Conjecture~\ref{conj:positive-isotopy}. 

\begin{remark}
\label{rem:klein-group}
The \emph{Klein four-group}~$\KK$ naturally acts on (isomorphism classes~of)
scan\-nable divides. 
For $D$ a scannable divide rendered as in Definition~\ref{def:scannable},  
the images of~$D$ under the action of~$\KK$ are:
\begin{itemize}[leftmargin=.3in]
\item
$D$ itself; 
\item
the reflection of $D$ with respect to a vertical line,
denoted~$D^{\leftrightarrow}$; 
\item
the reflection of $D$ with respect to a horizontal line,
denoted~$D^{\updownarrow}$; 
\item
the result of rotating $D$ by a $180^\circ$ turn,
denoted~$D^\curvearrowright$. 
\end{itemize}
Each of the scannable divides 
$D^{\leftrightarrow}, D^{\updownarrow}, D^\curvearrowright$
is link equivalent~to~$D$. 
If $D$ is algebraic, then so are $D^{\leftrightarrow}$, 
$D^{\updownarrow}$, and~$D^\curvearrowright$.  
According to Conjecture~\ref{conj:positive-isotopy},
the four positive~braids $\beta(D)$, $\beta(D^{\leftrightarrow})$,
$\beta(D^{\updownarrow})$ and~$\beta(D^\curvearrowright)$
must be positive-isotopic to each other. 
It is~easy to see that $\beta(D^{\leftrightarrow})$ and~$\beta(D)$
are related via cyclic shifts, so these braids are positive-isotopic. 
The challenge is to prove that $\beta(D)$ is positive-isotopic to 
$\beta(D^{\updownarrow})$ and~$\beta(D^\curvearrowright)$, 
in the case of algebraic divides (and possibly beyond). 
It would suffice to establish this claim for~$\beta(D^{\updownarrow})$. 
While $\beta(D^{\updownarrow})$ is conjugate to~$\beta(D)$
by the half-twist~$\Delta$, this in itself does not guarantee the existence
of positive isotopy.
On the other hand, the latter property
holds whenever $\beta(D)$ contains~$\Delta$, 
by Lemma~\ref{lem:orevkov}. 
In view of Lemma~\ref{lem:orevkov-blowup} and
Remark~\ref{rem:isotopy-in-known-constructions},
this condition is satisfied for the scannable algebraic divides of minimal index
arising from all common constructions of real morsifications. 
\end{remark}

\newpage

\section{Triangle moves}
\label{sec:yang-baxter-transformations}

\begin{definition}
A \emph{triangle move}
(or a ``Yang-Baxter move'')  
is a local deformation of a divide that can be applied for any
triangular region, 
i.e., a region whose boundary consists of three $1$-cells
and three nodes.  
A~triangle move pushes a branch containing one of these three $1$-cells
across the opposing node, see Figure~\ref{fig:yb-move}. 
\end{definition} 

\begin{figure}[ht] 
\begin{center} 
\setlength{\unitlength}{1.5pt} 
\begin{picture}(33,20)(0,0)
\thicklines
\qbezier(2,12)(16.5,5)(31,12)
\qbezier(10,0)(12,14)(23,24)
\qbezier(10,24)(21,14)(23,0)
\end{picture} 
\begin{picture}(25,24)(0,0)
\put(12.5,12){\makebox(0,0){$\longleftrightarrow$}}
\end{picture}
\begin{picture}(33,20)(0,0)
\thicklines
\qbezier(2,12)(16.5,19)(31,12)
\qbezier(10,0)(21,10)(23,24)
\qbezier(10,24)(12,10)(23,0)
\end{picture} 
\vspace{-.1in}
\end{center} 
\caption{A triangle move. 
} 
\label{fig:yb-move} 
\end{figure} 

\vspace{-.25in}

\begin{definition}
Two divides are called \emph{\YB-equivalent} 
(``triangle-move equivalent") 
if they are related 
via a sequence of triangle moves and isotopies. 
An example is shown in Figure~\ref{fig:transforming-into-scannable}. 
\end{definition}

\begin{figure}[ht] 
\begin{center} 
\setlength{\unitlength}{1.5pt} 
\begin{picture}(60,30)(-5,-15)
\put(0,0){\makebox(0,0){\usebox{\sssonethree}}} 
\put(10,0){\makebox(0,0){\usebox{\sssonethree}}} 
\put(20,0){\makebox(0,0){\usebox{\ssstwo}}} 
\put(30,0){\makebox(0,0){\usebox{\sssonethree}}} 
\put(40,0){\makebox(0,0){\usebox{\ssstwo}}} 
\put(50,0){\makebox(0,0){\usebox{\closing}}} 
\put(50,0){\makebox(0,0){\usebox{\cclosing}}} 
\end{picture} 
\begin{picture}(20,30)(-5,-15)
\put(5,0){\makebox(0,0){$\leadsto$}}
\end{picture}
\begin{picture}(80,30)(-5,-15)
\put(0,0){\makebox(0,0){\usebox{\sssonethree}}} 
\put(10,0){\makebox(0,0){\usebox{\sssone}}} 
\put(20,0){\makebox(0,0){\usebox{\ssstwo}}} 
\put(30,0){\makebox(0,0){\usebox{\sssthree}}} 
\put(40,0){\makebox(0,0){\usebox{\ssstwo}}} 
\put(50,0){\makebox(0,0){\usebox{\sssone}}} 
\put(60,0){\makebox(0,0){\usebox{\ssstwo}}} 
\put(70,0){\makebox(0,0){\usebox{\closing}}} 
\put(70,0){\makebox(0,0){\usebox{\cclosing}}} 
\end{picture} 
\begin{picture}(20,30)(-5,-15)
\put(5,0){\makebox(0,0){$\leadsto$}}
\end{picture}
\begin{picture}(70,30)(-5,-15)
\put(0,0){\makebox(0,0){\usebox{\sssonethree}}} 
\put(10,0){\makebox(0,0){\usebox{\sssone}}} 
\put(20,0){\makebox(0,0){\usebox{\ssstwo}}} 
\put(30,0){\makebox(0,0){\usebox{\sssonethree}}} 
\put(40,0){\makebox(0,0){\usebox{\ssstwo}}} 
\put(50,0){\makebox(0,0){\usebox{\sssone}}} 
\put(60,0){\makebox(0,0){\usebox{\closing}}} 
\put(60,0){\makebox(0,0){\usebox{\cclosing}}} 
\end{picture} 
\\[.3in] \hspace{-.1in}
\begin{picture}(20,10)(-5,-15)
\put(5,0){\makebox(0,0){$\sim$}}
\end{picture}
\hspace{-.1in}
\begin{picture}(65,30)(-5,-15)
\put(0,0){\makebox(0,0){\usebox{\sssonethree}}} 
\put(10,0){\makebox(0,0){\usebox{\sssone}}} 
\put(20,0){\makebox(0,0){\usebox{\ssstwo}}} 
\put(30,0){\makebox(0,0){\usebox{\sssonethree}}} 
\put(40,0){\makebox(0,0){\usebox{\ssstwo}}} 
\put(50,0){\makebox(0,0){\usebox{\ssstwo}}} 
\put(60,-10){\makebox(0,0){\usebox{\closing}}} 
\put(60,10){\makebox(0,0){\usebox{\closing}}} 
\end{picture} 
\hspace{-.1in}
\begin{picture}(20,30)(-5,-15)
\put(5,0){\makebox(0,0){$\leadsto$}}
\end{picture}
\hspace{-.1in}
\begin{picture}(75,30)(-5,-15)
\put(0,0){\makebox(0,0){\usebox{\sssonethree}}} 
\put(10,0){\makebox(0,0){\usebox{\ssstwo}}} 
\put(20,0){\makebox(0,0){\usebox{\sssone}}} 
\put(30,0){\makebox(0,0){\usebox{\ssstwo}}} 
\put(40,0){\makebox(0,0){\usebox{\sssthree}}} 
\put(50,0){\makebox(0,0){\usebox{\ssstwo}}} 
\put(60,0){\makebox(0,0){\usebox{\ssstwo}}} 
\put(70,-10){\makebox(0,0){\usebox{\closing}}} 
\put(70,10){\makebox(0,0){\usebox{\closing}}} 
\end{picture} 
\hspace{-.1in}
\begin{picture}(20,30)(-5,-15)
\put(5,0){\makebox(0,0){$\leadsto$}}
\end{picture}
\hspace{-.1in}
\begin{picture}(70,30)(-5,-15)
\put(0,0){\makebox(0,0){\usebox{\sssonethree}}} 
\put(10,0){\makebox(0,0){\usebox{\ssstwo}}} 
\put(20,0){\makebox(0,0){\usebox{\sssonethree}}} 
\put(30,0){\makebox(0,0){\usebox{\ssstwo}}} 
\put(40,0){\makebox(0,0){\usebox{\sssthree}}} 
\put(50,0){\makebox(0,0){\usebox{\ssstwo}}} 
\put(60,-10){\makebox(0,0){\usebox{\closing}}} 
\put(60,10){\makebox(0,0){\usebox{\closing}}} 
\end{picture} 
\end{center} 
\caption{A sequence of triangle moves (denoted $\leadsto$) and isotopies (denoted~$\sim$). 
All these divides are \hbox{\YB-equivalent} to each other. 
The first two divides are not scannable. 
While the shown drawing of the third divide is not scannable, this divide is isotopic to the first divide in the bottom row, which is scannable (as are all divides in this~row). 
} 
\vspace{-.2in}
\label{fig:transforming-into-scannable} 
\end{figure} 

\begin{problem}
\label{problem:triangle-moves-and-algebraicity}
Can an algebraic divide be \hbox{\YB-equivalent} to a non-algebraic~one?
\end{problem}

The following result, in different guises, 
has been a part of the ``cluster folklore'' for at least a decade;  
we~do not claim any originality for it. 
See, e.g., \cite[Section~2.3]{kenyon-pemantle}.

\begin{proposition} 
\label{prop:ybe-via-mutations}
\YB-equivalent divides have mutation equivalent quivers. 
\end{proposition}

We sketch two (implicitly related) proofs of Proposition~\ref{prop:ybe-via-mutations}. 

\begin{proof} 
Let us examine what happens to the quiver in the vicinity
of a triangle move. The case where all neighboring 
connected components of the complement of a divide are bounded
is shown in Figure~\ref{fig:yb-move-quiver}. 
There are many other cases, cf.\ e.g.\ Figure~\ref{fig:E6-morsifications}. 
\end{proof}


\begin{figure}[ht] 
\begin{center} 
\setlength{\unitlength}{6.0pt} 
\begin{picture}(33,24)(0,0)
\thicklines
\qbezier(2,12)(16.5,5)(31,12)
\qbezier(9,0)(12,14)(24,24)
\qbezier(9,24)(21,14)(24,0)

\put(16.5,12){\circle*{0.7}}
\put(12,8.8){\circle*{0.7}}
\put(21,8.8){\circle*{0.7}}
\put(16.5,16.3){\circle*{0.7}}

\put(16.5,20){\circle*{0.7}}
\put(16.5,4){\circle*{0.7}}

\put(9,7.2){\circle*{0.7}}
\put(9,16.8){\circle*{0.7}}
\put(24,7.2){\circle*{0.7}}
\put(24,16.8){\circle*{0.7}}

\put(16,10.7){\makebox(0,0){\text{\SMALL \textbf{0}}}}
\put(15.1,15.4){\makebox(0,0){\text{\SMALL \textbf{1}}}}
\put(20.9,7.5){\makebox(0,0){\text{\SMALL \textbf{2}}}}
\put(12.1,7.5){\makebox(0,0){\text{\SMALL \textbf{3}}}}
\put(16.5,21){\makebox(0,0){\text{\SMALL \textbf{4}}}}
\put(24.8,17.4){\makebox(0,0){\text{\SMALL \textbf{5}}}}
\put(24.8,6.6){\makebox(0,0){\text{\SMALL \textbf{6}}}}
\put(16.5,3){\makebox(0,0){\text{\SMALL \textbf{7}}}}
\put(8.2,6.6){\makebox(0,0){\text{\SMALL \textbf{8}}}}
\put(8.2,17.4){\makebox(0,0){\text{\SMALL \textbf{9}}}}

\put(16.5,11.3){\red{\vector(0,-1){6.6}}}
\put(16.5,15.6){\red{\vector(0,-1){3.1}}}
\put(16.5,17){\red{\vector(0,1){2.4}}}

\put(12.7,9.2){\red{\vector(45,34){3.1}}}

\put(20.3,9.2){\red{\vector(-45,34){3.1}}}

\put(9.6,16.78){\red{\vector(12,-1){6.2}}}
\put(23.4,16.78){\red{\vector(-12,-1){6.2}}}
\put(9.2,16.2){\red{\vector(1,-2.7){2.5}}}
\put(23.8,16.2){\red{\vector(-1,-2.7){2.5}}}
\put(9,7.9){\red{\vector(0,1){8.2}}}
\put(24,7.9){\red{\vector(0,1){8.2}}}
\put(9.5,7){\red{\vector(2.3,-1){6.5}}}
\put(23.5,7){\red{\vector(-2.3,-1){6.5}}}
\put(11.4,8.5){\red{\vector(-2,-1){2.1}}}
\put(21.6,8.5){\red{\vector(2,-1){2.1}}}
\put(16.1,4.4){\red{\vector(-2,2.15){3.7}}}
\put(16.9,4.4){\red{\vector(2,2.15){3.7}}}
\put(17,19.8){\red{\vector(2.3,-1){6.5}}}
\put(16,19.8){\red{\vector(-2.3,-1){6.5}}}
\put(16,12.2){\red{\vector(-2,1.3){6.5}}}
\put(17,12.2){\red{\vector(2,1.3){6.5}}}

\end{picture} 
\begin{picture}(33,24)(0,0)
\thicklines
\qbezier(2,12)(16.5,19)(31,12)
\qbezier(9,0)(21,10)(24,24)
\qbezier(9,24)(12,10)(24,0)

\put(16.5,12){\circle*{0.7}}
\put(12,15.2){\circle*{0.7}}
\put(21,15.2){\circle*{0.7}}
\put(16.5,7.7){\circle*{0.7}}

\put(16.5,20){\circle*{0.7}}
\put(16.5,4){\circle*{0.7}}

\put(9,7.2){\circle*{0.7}}
\put(9,16.8){\circle*{0.7}}
\put(24,7.2){\circle*{0.7}}
\put(24,16.8){\circle*{0.7}}

\put(15.8,13.4){\makebox(0,0){\text{\SMALL \textbf{0}}}}
\put(15.1,8.6){\makebox(0,0){\text{\SMALL \textbf{1}}}}
\put(20.9,16.5){\makebox(0,0){\text{\SMALL \textbf{3}}}}
\put(12.1,16.5){\makebox(0,0){\text{\SMALL \textbf{2}}}}
\put(16.5,21){\makebox(0,0){\text{\SMALL \textbf{4}}}}
\put(24.8,17.4){\makebox(0,0){\text{\SMALL \textbf{5}}}}
\put(24.8,6.6){\makebox(0,0){\text{\SMALL \textbf{6}}}}
\put(16.5,3){\makebox(0,0){\text{\SMALL \textbf{7}}}}
\put(8.2,6.6){\makebox(0,0){\text{\SMALL \textbf{8}}}}
\put(8.2,17.4){\makebox(0,0){\text{\SMALL \textbf{9}}}}

\put(16.5,19.3){\red{\vector(0,-1){6.6}}}
\put(16.5,11.5){\red{\vector(0,-1){3.1}}}
\put(16.5,4.6){\red{\vector(0,1){2.4}}}
\put(17,19.8){\red{\vector(2.3,-1){6.5}}}
\put(16,19.8){\red{\vector(-2.3,-1){6.5}}}

\put(9.6,7.22){\red{\vector(12,1){6.2}}}
\put(23.4,7.22){\red{\vector(-12,1){6.2}}}

\put(9,7.9){\red{\vector(0,1){8.2}}}
\put(24,7.9){\red{\vector(0,1){8.2}}}
\put(9.5,7){\red{\vector(2.3,-1){6.5}}}
\put(23.5,7){\red{\vector(-2.3,-1){6.5}}}
\put(9.5,16.5){\red{\vector(2,-1){2}}}
\put(23.5,16.5){\red{\vector(-2,-1){2}}}

\put(16.9,12.3){\red{\vector(2,1.45){3.6}}}
\put(16.1,12.3){\red{\vector(-2,1.45){3.6}}}
\put(11.7,14.6){\red{\vector(-1,-2.7){2.5}}}
\put(21.3,14.6){\red{\vector(1,-2.7){2.5}}}
\put(9.5,7.5){\red{\vector(2,1.3){6.5}}}
\put(23.5,7.5){\red{\vector(-2,1.3){6.5}}}
\put(12.4,15.6){\red{\vector(2,2.15){3.7}}}
\put(20.6,15.6){\red{\vector(-2,2.15){3.7}}}

\end{picture} 
\end{center} 
\caption{Quivers of two divides related by a triangle move. 
Only 
the arrows connecting pairs of vertices labeled $0,1,2,\dots,9$ 
are shown. 
The two quivers are related via the composition of $5$ mutations 
\hbox{$\mu_0\!\circ\! \mu_1\!\circ\! \mu_2\!\circ\!\mu_3\!\circ\!\mu_0$}. 
} 
\label{fig:yb-move-quiver} 
\vspace{-.15in}
\end{figure} 

An alternative proof of Proposition~\ref{prop:ybe-via-mutations} 
uses the machinery of plabic graphs. 
In view of Propositions~\ref{pr:plabic-vs-quivers} and~\ref{pr:Q(D)=Q(P(D))},
it suffices to establish the following claim. 

\begin{proposition}
\label{prop:braid-equivalence}
Let $D_1$ and~$D_2$ be two \YB-equivalent divides. 
Then any plabic graphs 
$P_1\in\PP(D_1)$ and $P_2\in\PP(D_2)$ are move equivalent. 
\end{proposition}

\begin{proof}
For $D_1$ and~$D_2$ related by a triangle move,
Figure~\ref{fig:plabic-hexagon-3} shows 
a sequence of local moves relating a plabic graph $P_1\!\in\!\PP(D_1)$ 
to a plabic graph $P_2\!\in\!\PP(D_2)$. 
\end{proof}

\begin{figure}[ht]
\begin{center}
\setlength{\unitlength}{0.41pt}
\begin{picture}(160,160)(0,0)
\thicklines
\put(0,80){\line(1,0){20}}
\put(140,80){\line(1,0){20}}
\put(50,20){\line(1,0){60}}
\put(50,140){\line(1,0){60}}
\put(20,80){\line(1,2){30}}
\put(110,20){\line(1,2){30}}
\put(20,80){\line(1,-2){30}}
\put(110,140){\line(1,-2){30}}
\put(40,0){\line(1,2){10}}
\put(110,140){\line(1,2){10}}
\put(40,160){\line(1,-2){10}}
\put(110,20){\line(1,-2){10}}
\put(50,20){\circle*{8}}
\put(110,140){\circle*{8}}
\put(140,80){\circle*{8}}

\put(30,100){\line(1,-2){40}}
\put(30,100){\circle*{8}}

\put(90,20){\line(1,2){40}}
\put(90,20){\circle*{8}}

\put(40,120){\line(1,0){80}}
\put(40,120){\circle*{8}}
\end{picture}
\qquad
\begin{picture}(160,160)(0,0)
\thicklines
\put(0,80){\line(1,0){20}}
\put(140,80){\line(1,0){20}}
\put(50,20){\line(1,0){60}}
\put(50,140){\line(1,0){60}}
\put(20,80){\line(1,2){30}}
\put(110,20){\line(1,2){30}}
\put(20,80){\line(1,-2){30}}
\put(110,140){\line(1,-2){30}}
\put(40,0){\line(1,2){10}}
\put(110,140){\line(1,2){10}}
\put(40,160){\line(1,-2){10}}
\put(110,20){\line(1,-2){10}}
\put(50,20){\circle*{8}}
\put(110,140){\circle*{8}}
\put(140,80){\circle*{8}}

\put(30,100){\line(1,-2){40}}
\put(30,100){\circle*{8}}

\put(90,20){\line(1,2){40}}
\put(90,20){\circle*{8}}

\put(40,80){\line(1,0){80}}
\put(40,80){\circle*{8}}
\end{picture}
\qquad
\begin{picture}(160,160)(0,0)
\thicklines
\put(0,80){\line(1,0){20}}
\put(140,80){\line(1,0){20}}
\put(50,20){\line(1,0){60}}
\put(50,140){\line(1,0){60}}
\put(20,80){\line(1,2){30}}
\put(110,20){\line(1,2){30}}
\put(20,80){\line(1,-2){30}}
\put(110,140){\line(1,-2){30}}
\put(40,0){\line(1,2){10}}
\put(110,140){\line(1,2){10}}
\put(40,160){\line(1,-2){10}}
\put(110,20){\line(1,-2){10}}
\put(50,20){\circle*{8}}
\put(110,140){\circle*{8}}
\put(140,80){\circle*{8}}

\put(30,100){\line(1,-2){40}}
\put(30,100){\circle*{8}}

\put(90,20){\line(1,2){40}}
\put(70,20){\circle*{8}}

\put(40,80){\line(1,0){80}}
\put(120,80){\circle*{8}}
\end{picture}
\qquad
\begin{picture}(160,160)(0,0)
\thicklines
\put(0,80){\line(1,0){20}}
\put(140,80){\line(1,0){20}}
\put(50,20){\line(1,0){60}}
\put(50,140){\line(1,0){60}}
\put(20,80){\line(1,2){30}}
\put(110,20){\line(1,2){30}}
\put(20,80){\line(1,-2){30}}
\put(110,140){\line(1,-2){30}}
\put(40,0){\line(1,2){10}}
\put(110,140){\line(1,2){10}}
\put(40,160){\line(1,-2){10}}
\put(110,20){\line(1,-2){10}}
\put(50,20){\circle*{8}}
\put(110,140){\circle*{8}}
\put(140,80){\circle*{8}}

\put(30,100){\line(1,-2){10}}
\put(30,100){\circle*{8}}

\put(120,80){\line(1,2){10}}
\put(120,80){\circle*{8}}

\put(30,60){\line(1,2){10}}
\put(30,60){\circle*{8}}

\put(40,80){\line(1,0){80}}
\put(120,80){\line(1,-2){10}}

\end{picture}
\\[.2in]
\begin{picture}(160,160)(0,0)
\thicklines
\put(0,80){\line(1,0){20}}
\put(140,80){\line(1,0){20}}
\put(50,20){\line(1,0){60}}
\put(50,140){\line(1,0){60}}
\put(20,80){\line(1,2){30}}
\put(110,20){\line(1,2){30}}
\put(20,80){\line(1,-2){30}}
\put(110,140){\line(1,-2){30}}
\put(40,0){\line(1,2){10}}
\put(110,140){\line(1,2){10}}
\put(40,160){\line(1,-2){10}}
\put(110,20){\line(1,-2){10}}
\put(50,20){\circle*{8}}
\put(110,140){\circle*{8}}
\put(130,60){\circle*{8}}

\put(30,100){\line(1,-2){10}}
\put(40,80){\circle*{8}}

\put(120,80){\line(1,2){10}}
\put(130,100){\circle*{8}}

\put(30,60){\line(1,2){10}}
\put(20,80){\circle*{8}}

\put(40,80){\line(1,0){80}}
\put(120,80){\line(1,-2){10}}
\end{picture}
\qquad
\begin{picture}(160,160)(0,0)
\thicklines
\put(0,80){\line(1,0){20}}
\put(140,80){\line(1,0){20}}
\put(50,20){\line(1,0){60}}
\put(50,140){\line(1,0){60}}
\put(20,80){\line(1,2){30}}
\put(110,20){\line(1,2){30}}
\put(20,80){\line(1,-2){30}}
\put(110,140){\line(1,-2){30}}
\put(40,0){\line(1,2){10}}
\put(110,140){\line(1,2){10}}
\put(40,160){\line(1,-2){10}}
\put(110,20){\line(1,-2){10}}
\put(50,20){\circle*{8}}
\put(110,140){\circle*{8}}
\put(130,60){\circle*{8}}

\put(40,80){\line(1,2){30}}
\put(40,80){\circle*{8}}

\put(120,80){\line(-1,2){30}}
\put(90,140){\circle*{8}}

\put(30,60){\line(1,2){10}}
\put(20,80){\circle*{8}}

\put(40,80){\line(1,0){80}}
\put(120,80){\line(1,-2){10}}
\end{picture}
\qquad
\begin{picture}(160,160)(0,0)
\thicklines
\put(0,80){\line(1,0){20}}
\put(140,80){\line(1,0){20}}
\put(50,20){\line(1,0){60}}
\put(50,140){\line(1,0){60}}
\put(20,80){\line(1,2){30}}
\put(110,20){\line(1,2){30}}
\put(20,80){\line(1,-2){30}}
\put(110,140){\line(1,-2){30}}
\put(40,0){\line(1,2){10}}
\put(110,140){\line(1,2){10}}
\put(40,160){\line(1,-2){10}}
\put(110,20){\line(1,-2){10}}
\put(50,20){\circle*{8}}
\put(110,140){\circle*{8}}
\put(130,60){\circle*{8}}

\put(40,80){\line(1,2){30}}
\put(120,80){\circle*{8}}

\put(120,80){\line(-1,2){30}}
\put(70,140){\circle*{8}}

\put(30,60){\line(1,2){10}}
\put(20,80){\circle*{8}}

\put(40,80){\line(1,0){80}}
\put(120,80){\line(1,-2){10}}
\end{picture}
\qquad
\begin{picture}(160,160)(0,0)
\thicklines
\put(0,80){\line(1,0){20}}
\put(140,80){\line(1,0){20}}
\put(50,20){\line(1,0){60}}
\put(50,140){\line(1,0){60}}
\put(20,80){\line(1,2){30}}
\put(110,20){\line(1,2){30}}
\put(20,80){\line(1,-2){30}}
\put(110,140){\line(1,-2){30}}
\put(40,0){\line(1,2){10}}
\put(110,140){\line(1,2){10}}
\put(40,160){\line(1,-2){10}}
\put(110,20){\line(1,-2){10}}
\put(50,20){\circle*{8}}
\put(70,140){\circle*{8}}
\put(130,60){\circle*{8}}

\put(30,60){\line(1,2){40}}
\put(20,80){\circle*{8}}

\put(90,140){\line(1,-2){40}}
\put(110,140){\circle*{8}}

\put(40,40){\line(1,0){80}}
\put(120,40){\circle*{8}}
\end{picture}
\end{center}
\caption{Viewing a triangle move as a sequence of local moves.}
\label{fig:plabic-hexagon-3}
\vspace{-.15in}
\end{figure}

\begin{remark}
\label{rem:YB-via-plabic-fences}
The above argument can be recast in the language of plabic fences and associated 
words, cf.\ Section~\ref{sec:plabic-fences}. 
Look at the fragments of plabic graphs shown in Figure~\ref{fig:plabic-hexagon-3}
in the upper-left and lower-right corners. 
These fragments can be drawn as plabic fences on $3$~strands, 
see Figure~\ref{fig:YB--via-braids}. Their associated words are
$\sigma_1\tau_1\sigma_2\tau_2\sigma_1\tau_1$
and $\sigma_2\tau_2\sigma_1\tau_1\sigma_2\tau_2$, respectively. 
These are related to each other via switches 
of the form $\tau_j\sigma_i\leftrightarrow\sigma_i\tau_j$
combined with the braid relations 
$\sigma_1\sigma_2\sigma_1\leftrightarrow\sigma_2\sigma_1\sigma_2$ and 
$\tau_1\tau_2\tau_1\leftrightarrow\tau_2\tau_1\tau_2$. 
\end{remark}

\begin{figure}[ht]
\begin{center}
\setlength{\unitlength}{1.5pt} 
\begin{picture}(70,30)(0,-5)
\thicklines 
\put(5,0){\line(1,0){70}} 
\put(5,10){\line(1,0){70}} 
\put(5,20){\line(1,0){70}} 
\put(15,0){\line(0,1){10}} 
\put(25,0){\line(0,1){10}} 
\put(35,10){\line(0,1){10}} 
\put(45,10){\line(0,1){10}} 
\put(55,0){\line(0,1){10}} 
\put(65,0){\line(0,1){10}} 

\put(15,0){\circle*{2.5}}
\put(25,10){\circle*{2.5}}
\put(35,10){\circle*{2.5}}
\put(45,20){\circle*{2.5}}
\put(55,0){\circle*{2.5}}
\put(65,10){\circle*{2.5}}
\end{picture}  
\qquad\qquad
\begin{picture}(70,30)(0,-5)
\thicklines 
\put(5,0){\line(1,0){70}} 
\put(5,10){\line(1,0){70}} 
\put(5,20){\line(1,0){70}} 
\put(15,10){\line(0,1){10}} 
\put(25,10){\line(0,1){10}} 
\put(35,0){\line(0,1){10}} 
\put(45,0){\line(0,1){10}} 
\put(55,10){\line(0,1){10}} 
\put(65,10){\line(0,1){10}} 

\put(15,10){\circle*{2.5}}
\put(25,20){\circle*{2.5}}
\put(35,0){\circle*{2.5}}
\put(45,10){\circle*{2.5}}
\put(55,10){\circle*{2.5}}
\put(65,20){\circle*{2.5}}
\end{picture}  
\end{center}
\caption{Interpreting a triangle move in the language of fences.}
\label{fig:YB--via-braids}
\vspace{-.15in}
\end{figure}

Proposition~\ref{prop:braid-equivalence} and
Corollary~\ref{cor:plabic-isotopic} imply the following result. 


\begin{proposition}[{\rm \cite[Lemma~1.3]{couture-perron}}]
\label{prop:ybe-preserves-links}
\YB-equivalent divides are link equivalent. 
\end{proposition}

\begin{problem}
\label{problem:yb-equivalence-of morsif}
Let $\mathcal{D}$ denote the set of all (algebraic) divides 
coming from morsifications of the same real singularity. 
Are any two divides in this set \YB-equivalent?
If $D'$ is a divide \YB-equivalent to $D\in\mathcal{D}$,
does it follow that $D'\in\mathcal{D}$?
(Cf.\ Problem~\ref{problem:triangle-moves-and-algebraicity}.) 
\end{problem}

By Propositions~\ref{prop:ybe-via-mutations}, \ref{prop:braid-equivalence},
and~\ref{prop:ybe-preserves-links}, 
triangle moves preserve:
\begin{itemize}[leftmargin=.3in]
\item[{(d)}]
the isotopy class of the A'Campo link of a divide;
\item[{(q)}] 
the mutation class of the associated quiver; and
\item[{(p)}]
the move equivalence class of the associated plabic graphs. 
\end{itemize}
This means that once a connection between 
the statements \textbf{(d)}, \textbf{(q)}, and \textbf{(p)} 
appearing in Conjecture~\ref{conj:morsif=mut-via-divides} 
has been established for a particular class of divides,
it can be immediately extended to all divides \YB-equivalent to a divide in this class. 
With this in mind, we make the following definition. 

\begin{definition}
A divide is \emph{malleable} if it is \YB-equivalent to a scannable divide. 
\end{definition}

To illustrate, all divides in Figure~\ref{fig:transforming-into-scannable} 
are malleable. 

\begin{conjecture}
\label{conj:alg-is-malleable}
Every algebraic divide is malleable. 
\end{conjecture}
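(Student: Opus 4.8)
The plan is to work directly with the combinatorial-topological structure of the divide, putting it into a ``Morse diagram'' and then sorting that diagram into scannable form using only Yang--Baxter transformations and ambient isotopies. First I would fix a generic linear coordinate $x$ on $\Disk$ and perform a small rotation (an isotopy of divides) so that the projection $(x,y)\mapsto x$ restricted to $D$ is Morse: every vertical tangent is nondegenerate and isolated, no vertical tangent coincides with a node, and no two nodes or two vertical tangents share an $x$-value. Scanning $D$ left to right then records it as a word in three kinds of elementary events---a \emph{cup} (local minimum of $x$, branch to the right), a \emph{cap} (local maximum of $x$, branch to the left), and a \emph{crossing} $\sigma_i$. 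In this language, Definition~\ref{def:scannable} says precisely that $D$ is scannable if and only if every cup precedes every crossing and every cap follows every crossing; so the conjecture reduces to the assertion that every Morse diagram of an algebraic divide can be brought, by YB-transformations and isotopies, to one in which all cups are at the far left and all caps at the far right.

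The reordering of \emph{crossings} among themselves is already available: the commutations $\sigma_i\sigma_j\leftrightarrow\sigma_j\sigma_i$ for $|i-j|\ge 2$ are isotopies of the divide, and the braid relation $\sigma_i\sigma_{i+1}\sigma_i\leftrightarrow\sigma_{i+1}\sigma_i\sigma_{i+1}$ is exactly a YB-transformation applied to the triangular region formed by the three crossings (this is the content of Figure~\ref{fig:yb-move-quiver} read on the divide side, and mirrors Remark~\ref{rem:YB-via-plabic-fences}). The decisive new ingredient is a \emph{sliding lemma}: a cap immediately to the left of a crossing (or a cup immediately to the right of one) can be moved across that crossing. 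Geometrically, a misplaced turning point together with the two nodes involved in the neighbouring crossing bounds a triangular region, to which a YB-transformation applies and which transports the turning point to the other side; the existence of such a triangle is where I would use conditions (D5)--(D6), i.e.\ that the body $I(D)$ is connected and simply connected, so that a turning point sitting in the bulk is always ``boxed in'' by genuine regions and nodes rather than escaping to $\partial\Disk$. The example in Figure~\ref{fig:transforming-into-scannable} is exactly this mechanism in action.

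Granting the sliding lemma, I would finish by a termination argument. Introduce the complexity $c(D)=\sum_{\text{caps}}(\#\ \text{crossings to the right of the cap})+\sum_{\text{cups}}(\#\ \text{crossings to the left of the cup})$, so that a scannable diagram is characterized by $c(D)=0$. Each application of the sliding lemma (together with an isotopy normalizing the $x$-values) strictly decreases $c$, while crossing-reorderings leave it unchanged, and $c$ is a nonnegative integer; hence the process halts at a scannable divide YB-equivalent to~$D$. By Propositions~\ref{prop:ybe-preserves-links} and~\ref{prop:ybe-via-mutations} this scannable divide has the same A'Campo link and the same mutation class, although for Conjecture~\ref{conj:alg-is-malleable} itself only YB-equivalence to \emph{some} scannable divide is needed. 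To set up the induction cleanly one can also peel off branches one at a time, using Proposition~\ref{pr:divide-properties-2-3} to control the (fixed) numbers of self-intersections and pairwise intersections, which bound the relevant combinatorics.

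The main obstacle is the sliding lemma, and more precisely the claim that one never gets \emph{stuck}: a priori a turning point could be surrounded by a configuration containing no triangular region---so that no YB-move is available---while $c(D)>0$. For general divides this can happen, which is exactly why the statement is restricted to \emph{algebraic} divides; the real work is to show that the constraints coming from an actual morsification---ultimately the same constraints that make an abstract divide algebraic, about which essentially nothing explicit is known (Remark~\ref{rem:which-divides-come-from-morsifications})---force enough triangular regions to be present at every non-scannable stage. I expect a full proof to require either an explicit normal form for algebraic divides (e.g.\ via the A'Campo--Guse\u{\i}n-Zade construction and transversal overlays, building on the totally real case of Theorem~\ref{th:totally-real-form-exists}) from which scannability is manifest, or a genuinely new structural characterization of algebraic divides guaranteeing the availability of the sliding moves; bridging this gap is the crux of the conjecture.
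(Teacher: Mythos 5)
This statement is Conjecture~\ref{conj:alg-is-malleable}: the paper states it as an open problem and offers no proof, so there is no argument of the authors to compare yours against. Your proposal is a strategy rather than a proof, and you say so yourself --- everything hinges on the ``sliding lemma,'' which you leave unestablished. That assessment is honest, but the gap is worse than ``we have not yet shown enough triangular regions exist.''

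A Yang--Baxter transformation, as defined in Section~\ref{sec:yang-baxter-transformations}, applies only to a triangular \emph{region}, i.e.\ a region bounded by three $1$-cells and three \emph{nodes}, and it pushes a branch through the opposite node. A cap or cup sitting next to a crossing never bounds such a region: the turning point is a smooth point of the divide, not a node, so the adjacent region has at most one or two nodes on its boundary, and no YB-move is available there. When the turning strands are disjoint from the crossing strands, the interchange is a mere planar isotopy and needs no move; when they share strands, the configuration is a teardrop (one node, one $1$-cell), and sliding the turning point across the crossing would destroy a monogon, which changes the link and is not a YB-transformation. The mechanism actually at work in Figure~\ref{fig:transforming-into-scannable} is global, not local: YB-moves applied to genuine triangles of three crossings reshuffle the crossing pattern until the turning points can be isotoped out to the ends. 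Consequently your complexity function $c(D)$ does not interact with the moves that are actually available --- a single YB-move need not decrease it, and the local slide that would decrease it is not a legal move. Your reduction to ``a triangle is always present near a misplaced turning point'' is therefore not a reduction at all; the availability of the sliding moves is essentially a restatement of the conjecture. As you note at the end, a proof will likely require an explicit normal form for algebraic divides (e.g.\ via the A'Campo--Guse\u{\i}n-Zade construction, or via overlays as in Section~\ref{sec:overlays}) or a new structural characterization of algebraicity; the intermediate scaffolding you propose would have to be rebuilt around what YB-transformations can actually accomplish.
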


Theorem~\ref{th:malleable+positive-isotopic} below establishes
the implication \textbf{(d)}$\Rightarrow$\textbf{(p)} 
of Conjecture~\ref{conj:morsif=mut-via-divides}
under the assumptions of positive isotopy and malleability.  
Each of these assumptions is potentially redundant, 
cf.\ Conjectures~\ref{conj:positive-isotopy} and~\ref{conj:alg-is-malleable},
respectively. 

\begin{theorem}
\label{th:malleable+positive-isotopic}
Let $D_1$ and $D_2$ be (link equivalent) malleable divides
which are \hbox{\YB-equivalent} to scannable divides whose respective 
braids are positive-isotopic. \linebreak[3]
Then the associated plabic graphs 
$P_1\in\PP(D_1)$ and $P_2\in\PP(D_2)$ are move equivalent. 
Consequently, the quivers $Q(D_1)$ and~$Q(D_2)$ are mutation equivalent. 
\end{theorem}

\begin{proof}
Follows from Theorem~\ref{th:main-conjecture-scannable-1}
and Propositions~\ref{prop:braid-equivalence} and~\ref{prop:ybe-preserves-links}. 
\end{proof}

\begin{example}
Consider the three divides $D_1,D_2,D_3$ 
in the lower-right corner of Figure~\ref{fig:divides-quasihom},  
representing the morsifications of three different real forms 
of the quasi-homogeneous singularity $x^6+y^4=0$. 
The first two divides are scannable, with the same associated braid 
$\beta(D_1)=\beta(D_2)=\Delta^3$,
where $\Delta=(\sigma_1 \sigma_3 \sigma_2)^2=(\sigma_2\sigma_1 \sigma_3)^2$
is the positive half-twist. 
The divide~$D_3$ is not scannable but malleable:
it is \hbox{\YB-equivalent} to the scannable divide~$D$ 
shown in Figure~\ref{fig:transforming-into-scannable} 
at the right end of the bottom row. 
The braid associated with~$D$ is given by 
\[
\beta(D)=\sigma_1 \sigma_3 \sigma_2\sigma_1 \sigma_3 \sigma_2
\cdot \sigma_3\sigma_2\sigma_1  \sigma_3\sigma_2 \sigma_3
\cdot \sigma_2\sigma_1 \sigma_3\sigma_2\sigma_1 \sigma_3
=\Delta^3=\beta(D_1)=\beta(D_2). 
\]
This means (see Proposition~\ref{pr:equal-braids=>move-equivalent})
that the plabic graphs associated with $D_1$, $D_2$, and~$D$
(and hence~$D_3$) are pairwise move equivalent,
as asserted by Theorem~\ref{th:malleable+positive-isotopic}.
Furthermore 
the quivers $Q(D_1), Q(D_2), Q(D_3)$ are mutation equivalent to each other. 
\end{example}

\newpage

\section{Transversal overlays}
\label{sec:overlays}

In this section, we discuss operations which combine 
scannable divides to produce new divides, also scannable. 
We show that by changing the order in which these operations are applied,
one can obtain different scannable divides which are
link equivalent to each other, with positive-isotopic braids. 
By Corollary~\ref{cor:scannable+positive-isotopic}, 
it follows that the corresponding quivers are mutation equivalent. 

\begin{definition} 
\label{def:overlays}
Let $D_1$ and $D_2$ be two scannable divides, 
with $k_1$ and $k_2$ strands respectively, oriented on the coordinate plane 
as in Section~\ref{sec:scannable-divides}. 
Let us place $D_2$ above~$D_1$, so that their respective ambient rectangles have the same width. 
Stretch their strands horizontally near the right ends,
then bend these extensions up (for~$D_1$) and down (for~$D_2$), 
as shown in Figure~\ref{fig:overlay},
thereby creating $k_1 k_2$ new nodes in the form of a grid. 
The resulting divide $D_1\sharp D_2$, which is scannable by construction, 
is called the \emph{transversal overlay} of $D_1$ and~$D_2$. 

We note that this operation depends on a choice of ``scanning directions'' for the input divides. 
If two divides $D_1$ and $D_1'$ are isotopic to each other but have different scanning directions
(in particular, they might have a different number of strands,
cf.\ Figure~\ref{fig:isotopic-scanning}),
then the overlays $D_1\sharp D_2$ and $D_1'\sharp D_2$ do not have to be isotopic. 
\end{definition}

\begin{figure}[ht]
\begin{center}
\setlength{\unitlength}{1.5pt} 
\begin{picture}(50,50)(0,-5)
\thicklines 
\qbezier(5,2)(0,2)(0,4)
\qbezier(5,6)(0,6)(0,4)
\multiput(0,10)(0,4){3}{\line(1,0){5}} 
\multiput(0,32)(0,12){2}{\line(1,0){5}} 
\qbezier(5,36)(0,36)(0,38)
\qbezier(5,40)(0,40)(0,38)
\put(5,0){\line(1,0){40}} 
\put(5,20){\line(1,0){40}} 
\put(5,30){\line(1,0){40}} 
\put(5,46){\line(1,0){40}} 
\put(5,0){\line(0,1){20}} 
\put(45,0){\line(0,1){20}} 
\put(5,30){\line(0,1){16}} 
\put(45,30){\line(0,1){16}} 
\put(26,10){\makebox(0,0){$D_1$}}
\put(26,38){\makebox(0,0){$D_2$}}

\multiput(45,10)(0,4){1}{\line(1,0){4}} 
\multiput(45,40)(0,4){2}{\line(1,0){4}} 

\qbezier(45,2)(50,2)(50,4)
\qbezier(45,6)(50,6)(50,4)
\qbezier(45,14)(50,14)(50,16)
\qbezier(45,18)(50,18)(50,16)

\qbezier(45,32)(50,32)(50,34)
\qbezier(45,36)(50,36)(50,34)

\end{picture}  
\qquad\qquad\qquad
\begin{picture}(85,50)(0,-5)
\thicklines 
\qbezier(5,2)(0,2)(0,4)
\qbezier(5,6)(0,6)(0,4)
\multiput(0,10)(0,4){3}{\line(1,0){5}} 
\multiput(0,32)(0,12){2}{\line(1,0){5}} 
\qbezier(5,36)(0,36)(0,38)
\qbezier(5,40)(0,40)(0,38)

\put(5,0){\line(1,0){40}} 
\put(5,20){\line(1,0){40}} 
\put(5,30){\line(1,0){40}} 
\put(5,46){\line(1,0){40}} 
\put(5,0){\line(0,1){20}} 
\put(45,0){\line(0,1){20}} 
\put(5,30){\line(0,1){16}} 
\put(45,30){\line(0,1){16}} 
\put(26,10){\makebox(0,0){$D_1$}}
\put(26,38){\makebox(0,0){$D_2$}}
\multiput(45,2)(0,4){5}{\line(1,0){4}} 
\multiput(45,32)(0,4){4}{\line(1,0){4}} 
\multiput(49,2)(0,4){5}{\line(1,1){26}} 
\multiput(49,32)(0,4){4}{\line(1,-1.15){26}} 
\multiput(75,10)(0,4){2}{\line(1,0){5}} 
\qbezier(75,2)(80,2)(80,4)
\qbezier(75,6)(80,6)(80,4)
\put(75,36){\line(1,0){5}} 
\qbezier(75,28)(80,28)(80,30)
\qbezier(75,32)(80,32)(80,30)
\qbezier(75,40)(80,40)(80,42)
\qbezier(75,44)(80,44)(80,42)

\end{picture}  
\vspace{-.15in}
\end{center}
\caption{Two scannable divides, and their transversal overlay.}
\label{fig:overlay}
\end{figure}


\begin{remark}
\label{rem:overlay-YB}
The construction of Definition~\ref{def:overlays} allows for a multitude of modifications, obtained as follows. 
Stretch both divides $D_1$ and~$D_2$ very wide, then rotate one of them and overlay
on top of the other, making sure that each strand of~$D_1$ transversally intersects
each strand of~$D_2$ exactly once. 
All such overlays are \YB-equivalent to each other,
and consequently have equivalent (i.e., isotopic or move/mutation equivalent) 
links, braids, quivers, and plabic graphs. 
\end{remark}

\begin{remark}
\label{rem:overlay-sing}
Transversal overlays have a natural interpretation
in the context of plane curve singularities. 
To explain this, we shall make some simplifying assumptions which can in principle be relaxed. 
Let $(C,z)$ and~$(C',z)$ be two complex isolated plane curve singularities
with a common singular point~$z$. 
Suppose $C$ and~$C'$ are in general position at~$z$ with respect to each other,
i.e., $C$ and~$C'$ have no common tangents. Then the topological type of 
the overlay $(C\cup C',z)$ is canonically defined. 

Let $(C,z)$ be a real singularity, and $L$ a line transversal to it. 
Suppose that $(C_t)_{0\le t\le\zeta}$ is a real morsification of $(C,z)$ 
such that each curve $\RR C_t$,
for $0< t\le\zeta$, is scannable (in an appropriate neighborhood of~$z$) 
by a pencil of lines parallel~to~$L$. 
Each of these lines intersects $\RR C_t$ in the same number of points
equal to the multiplicity of~$(C,z)$. 
The direction of~$L$ corresponds to the vertical direction in Definition~\ref{def:scannable}. 

Now let $(C,z)$ and $(C',z)$ be real singularities, 
say of multiplicities $k$ and~$k'$, respectively. 
Suppose that each of them has a scannable morsification as above. 
Then the same is true for the (generic) transversal overlay $(C\cup C',z)$. 
In order to obtain a scannable morsification for $(C\cup C',z)$, 
let us shrink each of the two input morsifications along their respective transversal directions,
then overlay them at an angle, 
so that they intersect in $kk'$ points. 
The divide corresponding to the resulting morsification is obtained 
from the two input divides via the procedure outlined in Remark~\ref{rem:overlay-YB}. 
\end{remark}

\begin{definition}
We define the equivalence relation $\simplus$ on scannable divides as \linebreak[3]
follows. 
Let $D$ and $D'$ be two scannable divides with the same number of strands.
The notation $D\simplus D'$ means that the associated braids
$\beta(D)$ and $\beta(D')$ are positive-isotopic to each other inside the solid torus,
see Definition~\ref{def:positive-isotopy}. 
\end{definition}

The operation of transversal overlay of scannable divides descends to the level of
equivalence classes with respect to the equivalence relation~$\simplus$: 

\begin{lemma}
\label{lem:overlay-equivalence}
Let $D_1, D_2, D_1', D_2'$ be scannable divides. 
If $D_1\simplus D_1'$ and $D_2\simplus D_2'$, 
then $D_1\sharp D_2\simplus D_1'\sharp D_2'$.
\end{lemma}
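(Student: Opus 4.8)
The plan is to reduce the statement about transversal overlays to a statement about the associated braids, and then to combine two reductions: first, that positive isotopy inside the solid torus is compatible with the overlay operation on braids, and second, that the braid of an overlay depends only on the braids of the two input divides (up to the relevant equivalence). First I would unwind the definitions: by Definition~\ref{def:braid-of-divide} and Proposition~\ref{pr:divide-to-fence-to-braid}, each scannable divide $D_i$ has an associated positive braid $\beta(D_i)$ in a braid group $\mathbb{B}_{k_i}$, and the relation $D\simplus D'$ means precisely that $\beta(D)$ and $\beta(D')$ are positive-isotopic inside the solid torus (in particular they live in the same braid group $\mathbb{B}_k$ and are conjugate). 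So the whole lemma is equivalent to a purely braid-theoretic assertion: if I have an overlay operation $\sharp$ on scannable divides, it induces a well-defined operation on positive braids that respects positive isotopy inside the solid torus in each argument separately.

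The key step is to give an explicit combinatorial description of $\beta(D_1\sharp D_2)$ in terms of $\beta(D_1)$ and $\beta(D_2)$. Examining the construction in Figure~\ref{fig:overlay} together with the scanning rule of Definition~\ref{def:braid-of-divide}, I expect the overlay braid to decompose as a concatenation of three pieces: the contribution coming from $D_1$ alone (on its $k_1$ strands, positioned as the bottom block), the contribution from $D_2$ alone (on its $k_2$ strands, positioned as the top block), and the ``grid'' contribution coming from the $k_1k_2$ transversal crossings created when the two strand families cross each other. Concretely, if I write the strands of the overlay as the bottom $k_1$ followed by the top $k_2$, then $\beta(D_1\sharp D_2)$ should factor as $\beta(D_1)\cdot\beta(D_2)\cdot G_{k_1,k_2}$, where $\beta(D_1)$ acts only on the lower strands, $\beta(D_2)$ only on the upper strands (so the two factors commute), and $G_{k_1,k_2}$ is a fixed positive ``half-twist-like'' braid word effecting the full transversal crossing of the two blocks (a product of the generators $\sigma_i$ ranging over the grid, read in the order dictated by the scanning). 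The grid factor $G_{k_1,k_2}$ depends only on the numbers $k_1,k_2$, not on the internal structure of the divides. I would verify this factorization by carefully matching $\texttt{left}$, $\texttt{bulk}$, $\texttt{right}$, $\texttt{klub}$ of the overlay against those of the two inputs, using Remark~\ref{rem:overlay-YB} to fix a convenient representative of the overlay up to YB-equivalence (which preserves positive isotopy of braids by Propositions~\ref{prop:ybe-preserves-links} and the discussion of positive braid isotopy).

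Granting this factorization, the lemma follows by a two-argument substitution. Suppose $\beta(D_1)$ and $\beta(D_1')$ are positive-isotopic inside the solid torus on $k_1$ strands, i.e., related by Artin braid relations and cyclic shifts (transformations (i)--(ii) of Definition~\ref{def:positive-isotopy}). Each such transformation, performed on the lower block of strands in the overlay, is again a valid positive isotopy inside the solid torus of the full overlay braid: Artin relations applied within the lower block are Artin relations in $\mathbb{B}_{k_1+k_2}$, and a cyclic shift in the lower block can be emulated by a cyclic shift of the whole word after commuting the $\beta(D_2)$ and $G_{k_1,k_2}$ factors appropriately past the relevant generator. Thus $\beta(D_1\sharp D_2)=\beta(D_1)\beta(D_2)G_{k_1,k_2}$ is positive-isotopic inside the solid torus to $\beta(D_1')\beta(D_2)G_{k_1,k_2}=\beta(D_1'\sharp D_2)$; symmetrically, replacing $D_2$ by $D_2'$ in the upper block gives $\beta(D_1'\sharp D_2)\simplus\beta(D_1'\sharp D_2')$. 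Chaining the two and using transitivity of $\simplus$ yields $D_1\sharp D_2\simplus D_1'\sharp D_2'$, as desired.

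The main obstacle I anticipate is making the cyclic-shift case fully rigorous: positive isotopy inside the solid torus only allows transformations (i) and (ii), and a cyclic shift in a subword (the lower block) is not literally a cyclic shift of the whole overlay word. The delicate point is to check that after commuting the block factors—which commute because they act on disjoint strands—and after possibly invoking the freedom in the grid factor $G_{k_1,k_2}$ guaranteed by Remark~\ref{rem:overlay-YB}, the moved generator can indeed be cycled around the full word without leaving the class of positive braids or changing the number of strands. I would handle this by being explicit about how $G_{k_1,k_2}$ interacts with lower-block generators (it commutes with, or at worst conjugates by a disjoint subword), so that a cyclic shift of $\beta(D_1)$ lifts to a composition of an honest cyclic shift of the overlay word with Artin relations. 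Everything else—the factorization and the Artin-relation case—should be routine bookkeeping once the scanning order in Figure~\ref{fig:overlay} is pinned down.
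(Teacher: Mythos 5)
Your overall strategy---reduce to a statement about the associated positive braids and substitute one factor at a time---is the right one, but the central combinatorial claim on which everything rests, namely the factorization $\beta(D_1\sharp D_2)=\beta(D_1)\cdot\beta(D_2)\cdot G_{k_1,k_2}$ with the two input braids appearing as contiguous blocks on disjoint strand sets, is not correct. By the palindromic rule of Definition~\ref{def:braid-of-divide}, $\beta(D)=\texttt{left}(D)\,\texttt{bulk}(D)\,\texttt{right}(D)\,\texttt{klub}(D)$; applying this to the overlay, the grid of $k_1k_2$ new crossings enters the word \emph{twice} (once inside the bulk, once reversed inside the klub), and the actual shape is
\[
\texttt{left}(D_1)\texttt{left}(D_2)\cdot\texttt{bulk}(D_1)\texttt{bulk}(D_2)\cdot G\cdot \texttt{right}(D_1)\texttt{right}(D_2)\cdot G^{\mathrm{rev}}\cdot\texttt{klub}(D_1)\texttt{klub}(D_2),
\]
where the factors $\texttt{right}(D_i)$ sit \emph{between} the two grid blocks and hence act on swapped strand positions (the $D_1$-strands have been carried to the top of the braid by $G$ at that stage). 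So the letters of $\beta(D_1)$ are neither contiguous in the overlay word nor uniformly supported on the bottom $k_1$ strands, and the assertion that ``Artin relations applied within the lower block are Artin relations in $\mathbb{B}_{k_1+k_2}$'' fails for relations mixing $\texttt{bulk}(D_1)$ with $\texttt{right}(D_1)$ or $\texttt{klub}(D_1)$: those letters would have to be pushed through $G$, re-indexing the generators. The cyclic-shift case, which you correctly flag as the main obstacle, inherits the same difficulty and is not resolved by commuting block factors, since the blocks are interleaved with the grids.

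The paper sidesteps all of this bookkeeping with a short topological argument: the closure of $\beta(D_1\sharp D_2)$ is obtained by placing the closures of $\beta(D_1)$ and $\beta(D_2)$ in tubular neighborhoods of the two components of a Hopf link inside the ambient solid torus (this is what Figure~\ref{fig:link-overlay} exhibits). A positive isotopy of $\beta(D_i)$ inside its own solid torus then extends to a positive isotopy of the overlay braid inside the ambient solid torus, and the lemma follows. To salvage your combinatorial route you would need to prove that Artin relations and cyclic shifts of $\beta(D_1)$ lift through the correct double-grid word structure---which essentially amounts to re-deriving the nested-solid-torus picture by hand.
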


\begin{proof}
Direct inspection shows that 
the positive braid $\beta(D_1\sharp D_2)$ associated with the
transversal overlay of two scannable divides $D_1$ and~$D_2$
is obtained by ``linking'' the braids $\beta(D_1)$ and~$\beta(D_2)$ as
shown in Figure~\ref{fig:link-overlay}. \linebreak[3]
The closure of $\beta(D_1\sharp D_2)$ 
is thus obtained by placing the closures of $\beta(D_1)$ and~$\beta(D_2)$ 
in the vicinities of the two components of a Hopf link. 
The claim follows. 
\end{proof}

\begin{figure}[ht]
\begin{center}
\setlength{\unitlength}{1.4pt} 
\begin{picture}(90,50)(0,-5)
\thicklines 

\put(0,6){\line(5,-4){5}} 
\put(0,40){\line(5,-4){5}} 
\multiput(0,2)(3.5,2.8){2}{\line(5,4){1.5}} 
\multiput(0,36)(3.5,2.8){2}{\line(5,4){1.5}} 

\multiput(0,10)(0,4){3}{\line(1,0){5}} 
\multiput(0,32)(0,12){2}{\line(1,0){5}} 

\put(5,0){\line(1,0){40}} 
\put(5,20){\line(1,0){40}} 
\put(5,30){\line(1,0){40}} 
\put(5,46){\line(1,0){40}} 
\put(5,0){\line(0,1){20}} 
\put(45,0){\line(0,1){20}} 
\put(5,30){\line(0,1){16}} 
\put(45,30){\line(0,1){16}} 
\put(26,10){\makebox(0,0){$D_1$}}
\put(26,38){\makebox(0,0){$D_2$}}

\put(50,0){\line(1,0){40}} 
\put(50,20){\line(1,0){40}} 
\put(50,30){\line(1,0){40}} 
\put(50,46){\line(1,0){40}} 
\put(50,0){\line(0,1){20}} 
\put(90,0){\line(0,1){20}} 
\put(50,30){\line(0,1){16}} 
\put(90,30){\line(0,1){16}} 

\multiput(45,10)(0,4){1}{\line(1,0){5}} 
\multiput(45,40)(0,4){2}{\line(1,0){5}} 

\put(45,6){\line(5,-4){5}} 
\multiput(45,2)(3.5,2.8){2}{\line(5,4){1.5}} 
\put(45,18){\line(5,-4){5}} 
\multiput(45,14)(3.5,2.8){2}{\line(5,4){1.5}} 

\put(45,36){\line(5,-4){5}} 
\multiput(45,32)(3.5,2.8){2}{\line(5,4){1.5}} 

\multiput(90,2)(0,4){5}{\line(1,0){4}} 
\multiput(90,32)(0,4){4}{\line(1,0){4}} 

\put(70,10){\makebox(0,0){\reflectbox{$D_1$}}}
\put(70,38){\makebox(0,0){\reflectbox{$D_2$}}}

\end{picture}  
\qquad\quad
\begin{picture}(158,50)(0,-5)
\thicklines 
\put(0,6){\line(5,-4){5}} 
\multiput(0,2)(3.5,2.8){2}{\line(5,4){1.5}} 
\put(0,40){\line(5,-4){5}} 
\multiput(0,36)(3.5,2.8){2}{\line(5,4){1.5}} 

\multiput(0,10)(0,4){3}{\line(1,0){5}} 
\multiput(0,32)(0,12){2}{\line(1,0){5}} 

\put(5,0){\line(1,0){40}} 
\put(5,20){\line(1,0){40}} 
\put(5,30){\line(1,0){40}} 
\put(5,46){\line(1,0){40}} 
\put(5,0){\line(0,1){20}} 
\put(45,0){\line(0,1){20}} 
\put(5,30){\line(0,1){16}} 
\put(45,30){\line(0,1){16}} 
\put(26,10){\makebox(0,0){$D_1$}}
\put(26,38){\makebox(0,0){$D_2$}}
\multiput(45,2)(0,4){5}{\line(1,0){4}} 
\multiput(45,32)(0,4){4}{\line(1,0){4}} 

\put(49,2){\line(1,1){13}} 
\put(49,6){\line(1,1){11.1}} 
\put(49,10){\line(1,1){9.2}} 
\put(49,14){\line(1,1){7.3}} 
\put(49,18){\line(1,1){5.4}} 

\put(75,28){\line(-1,-1){5.4}} 
\put(75,32){\line(-1,-1){7.3}} 
\put(75,36){\line(-1,-1){9.2}} 
\put(75,40){\line(-1,-1){11.1}} 
\put(75,44){\line(-1,-1){13}} 

\multiput(49,32)(0,4){4}{\line(1,-1.15){26}} 

\multiput(75,2)(0,4){4}{\line(1,0){4}} 
\multiput(75,28)(0,4){5}{\line(1,0){4}} 
\multiput(84,2)(0,4){4}{\line(1,0){4}} 
\multiput(84,28)(0,4){5}{\line(1,0){4}} 

\put(75,36){\line(1,0){5}} 

\put(79,6){\line(5,-4){5}} 
\multiput(79,2)(3.5,2.8){2}{\line(5,4){1.5}} 
\put(79,32){\line(5,-4){5}} 
\multiput(79,28)(3.5,2.8){2}{\line(5,4){1.5}} 
\put(79,44){\line(5,-4){5}} 
\multiput(79,40)(3.5,2.8){2}{\line(5,4){1.5}} 

\multiput(114,2)(0,4){5}{\line(-1,1){26}} 

\put(88,2){\line(1,1.15){11.2}} 
\put(88,6){\line(1,1.15){9.3}} 
\put(88,10){\line(1,1.15){7.4}} 
\put(88,14){\line(1,1.15){5.5}} 
\put(114,44){\line(-1,-1.15){11.2}} 
\put(114,40){\line(-1,-1.15){9.3}} 
\put(114,36){\line(-1,-1.15){7.4}} 
\put(114,32){\line(-1,-1.15){5.5}} 

\multiput(75,2)(0,4){4}{\line(1,0){4}} 
\multiput(75,28)(0,4){5}{\line(1,0){4}} 
\multiput(114,2)(0,4){5}{\line(1,0){4}} 
\multiput(114,32)(0,4){4}{\line(1,0){4}} 

\put(79,10){\line(1,0){5}} 
\put(79,14){\line(1,0){5}} 
\put(79,36){\line(1,0){5}} 

\put(118,0){\line(1,0){40}} 
\put(118,20){\line(1,0){40}} 
\put(118,30){\line(1,0){40}} 
\put(118,46){\line(1,0){40}} 
\put(118,0){\line(0,1){20}} 
\put(158,0){\line(0,1){20}} 
\put(118,30){\line(0,1){16}} 
\put(158,30){\line(0,1){16}} 

\multiput(158,2)(0,4){5}{\line(1,0){4}} 
\multiput(158,32)(0,4){4}{\line(1,0){4}} 

\put(138,10){\makebox(0,0){\reflectbox{$D_1$}}}
\put(138,38){\makebox(0,0){\reflectbox{$D_2$}}}
\end{picture}  
\end{center}
\caption{Positive braids associated with scannable divides $D_1$ and~$D_2$
(on the left) and with their transversal overlay (on the right).
}
\label{fig:link-overlay}
\end{figure}


\begin{lemma}
\label{lem:overlay-associative-commutative}
Transversal overlay of scannable divides 
is associative and commutative modulo the equivalence~$\simplus$. 
\end{lemma}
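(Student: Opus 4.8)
The plan is to work at the level of positive braids inside the solid torus, exploiting the explicit ``linking'' description of $\beta(D_1\sharp D_2)$ furnished by the proof of Lemma~\ref{lem:overlay-equivalence} (see Figure~\ref{fig:link-overlay}). The key structural fact is that $\beta(D_1\sharp D_2)$ is built by taking the braids $\beta(D_1)$ and $\beta(D_2)$ and inserting, between them, a fixed ``grid'' of $k_1 k_2$ crossings that realizes a Hopf-type clasp linking the two bundles of strands. Commutativity and associativity should both reduce to the claim that the relevant grids of crossings can be rearranged, using Artin braid relations together with cyclic shifts (i.e.\ conjugation), without disturbing the internal braids $\beta(D_i)$.

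First I would set up precise notation for the overlay braid. Writing $\beta(D_1)\in\mathbb{B}_{k_1}$ and $\beta(D_2)\in\mathbb{B}_{k_2}$, the overlay lives in $\mathbb{B}_{k_1+k_2}$, with $D_1$ occupying the bottom $k_1$ strands and $D_2$ the top $k_2$ strands; the linking grid $G_{k_1,k_2}$ is a product of the generators $\sigma_i$ with $k_1-1 < i < k_1+k_2$ arranged so that every one of the bottom $k_1$ strands crosses every one of the top $k_2$ strands. For commutativity, I would verify that $\beta(D_2\sharp D_1)$ is obtained from $\beta(D_1\sharp D_2)$ by the symmetry that swaps the two blocks, and that this swap is implemented inside the solid torus by a cyclic shift (rotating the closed braid by a half turn) combined with braid relations that slide the grid through. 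Concretely, since closure in the solid torus only sees the conjugacy class of positive words reachable by transformations (i) and (ii) of Definition~\ref{def:positive-isotopy}, it suffices to exhibit $\beta(D_2\sharp D_1)$ as a cyclic rotation of $\beta(D_1\sharp D_2)$ followed by applications of far-commutativity and braid relations that move the internal blocks past the grid. The cleanest route is to argue topologically: the closure of $\beta(D_1\sharp D_2)$ places the closures of $\beta(D_1)$ and $\beta(D_2)$ as satellites of the two components of a Hopf link, and swapping the roles of the two components is an isotopy of the Hopf link in the solid torus that is visibly realized by positive moves, because the Hopf clasp is positive and symmetric.

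For associativity, I would analyze $(D_1\sharp D_2)\sharp D_3$ versus $D_1\sharp(D_2\sharp D_3)$, each an element of $\mathbb{B}_{k_1+k_2+k_3}$. Both are built from the three internal braids $\beta(D_i)$ together with pairwise linking grids $G_{k_1,k_2}$, $G_{k_1,k_3}$, $G_{k_2,k_3}$; the two bracketings differ only in the order and vertical placement in which these grids are inserted. The plan is to show that the three grids can be reordered by far-commutativity (grids acting on disjoint strand-pairs commute) and by the braid relations needed to pass a grid through the block it does not touch, so that both bracketings normalize to the same positive word encoding ``every strand of block $i$ crosses every strand of block $j$ positively for all $i<j$.'' Because the overlay is by construction transversal and generic, Remark~\ref{rem:overlay-YB} guarantees that any two such total linking patterns are YB-equivalent, hence $\simplus$-equivalent; thus I can appeal to YB-equivalence to absorb the discrepancy between the two bracketings rather than verifying a literal braid-word identity.

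The hard part will be controlling the interaction between a linking grid and a \emph{nontrivial} internal braid as one grid is slid past a block. Far-commutativity handles disjoint supports for free, but when a grid spanning blocks $i$ and $k$ must move past block $j$ lying between them, one genuinely needs braid relations that commute the block $\beta(D_j)$ with the vertical strands of the grid; since $\beta(D_j)$ is an arbitrary positive braid, this is not a single relation but a sequence whose existence must be justified, presumably by the standard fact that a full bundle of parallel strands can be dragged across any braid (a ``cabling/satellite'' move) using only positive isotopy inside the solid torus. I expect the bulk of the technical work, and the main obstacle, to be packaging this dragging move so that it stays within transformations (i) and (ii) of Definition~\ref{def:positive-isotopy}; the topological satellite picture of Figure~\ref{fig:link-overlay} makes the conclusion intuitively clear, but turning ``place the closures near the components of a Hopf link'' into an explicit positive-isotopy-inside-the-torus certificate is where care is required.
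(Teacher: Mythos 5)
Your plan follows essentially the same route as the paper: commutativity is handled by viewing the closure of $\beta(D_1\sharp D_2)$ as the closures of $\beta(D_1)$ and $\beta(D_2)$ sitting near the two components of a Hopf link and swapping them via a cyclic shift plus dragging strands through the positive linking grid, while associativity is reduced to the YB-equivalence of the two bracketed overlays (Remark~\ref{rem:overlay-YB}) together with the fact that YB-transformations translate into positive braid relations (Remark~\ref{rem:YB-via-plabic-fences}). The ``dragging a bundle past the grid'' step you flag as the technical crux is exactly the point the paper also leaves implicit; it amounts to the standard identity that a positive permutation braid conjugates a block $\beta(D_i)$ supported on one bundle of strands to the same block supported on the image bundle, so your proposal is sound.
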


\begin{proof}
It is easy to see that transversal overlay 
is associative modulo \YB-equi\-valence. 
More precisely, if $D_1,D_2,D_3$ are scannable divides,
then the divides \linebreak[3] 
$(D_1\sharp D_2)\sharp D_3$ and $D_1\sharp (D_2\sharp D_3)$
are \YB-equivalent to each other. 
Moreover the corresponding braids are 
positive-isotopic inside the solid torus, cf.\ Remark~\ref{rem:YB-via-plabic-fences}. 

To prove commutativity, we need to show that the positive braid $\beta(D_1\sharp D_2)$
shown in Figure~\ref{fig:link-overlay} and the analogous 
braid~$\beta(D_2\sharp D_1)$ 
are positive-isotopic to each other inside the solid torus.
To see that, pull the strands of $\beta(D_1\sharp D_2)$ coming from~$\beta(D_1)$
(resp., from~$\beta(D_2)$) along the corresponding components of the Hopf link,
so that the fragments marked $D_1$ and \reflectbox{$D_1$} in Figure~\ref{fig:link-overlay}
get repositioned above the fragments marked $D_2$ and \reflectbox{$D_2$},
matching---up to a cyclic shift---the braid~$\beta(D_2\sharp D_1)$. 
\end{proof}

\begin{remark}
\label{rem:m-overlay}
Let $D_1,\dots,D_m$ be scannable divides. 
In view of Remark~\ref{rem:overlay-YB} and 
Lemmas~\ref{lem:overlay-equivalence}
and~\ref{lem:overlay-associative-commutative}, 
we can construct different 
versions of the transversal overlay 
of these $m$ divides, as follows. 
Take a generic planar arrangement of $m$ straight lines. 
Pick an arbitrary bijection between these lines and the given divides. 
Stretch each divide~$D_i$ along its scanning direction (horizontally,
in our usual rendering), 
then place $D_i$ near the corresponding line. 
For each~$i$, there are, generally speaking, four ways to do this, 
related by the action of the Klein four-group.
We assume that all these four versions yield the same result
modulo the equivalence~$\simplus$. 
This assumption is satisfied for all scannable divides arising from commonly used constructions, 
see Remark~\ref{rem:klein-group}. 

Different choices of a line arrangement, 
of an assignment of the divides $D_1,\dots,D_m$ to the lines in the arrangement,
and of a placement of each divide~$D_i$ along the corresponding line 
will produce different overlays of $D_1,\dots,D_m$. 
All of them will be $\simplus$-equi\-valent to each other. 
\end{remark}

\begin{remark}
\label{rem:evidence-via-overlays}
Iterated transversal overlays, as described in Remark~\ref{rem:m-overlay},
can be used to construct numerous examples in support of our main conjectures. 
As inputs, we take two $m$-tuples of scannable divides $D_1,\dots,D_m$
and $D'_1,\dots,D'_m$ such that, for $i=1,\dots,m$,
\begin{itemize}[leftmargin=.3in]
\item
the divides $D_i$ and $D_i'$ come from scannable morsifications of 
(potentially different real forms of) the same complex singularity,
as in Remark~\ref{rem:overlay-sing}; 
\item
the divides $D_i$ and $D_i'$ are $\simplus$-equivalent. 
\end{itemize}
(For example, one can take $D_i=D_i'$, 
or let $D_i$ and $D_i'$ be related 
by the action of a Klein group element, cf.\ Remark~\ref{rem:klein-group}.
In any case, the number of strands in both $D_i$ and $D_i'$
should be equal to the multiplicity of the singularity.) 
For each of these two $m$-tuples of divides,
we then construct some version of their transversal overlay,   
see Remark~\ref{rem:m-overlay}.  
This will produce a pair of divides $D$ and~$D'$ such that
\begin{itemize}[leftmargin=.3in]
\item
$D$ and $D'$ come from morsifications of (real forms of) the same complex singularity, 
see Remark~\ref{rem:overlay-sing}; 
consequently, $D$ and $D'$ are link equivalent; 
\item
$D$ and $D'$ are $\simplus$-equivalent, by Remark~\ref{rem:overlay-YB} and 
Lemmas~\ref{lem:overlay-equivalence}
and~\ref{lem:overlay-associative-commutative};
hence the quivers $Q(D)$ and $Q(D')$ are mutation equivalent,
by Corollary~\ref{cor:scannable+positive-isotopic}. 
\end{itemize}
Summing up, morsifications of the same complex singularity
obtained via different versions of transversal overlays, as described above,
give rise to mutation equivalent quivers, 
thereby providing evidence in support of Conjecture~\ref{conj:morsif=mut}. 
\end{remark} 

A number of concrete examples are given in Section~\ref{sec:lissajous}. 

\newpage

\section{Lissajous divides and wiring diagrams}
\label{sec:lissajous}

In this section,
we illustrate the construction of Section~\ref{sec:overlays}
using a class of divides coming from quasihomogeneous singularities. 
These examples show that mutation equivalence of
quivers obtained via different iterated transversal overlays
(as in Remark~\ref{rem:evidence-via-overlays})
may be far from obvious from a combinatorial standpoint. 

\pagebreak[3]

\begin{definition}
\label{def:lissajous}
A \emph{Lissajous divide} of type~$(a,b)$
(here $a\ge b\ge 2$)
is one of the two scannable divides on $b$ strands constructed as follows. 
Begin by drawing $b$ horizontal and $a$ vertical line segments 
in the form of a $(b-1)\times(a-1)$ grid.
Pick one of the two proper black-and-white (``checkerboard'') 
colorings of the $(b-1)(a-1)$ squares of the grid. 
Finally, replace each black square by a crossing~$\times$. 
See Figure~\ref{fig:divides-lissajous}. 
\end{definition}

\vspace{-.1in}

\begin{figure}[htbp] 
\begin{center} 
\begin{tabular}{c|c|c}
$x^5+y^3=0$
&
\setlength{\unitlength}{1.5pt} 
\begin{picture}(50,20)(-10,-5)
\put(0,5){\makebox(0,0){\usebox{\opening}}} 
\put(0,0){\makebox(0,0){\usebox{\ssone}}} 
\put(10,0){\makebox(0,0){\usebox{\sstwo}}} 
\put(20,0){\makebox(0,0){\usebox{\ssone}}} 
\put(30,0){\makebox(0,0){\usebox{\sstwo}}} 
\put(40,-5){\makebox(0,0){\usebox{\closing}}} 
\end{picture} 
&
\setlength{\unitlength}{1.5pt} 
\begin{picture}(50,20)(-10,-5)
\put(0,-5){\makebox(0,0){\usebox{\opening}}} 
\put(0,0){\makebox(0,0){\usebox{\sstwo}}} 
\put(10,0){\makebox(0,0){\usebox{\ssone}}} 
\put(20,0){\makebox(0,0){\usebox{\sstwo}}} 
\put(30,0){\makebox(0,0){\usebox{\ssone}}} 
\put(40,5){\makebox(0,0){\usebox{\closing}}} 
\end{picture} 
\\[.2in]
\hline
$x^5+y^4=0$
&
\setlength{\unitlength}{1.5pt} 
\begin{picture}(50,30)(-10,-5)
\put(0,0){\makebox(0,0){\usebox{\opening}}} 
\put(0,0){\makebox(0,0){\usebox{\sssonethree}}} 
\put(10,0){\makebox(0,0){\usebox{\ssstwo}}} 
\put(20,0){\makebox(0,0){\usebox{\sssonethree}}} 
\put(30,0){\makebox(0,0){\usebox{\ssstwo}}} 
\put(40,-10){\makebox(0,0){\usebox{\closing}}} 
\put(40,10){\makebox(0,0){\usebox{\closing}}} 
\end{picture}
&
\setlength{\unitlength}{1.5pt} 
\begin{picture}(50,30)(-10,-5)
\put(0,-10){\makebox(0,0){\usebox{\opening}}} 
\put(0,10){\makebox(0,0){\usebox{\opening}}} 
\put(10,0){\makebox(0,0){\usebox{\sssonethree}}} 
\put(0,0){\makebox(0,0){\usebox{\ssstwo}}} 
\put(30,0){\makebox(0,0){\usebox{\sssonethree}}} 
\put(20,0){\makebox(0,0){\usebox{\ssstwo}}} 
\put(40,0){\makebox(0,0){\usebox{\closing}}} 
\end{picture}
\\[.3in]
\hline
$x^6+y^3=0$
&
\setlength{\unitlength}{1.5pt} 
\begin{picture}(60,20)(-10,0)
\put(0,5){\makebox(0,0){\usebox{\opening}}} 
\put(0,0){\makebox(0,0){\usebox{\ssone}}} 
\put(10,0){\makebox(0,0){\usebox{\sstwo}}} 
\put(20,0){\makebox(0,0){\usebox{\ssone}}} 
\put(30,0){\makebox(0,0){\usebox{\sstwo}}} 
\put(40,0){\makebox(0,0){\usebox{\ssone}}} 
\put(50,5){\makebox(0,0){\usebox{\closing}}} 
\end{picture} 
&
\setlength{\unitlength}{1.5pt} 
\begin{picture}(60,20)(-10,0)
\put(0,-5){\makebox(0,0){\usebox{\opening}}} 
\put(10,0){\makebox(0,0){\usebox{\ssone}}} 
\put(0,0){\makebox(0,0){\usebox{\sstwo}}} 
\put(30,0){\makebox(0,0){\usebox{\ssone}}} 
\put(20,0){\makebox(0,0){\usebox{\sstwo}}} 
\put(40,0){\makebox(0,0){\usebox{\sstwo}}} 
\put(50,-5){\makebox(0,0){\usebox{\closing}}} 
\end{picture} 
\\[.3in]
\hline
$x^6+y^4=0$
&
\setlength{\unitlength}{1.5pt} 
\begin{picture}(60,30)(-10,-5)
\put(0,-10){\makebox(0,0){\usebox{\opening}}} 
\put(0,10){\makebox(0,0){\usebox{\opening}}} 
\put(0,0){\makebox(0,0){\usebox{\ssstwo}}} 
\put(10,0){\makebox(0,0){\usebox{\sssonethree}}} 
\put(20,0){\makebox(0,0){\usebox{\ssstwo}}} 
\put(30,0){\makebox(0,0){\usebox{\sssonethree}}} 
\put(40,0){\makebox(0,0){\usebox{\ssstwo}}} 
\put(50,-10){\makebox(0,0){\usebox{\closing}}} 
\put(50,10){\makebox(0,0){\usebox{\closing}}} 
\end{picture}
&
\setlength{\unitlength}{1.5pt} 
\begin{picture}(60,30)(-10,-5)
\put(0,0){\makebox(0,0){\usebox{\opening}}} 
\put(0,0){\makebox(0,0){\usebox{\sssonethree}}} 
\put(10,0){\makebox(0,0){\usebox{\ssstwo}}} 
\put(20,0){\makebox(0,0){\usebox{\sssonethree}}} 
\put(30,0){\makebox(0,0){\usebox{\ssstwo}}} 
\put(40,0){\makebox(0,0){\usebox{\sssonethree}}} 
\put(50,0){\makebox(0,0){\usebox{\closing}}} 
\end{picture}
\\[.2in]
\end{tabular}
\end{center} 
\caption{Lissajous divides. Cf.\ Figure~\ref{fig:divides-quasihom}.} 
\label{fig:divides-lissajous} 
\end{figure} 

\vspace{-.15in}

It is straightforward to check that the two Lissajous divides of type $(a,b)$ 
are $\simplus$-equivalent to each other. 

It is well known, and not hard to see, that a Lissajous divide of type~$(a,b)$
represents a morsification of (an appropriate real form of) 
the quasihomogeneous singularity $x^a+y^b=0$, 
cf.\ Definition~\ref{def:quasi-homogeneous}. 
The conditions of Remark~\ref{rem:evidence-via-overlays} are readily checked,
providing a large class of examples of pairs of divides 
(\emph{viz.}, overlays of Lissajous divides) whose associated quivers are---provably---mutation equivalent. 

This phenomenon is already combinatorially nontrivial 
for transversal overlays of singularities of types~$A_1$ and~$A_2$
(i.e., nodes and/or cusps).  
Let $D_1,\dots,D_m$ and $D'_1,\dots,D'_m$ be such that for each $i=1,\dots,m$, 
we have one of the following:
\begin{itemize}[leftmargin=.3in]
\item
each of $D_i$ and $D_i'$ is either an ellipse \,\rotatebox[origin=c]{90}{\texttt{O}}\, or a pair of lines
\rotatebox[origin=c]{90}{\texttt{X}}\,; or 
\item
each of $D_i$ and $D_i'$ is a nodal cubic $\propto\,$. 
\end{itemize}
When we construct transversal overlays for each of the two input $m$-tuples, 
we have the freedom of rotating each morsified cusp ~$\propto\,$ by~$180^\circ$,
and more importantly,
the freedom of choosing the cyclic ordering of the ingredient divides; 
this ordering determines the placement of their stretched versions near the $m$~lines 
of a planar line arrangement. 
All the resulting divides will have mutation equivalent quivers. 

\begin{remark}
\label{rem:lissajous-grassmannian}
The readers familiar with cluster algebras will recognize 
the quiver associated to a Lissajous divide of type~$(a,b)$
as the quiver defining the standard cluster structure on 
the corresponding \emph{Pl\"ucker ring}, 
the homogeneous coordinate ring of the Grassmannian
$\operatorname{Gr}_{a,a+b}(\CC)$, see~\cite{scott}. 
This suggests the existence of an intrinsic connection
between the quasihomogeneous complex singularity $x^a+y^b=0$ 
and the standard cluster structure on the affine cone over $\operatorname{Gr}_{a,a+b}(\CC)$. 
One algebraic interpretation of this connection (via additive categorification) 
was proposed in~\cite{jensen-king-su}. 
It would be very interesting to find an explanation of this connection
that directly relates Pl\"ucker rings, viewed as cluster algebras, 
to quasihomogeneous singularities (viewed up to topological equivalence). 
\end{remark}

\begin{definition}
\label{def:wiring-a-ac}
An \emph{alternating wiring diagram} of type~$(a,b)$ 
(here $a,b\ge 2$) is a scannable divide
constructed via the following modification of Definition~\ref{def:lissajous} \linebreak[3]
Draw $b$ horizontal and $a+1$ vertical line segments 
forming a grid of size $(b-1)\times a$.
Pick a checkerboard coloring of the squares of the grid. 
Replace each black square by a crossing~$\times$. 
Finally, remove all the remaining vertical segments. 
See Figure~\ref{fig:divides-wiring}. 
\end{definition}

\begin{figure}[htbp] 
\begin{center} 
\begin{tabular}{c|c}
$x^4+y^4=0$
&
\setlength{\unitlength}{1.4pt} 
\begin{picture}(50,25)(-10,-5)
\put(0,0){\makebox(0,0){\usebox{\sssonethree}}} 
\put(10,0){\makebox(0,0){\usebox{\ssstwo}}} 
\put(20,0){\makebox(0,0){\usebox{\sssonethree}}} 
\put(30,0){\makebox(0,0){\usebox{\ssstwo}}} 
\end{picture}
\\[.3in]
\hline
$x^6+y^3=0$
&
\setlength{\unitlength}{1.4pt} 
\begin{picture}(60,20)(-5,0)
\put(0,0){\makebox(0,0){\usebox{\ssone}}} 
\put(10,0){\makebox(0,0){\usebox{\sstwo}}} 
\put(20,0){\makebox(0,0){\usebox{\ssone}}} 
\put(30,0){\makebox(0,0){\usebox{\sstwo}}} 
\put(40,0){\makebox(0,0){\usebox{\ssone}}} 
\put(50,0){\makebox(0,0){\usebox{\sstwo}}} 
\end{picture} 
\\[.2in]
\end{tabular}
\end{center} 
\caption{Alternating wiring diagrams of types $(4,4)$, and $(6,3)$. 
Cf.\ Figure~\ref{fig:divides-quasihom}.
} 
\label{fig:divides-wiring} 
\end{figure} 

\vspace{-.15in}

In~the special case $a=b$, we get a wiring diagram with
$b$ branches (``pseudolines'') \linebreak[3]
each pair of which intersect each other exactly once. 
This wiring diagram represents a morsification of 
the real singularity $\prod_{i=1}^b (y-r_i x)=0$, with distinct real slopes~$r_i$. 
More generally, 
when $b$ divides~$a$, an alternating wiring diagram of type~$(a,b)$ 
represents a quasihomogeneous singularity of the same type.
(For arbitrary $a$ and~$b$, this can be false.) 
To be more precise, one can show that an alternating wiring diagram of type~$(bc,b)$
represents a morsification of a quasihomogeneous singularity of type~$(bc,b)$,
specifically its real form involving $b$~smooth branches
each pair of which have a tangency of order~$c$. 
(For $c=1$, each pair is transversal.) 
It is also straightforward to check that this wiring diagram is 
$\simplus$-equivalent to a Lissajous divide of type~$(bc,b)$. 
We~can consequently use this wiring diagram as a replacement for 
a Lissajous divide of type~$(bc,b)$ in any transversal overlay.


\newpage

\section{Oriented plabic graphs and their links}
\label{sec:plabic-graphs-and-links}

In this section, we present an alternative approach to the theory of divide links. \linebreak[3]
Unlike the Gibson-Ishikawa/Kawamura constructions 
(cf.\ Definition~\ref{def:L(P)=L(o(P))}),
this approach utilizes conventional link diagrams, so it is 
more explicit and combinatorial. \linebreak[3]
Unfortunately, diagrammatic constructions based on uniform local rules appear to 
only exist for scannable divides (resp., the corresponding class of plabic graphs),
hence the limitations of the method. 
On the other hand, many morsifications arising via commonly used constructions
are scannable (cf.\ Sections~\ref{sec:overlays}--\ref{sec:lissajous}), 
so the technique described in this section has a fairly wide applicability. 

Recall that in 
Theorem~\ref{th:main-conjecture-scannable-1}, 
we showed, modulo some technical assumptions, 
that link equivalence of scannable divides
implies move equivalence of associated plabic graphs (fences). 
In this section, we provide a combinatorial proof of the converse implication, 
again modulo some (conjecturally redundant) assumptions. 
The end result (cf.\ Proposition~\ref{pr:main-conjecture-scannable-2})
is thus substantially weaker than Corollary~\ref{cor:plabic-isotopic}, 
which required neither those assumptions nor scannability. 
Nevertheless, we opted to include this section in the paper
since the machinery developed herein is much simpler 
(though less powerful) than the one employed in 
Section~\ref{sec:links-from-plabic};
it also suggests connections with 
Postnikov's theory of perfect orientations of plabic graphs,
cf.\ Remark~\ref{rem:perfect-orientations}.  

\begin{definition}
\label{def:admissible-orientation}
Let $P$ be a plabic graph. 
An orientation of the edges of $P$ is called \emph{admissible} 
if it satisfies the following requirements:
\begin{itemize}[leftmargin=.3in]
\item[(B)]
at each trivalent black vertex, two edges are incoming, and one outgoing;\\
at each univalent black vertex, one edge is incoming; 
\item[(W)]
at each trivalent white vertex,  two edges are outgoing, and one is incoming;\\
at each univalent white vertex, one edge is outgoing; 
\item[(F)]
the edges at the boundary of each internal face of $P$ form a directed graph (an
orientation of a cycle) with exactly one source and one sink. 
\end{itemize}
\end{definition}

\begin{remark}
\label{rem:perfect-orientations}
In Postnikov's original treatment~\cite{postnikov}, 
the orientations satisfying conditions (B) and~(W) 
in Definition~\ref{def:admissible-orientation}
(for the interior trivalent vertices) 
were called \emph{perfect}. 
For our purposes~however, ``perfect'' is not enough, as we do need condition~(F), 
or~some variant thereof. 
(One acceptable way to relax condition~(F) is to only forbid two types of
orientations of the boundaries of internal faces, namely (a) oriented cycles
and (b)~orientations with two sources and two sinks.) 
\end{remark}


\begin{lemma}
\label{lem:admissible-acyclic}
An admissible orientation of a plabic graph is acyclic. 
\end{lemma}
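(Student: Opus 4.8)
The plan is to argue by contradiction: suppose an admissible orientation of a plabic graph $P$ contains a directed cycle, and derive a violation of one of the conditions (B), (W), (F). First I would choose a directed cycle $\gamma$ that is \emph{innermost}, in the sense that the region of the disk $\Disk$ enclosed by $\gamma$ contains no other directed cycle, or more precisely that the area it bounds is minimal. Since $P$ is embedded in the disk and $\gamma$ is a closed directed walk, it bounds a disk region $R_\gamma$; I want to understand the orientation of the edges lying strictly inside $R_\gamma$ together with the behavior forced along $\gamma$ itself.

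The key observation is a parity/counting argument. Condition~(F) says that the boundary of each internal face, with its induced orientation, has exactly one source and one sink; in particular no internal face boundary is a directed cycle. So if $\gamma$ is a directed cycle, it cannot be the boundary of a single internal face, and hence $R_\gamma$ must contain at least one edge or vertex of $P$ in its interior. I would then look at the vertices on $\gamma$. At a trivalent black vertex $v$ on $\gamma$, the cycle uses one incoming and one outgoing edge of $v$ (consecutive in the cyclic order around $v$, since $\gamma$ is embedded); by condition~(B), $v$ has two incoming and one outgoing edge, so the third edge at $v$ is incoming, and depending on the planar side it either points into $R_\gamma$ or out of it. Symmetrically, at a trivalent white vertex on $\gamma$, condition~(W) forces the third edge to be outgoing. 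The plan is to track, using the planar embedding and the black/white coloring, whether these ``third edges'' systematically point into $R_\gamma$, and to combine this with the source/sink count of the faces inside $R_\gamma$.

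Concretely, I would sum condition~(F) over all the internal faces contained in $R_\gamma$ and compare with the orientation data along $\gamma$. Each internal face inside $R_\gamma$ contributes exactly one source and one sink to its own boundary; summing the (outgoing minus incoming) half-edge counts face by face, the contributions from interior edges cancel (each interior edge borders two faces, appearing with opposite signs), leaving only the contribution along $\gamma$. But along the directed cycle $\gamma$ every edge is oriented consistently, so $\gamma$ contributes neither a source nor a sink as seen from inside—this is precisely the forbidden ``oriented cycle'' configuration that condition~(F) rules out for a single face, now propagated to the whole region. Making this cancellation rigorous is the main obstacle: one must set up the right signed count (e.g., an Euler-characteristic or winding-number bookkeeping on $R_\gamma$) so that the local constraints (B), (W) at the vertices interior to $R_\gamma$ and the face constraint (F) combine to contradict the existence of the all-oriented boundary $\gamma$.

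The hard part will be handling the non-regularity phenomena flagged in Remark~\ref{rem:non-regular-divides}, namely that a face need not be a topological disk with embedded boundary, and that two faces may share disconnected boundary segments. In such cases a single edge may be traversed twice by $\gamma$, or a face may appear on both sides of $\gamma$, which complicates the cancellation. To sidestep this I would first reduce to the trivalent case by tail removals (these do not affect acyclicity, since a univalent vertex cannot lie on a cycle), and then argue that it suffices to prove the statement after passing to the universal-cover-style unfolding of the boundary walk of each face, so that condition~(F) can be applied locally without ambiguity. Once the signed count is established, the contradiction is immediate, and the lemma follows: an admissible orientation admits no directed cycle, i.e., it is acyclic.
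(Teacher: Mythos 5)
Your setup coincides with the paper's: assume a directed cycle exists, pass to an innermost one $\gamma$, and note that condition (F) rules out the case where $\gamma$ bounds a single internal face. But the case that carries the content of the lemma --- $\gamma$ enclosing more than one face --- is exactly where your argument stops being a proof. The ``signed count'' is never pinned down: you do not say what quantity is summed over the faces inside $R_\gamma$, why interior edges cancel in that sum, or how the vertex conditions (B) and (W) enter the bookkeeping, and you yourself flag ``making this cancellation rigorous'' as the main obstacle. As written, the natural candidate (sources minus sinks per face) is identically zero by (F) and gives no information, while a count that could work (extremal vs.\ lateral corners at vertex--face incidences, compared against an Euler-characteristic identity for $R_\gamma$) is a genuinely more delicate computation that you have not set up. The worries about non-regular faces and the proposed ``universal-cover unfolding'' do not repair this; they address side issues, and the reduction by tail removal is unnecessary since a tail, being attached to $\partial\Disk$, can never lie on a cycle.

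The paper closes the gap with a short dynamical argument you came within a hair of finding. If $\gamma$ encloses more than one face, some vertex $v$ of $\gamma$ is incident to an edge $e$ lying inside $R_\gamma$ (your ``third edge'' observation). Say $e$ is oriented away from $v$ (the other case is symmetric); walk forward along the orientation starting with $e$. Conditions (B) and (W) guarantee that every trivalent vertex has an outgoing edge, and the walk cannot reach $\partial\Disk$ from inside $R_\gamma$, so it never terminates; hence it must revisit a vertex or return to $\gamma$, producing an oriented cycle strictly enclosed by $\gamma$ and contradicting innermostness. I recommend replacing your counting scheme with this walk, or else actually carrying out the corner-counting identity in full; in its current form the proposal does not prove the lemma.
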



\begin{proof}
Suppose not. Let $C$ be an oriented cycle in an admissible orientation.
We~may assume that $C$ is simple 
(i.e., it does not visit the same vertex more than once)
and moreover $C$~does not enclose another oriented cycle. 
If $C$ encloses a single face, then we are in contradiction with condition~(F). 
Otherwise $C$ contains a vertex~$v$ incident to an edge~$e$ located inside~$C$. 
Assume that $e$ is oriented away from~$v$, as the other case is similar. 
Starting with~$e$, keep moving in the direction of the orientation.
When arriving at a black vertex, make the unique choice of the outgoing edge; 
at a white vertex, choose any of the two outgoing edges. 
Eventually, this walk will either hit itself or hit~$C$, 
thereby creating an oriented cycle enclosed by~$C$, a contradiction. 
\end{proof}

While an admissible orientation does not have to exist, it is always unique: 

\begin{proposition}
\label{pr:admissible-unique}
A plabic graph has at most one admissible orientation. 
\end{proposition}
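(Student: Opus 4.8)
The plan is to argue by contradiction via the symmetric difference of two orientations, and to reduce uniqueness to a purely local statement about a \emph{single} admissible orientation: that it contains no directed cycle enclosing a disk. So suppose $O$ and $O'$ are two admissible orientations of a plabic graph $P\subset\Disk$, and let $S$ be the set of edges on which they disagree. Conditions (B) and (W) prescribe the in-degree and out-degree at \emph{every} vertex (trivalent or univalent), and these numbers are the same for $O$ and $O'$. Since passing from $O$ to $O'$ reverses exactly the edges of $S$, preservation of all in/out-degrees forces, at each vertex $v$, that the number of $S$-edges oriented (by $O$) into $v$ equals the number oriented out of $v$. This is flow conservation, so $S$, oriented by $O$, decomposes into edge-disjoint directed cycles. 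If $O\ne O'$ then $S\ne\varnothing$, and $S$ contains a simple directed cycle $\gamma$. Because boundary vertices are univalent they cannot lie on any cycle, so $\gamma$ runs through internal trivalent vertices only; being a simple closed curve in $\operatorname{int}\Disk$, it bounds a disk $R\subseteq\operatorname{int}\Disk$. Thus it suffices to show that an admissible orientation admits no such $\gamma$.

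The crux is a corner-counting argument on the disk $\bar R$. Call a \emph{corner} of a face $F$ at a vertex $w$ a \emph{source corner} if the two boundary edges of $F$ at $w$ both point away from $w$, and a \emph{sink corner} if both point into $w$. Condition (F) says precisely that each internal face of $P$ inside $R$ has exactly one source corner and one sink corner, so the faces inside $R$ contribute $F_{\mathrm{int}}$ source corners and $F_{\mathrm{int}}$ sink corners in total, where $F_{\mathrm{int}}$ is the number of faces of $P$ inside $R$. I then locate these corners vertex-by-vertex. At an interior white vertex (B)/(W) give out-degree $2$, in-degree $1$, so among its three corners exactly one is a source corner and none is a sink corner; an interior black vertex contributes exactly one sink corner and no source corner. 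For a vertex $v$ on $\gamma$, the two $\gamma$-edges give one incoming and one outgoing edge, which \emph{forces} the orientation of its third edge (outgoing if $v$ is white, incoming if black); a short check shows that if this third edge points into $R$ then a white $v$ contributes one source corner inside $R$ (and a black $v$ one sink corner), while if it points outside $R$ the unique inside corner at $v$ is neither. Writing $V_{\mathrm{int}}=W_{\mathrm{int}}+B_{\mathrm{int}}$ for the interior vertices and $n_{\mathrm{in}}$ for the number of $\gamma$-vertices whose third edge enters $R$, balancing source and sink corners gives $F_{\mathrm{int}}=W_{\mathrm{int}}+W_{\gamma,\mathrm{in}}$ and $F_{\mathrm{int}}=B_{\mathrm{int}}+B_{\gamma,\mathrm{in}}$, whence $2F_{\mathrm{int}}=V_{\mathrm{int}}+n_{\mathrm{in}}$.

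To finish I would combine this with Euler's formula and the handshake count for $\bar R$. Counting cells of the disk $\bar R$ gives $V_{\mathrm{int}}-E_{\mathrm{int}}+F_{\mathrm{int}}=1$, while summing vertex degrees (interior vertices of degree $3$, each $\gamma$-vertex of degree $2$ plus $1$ if its third edge is inside) yields $2E_{\mathrm{int}}=3V_{\mathrm{int}}+n_{\mathrm{in}}$; eliminating $E_{\mathrm{int}}$ gives $2F_{\mathrm{int}}=V_{\mathrm{int}}+n_{\mathrm{in}}+2$. This contradicts the corner count $2F_{\mathrm{int}}=V_{\mathrm{int}}+n_{\mathrm{in}}$, so no directed cycle bounds a disk; hence $S=\varnothing$ and $O=O'$. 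The main obstacle is the bookkeeping in the middle paragraph: verifying that the local orientation patterns at white/black vertices produce exactly the claimed source/sink corners, and that the third edge at each $\gamma$-vertex is genuinely forced and correctly classified as interior or exterior. A secondary point to nail down is the tacit assumption that each internal face of $P$ is a disk whose boundary is a simple cycle (so that condition (F) literally provides one source and one sink); if degenerate faces are permitted, the corner-counting step would need to be phrased in terms of boundary half-edges rather than vertices. I expect the rest—the flow decomposition of $S$ and the Euler/handshake computation—to be routine.
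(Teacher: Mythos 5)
Your overall architecture matches the paper's: reduce uniqueness to the nonexistence of directed cycles by observing that the edge set on which two admissible orientations disagree satisfies flow conservation at every vertex (the paper phrases this as ``the differing edges form an oriented two-edge path at each trivalent vertex,'' which for trivalent vertices is the same statement) and hence decomposes into disjoint oriented cycles. Where you genuinely diverge is in the proof of the acyclicity step (the paper's Lemma~\ref{lem:admissible-acyclic}): the paper takes an innermost simple oriented cycle and, if it encloses more than one face, launches a directed walk from an interior edge that must close up into a smaller enclosed oriented cycle, contradicting minimality; you instead fix any simple directed cycle bounding a disk and derive a contradiction between two counts of $2F_{\mathrm{int}}$ --- one from condition~(F) via source/sink corners, the other from Euler's formula and the handshake identity. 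Your computation checks out (including the bookkeeping at $\gamma$-vertices, whose third edge is indeed forced by (B)/(W), and the consistency of the count for chords with both endpoints on~$\gamma$), and it buys a quantitative statement (the two counts differ by exactly~$2$, a discrete Gauss--Bonnet flavor) at the cost of more setup; the paper's argument is shorter but relies on a minimality/termination argument rather than a closed-form identity. The caveat you flag about internal faces having simple cycle boundaries is not a defect specific to your proof: the paper's condition~(F) is itself phrased under that tacit assumption, and the paper's own proof of the lemma invokes (F) in the same way, so you are working at the same level of rigor as the source. (Both proofs also share the marginal edge case where $P$ has a single internal face, for which (F) is declared vacuous.)
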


(This statement relies on the convention that we do color boundary vertices.) 

\begin{proof} 
Let $O_1$ and $O_2$ be distinct admissible orientations~of 
the same plabic graph~$P$. 
At each univalent boundary vertex, the two orientations co\-incide by
conditions (B) and~(W). 
At a trivalent vertex~$v$ in the interior~of~$P$, they either coincide
for all three edges, 
or else they coincide at one edge, and are opposite 
at the remaining two edges $e_1$ and~$e_2$;
moreover $e_1$ and~$e_2$ form an oriented two-edge path in both $O_1$ and~$O_2$. 
It~follows that the edges whose orientations in $O_1$ and~$O_2$ differ from each other form 
a collection of disjoint oriented cycles 
(with different orientations in $O_1$ and in~$O_2$). 
This contradicts Lemma~\ref{lem:admissible-acyclic}. 
\end{proof}

A plabic graph is 
\emph{balanced} if it contains equally many black and white vertices.

To illustrate, any plabic fence (cf.\ Definition~\ref{def:plabic-fence}) 
is balanced, assuming its boundary vertices located at the left
(resp., right) end are colored white (resp., black), 
as in Figure~\ref{fig:fence-word}. 

\begin{lemma}
\label{lem:admissible-is-balanced}
Any plabic graph possessing an admissible orientation is balanced. 
\end{lemma}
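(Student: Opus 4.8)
The plan is to count edges in two different ways and extract a relation forcing the number of black and white vertices to coincide. Let $P$ be a plabic graph equipped with an admissible orientation, and write $b_3, b_1$ for the numbers of trivalent and univalent black vertices, and $w_3, w_1$ for the numbers of trivalent and univalent white vertices. Balanced means $b_3+b_1 = w_3+w_1$. First I would count the total number of oriented edges by summing outgoing half-edges over all vertices. By condition~(B), each trivalent black vertex contributes exactly one outgoing edge and each univalent black vertex contributes zero; by condition~(W), each trivalent white vertex contributes two outgoing edges and each univalent white vertex contributes one. An edge with both endpoints internal is counted once this way (it has a unique tail), while I must be careful about edges terminating on the boundary: a univalent boundary vertex is an endpoint of a single edge, so such an edge is accounted for by the half-edge data at its two endpoints in the usual way.

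The key step is to count each edge's \emph{incoming} half-edges as well and compare. Every edge has exactly one tail and one head, so the total number of outgoing half-edges equals the total number of incoming half-edges, both equal to the number of edges~$E$. From condition~(B), incoming half-edges at black vertices number $2b_3 + b_1$; from condition~(W), incoming half-edges at white vertices number $w_3 + w_1$. Hence
\begin{equation}
\label{eq:incoming}
E = 2b_3 + b_1 + w_3 + w_1.
\end{equation}
Similarly, counting outgoing half-edges gives
\begin{equation}
\label{eq:outgoing}
E = b_3 + 2w_3 + w_1.
\end{equation}
Subtracting \eqref{eq:outgoing} from \eqref{eq:incoming} yields $0 = b_3 + b_1 - w_3$, which is not yet the balanced condition and signals that I have miscounted the univalent contributions; the correct bookkeeping must treat each edge as contributing exactly one tail and one head among the four vertex types, and the univalent vertices on $\partial\Disk$ need their incoming/outgoing status pinned down by conditions~(B) and~(W) before the sums are formed.

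After fixing the half-edge accounting so that \eqref{eq:incoming} and \eqref{eq:outgoing} are both genuinely equal to $E$, subtracting them should collapse the trivalent terms and leave a relation purely between $b_1, w_1$ and the trivalent counts, from which $b_3+b_1 = w_3+w_1$ follows. The main obstacle I anticipate is precisely this edge-counting subtlety: I must verify that condition~(F) is never actually needed for the balanced conclusion (it governs faces, not the vertex degree sums), and that the univalent boundary vertices contribute consistently, so that the tail-count and head-count of edges each reproduce the same global total $E$. Once the two expressions for $2E$ (summing both half-edge types) are set equal, the trivalent and univalent contributions rearrange into $b_3+b_1 = w_3+w_1$, giving balancedness. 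I would present the argument as a clean double count, isolating the degree contributions $2b_3+b_1$ versus $2w_3+w_1$ and noting that incoming and outgoing half-edge totals must agree because orientation assigns each edge one head and one tail.
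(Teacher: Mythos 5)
Your approach is essentially the paper's: a double count of edge heads and tails dictated by the admissible orientation, using only conditions (B) and (W). The paper phrases it slightly differently --- at every white vertex, trivalent or univalent, the number of outgoing edges minus the number of incoming edges equals $+1$, at every black vertex it equals $-1$, and summing this quantity over all vertices gives zero because each edge contributes one tail and one head; hence the black and white vertices are equinumerous. You are also correct that condition (F) is never needed.

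The concrete flaw is in your incoming-edge tally. By condition (W) a univalent white vertex has one \emph{outgoing} edge and no incoming edge, so it contributes $0$ to the head count; the correct equation is $E = 2b_3 + b_1 + w_3$, not $E = 2b_3 + b_1 + w_3 + w_1$. You notice the discrepancy (your subtraction yields $0 = b_3 + b_1 - w_3$, which is indeed wrong) but then defer the repair to ``after fixing the half-edge accounting,'' leaving the derivation incomplete. The fix is one line: equating the head count $E = 2b_3 + b_1 + w_3$ with the tail count $E = b_3 + 2w_3 + w_1$ gives $b_3 + b_1 = w_3 + w_1$ immediately. With that correction your argument is complete and coincides with the paper's.
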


\begin{proof}
In an admissible orientation, 
at each white (resp., black) vertex, whether internal or located on the boundary,
the number of outgoing edges minus 
the number of incoming edges is equal to~$1$ (resp.,~$-1$).
Summing over all the half-edges, we obtain the claim. 
\end{proof}

\begin{remark}
The converse to Lemma~\ref{lem:admissible-is-balanced} is false: 
a balanced plabic graph does not have to allow an admissible orientation. 
A~counterexample is shown in Figure~\ref{fig:no-admissible orientation}. 
\end{remark}

\begin{figure}[ht]
\begin{center}
\setlength{\unitlength}{1pt}
\begin{picture}(60,30)(0,5)
\thinlines
\put(-1,0){\line(0,1){27.5}}
\put(1,0){\line(0,1){27.5}}
\put(-1,32.5){\line(0,1){7.5}}
\put(1,32.5){\line(0,1){7.5}}
\put(59,0){\line(0,1){7.5}}
\put(61,0){\line(0,1){7.5}}
\put(59,12.5){\line(0,1){27.5}}
\put(61,12.5){\line(0,1){27.5}}
\thicklines
\put(2.5,10){\line(1,0){15}}
\put(2.5,30){\line(1,0){15}}
\put(22.5,10){\line(1,0){15}}
\put(22.5,30){\line(1,0){15}}
\put(20,12.5){\line(0,1){15}}
\put(40,12.5){\line(0,1){15}}
\put(42.5,10){\line(1,0){15}}
\put(42.5,30){\line(1,0){15}}
\put(0,10){\circle*{5}}
\put(0,30){\circle{5}}
\put(60,10){\circle{5}}
\put(60,30){\circle*{5}}
\put(20,10){\circle{5}}
\put(20,30){\circle*{5}}
\put(40,10){\circle*{5}}
\put(40,30){\circle{5}}
\end{picture}
\end{center}
\caption{A balanced plabic graph that does not have an admissible orientation.
(It does have perfect orientations, but they do not satisfy condition~(F).)}
\label{fig:no-admissible orientation}
\end{figure}


\begin{remark}
\label{rem:staying-balanced}
The property of being balanced is preserved by both flip and square moves, 
cf.\ Figure~\ref{fig:plabic-moves}(a,b).  
For it to be preserved by a tail attachment/removal move, 
one needs to require that the two vertices involved in the move 
have opposite colors, as in Figure~\ref{fig:plabic-moves}(c). 
Similarly, a switch transformation (see Definition~\ref{def:switch})
preserves balanceness provided the portion of the plabic graph 
being flipped over
(cf.\ Figure~\ref{fig:isthmus-move} inside the dotted line) contains
an equal number of black and white vertices. 
\end{remark}

\begin{definition}
Two balanced plabic graphs are 
\emph{move equivalent through balanced plabic graphs}
if they are related to each other by a sequence of local moves
which only involve balanced plabic graphs. 
As noted in Remark~\ref{rem:staying-balanced},
Postnikov's flip and square moves preserve the property of being balanced. 
\end{definition}

\begin{proposition}
\label{pr:move-admissible-orientation}
If two balanced plabic graphs are move equivalent through 
balanced plabic graphs, and one of them has a
(necessarily unique) admissible orientation,
then so does the other. 
\end{proposition}

\begin{proof}
All we need to do is check 
that for each type of local move transforming a balanced
plabic graph~$P_1$ into another balanced plabic graph~$P_2$ 
(see Figure~\ref{fig:plabic-moves}), 
we can transport an admissible orientation of~$P_1$ 
into an admissible orientation of~$P_2$.
This is demonstrated in Figure~\ref{fig:orientations-via-moves}. 
It~is straightforward to verify that each of these local transformations preserves the conditions 
(B), (W), and~(F) of Definition~\ref{def:admissible-orientation}. 
We note that Figure~\ref{fig:orientations-via-moves}
shows all possible edge orientations 
(up to isotopy, rotation, and/or reflection)
which are consistent with these conditions. 
\end{proof}

\begin{figure}[ht]
\begin{tabular}{ll}
\setlength{\unitlength}{1pt}
\begin{picture}(60,40)(0,0)
\put(20,20){\makebox(0,0){(a)}}
\end{picture}
&
\setlength{\unitlength}{1pt}
\begin{picture}(40,40)(0,0)
\thicklines
\put(0,0){\line(2,1){20}}
\put(20,30){\line(2,1){20}}
\put(0,40){\line(2,-1){20}}
\put(20,10){\line(2,-1){20}}
\put(20,10){\line(0,1){20}}
\put(0,0){\vector(2,1){15}}
\put(40,0){\vector(-2,1){15}}
\put(0,40){\vector(2,-1){15}}
\put(20,30){\vector(2,1){15}}
\put(20,10){\vector(0,1){15}}
\put(20,10){\circle*{5}}
\put(20,30){\circle*{5}}
\end{picture}
\begin{picture}(40,40)(0,0)
\put(20,20){\makebox(0,0){$\longleftrightarrow$}}
\end{picture}
\begin{picture}(40,40)(0,0)
\thicklines
\put(0,0){\line(1,2){10}}
\put(30,20){\line(1,2){10}}
\put(40,0){\line(-1,2){10}}
\put(10,20){\line(-1,2){10}}
\put(10,20){\line(1,0){20}}
\put(0,0){\vector(1,2){7.5}}
\put(0,40){\vector(1,-2){7.5}}
\put(10,20){\vector(1,0){15}}
\put(30,20){\vector(1,2){7.5}}
\put(40,0){\vector(-1,2){7.5}}

\put(10,20){\circle*{5}}
\put(30,20){\circle*{5}}
\end{picture}
\begin{picture}(60,40)(0,0)
\end{picture}
\begin{picture}(40,40)(0,0)
\thicklines
\put(0,0){\line(2,1){18}}
\put(40,40){\line(-2,-1){18}}
\put(0,40){\line(2,-1){18}}
\put(40,0){\line(-2,1){18}}
\put(20,12.5){\line(0,1){15}}
\put(20,25){\vector(0,-1){10}}
\put(16,8){\vector(-2,-1){10}}
\put(24,8){\vector(2,-1){10}}
\put(36,38){\vector(-2,-1){10}}
\put(16,32){\vector(-2,1){10}}

\put(20,10){\circle{5}}
\put(20,30){\circle{5}}
\end{picture}
\begin{picture}(40,40)(0,0)
\thicklines
\put(20,20){\makebox(0,0){$\longleftrightarrow$}}
\end{picture}
\begin{picture}(40,40)(0,0)
\thicklines
\put(0,0){\line(1,2){9}}
\put(40,40){\line(-1,-2){9}}
\put(40,0){\line(-1,2){9}}
\put(0,40){\line(1,-2){9}}
\put(12.5,20){\line(1,0){15}}
\put(25,20){\vector(-1,0){10}}
\put(8,24){\vector(-1,2){5}}
\put(8,16){\vector(-1,-2){5}}
\put(32,16){\vector(1,-2){5}}
\put(38,36){\vector(-1,-2){5}}

\put(10,20){\circle{5}}
\put(30,20){\circle{5}}
\end{picture}
\\[.2in]
\setlength{\unitlength}{1pt}
\begin{picture}(60,40)(0,0)
\put(20,20){\makebox(0,0){(b)}}
\end{picture}
&
\setlength{\unitlength}{1pt}
\begin{picture}(40,40)(0,0)
\thicklines
\put(0,0){\line(1,1){8.5}}
\put(40,40){\line(-1,-1){8.5}}
\put(0,40){\line(1,-1){8.5}}
\put(40,0){\line(-1,1){8.5}}
\put(10,12.5){\line(0,1){15}}
\put(30,12.5){\line(0,1){15}}
\put(12.5,10){\line(1,0){15}}
\put(12.5,30){\line(1,0){15}}
\put(0,0){\vector(1,1){7}}
\put(0,40){\vector(1,-1){7}}
\put(33,7){\vector(1,-1){6}}
\put(33,33){\vector(1,1){6}}
\put(10,25){\vector(0,-1){9}}
\put(15,10){\vector(1,0){9}}
\put(15,30){\vector(1,0){9}}
\put(30,15){\vector(0,1){9}}

\put(10,10){\circle*{5}}
\put(10,30){\circle{5}}
\put(30,10){\circle{5}}
\put(30,30){\circle*{5}}
\end{picture}
\begin{picture}(40,40)(0,0)
\thicklines
\put(20,20){\makebox(0,0){$\longleftrightarrow$}}
\end{picture}
\begin{picture}(40,40)(0,0)
\thicklines
\put(0,0){\line(1,1){8.5}}
\put(40,40){\line(-1,-1){8.5}}
\put(0,40){\line(1,-1){8.5}}
\put(40,0){\line(-1,1){8.5}}
\put(10,12.5){\line(0,1){15}}
\put(30,12.5){\line(0,1){15}}
\put(12.5,10){\line(1,0){15}}
\put(12.5,30){\line(1,0){15}}
\put(0,0){\vector(1,1){7}}
\put(0,40){\vector(1,-1){7}}
\put(33,7){\vector(1,-1){6}}
\put(33,33){\vector(1,1){6}}
\put(15,10){\vector(1,0){9}}
\put(15,30){\vector(1,0){9}}
\put(10,15){\vector(0,1){9}}
\put(30,25){\vector(0,-1){9}}

\put(10,10){\circle{5}}
\put(10,30){\circle*{5}}
\put(30,10){\circle*{5}}
\put(30,30){\circle{5}}
\end{picture}
\\[0in]
\setlength{\unitlength}{1pt}
\begin{picture}(60,40)(0,0)
\put(20,20){\makebox(0,0){(c)}}
\end{picture}
&
\setlength{\unitlength}{1pt}
\begin{picture}(40,30)(0,0)
\thinlines
\put(0,6){\line(1,0){18}}
\put(0,4){\line(1,0){18}}
\put(40,6){\line(-1,0){18}}
\put(40,4){\line(-1,0){18}}
\thicklines

\put(0,25){\line(1,0){40}}
\put(20,25){\line(0,-1){17.5}}
\put(20,8){\vector(0,1){12}}
\put(5,25){\vector(1,0){10}}
\put(25,25){\vector(1,0){10}}

\put(20,25){\circle*{5}}
\put(20,5){\circle{5}}
\put(-12,5){\makebox(0,0){$\partial\Disk$}}
\end{picture}
\begin{picture}(40,30)(0,0)
\thicklines
\put(20,15){\makebox(0,0){$\longleftrightarrow$}}
\end{picture}
\begin{picture}(40,30)(0,0)
\thinlines
\put(0,6){\line(1,0){40}}
\put(0,4){\line(1,0){40}}
\thicklines
\put(0,25){\line(1,0){40}}
\put(15,25){\vector(1,0){10}}
\end{picture}
\begin{picture}(40,30)(0,0)
\thicklines
\put(20,15){\makebox(0,0){$\longleftrightarrow$}}
\end{picture}
\begin{picture}(40,30)(0,0)
\thinlines
\put(0,6){\line(1,0){40}}
\put(0,4){\line(1,0){40}}
\put(20,5){\circle*{5}}
\thicklines
\put(0,25){\line(1,0){17.5}}
\put(40,25){\line(-1,0){17.5}}
\put(20,5){\line(0,1){17.5}}
\put(20,22){\vector(0,-1){12}}
\put(5,25){\vector(1,0){10}}
\put(25,25){\vector(1,0){10}}
\put(20,25){\circle{5}}
\put(52,5){\makebox(0,0){$\partial\Disk$}}
\end{picture}
\end{tabular}
\caption{Transporting admissible orientations
via moves in plabic graphs.}
\label{fig:orientations-via-moves}
\end{figure}

\pagebreak[3]

\vspace{-.15in}

\begin{proposition}
\label{pr:fence-is-orientable}
Any plabic fence has an admissible orientation. 
\end{proposition}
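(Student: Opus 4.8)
The plan is to construct the admissible orientation on a plabic fence $\Phi$ explicitly, by orienting all the horizontal strands uniformly in one direction and then deducing the forced orientation of each vertical connector from the black/white coloring at its endpoints. Recall that in a plabic fence, the strands are horizontal segments numbered $1,\dots,k$ from bottom to top, with left endpoints colored white and right endpoints colored black (Definition~\ref{def:Phi(D)}); each connector is a vertical edge joining two adjacent strands, carrying a black vertex (for~$\sigma_i$) or a white vertex (for~$\tau_i$) at one of its two endpoints, the other endpoint being of the opposite color where it meets the strand.

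First I would orient every horizontal strand \emph{left to right}. This is forced at the boundary: at each univalent white left endpoint condition~(W) demands one outgoing edge, and at each univalent black right endpoint condition~(B) demands one incoming edge, so the leftmost and rightmost horizontal segments must point rightward; I would then check that interior trivalent vertices propagate this consistently. Next I would orient each vertical connector. A trivalent vertex lying on a strand has two horizontal half-edges (one incoming from the left, one outgoing to the right) plus the vertical half-edge; whether it is black or white then forces the direction of the vertical edge uniquely. Concretely, at a black strand-vertex (two in, one out) the horizontal-in and horizontal-out already give one in and one out, so the vertical edge must be \emph{incoming}; at a white strand-vertex (two out, one in) the vertical edge must be \emph{outgoing}. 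Doing this at both endpoints of a connector, I must verify the two forced directions agree, i.e.\ the vertical edge is oriented \emph{upward} exactly when its lower endpoint is white (equivalently its upper endpoint is black) — and the coloring prescribed in Definition~\ref{def:plabic-divide} and Definition~\ref{def:Phi(D)} is precisely arranged so that the two endpoints of each connector receive opposite colors, making this consistent.

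It then remains to check condition~(F): each internal face has its boundary oriented as a cycle with exactly one source and one sink. The internal faces of a plabic fence are the bounded regions between two consecutive strands, cut into cells by the connectors; a typical such face is a ``rectangle'' bounded below and above by two rightward-oriented horizontal arcs and on the left and right by two vertical connectors. With the horizontal edges both pointing rightward and the two vertical sides oriented as forced above, the source of the face is its lower-left (or upper-left) corner and the sink is the diagonally opposite corner, giving exactly one source and one sink. I would confirm this for each of the possible local configurations of bounding connectors ($\sigma$ vs.\ $\tau$ on either side), which is a short finite check.

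\textbf{The main obstacle} I anticipate is the bookkeeping of the orientation of the vertical connectors and the verification of~(F) for faces whose left or right boundary is not a single clean connector but involves several connectors attached along the same pair of strands, or a face bounded on one side by a U-turn/tail rather than a connector. One must make sure the ``one source, one sink'' count is never violated by such degenerate boundaries — in particular that no face acquires the forbidden all-cyclic orientation or a two-source/two-sink pattern. Since the horizontal strands are all co-oriented, a cyclic face boundary is impossible (it would need a leftward horizontal edge), which rules out configuration~(a) of the forbidden orientations; ruling out (b) reduces to the local corner analysis above. I expect that once the uniform strand orientation is fixed, every remaining verification is local and routine, so the real content is simply exhibiting the orientation and confirming it meets~(B), (W), and~(F); uniqueness is already guaranteed by Proposition~\ref{pr:admissible-unique}.
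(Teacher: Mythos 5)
Your proposal is correct and is essentially the paper's own proof: the paper simply exhibits the same orientation (all horizontal edges left to right, each vertical connector oriented from its white endpoint to its black endpoint) and leaves the verification of (B), (W), (F) to inspection of a figure. Your derivation of the forced vertical orientations and the case check for condition (F) just spell out the details the paper omits.
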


\begin{proof}
Orient all horizontal edges of a plabic fence left to right, and orient each vertical edge
from the white vertex to the black one. 
See Figure~\ref{fig:fence-orientation}. 
\end{proof}

\begin{figure}[ht] 
\begin{center} 
\setlength{\unitlength}{2.5pt} 
\begin{picture}(100,30)(-10,-5)
\thinlines
\put(-5,-5){\line(0,1){30}} 
\put(-5.5,-5){\line(0,1){30}} 
\put(85,-5){\line(0,1){30}} 
\put(85.5,-5){\line(0,1){30}} 
\multiput(85.25,0)(0,10){3}{\circle*{1.5}}
\thicklines 
\put(-5,0){\line(1,0){90}} 
\put(-5,10){\line(1,0){90}} 
\put(-5,20){\line(1,0){90}} 
\put(5,10){\line(0,1){10}} 
\put(15,0){\line(0,1){10}} 
\put(25,0){\line(0,1){10}} 
\put(35,10){\line(0,1){10}} 
\put(45,10){\line(0,1){10}} 
\put(55,0){\line(0,1){10}} 
\put(65,0){\line(0,1){10}} 
\put(75,10){\line(0,1){10}} 

\put(5,0){\vector(1,0){3}} 
\put(20,0){\vector(1,0){3}} 
\put(40,0){\vector(1,0){3}} 
\put(60,0){\vector(1,0){3}} 
\put(75,0){\vector(1,0){3}} 
\multiput(0,10)(10,0){9}{\vector(1,0){3}} 
\multiput(0,20)(40,0){3}{\vector(1,0){3}} 
\multiput(20,20)(40,0){2}{\vector(1,0){3}} 
\put(15,6){\vector(0,-1){3}} 
\put(55,6){\vector(0,-1){3}} 
\put(25,4){\vector(0,1){3}} 
\put(65,4){\vector(0,1){3}} 
\put(5,16){\vector(0,-1){3}} 
\put(45,16){\vector(0,-1){3}} 
\put(75,16){\vector(0,-1){3}} 
\put(35,14){\vector(0,1){3}}

\put(5,10){\circle*{1.5}}
\put(15,0){\circle*{1.5}}
\put(25,10){\circle*{1.5}}
\put(35,20){\circle*{1.5}}
\put(45,10){\circle*{1.5}}
\put(55,0){\circle*{1.5}}
\put(65,10){\circle*{1.5}}
\put(75,10){\circle*{1.5}}
\end{picture}  
\end{center} 
\vspace{-.05in}
\caption{An admissible orientation of a plabic fence. 
} 
\label{fig:fence-orientation} 
\end{figure} 

\vspace{-.15in}

\begin{definition}
\label{def:link-of-oriented-plabic-graph}
Let $P$ be a (necessarily balanced) plabic graph allowing 
a (necessarily unique) 
admissible orientation. 
The (oriented) \emph{link $L^\circ(P)$ associated with~$P$} is defined 
in terms of a link diagram constructed as follows. 
Replace each edge of~$P$ by a pair of parallel strands, oriented according to the
``drive on the right side'' rule. 
Connect these strands at each internal trivalent vertex of~$P$ according to the recipe shown in 
Figure~\ref{fig:black-to-link}. 
Finally, at each univalent boundary vertex of~$P$, connect the two strands to each other. 
\end{definition}

Definition~\ref{def:link-of-oriented-plabic-graph} is illustrated, for the special case of plabic fences, 
in Figures~\ref{fig:fence-to-link}--\ref{fig:link-via-fence}. 

\begin{figure}[ht]
\vspace{.1in}
\begin{center}
\setlength{\unitlength}{0.8pt}
\begin{picture}(70,80)(0,0)
\thicklines
\put(10,0){\line(1,2){20}}
\put(10,80){\line(1,-2){20}}
\put(30,40){\line(1,0){40}}
\put(10,0){\vector(1,2){15}}
\put(10,80){\vector(1,-2){15}}
\put(30,40){\vector(1,0){25}}
\put(30,40){\circle*{6}}
\end{picture}
\qquad
\begin{picture}(70,80)(0,0)
\thicklines
\put(18,36){\line(-1,-2){14}}
\put(4,72){\line(1,-2){21}}
\put(25,30){\line(1,0){45}}
\put(20,0){\line(1,2){13}}
\put(40,40){\line(-1,2){20}}
\put(40,40){\line(-1,-2){3}}
\put(70,50){\line(-1,0){31}}
\put(25,50){\line(1,0){6}}
\put(25,50){\line(-1,-2){3}}
\put(4,72){\vector(1,-2){10}}
\put(20,0){\vector(1,2){10}}
\put(70,50){\vector(-1,0){20}}
\put(18,36){\vector(-1,-2){10}}
\put(40,40){\vector(-1,2){15}}
\put(25,30){\vector(1,0){35}}
\end{picture}
\qquad
\qquad\quad
\begin{picture}(70,80)(0,0)
\thicklines
\put(10,0){\line(1,2){18.5}}
\put(10,80){\line(1,-2){18.5}}
\put(33,40){\line(1,0){37}}
\put(25,30){\vector(-1,-2){5}}
\put(25,50){\vector(-1,2){5}}
\put(48,40){\vector(-1,0){5}}
\put(30,40){\circle{6}}
\end{picture}
\qquad
\begin{picture}(70,80)(5,0)
\thicklines
\put(20,40){\line(-1,-2){16}}
\put(20,40){\line(-1,2){16}}
\put(35,50){\line(1,0){35}}
\put(35,50){\line(-1,2){15}}
\put(35,30){\line(1,0){35}}
\put(35,30){\line(-1,-2){15}}
\put(70,50){\line(-1,0){31}}

\put(4,72){\vector(1,-2){10}}
\put(20,0){\vector(1,2){10}}
\put(70,50){\vector(-1,0){20}}
\put(18,36){\vector(-1,-2){10}}
\put(35,50){\vector(-1,2){10}}
\put(35,30){\vector(1,0){25}}
\end{picture}
\end{center}
\caption{Building a link diagram around a black, resp.\ white, vertex
of a plabic graph carrying an admissible orientation.
Cf.\ Figure~\ref{fig:plabic-to-o-divide}.}
\label{fig:black-to-link}
\end{figure}

\newsavebox{\slink}
\setlength{\unitlength}{6pt} 
\savebox{\slink}(5,10)[bl]{
\thicklines 
\put(0,0){\line(1,0){1.5}} 
\put(0,8){\line(1,0){1.5}} 
\put(1.5,8){\line(0,-1){7}} 
\put(1.5,8){\vector(0,-1){4}} 
\put(3.5,1){\vector(0,1){4}} 
\put(1.5,1){\line(2,-1){2}} 
\put(1.5,0){\line(2,1){0.5}} 
\put(3.5,1){\line(-2,-1){0.5}} 
\put(3.5,1){\line(0,1){7}} 
\put(3.5,8){\line(1,0){1.5}} 
\put(3.5,0){\line(1,0){1.5}} 

\put(1,2){\red{\line(-1,0){1.5}}}
\put(1,2){\red{\vector(-1,0){1.5}}}
\put(3,2){\red{\line(-1,0){1}}}
\put(5,2){\red{\line(-1,0){1}}}
\put(5,10){\red{\line(-1,0){5}}}
\put(5,10){\red{\vector(-1,0){3}}}
}

\newsavebox{\tlink}
\setlength{\unitlength}{6pt} 
\savebox{\tlink}(5,10)[bl]{
\thicklines 
\put(0,0){\line(1,0){5}} 
\put(0,0){\vector(1,0){3}} 
\put(0,8){\line(1,0){5}} 
\put(0,8){\vector(1,0){3.2}} 

\put(0,2){\red{\line(1,0){1.5}}}
\put(3.5,2){\red{\line(1,0){1.5}}}
\put(0,10){\red{\line(1,0){1.5}}}
\put(3.5,10){\red{\line(1,0){1.5}}}
\put(1.5,7.5){\red{\line(0,-1){5.5}}}
\put(1.5,7.5){\red{\vector(0,-1){3.5}}}
\put(1.5,8.5){\red{\line(0,1){0.5}}}
\put(3.5,8.5){\red{\line(0,1){0.5}}}
\put(1.5,9){\red{\line(2,1){0.5}}}
\put(3.5,10){\red{\line(-2,-1){0.5}}}
\put(1.5,10){\red{\line(2,-1){2}}}
\put(3.5,2){\red{\line(0,1){5.5}}}
\put(3.5,2){\red{\vector(0,1){3.5}}}
}

\newsavebox{\strack}
\setlength{\unitlength}{6pt} 
\savebox{\strack}(5,10)[bl]{
\thicklines 
\put(0,0){\line(1,0){5}} 

\put(5,2){\red{\line(-1,0){5}}}
}

\newsavebox{\tslink}
\setlength{\unitlength}{6pt} 
\savebox{\tslink}(10,10)[bl]{
\thicklines 
\put(0,0){\line(1,0){7}} 
\put(0,0){\vector(1,0){5}} 
\put(0,8){\line(1,0){7}} 
\put(0,8){\vector(1,0){5}} 
\put(7,8){\line(0,-1){7}} 
\put(7,8){\vector(0,-1){4}} 
\put(7,1){\line(2,-1){2}} 
\put(7,0){\line(2,1){0.5}} 
\put(9,1){\line(-2,-1){0.5}} 
\put(9,1){\line(0,1){7}} 
\put(9,8){\line(1,0){1}} 
\put(9,0){\line(1,0){1}} 

\put(0,2){\red{\line(1,0){1}}}
\put(0,10){\red{\line(1,0){1}}}
\put(1,7.5){\red{\line(0,-1){5.5}}}
\put(1,7.5){\red{\vector(0,-1){3.5}}}
\put(1,8.5){\red{\line(0,1){0.5}}}
\put(3,8.5){\red{\line(0,1){0.5}}}
\put(1,9){\red{\line(2,1){0.5}}}
\put(3,10){\red{\line(-2,-1){0.5}}}
\put(1,10){\red{\line(2,-1){2}}}
\put(3,2){\red{\line(0,1){5.5}}}
\put(3,2){\red{\vector(0,1){3.5}}}
\put(10,10){\red{\line(-1,0){7}}}
\put(10,10){\red{\vector(-1,0){4}}}
\put(6.5,2){\red{\line(-1,0){3.5}}}
\put(6.5,2){\red{\vector(-1,0){2.5}}}
\put(8.5,2){\red{\line(-1,0){1}}}
\put(10,2){\red{\line(-1,0){0.5}}}
}

\begin{figure}[ht] 
\vspace{-.1in}
\begin{center} 
\begin{tabular}{c||c|c|c|c}
\setlength{\unitlength}{3pt} 
\begin{picture}(10,10)(0,-2)
\put(5,7){\makebox(0,0){plabic}}
\put(5,3){\makebox(0,0){fence}}
\end{picture}
& \setlength{\unitlength}{3pt} 
\begin{picture}(10,15)(0,-2)
\thicklines 
\put(0,0){\line(1,0){10}} 
\put(0,10){\line(1,0){10}} 
\put(5,0){\line(0,1){10}} 
\put(5,0){\circle*{1.5}}
\end{picture}  
&
\setlength{\unitlength}{3pt} 
\begin{picture}(10,10)(0,-2)
\thicklines 
\put(0,0){\line(1,0){10}} 
\put(0,10){\line(1,0){10}} 
\put(5,0){\line(0,1){10}} 
\put(5,10){\circle*{1.5}}
\end{picture} 
&
\setlength{\unitlength}{3pt} 
\begin{picture}(15,10)(0,-2)
\thicklines 
\put(0,0){\line(1,0){15}} 
\put(0,10){\line(1,0){15}} 
\put(4,0){\line(0,1){10}} 
\put(11,0){\line(0,1){10}} 
\put(4,0){\circle*{1.5}}
\put(11,10){\circle*{1.5}}
\end{picture} 
&
\setlength{\unitlength}{3pt} 
\begin{picture}(15,10)(0,-2)
\thicklines 
\put(0,0){\line(1,0){15}} 
\put(0,10){\line(1,0){15}} 
\put(4,0){\line(0,1){10}} 
\put(11,0){\line(0,1){10}} 
\put(4,10){\circle*{1.5}}
\put(11,0){\circle*{1.5}}
\end{picture} 
\\
\hline
&&&& \\
\setlength{\unitlength}{6pt} 
\begin{picture}(4,10)(0,0)
\put(2,5){\makebox(0,0){link}} 
\end{picture} 
&
\setlength{\unitlength}{6pt} 
\begin{picture}(8,10)(0,0)
\put(4,5){\makebox(0,0){\usebox{\slink}}} 
\end{picture} 
&
\setlength{\unitlength}{6pt} 
\begin{picture}(8,10)(0,0)
\put(4,5){\makebox(0,0){\usebox{\tlink}}} 
\end{picture} 
&
\setlength{\unitlength}{6pt} 
\begin{picture}(13,10)(0,0)
\put(4,5){\makebox(0,0){\usebox{\slink}}} 
\put(9,5){\makebox(0,0){\usebox{\tlink}}} 
\end{picture} 
&
\setlength{\unitlength}{6pt} 
\begin{picture}(13,10)(0,0)
\put(4,5){\makebox(0,0){\usebox{\tlink}}} 
\put(9,5){\makebox(0,0){\usebox{\slink}}} 
\end{picture} 
\end{tabular}
\end{center} 
\caption{Transforming a plabic fence $\Phi$ into a link diagram for $L^\circ(\Phi)$. 
} 
\label{fig:fence-to-link} 
\end{figure} 

\begin{figure}[ht] 
\vspace{-.1in}
\begin{center} 
\setlength{\unitlength}{6pt} 
\begin{picture}(40,19)(-3,0)
\put(0,13){\makebox(0,0){\usebox{\slink}}} 
\put(5,5){\makebox(0,0){\usebox{\slink}}} 
\put(10,5){\makebox(0,0){\usebox{\tlink}}} 
\put(15,13){\makebox(0,0){\usebox{\tlink}}} 
\put(20,13){\makebox(0,0){\usebox{\slink}}} 
\put(25,5){\makebox(0,0){\usebox{\slink}}} 
\put(30,5){\makebox(0,0){\usebox{\tlink}}} 
\put(35,13){\makebox(0,0){\usebox{\slink}}} 

\put(0,5){\makebox(0,0){\usebox{\strack}}} 
\put(15,5){\makebox(0,0){\usebox{\strack}}} 
\put(20,5){\makebox(0,0){\usebox{\strack}}} 
\put(35,5){\makebox(0,0){\usebox{\strack}}} 

\put(5,21){\makebox(0,0){\usebox{\strack}}} 
\put(10,21){\makebox(0,0){\usebox{\strack}}} 
\put(25,21){\makebox(0,0){\usebox{\strack}}} 
\put(30,21){\makebox(0,0){\usebox{\strack}}}

\put(2.5,21){\makebox(0,0){\usebox{\uturnleft}}} 
\put(2.5,13){\makebox(0,0){\usebox{\uturnleft}}} 
\put(2.5,5){\makebox(0,0){\usebox{\uturnleft}}} 

\put(42.5,21){\makebox(0,0){\usebox{\uturnright}}} 
\put(42.5,13){\makebox(0,0){\usebox{\uturnright}}} 
\put(42.5,5){\makebox(0,0){\usebox{\uturnright}}} 
\end{picture} 
\end{center} 
\caption{The link $L^\circ(\Phi)$ for the plabic fence~$\Phi$ 
in Figure~\ref{fig:fence-orientation};  
cf.\ Figure~\ref{fig:link-scannable}. 
} 
\label{fig:link-via-fence} 
\end{figure} 


\pagebreak[3]

For plabic fences associated to scannable divides, we recover the A'Campo links:  

\begin{proposition}
\label{pr:L(D)=L(Phi)}
Let $D$ be a scannable divide, and $\Phi=\Phi(D)$ the corresponding plabic fence,
cf.\ Definition~\ref{def:Phi(D)}.
Then the links $L(D)=L(\Phi)$ and $L^\circ(\Phi)$ are isotopic to each other. 
\end{proposition}

\begin{proof}
Compare the constructions of the links $L(D)$ and $L^\circ(\Phi)$
given in Theorem~\ref{th:palindromic} (due to O.~Couture and B.~Perron)
and Definition~\ref{def:link-of-oriented-plabic-graph}, respectively. 
\end{proof}

\begin{remark}
\label{rem:L^o=L}
More generally, for any plabic graph~$P$ supporting an admissible orientation,
the link $L^\circ(P)$ constructed in Definition~\ref{def:link-of-oriented-plabic-graph} 
is isotopic to the Kawamura link~$L(P)$ from Definition~\ref{def:L(P)=L(o(P))}. 
Since we will not rely on this statement, we shall only sketch an argument
justifying it. 

  The construction of Definition~\ref{def:link-of-oriented-plabic-graph} 
 naturally produces a link in the solid torus $UT\Disk$, the unit tangent
  bundle of the disk~$\Disk$. (Collapsing the boundary of $UT\Disk$
  yields a link in $\mathbf{S}^3$.) Links in the solid torus are a bit 
  difficult to visualize, especially since rotating the tangent vector
  of an oriented divide~$\vec D$ moves a short distance in the disk while the
  resulting link makes a full revolution around the solid torus. 
  Now suppose that $\vec v$ 
  is a nonzero vector field on~$\Disk$ such that $\vec D$ is
  never tangent to~$\vec v$, with the same orientation. Then $\vec v$ gives a section of
  $UT\Disk$, a meridional disk~$M_{\vec v}$ cutting across the solid
  torus. 
  The  complement  $UT\Disk \setminus M_{\vec v}$ is homeomorphic to $\RR^3$, 
  and it is much easier to draw pictures there. To concretely draw
  $L(\vec D)$ in this space, given a  point $(x, w) \in L(\vec D)$, two of
  the three coordinates of its image in $\RR^3\cong UT\Disk$
  are the coordinates of~$x$ in~$\Disk$, \linebreak[3]
  and the third coordinate
  (which in our pictures would be perpendicular to the page) measures the
  clockwise rotation from $\vec v(x)$ to~$w$. 
  Thus, to convert a crossing in~$\vec D$ into a crossing in
  the traditional link diagram for~$L(\vec D)$, 
  we designate as the bottom strand the one whose
  tangent you reach first when rotating clockwise from~$\vec v(x)$.
  (Hirasawa's technique for visualizing divides \cite{hirasawa} can be
  viewed as the special case of this construction, with $\vec v$ the constant
  vector field pointing to the left, and with additional elaboration for
  cases when the link passes through the forbidden set~$M_{\vec v}$.)

From an admissible orientation of a plabic graph~$P$, we can
  construct a non-vanishing vector field $\vec w$, arranged to be
  parallel to each oriented edge (with the same orientation) except
  near the vertices, and turning
  a minimal amount at each vertex.
  Condition~(F) ensures that $\vec w$ extends continuously across the
  faces. (To extend a non-zero vector field across a disk, it suffices
  to check that the index on the boundary is~$0$, which is exactly
  what Condition~(F) ensures.)
  New let $\vec v$ be the clockwise $90^\circ$ rotation of~$\vec w$. 
  By construction, away from univalent vertices,
  $\vec v$ is never parallel with the same orientation to the oriented
  divide~$o(P)$ from
  Definition~\ref{def:L(P)=L(o(P))}. 

  We need to see that the link diagram in Figure~\ref{fig:black-to-link}
  represents $L(o(P))$. 
  Since $o(P)$ is not tangent to~$\vec v$ away from the boundary, we can use the rule above
  to establish which strand is on top at the crossings of
  $o(P)$ near black vertices. The result differs by a Reidemeister III move from
  Figure~\ref{fig:black-to-link}.
  Finally, near the univalent vertices lying on $\partial\Disk$, the conventions from
  Figure~\ref{fig:plabic-boundary-to-o-divide} produce an oriented divide
  $o(P)$ that does pass through the forbidden set~$M_{\vec v}$. We can fix
  this by applying a boundary U-turn move to $o(P)$ near each boundary
  univalent
  vertex. This introduces extra crossings near the univalent
  vertices; these crossings can be removed with Reidemeister~I~moves.
\end{remark}

In light of Remark~\ref{rem:L^o=L}, it comes as no surprise that 
local moves on oriented plabic graphs preserve the associated links,
cf.\ Proposition~\ref{cor:plabic-isotopic}: 

\begin{proposition}
\label{pr:moves-preserve-links}
Let $P_1$ and $P_2$ be balanced plabic graphs which are move equivalent through balanced plabic graphs. 
Suppose that one of them 
(hence the other, see Proposition~\ref{pr:move-admissible-orientation}) 
has an admissible orientation. 
Then the links $L^\circ(P_1)$ and $L^\circ(P_2)$ are isotopic to each other. 
\end{proposition}

\begin{proof}
We need to check that each type of local move 
shown in Figure~\ref{fig:orientations-via-moves} preserves the isotopy type~of
the link $L^\circ(P)$ associated with a plabic graph~$P$ 
carrying an admissible orientation. 
For a flip move involving two white vertices, the statement is clear 
(no strands cross each other).
The~case of a flip move involving two black vertices is shown in Figure~\ref{fig:black-flip-link}.
For the square move, examine the last two columns of Figure~\ref{fig:fence-to-link} 
and verify that the isotopy type of the link does not change. 
Finally, the case of tail attachment/removal is treated in Figure~\ref{fig:tail-link}. 
\end{proof}

\begin{figure}[ht]
\begin{center}
\setlength{\unitlength}{0.8pt}
\begin{picture}(100,90)(0,5)
\thicklines
\put(30,56){\line(0,-1){17}}
\put(30,56){\vector(0,-1){14}}

\put(30,70){\line(0,-1){6}}
\put(30,70){\line(2,1){6}}
\put(70,90){\line(-2,-1){26}}
\put(70,90){\vector(-2,-1){21}}

\put(30,25){\line(0,1){6}}
\put(30,25){\line(2,-1){6}}
\put(44,18){\line(2,-1){26}}
\put(44,18){\vector(2,-1){21}}

\put(40,40){\line(-2,-1){40}}
\put(40,40){\vector(-2,-1){27}}

\put(40,40){\line(2,-1){6}}

\put(80,20){\line(-2,1){26}}
\put(80,20){\vector(-2,1){16}}

\put(50,25){\line(-2,-1){40}}
\put(10,5){\vector(2,1){21}}

\put(50,25){\line(0,1){31}}
\put(50,25){\vector(0,1){26}}

\put(50,70){\line(-2,1){40}}
\put(50,70){\vector(-2,1){27}}
\put(50,70){\line(0,-1){6}}

\put(0,75){\line(2,-1){40}}
\put(0,75){\vector(2,-1){24}}
\put(40,55){\line(2,1){40}}
\put(40,55){\vector(2,1){27}}

\end{picture}
\qquad\qquad
\setlength{\unitlength}{0.8pt}
\begin{picture}(100,80)(0,0)
\thicklines
\put(18,36){\line(-1,-2){14}}
\put(4,72){\line(1,-2){21}}
\put(25,30){\line(1,0){45}}
\put(20,0){\line(1,2){13}}
\put(40,40){\line(-1,2){20}}
\put(40,40){\line(-1,-2){3}}
\put(70,50){\line(-1,0){31}}
\put(25,50){\line(1,0){6}}
\put(25,50){\line(-1,-2){3}}
\put(4,72){\vector(1,-2){10}}
\put(20,0){\vector(1,2){10}}
\put(70,50){\vector(-1,0){24}}
\put(18,36){\vector(-1,-2){10}}
\put(40,40){\vector(-1,2){15}}
\put(25,30){\vector(1,0){31}}

\put(69,30){\line(1,2){21}}
\put(69,30){\vector(1,2){15}}
\put(91,8){\line(-1,2){15}}
\put(91,8){\vector(-1,2){10}}
\put(70,50){\line(1,-2){3}}
\put(75,80){\line(-1,-2){13}}
\put(75,80){\vector(-1,-2){10}}
\put(55,40){\line(1,2){3}}
\put(55,40){\line(1,-2){3}}
\put(62,26){\line(1,-2){13}}
\put(62,26){\vector(1,-2){8}}
\end{picture}
\end{center}
\caption{Link diagrams for two oriented plabic graphs
related via a flip move involving two black vertices.
The~two links are isotopic to each other.}
\label{fig:black-flip-link}
\end{figure}

\begin{figure}[ht]
\begin{center}
\setlength{\unitlength}{1.2pt}
\begin{picture}(50,40)(0,0)
\thicklines
\put(0,30){\line(1,0){20}}
\put(0,30){\vector(1,0){15}}
\put(50,40){\vector(-1,0){28}}
\put(30,30){\line(1,0){20}}
\put(30,30){\vector(1,0){15}}
\put(0,40){\line(1,0){50}}
\put(20,30){\line(0,-1){20}}
\put(20,30){\vector(0,-1){15}}
\put(30,10){\vector(0,1){13}}
\put(30,30){\line(0,-1){20}}
\qbezier(20,10)(20,5)(25,5)
\qbezier(30,10)(30,5)(25,5)
\end{picture}
\qquad\qquad
\begin{picture}(50,40)(0,0)
\thicklines
\put(0,30){\line(1,0){50}}
\put(0,30){\vector(1,0){30}}
\put(50,40){\vector(-1,0){30}}
\put(0,40){\line(1,0){50}}
\end{picture}
\qquad\qquad
\begin{picture}(50,40)(0,0)
\thicklines
\put(0,30){\line(1,0){50}}
\put(0,30){\vector(1,0){13}}
\put(30,30){\vector(1,0){13}}
\put(0,40){\line(1,0){20}}

\put(50,40){\line(-1,0){20}}
\put(50,40){\vector(-1,0){15}}
\put(20,40){\vector(-1,0){15}}

\put(20,40){\line(2,-1){10}}
\put(30,40){\line(-2,-1){3}}
\put(20,35){\line(2,1){3}}
\put(20,35){\line(0,-1){3}}
\put(30,35){\line(0,-1){3}}
\put(20,28){\line(0,-1){18}}
\put(20,28){\vector(0,-1){13}}
\qbezier(20,10)(20,5)(25,5)
\qbezier(30,10)(30,5)(25,5)
\put(30,10){\vector(0,1){13}}
\put(30,10){\line(0,1){18}}

\end{picture}
\end{center}
\caption{Transforming the link diagram under a tail removal/attachment.}
\label{fig:tail-link}
\end{figure}

\begin{proposition}
\label{pr:main-conjecture-scannable-2}
Let $D_1$ and $D_2$ be scannable divides 
whose respective plabic fences $\Phi(D_1)$ and $\Phi(D_2)$ 
are move equivalent through balanced plabic graphs. 
Then $D_1$ and $D_2$ are link equivalent. 
\end{proposition}

\begin{proof} 
By Proposition~\ref{pr:fence-is-orientable},
the plabic fences $\Phi(D_1)$ and $\Phi(D_2)$ have admissible orientations. 
Since they are move equivalent through balanced plabic graphs, 
their links are isotopic, by Proposition~\ref{pr:moves-preserve-links}. 
Moreover by Proposition~\ref{pr:L(D)=L(Phi)} 
these links are isotopic to the respective A'Campo links $L(D_1)$ and~$L(D_2)$,
and we are done. 
\end{proof}

\begin{remark}
The following example illustrates the difficulties involved in extending the arguments
presented in this section to arbitrary (not necessarily
scannable) algebraic divides. 
Let $D$ be the non-scannable divide shown in Figure~\ref{fig:transforming-into-scannable} on the upper left (or in Figure~\ref{fig:non-partitions}(a)). 
It is not hard to verify that any plabic graph $P\in\PP(D)$ (see Definition~\ref{def:plabic-divide}) 
does not have an admissible orientation. 
(To see this, examine the portion of~$P$ corresponding to the vicinity of the rightmost node in~$D$.) 
Consequently $P$ cannot be obtained from a plabic graph 
attached to a scannable algebraic divide of the same type 
(see e.g.\ Figure~\ref{fig:transforming-into-scannable} on the lower right) 
via local moves through balanced plabic graphs.
This means that some of the moves in Figure~\ref{fig:transforming-into-scannable},
when translated into the language of plabic graphs, must pass through non-balanced graphs. 
(Note that the plabic graph for any divide can be chosen to be balanced.) 
\end{remark}



\section{Simple singularities}
\label{sec:simple-singularities}

In this section, we verify that the main construction of this paper
provides a direct link between two \textit{ADE}
classifications: (i)~Arnold's classification of simple sin\-gu\-lar\-ities and
(ii)~the classification of skew-symmetric cluster algebras of finite type. 
We~begin by reviewing these two classifications, starting with the latter. 

\smallskip

Any quiver~$Q$ 
gives rise to a \emph{cluster algebra} $\mathcal{A}(Q)$ 
(with trivial coefficients and a skew-symmetric exchange matrix). 
The cluster algebra $\mathcal{A}(Q)$ only depends 
on the mutation equivalence class of~$Q$.  
It is generated inside a field of rational functions
in several variables by a distinguished set of generators 
called \emph{cluster variables};
see, e.g.,~\cite[Chapter~3]{fwz}. \linebreak[3]
The~cluster algebra~$\mathcal{A}(Q)$ (or the quiver~$Q$) 
is said to have \emph{finite type} if this generating set is finite. 
While cluster algebras as such do not play a role in our arguments,
we would like to recall the following classification result. 

\begin{theorem}[{\rm \cite{ca2}}]
A connected quiver~$Q$ has finite type 
if and only if $Q$ is mutation equivalent to an orientation of a
simply-laced Dynkin diagram. 
\end{theorem}

Orientations of two Dynkin diagrams are mutation equivalent
if and only if the (unoriented) diagrams are isomorphic. 
Thus each connected quiver of finite type has a well defined \emph{type} 
in the standard $ADE$ nomenclature of the simply-laced Dynkin diagrams. 
These types are $A_n$ ($n\ge 1$), 
$D_n$ ($n\ge 4$), and $E_n$ ($6\le n\le 8$). 

A singularity is called \emph{simple} if any two curves in its
topological equivalence class are locally diffeomorphic. 
Thus, simple singularities are those for which 
the topological classification coincides with the
contact analytic one,
i.e., the classification up to local diffeomorphism in the source
and multiplication by a unit of the local ring. 

A classical result by V.~Arnold 
asserts that simple singularities are classified by 
the simply-laced Dynkin diagrams.
In the language of Section~\ref{sec:ag-diagrams}, Arnold's classification 
can be stated as follows. 

\begin{theorem}[{\rm \cite{arnold-classification}}]
A plane curve singularity is simple if and only if it has a real morsification 
whose unlabeled \AG-diagram is a simply-laced Dynkin diagram. 
\end{theorem}

Furthermore, every \textit{ADE} type comes up as a type of a plane curve singularity,  
cf.\ Figures~\ref{fig:divides-quasihom} and~\ref{fig:divides-D5-D6-E7}. 
See \cite[Section~I.2.4]{GLS} 
for a detailed treatment. 

\pagebreak[3]

A quiver corresponding to an 
arbitrary real morsification of a simple singularity does not have to 
be an orientation of an \textit{ADE} Dynkin diagram, in the traditional Lie-theoretic 
sense of the term; see for example Figure~\ref{fig:E6-morsifications} on the left. 
What, then, distinguishes these quivers among those arising from 
real morsifications of general singularities? 
The answer is suggested by Conjecture~\ref{conj:morsif=mut}:

\begin{theorem}
\label{th:simple-singularities}
Let $(C,z)$ be a plane curve singularity, 
as in Section~\ref{sec:singularities-and-morsifications}.
Let $D$ be a divide associated with its real morsification,
and $Q=Q(D)$ the corresponding quiver.
Then the following are equivalent:
\begin{itemize}[leftmargin=.3in]
\item
$(C,z)$ is a simple singularity; 
\item
$Q$ is a quiver of finite type. 
\end{itemize}
Furthermore, if these statements hold, then the type of the singularity $(C,z)$
matches the type of the quiver~$Q$. 
\end{theorem}

To rephrase, Theorem~\ref{th:simple-singularities} (proved below in this section)
asserts that a plane curve singularity is simple if and only if 
a quiver arising from some (equivalently,~any) real morsification 
of this singularity gives rise to a cluster algebra of finite type. 
Moreover, the type of this cluster algebra matches the type of the singularity. 

\begin{example}
A quasihomogeneous singularity of type~$(a,b)$, $a\ge b\ge 2$, 
is simple if and only if $b=2$ or else $b=3$ and $a\le 5$.
These are precisely the cases in which the $(a-1)\times(b-1)$ 
grid quiver defines a cluster algebra of finite type.
Cf.\ Remark~\ref{rem:lissajous-grassmannian}.
\end{example}

\begin{remark}
Most quivers of finite type do not arise from any morsification
(nor from any divide, for that matter). 
The complete list of the quivers of type \textit{ADE} which do arise in this way 
can be obtained from 
Proposition~\ref{pr:ade-morsifications} below. 
\end{remark}

Theorem~\ref{th:simple-singularities}
implies that Conjecture~\ref{conj:morsif=mut} holds for simple singularities:

\begin{corollary}
Given two real morsifications of isolated plane curve
singularities, one of which is known to be simple,
the following are equivalent:
\begin{itemize}[leftmargin=.3in]
\item
the two singularities 
have the same complex topological type; 
\item
the quivers associated with the two morsifications are
mutation equivalent. 
\end{itemize}
\end{corollary}

Our proof of Theorem~\ref{th:simple-singularities} will rely on the 
classification of morsifications of \textit{ADE} singularities, 
see 
J.~Callahan \cite[Section~4]{callahan} and J.~Chislenko \cite[Section~4]{chislenko}. 

\begin{proposition}[{\rm \cite{callahan, chislenko}}]
\label{pr:ade-morsifications}
Divides arising from real morsifications of simple singularities
are classified, up to a homeomorphism of an ambient disk, as follows: 
\begin{itemize}[leftmargin=.3in]
\item
for each type $A_n$ ($n$ odd), there are two possible divides,
see Figure~\ref{fig:morsifications-An}, {\rm (i)--(ii)}; 
\item
for each type $A_n$ ($n$ even), there is one possible divide,
see Figure~\ref{fig:morsifications-An}, {\rm (iii)}; 
\item
for each type~$D_n$, there are $\lfloor \frac{n}{2}\rfloor$ possible divides,
see Figure~\ref{fig:morsifications-Dn};  
\item
for type~$E_6$, there are two possible divides, see Figure~\ref{fig:E6-morsifications};
\item
for type~$E_7$, there are two possible divides, see Figure~\ref{fig:divides-D5-D6-E7};
\item
for type~$E_8$, there are three possible divides, see Figure~\ref{fig:divides-quasihom}. 
\end{itemize}
\end{proposition}

\begin{figure}[ht]
\begin{center}
\hspace{-.5in}
\setlength{\unitlength}{1pt}
\begin{picture}(100,20)(0,-5)
\thinlines
\put(15,5){\line(1,1){5}}
\put(15,15){\line(1,-1){5}}
\put(85,5){\line(-1,1){5}}
\put(85,15){\line(-1,-1){5}}
\qbezier(20,10)(30,20)(40,10)
\qbezier(40,10)(50,20)(60,10)
\qbezier(60,10)(70,20)(80,10)
\qbezier(20,10)(30,0)(40,10)
\qbezier(40,10)(50,0)(60,10)
\qbezier(60,10)(70,0)(80,10)
\put(50,-8){\makebox(0,0){(i)}}
\end{picture}
\begin{picture}(100,20)(0,-5)
\thinlines
\qbezier(40,10)(50,20)(60,10)
\qbezier(60,10)(70,20)(80,10)
\qbezier(80,10)(85,15)(90,15)
\qbezier(95,10)(95,15)(90,15)
\qbezier(40,10)(35,15)(30,15)
\qbezier(25,10)(25,15)(30,15)

\qbezier(40,10)(50,0)(60,10)
\qbezier(60,10)(70,0)(80,10)
\qbezier(80,10)(85,5)(90,5)
\qbezier(95,10)(95,5)(90,5)
\qbezier(40,10)(35,5)(30,5)
\qbezier(25,10)(25,5)(30,5)
\put(60,-8){\makebox(0,0){(ii)}}
\end{picture}
\begin{picture}(100,20)(-20,-5)
\thinlines
\put(15,5){\line(1,1){5}}
\put(15,15){\line(1,-1){5}}
\qbezier(20,10)(30,20)(40,10)
\qbezier(40,10)(50,20)(60,10)
\qbezier(60,10)(70,20)(80,10)
\qbezier(80,10)(85,15)(90,15)
\qbezier(95,10)(95,15)(90,15)
\qbezier(20,10)(30,0)(40,10)
\qbezier(40,10)(50,0)(60,10)
\qbezier(60,10)(70,0)(80,10)
\qbezier(80,10)(85,5)(90,5)
\qbezier(95,10)(95,5)(90,5)
\put(55,-8){\makebox(0,0){(iii)}}
\end{picture}
\end{center}
\caption{Divides arising from real morsifications of type~$A_n$ singularities. 
For $n$ odd, there are two isotopy classes, shown in (i)--(ii), for $n=7$. 
For $n$ even, there is one isotopy class, shown in~(iii), for $n=8$. 
}
\label{fig:morsifications-An}
\end{figure}
\nopagebreak


\begin{figure}[ht]
\begin{center}
\hspace{-.5in}
\setlength{\unitlength}{1pt}
\begin{picture}(100,15)(0,3)
\thinlines
\put(15,5){\line(1,1){5}}
\put(15,15){\line(1,-1){5}}
\put(85,5){\line(-1,1){5}}
\put(85,15){\line(-1,-1){5}}
\qbezier(20,10)(30,20)(40,10)
\qbezier(40,10)(50,20)(60,10)
\qbezier(60,10)(70,20)(80,10)
\qbezier(20,10)(30,0)(40,10)
\qbezier(40,10)(50,0)(60,10)
\qbezier(60,10)(70,0)(80,10)
\put(30,0){\line(0,1){20}}
\end{picture}
\begin{picture}(100,15)(0,3)
\thinlines
\qbezier(40,10)(50,20)(60,10)
\qbezier(60,10)(70,20)(80,10)
\qbezier(80,10)(85,15)(90,15)
\qbezier(95,10)(95,15)(90,15)
\qbezier(40,10)(35,15)(30,15)
\qbezier(25,10)(25,15)(30,15)

\qbezier(40,10)(50,0)(60,10)
\qbezier(60,10)(70,0)(80,10)
\qbezier(80,10)(85,5)(90,5)
\qbezier(95,10)(95,5)(90,5)
\qbezier(40,10)(35,5)(30,5)
\qbezier(25,10)(25,5)(30,5)
\put(50,0){\line(0,1){20}}
\end{picture}
\begin{picture}(100,15)(-20,3)
\thinlines
\put(15,5){\line(1,1){5}}
\put(15,15){\line(1,-1){5}}
\qbezier(20,10)(30,20)(40,10)
\qbezier(40,10)(50,20)(60,10)
\qbezier(60,10)(70,20)(80,10)
\qbezier(80,10)(85,15)(90,15)
\qbezier(95,10)(95,15)(90,15)
\qbezier(20,10)(30,0)(40,10)
\qbezier(40,10)(50,0)(60,10)
\qbezier(60,10)(70,0)(80,10)
\qbezier(80,10)(85,5)(90,5)
\qbezier(95,10)(95,5)(90,5)
\put(30,0){\line(0,1){20}}
\end{picture}
\end{center}
\caption{Divides arising from real morsifications of type~$D_n$ singularities. 
Each of them is obtained as a transversal overlay of 
a real morsification of a type $A_{n-3}$ singularity 
and a single smooth branch 
(shown as a~vertical line). 
}
\label{fig:morsifications-Dn}
\end{figure}

\pagebreak[3]

We will also need the following lemma, 
listing several properties of the quivers which come from divides. 

\begin{lemma}
\label{lem:divide-quivers-restrictions}
Let $D$ be a divide, and $Q=Q(D)$ the associated quiver. Then:
\begin{itemize}[leftmargin=.3in]
\item[{\rm (1)}]
$Q$ does not contain an improperly oriented $3$-cycle.
That is, 
if $Q$ contains arrows $a\to b$ and $b\to c$, then it does not contain an arrow $a\to c$. 
\item[{\rm (2)}]
If $Q$ contains arrows $a\to b$ and $b\to c$, then it contains a $3$-cycle that
includes at least one of these arrows. 
\item[{\rm (3)}]
Let $\Delta$ be an oriented $3$-cycle in~$Q$.
Then $Q$ contains another oriented $3$-cycle 
which is different from~$\Delta$ but shares an arrow with~$\Delta$. 
\item[{\rm (4)}]
If $Q$ contains arrows $a\to b\to c\to d\to e$, then it contains a $3$-cycle that
includes two consecutive arrows among these four. 
\item[{\rm (5)}]
If $v$ is a vertex in~$Q$ with exactly two incoming arrows $a\to v$ and $c\to v$,
and exactly two outgoing arrows $v\to b$ and $v\to d$,
with $a,b,c,d$ distinct, 
then $Q$ contains arrows $a\leftarrow b\to c\leftarrow d\to a$. 
\item[{\rm (6)}]
If a vertex $b$ is incident in~$Q$ to exactly three arrows $a\to b$, $b\to c$, $d\to b$, 
and $c$ is incident in~$Q$ to exactly three arrows $c\to a$, $b\to c$, $c\to d$,
then \linebreak[3]
$Q$~is the $4$-vertex quiver with vertices $a,b,c,d$ and the five arrows listed above. 
\end{itemize}
\end{lemma}

\begin{proof}
(1) This statement is immediate from the orientation rule 
of Definition~\ref{def:quiver-of-divide}. 

(2) Note that by the construction of~$Q(D)$, 
each arrow $\circledplus\to\circleddash$ is contained in a $3$-cycle, and 
each path $\circleddash\to\bullet\to\circledplus$ is contained in a $3$-cycle. 

(3) If $Q$ contains an arrow parallel to one of the arrows in~$\Delta$
(i.e., if two vertices of~$\Delta$ are connected in~$Q$ by two or more arrows), 
then the claim is obvious. 
Otherwise, $\Delta$ contains an arrow $a\to b$ 
corresponding to two adjacent (inner) regions of~$D$
separated by a curve segment connecting two distinct nodes $u$ and~$v$. 
This yields two oriented $3$-cycles $a\to b\to u\to a$ and $a\to b\to v\to a$,  
one of which is~$\Delta$. 

(4) The oriented path 
$a\to b\to c\to d\to e$ must include three consecutive vertices labeled 
$\circleddash\to\bullet\to\circledplus$.  
This two-arrow path has to be contained in a $3$-cycle. 

\pagebreak[3]

(5) 
If $v$ is labeled~$\bullet$ (i.e., comes from a node
in~$D$), then $a,b,c,d$ correspond to the four incident regions, 
and the claim follows by the construction of~$Q(D)$. 
If $v$ is labeled $\circledplus$ or~$\circleddash$, 
then it comes from a region of~$D$ bounded by two $1$-cells (i.e., a digon), 
with the vertices $a,b,c,d$ corresponding to the two vertices of the digon and 
the two adjacent regions; the claim follows. 

(6) If $b$ is labeled~$\bullet$ (i.e., comes from a node), 
then none of $a,c,d$ is labeled~$\bullet\,$. 
Hence $c$ comes from a region adjacent to a single node (i.e., a monogon),
and consequently $c$ cannot be incident to three arrows. 
One similarly shows that $c$ cannot be labeled~$\bullet\,$.
Hence both $a$ and~$d$ are labeled~$\bullet\,$. Furthermore $b$ and~$c$
come from adjacent digons, each having two vertices $a$ and~$d$. 
Moreover these digons are not adjacent to any other regions of~$D$.
The statement now follows from the connectedness of~$Q(D)$. 
\end{proof}

\begin{proof}[Proof of Theorem~\ref{th:simple-singularities}]
If $(C,z)$ is a simple singularity, then $D$ is one of the divides 
listed in Proposition~\ref{pr:ade-morsifications}. 
It is straightforward to check that for each of these divides, the quiver $Q(D)$
is mutation equivalent to an orientation of the Dynkin diagram of the corresponding type. 

It remains to show that if $Q(D)$ is a quiver of finite type, then $(C,z)$ is a simple singularity. 
If $n\le 8$, then the claim follows from the well known fact 
(see, e.g., \cite[Preamble to Part~II]{agzv1})
that all singularities with the Milnor number
$n\le 8$ are simple. 
Thus, we may assume that $Q(D)$ is a quiver of type
$A_n$ or~$D_n$, with~$n\ge 9$; i.e., $D$~is mutation equivalent to an orientation
of the corresponding Dynkin diagram. 
We next show that this assumption implies 
that $D$ is one of the divides catalogued in Proposition~\ref{pr:ade-morsifications}
(cf.\ Figures \ref{fig:morsifications-An}--\ref{fig:morsifications-Dn}),
and therefore the underlying singularity is simple, of the corresponding type
($A_n$ or~$D_n$), 
cf.\ Theorem~\ref{th:ag-diagram-determines-topology}. 
We will treat the types $A_n$ and~$D_n$ separately,
assuming the reader's familiarity with the basic combinatorics of
triangulations giving rise to quivers of these types; 
see, e.g., \cite[Chapter~5]{fwz45}. 

\textbf{Type~$A_n$}. 
Each quiver~$Q$ of type~$A_n$ describes the signed adjacencies of diagonals 
in a triangulation~$T$ of a convex $(n+3)$-gon. 
If $T$ contains three diagonals forming a triangle,
then $Q$ contains an oriented $3$-cycle whose sides are not 
contained in any other $3$-cycles. 
By Lemma~\ref{lem:divide-quivers-restrictions}(3),
such a quiver cannot come from a divide. 
If~$T$ does not include a triple of diagonals forming a triangle, 
then $Q$ is an orientation of the Dynkin diagram of type~$A_n$. 
If such a quiver $Q$ includes two co-oriented arrows $a\to b\to c$,
then $Q$ cannot come from a divide, by Lemma~\ref{lem:divide-quivers-restrictions}(2). 
We conclude that $Q$ has an alternating orientation, 
i.e., every vertex is either a source or a sink. 
The only divides producing such quivers are the ones appearing in 
Figure~\ref{fig:morsifications-An}. 

\textbf{Type~$D_n$}. 
Each quiver~$Q$ of type~$D_n$
comes from a (tagged) triangulation~$T$
of an $n$-gon with a single puncture~$p$. 
Removing the arcs of~$T$ incident to~$p$,
we obtain a partial triangulation~$T'$,
with $p$ lying inside a punctured
$k$-gon $\mathbf{P}$, for some $k\ge 2$. 

Suppose $k\ge 4$.
Then $Q$ contains a chordless oriented $k$-cycle,
and therefore cannot come from a divide, 
by Lemma~\ref{lem:divide-quivers-restrictions}(4). 

Suppose $k=3$. 
If exactly one side of the triangle $\mathbf{P}$ is a diagonal of the $n$-gon,
then Lemma~\ref{lem:divide-quivers-restrictions}(6) applies, 
and $Q$ is a quiver of type~$D_4$ which comes from a divide
obtained by crossing a circle with a line. 
If two or three sides of $\mathbf{P}$ are diagonals, 
then we run into a contradiction with Lemma~\ref{lem:divide-quivers-restrictions}(5). 

Suppose $k=2$. Then $p$ is incident to two arcs in~$T$. 
If these two arcs connect $p$ to two distinct vertices of the digon~$\mathbf{P}$
(with the same tagging), then $Q$ contains a chordless oriented $4$-cycle,
contradicting Lemma~\ref{lem:divide-quivers-restrictions}(4). 
Thus, let us assume that the two aforementioned arcs connect~$p$ to the same vertex
(and are tagged differently). 
Arguing as in the type~$A_n$ case above, we conclude that
the partial triangulation~$T'$ cannot include three diagonals forming a triangle.
It follows that the associated quiver~$Q$ is an orientation of
a graph of the form 
\begin{center}
\setlength{\unitlength}{1pt}
\begin{picture}(180,15)(0,5)
\thicklines
\put(0,10){\line(1,0){180}}

\put(60,10){\line(1,1){10}}
\put(60,10){\line(1,-1){10}}
\put(70,0){\line(1,1){10}}
\put(70,20){\line(1,-1){10}}

\put(70,20){\circle*{4}}
\put(70,0){\circle*{4}}
\multiput(0,10)(20,0){4}{\circle*{4}}
\multiput(80,10)(20,0){6}{\circle*{4}}
\end{picture}
\end{center}
(with both triangles properly oriented). 
Moreover, each of the chains on either side of this graph 
must have an alternating orientation, by Lemma~\ref{lem:divide-quivers-restrictions}(2)
(cf.\ the argument in type~$A_n$).
Finally, none of the degree~$4$ vertices in~$Q$ can have two incoming and two outgoing arrows,
by Lemma~\ref{lem:divide-quivers-restrictions}(5). 
These considerations completely fix the orientation of all the arrows in
the quiver~$Q$, 
up to a global reversal. 
We conclude that this quiver must come from one of the divides shown in 
Figure~\ref{fig:morsifications-Dn}. 
%
\end{proof}


\end{document}